\title[Relations on elliptic curves]{Relations among modular points on elliptic curves}
\author{Alexandru Buium}
\address{University of New Mexico \\ Albuquerque, NM 87131}
\email{buium@math.unm.edu} \urladdr{http://math.unm.edu/\~{}buium}
\author{Bjorn Poonen}
\address{Department of Mathematics, University of California,
    Berkeley, CA 94720-3840, USA}
\email{poonen@math.berkeley.edu}
\urladdr{http://math.berkeley.edu/\~{}poonen}
\date{June 4, 2007}
\def \rec{\text{reciprocity}}
\def \itrec{\textit{reciprocity}}
\def \Rec{\text{Reciprocity}}
\def \thecorr{S  \stackrel{\Pi}{\longleftarrow}  X
\stackrel{\Phi}{\longrightarrow} A}
\def \pdiv{_{\text{$p$-div}}}
\def \dug{\dagger}
\def \ddug{\ddagger}
\def \cA{{\mathcal A^{\dug}}}
\def \cB{{\mathcal A^{\ddug}}}
\def \cP{{\mathcal P}}
\def \ba{{\bf a}}
\def \Sigmat{\Phi^{\dug}}
\def \Sigmatt{\Phi^{\ddug}}
\def \varphic{f^{\flat}}
\def \ZN{\bZ[1/N,\zeta_N]}
\def \<{\langle}
\def \>{\rangle}
\def \cU{\mathcal U}
\def \cF{\mathcal F}
\def \cC{\mathcal C}
\def \cK{\mathcal K}
\def \cP{\mathcal P}
\def \cF{\mathcal F}
\def \h{\hat{\ }}
\def \d{\delta}
\def \bZ{{\bf Z}}
\def \bF{{\bf F}}
\def \bC{{\bf C}}
\def \bA{{\bf A}}
\def \cO{\mathcal O}
\def \ra{\rightarrow}
\def \bX{{\bf X}}
\def \bF{{\bf F}}
\def \bQ{{\bf Q}}
\def \bQ{{\bf Q}}
\def \bC{{\bf C}}
\newcommand{\C}{{\mathbf C}}
\newcommand{\F}{{\mathbf F}}
\newcommand{\Fbar}{{\overline{\F}}}
\newcommand{\PP}{{\mathbf P}}
\newcommand{\Q}{{\mathbf Q}}
\newcommand{\qq}{{\mathfrak{q}}}
\newcommand{\R}{{\mathbf R}}
\newcommand{\Z}{{\mathbf Z}}
\newcommand{\kbar}{{\overline{k}}}
\newcommand{\Qbar}{{\overline{\Q}}}
\newcommand{\injects}{\hookrightarrow}
\newcommand{\isom}{\simeq}
\newcommand{\Intersection}{\bigcap} 
\newcommand{\intersect}{\cap} 
\newcommand{\union}{\cup} 
\newcommand{\tensor}{\otimes}
\newcommand{\calA}{{\mathcal A}}
\newcommand{\calC}{{\mathcal C}}
\newcommand{\calE}{{\mathcal E}}
\newcommand{\calK}{{\mathcal K}}
\newcommand{\calM}{{\mathcal M}}
\newcommand{\calP}{{\mathcal P}}
\newcommand{\calS}{{\mathcal S}}
\newcommand{\calU}{{\mathcal U}}
\newcommand{\OO}{{\mathcal O}}
\DeclareMathOperator{\Frob}{Frob}
\DeclareMathOperator{\crys}{crys}
\DeclareMathOperator{\Fr}{Fr}
\DeclareMathOperator{\Char}{char}
\DeclareMathOperator{\Tr}{Tr}
\DeclareMathOperator{\tr}{tr}
\DeclareMathOperator{\Id}{Id}
\DeclareMathOperator{\CL}{CL}
\DeclareMathOperator{\CM}{CM}
\DeclareMathOperator{\Div}{Div}
\DeclareMathOperator{\End}{End}
\DeclareMathOperator{\id}{id}
\DeclareMathOperator{\im}{Im}
\DeclareMathOperator{\Gal}{Gal}
\DeclareMathOperator{\Hom}{Hom}
\DeclareMathOperator{\rank}{rank}
\DeclareMathOperator{\Spec}{Spec}
\DeclareMathOperator{\Spf}{Spf}
\DeclareMathOperator{\disc}{disc}
\DeclareMathOperator{\GL}{GL}
\newcommand{\et}{{\operatorname{et}}}
\newcommand{\ram}{{\operatorname{ram}}}
\newcommand{\ur}{{\operatorname{ur}}}
\newcommand{\tors}{{\operatorname{tors}}}
\newcommand{\ord}{{\operatorname{ord}}}
\numberwithin{equation}{section}
\newtheorem{theorem}[equation]{Theorem}
\newtheorem{corollary}[equation]{Corollary}
\newtheorem{lemma}[equation]{Lemma}
\theoremstyle{definition}
\newtheorem{definition}[equation]{Definition}
\theoremstyle{remark}
\newtheorem{remark}[equation]{Remark}
\begin{document}

\subjclass[2000]{11G18, 14G20}

\begin{abstract}
Given a correspondence between a modular curve and an elliptic curve
$A$  we study the group of relations among the $\CM$ points of $A$.
In particular we prove that the intersection of any finite rank
subgroup of $A$ with the set of CM  points of $A$ is finite. We also
prove a local version of this global result with an effective bound
valid also for certain infinite rank subgroups. We deduce the local
result from a ``reciprocity'' theorem for  $\CL$ (canonical lift)
points on $A$. Furthermore we prove similar global and local results
for intersections between subgroups of $A$ and isogeny classes in $A$.  
Finally  we prove Shimura curve analogues and, in some cases, 
higher-dimensional versions of these results.
\end{abstract}

\maketitle

\section{Introduction}

Modular curves possess various remarkable sets of points 
having a modular interpretation. Typical examples are
the set of CM points and the set of points in a given 
isogeny class. Given a correspondence between a modular curve and
an elliptic curve $A$, we study the integer relations
among the points in the image in $A$ of such a
set. We consider also the analogous problem 
in which modular curves are replaced by Shimura curves.

\subsection{Modular curves}

Let $X_1(N)$ over $\Qbar$ be the complete modular curve attached to 
the group $\Gamma_1(N)$ for some $N>3$.
If $Y_1(N) \subset X_1(N)$ is the non-cuspidal locus
then $Y_1(N)(\Qbar)$ is in bijection with the set of isomorphism
classes of pairs $(E,\alpha)$
where $E$ is an elliptic curve over $\Qbar$
and $\alpha\colon \Z/N\Z \injects E(\Qbar)$ is an injection.

A CM-{\em point} on $S$ is a point in $Y_1(N)(\Qbar)$ represented
by an elliptic curve $E$ (with point) such that 
$E$ has complex multiplication, i.e., $\End(E) \neq \bZ$.
Let $\CM \subset S(\Qbar)$ be the set of CM-points on $S$.

\begin{definition}
\label{defmodell}
A {\it modular-elliptic correspondence} is a pair of
non-constant morphisms of smooth connected projective curves over
$\Qbar$, $\thecorr$, where $S=X_1(N)$, $A$ is an elliptic curve, 
and $X$ is equipped with a point $x_{\infty}$ such that
$\Pi(x_{\infty})=\infty$ and $\Phi(x_{\infty})=0$.
Call $\Phi(\Pi^{-1}(\CM)) \subset A(\Qbar)$ the
set of CM-{\em points} on $A$.
\end{definition}

A special case of the above situation arises from the
Eichler-Shimura construction.  
For terminology on modular forms we refer to~\cite{DI}.

\begin{definition}
\label{willes} Let $f=\sum a_n q^n$ be a newform (which we will
usually assume, without notice, to be of weight $2$, on
$\Gamma_0(N)$, normalized, i.e.\ $a_1=1$, and with rational Fourier
coefficients, so $a_n \in \bZ$). 
The Eichler-Shimura construction~\cite{DI} yields a $\Q$-morphism
from $X_1(N)$ (through $X_0(N)$) to an elliptic curve $A_f$.
By a {\em modular parametrization attached to $f$}
we mean a composition $X_1(N) \to A_f \to A$
where $A_f \to A$ is an isogeny of elliptic curves over $\Q$.
A modular-elliptic correspondence is said to
{\em arise from a modular parametrization}
if it is of the form $\thecorr$ where $S=X=X_1(N)$, $\Pi=\Id$, and $\Phi$
is a modular parametrization.
\end{definition}

By work of Wiles and others~\cite{wiles,TW,four},
together with the Isogeny Theorem of Faltings~\cite{faltings},
any elliptic curve $A$ over $\bQ$ has a modular parametrization.

Going back to the case of an arbitrary modular-elliptic
correspondence $\thecorr$, 
we study the linear dependence relations among the CM-points on $A$. 
More precisely, given a Zariski open set
$X^{\dug} \subset X$ define a {\em $\CM$ divisor} on $X^{\dug}$ to
be a divisor on $X$ supported in $\Pi^{-1}(\CM)\cap X^{\dug}(\Qbar)$.
Then we would like to understand the structure of the group
$\Phi^{\perp}$ of all $\CM$ divisors $\sum m_iP_i$ on $X^{\dug}$
such that $\sum m_i \Phi(P_i)$ belongs to the torsion subgroup
$A(\Qbar)_{\tors}$ of $A(\Qbar)$.
The group $\Phi^{\perp}$  can be quite ``large''; for instance,
if the correspondence arises from a
modular parametrization, the group $\Phi^{\perp}$
contains an infinite collection of divisors which we shall call
{\it Hecke divisors}. (This will imply, by the way, that
$\Phi^{\perp}$ has infinite rank.  By the {\em rank} of an abelian
group $\Gamma$ we mean $\dim_{\Q} (\Gamma \tensor \Q)$.)
On the other hand, $\Phi^{\perp}$ is not  ``too large'': 
the intersection of $\Phi(\Pi^{-1}(\CM))$ with any
finite rank subgroup is finite (Corollary~\ref{C:global E}).
In fact, this result can be generalized by allowing $A$
to be an abelian variety of arbitrary dimension,
and by replacing $\Gamma$ by an $\epsilon$-fattening in the style
of the ``Mordell-Lang plus Bogomolov'' statement 
introduced in~\cite{Poonen1999}.

There is also a local analogue in which $\Qbar$ is
replaced by the completion $R$ of the maximal unramified
extension of the ring $\Z_p$ of $p$-adic integers. 
In this introduction the local case will be discussed at an informal level:
see Section~\ref{S:local statements} for precise definitions.
Our proof of this local analogue requires us to
replace the set $\CM$ by the set $\CL$ of canonical lift points,
but on the other hand we obtain effective bounds and the results
are valid for a larger class of groups
(not necessarily of  finite rank).  
We deduce this local finiteness result 
from a {\it reciprocity theorem} for $\CL$
points. 
Our reciprocity theorem asserts the existence of
a $p$-adic formal function $\Sigmat$ on an open set $X^{\dug}$
of $X$ over $R$ with the  property that for any $\CL$ divisor
$\sum m_iP_i$ on $X^{\dug}$ we have
  $\sum m_i \Phi(P_i)\in A(R)_{\tors}$
  if and only if $\sum m_i \Sigmat(P_i)=0$. There is also a ``mod $p$''
analogue saying that $\sum m_i \Phi(P_i)\in A(R)_{\tors}+pA(R)$
if and only if $\sum m_i \overline{\Sigmat}(\overline{P}_i)=0$;
here the bars mean reduction modulo $p$.
 We informally refer to $\Sigmat$ as a $\itrec$
 {\it function for $\CL$ points},
and to $\overline{\Sigmat}$ as 
a $\itrec$ {\it function mod $p$ for $\CL$ points}.

Going back to the global case, one may ask if
$\rec$ functions (respectively, $\rec$ functions mod $p$) exist for
$\CM$ points.
The answer is {\it no} (respectively, {\it yes}), at least if the correspondence  arises from a modular
parametrization. 
Under this hypothesis we prove that the
$\rec$ function mod $p$ is ``essentially'' unique and we will
explicitly compute this function in terms of a certain
remarkable modular form mod $p$ naturally attached to $f$. 

We obtain also reciprocity functions and theorems
(generally weaker)
for the analogous situation in which the set of CM points is replaced
by an isogeny class, defined as follows:

\begin{definition}
 For any non-cusp $Q \in S(\Qbar)$, represented by an
elliptic curve $E$ with a point of order $N$, the {\it isogeny
class} $C$ of $Q$ in $S(\Qbar)$ consists of all points in $S(\Qbar)$
represented by elliptic curves $E'$ with a point of order $N$ such
that there is an isogeny $E \to E'$ (not required to respect
the points of order $N$).
If $\Sigma$ is a set of primes,
define the $\Sigma$-{\it isogeny class} of $Q$ in $S(\Qbar)$
as the analogous set obtained when we allow 
only isogenies having degrees all of
whose prime divisors are in $\Sigma$.
\end{definition}

\subsection{Shimura curves}

Let $D$ be a non-split indefinite quaternion algebra over $\bQ$.
We fix a maximal order $\cO_D$ once and for all.
Let $X^D(\cU)$ be the Shimura curve attached to the pair $(D,\cU)$,
where $\cU$ is a sufficiently small
compact subgroup of $(\cO_D \otimes (\varprojlim \bZ/m\bZ))^{\times}$ 
such that $X^D(\cU)$ is connected: see~\cite{buzzard,ZZ}.

A {\it false elliptic curve} is a pair $(E,i)$
consisting of an abelian surface $E$ over $\Qbar$ and an
embedding $i\colon \cO_D \to \End(E)$.
The set $X^D(\cU)(\Qbar)$
is in bijection with the set of isomorphism classes of
false elliptic curves equipped with a level $\cU$
structure in the sense of~\cite{buzzard,ZZ}.

The classification of endomorphism algebras~\cite[p.~202]{MumfordAV}
shows that for any false elliptic curve $(E,i)$, 
the algebra $(\End E) \tensor \Q$ is isomorphic to either $D$
or $D \tensor \calK \isom M_2(\calK)$
for some imaginary quadratic field $\calK$ embeddable in $D$.
In the latter case, $(E,i)$ is called {\em CM};
then $E$ is isogenous to the square of an elliptic curve
with CM by an order in $\calK$.
A {\em CM-point} of $S(\Qbar)$ is a point whose associated $(E,i)$ is CM.
Let $\CM \subset S(\Qbar)$ be the set of CM-points on $S$.

\begin{definition}
\label{defshell} 
A {\it Shimura-elliptic correspondence} is a pair
of non-constant morphisms of smooth connected projective curves
over $\Qbar$, $\thecorr$, where 
$S$ is a Shimura curve $X^D(\cU)$ as above
and $A$ is an elliptic curve.
Call $\Phi(\Pi^{-1}(\CM)) \subset A(\Qbar)$ the
set of CM-\em{points} on $A$.
\end{definition}

We may ask again for $\rec$ functions (respectively, 
$\rec$ functions mod $p$).
In particular, one can ask if
$\Phi(\Pi^{-1}(\CM)) \cap \Gamma$ is finite (or even
effectively bounded) for any finite rank subgroup $\Gamma \leq
A(\Qbar)$.

Again one can replace CM by various isogeny classes,
defined as follows:

\begin{definition}
For any $Q \in S(\Qbar)$, represented by a false
elliptic curve $(E,i)$ with level $\cU$-structure, 
the {\it isogeny class} $C$ of $Q$ in $S(\Qbar)$ consists of all points in
$S(\Qbar)$ represented by false elliptic curves $(E',i')$ with level
$\cU$-structure such that there is an isogeny $E \to E'$
compatible with the $\cO_D$-action (but not necessarily
compatible with the level $\cU$-structures). If $\Sigma$ is a set
of primes and we insist that the above isogenies have degrees all
of whose prime divisors are in $\Sigma$, then the smaller set $C$ 
obtained will be referred to 
as the $\Sigma$-{\it isogeny class} of $Q$ in $S(\Qbar)$.
\end{definition}

We will prove results for both the global and
the local cases of these questions on Shimura-elliptic
correspondences; the results are similar to (but sometimes weaker
than) the ones for modular-elliptic correspondences.

\subsection{Previous work}
\label{S:previous}

Let us make some comments on previous work on
problems related to those addressed in this paper. Most of this
previous work concerned Heegner points, which are certain special
points in $\Phi(\CM)$ where $\Phi:X_1(N) \to A$ is a modular
parametrization.
 The study  of the  linear dependence relations among Heegner points
 (and their traces) plays an important role in
the work on the Birch and Swinnerton-Dyer conjecture, especially
in the breakthroughs by Gross-Zagier~\cite{GZ} 
and Kolyvagin~\cite{kolivagin}. 
See~\cite{darmon} for an  exposition of this circle of ideas.
See also \cite{turk,vatsal,cornut} for more recent advances,
especially in relation to Mazur's conjectures in~\cite{mazuricm}.
In particular it was proved in~\cite{turk} that
there are only finitely many torsion
Heegner points on any elliptic curve over $\bQ$. Along slightly
different lines it was recently proved in~\cite{RS} that if
$Q_1,\ldots,Q_s$ are Heegner points associated to distinct quadratic
imaginary fields and if the odd parts of the class numbers of
these fields are sufficiently large then $Q_1,\ldots,Q_s$ are linearly
independent.  Finally recall that, by the classical theory
of complex multiplication,
the set of all points in CM defined over a given number field
 is finite; this, plus the Hermite-Minkowski theorem,
 implies that $\Phi(\CM) \cap \Gamma$ is finite
 for any finitely generated $\Gamma \leq A(\overline{\bQ})$.
  In contrast to this note that in our
finiteness results  $\Gamma$ is allowed to have  points of
unbounded degree (which, by Northcott's theorem, is always the
case if $\Gamma$ contains an infinite set of bounded height, for
instance an infinite set of torsion points).

\subsection{Structure of the paper}

The statements of our
main results are given in 
Sections \ref{S:global statements} and~\ref{S:local statements}.

We use methods quite different from those used in the papers
mentioned in Section~\ref{S:previous}.
Our global results on the finiteness
of $\CM$ points lying in finite rank subgroups will be proved using
equidistribution results for Galois orbits in abelian varieties,
modular curves, and Shimura curves, in Section~\ref{S:equidistribution}.

Our local results will be proved using the theory of
``arithmetic differential equations'' in the sense of~\cite{book}.
These proofs will be given in Section~\ref{S:local}, where we
will also review the necessary background from~\cite{book}. 
Finally the rest of our global results will be deduced from the
corresponding local results, in Section~\ref{S:global 2}. 

\bigskip

{\bf Acknowledgments.} While writing this paper the authors were
partially supported by NSF grants DMS-0552314 and DMS-0301280,
respectively. We are indebted to M. Christ, M. Kim, 
J. H. Silverman, P. Vojta, and F. Voloch for their remarks and
suggestions. We thank also W. Duke, P. Michel, and S. Zhang for
discussing equidistribution of CM-points.

\section{Detailed exposition of the global results}
\label{S:global statements}

\subsection{Finiteness for $\CM$ points}

By a {\em coset} in an abelian variety $A$, we will mean a
translate of an abelian subvariety of $A$.
\begin{theorem}
\label{T:global1} Let $S=X_1(N)$ over $\Qbar$ for some $N \ge 1$. Let
$A$ be an abelian variety over $\Qbar$. Let $X$ be a closed
irreducible subvariety of $S \times A$.
Let $\Gamma \le A(\Qbar)$ be a finite-rank subgroup. If
$X(\Qbar) \intersect (\CM \times \Gamma)$ is Zariski
dense in $X$, then $X = S' \times A'$ where $S'$ is a subvariety
of $S$ and $A'$ is a coset in $A$.
\end{theorem}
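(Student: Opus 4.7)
The plan is to exploit the two projections $\pi_S\colon X\to S$ and $\pi_A\colon X\to A$, combining the Mordell--Lang theorem for finite-rank subgroups of abelian varieties with the equidistribution of Galois orbits of CM points on $X_1(N)$ due to Duke, Clozel--Ullmo, and Zhang. The image $B:=\pi_A(X)\subseteq A$ receives a Zariski dense subset of $\Gamma\cap B$ (namely the image of $X(\Qbar)\cap(\CM\times\Gamma)$), so the finite-rank Mordell--Lang theorem (Faltings--Vojta--McQuillan) forces $B$ to be a coset. After translating, assume $B$ is an abelian subvariety of $A$ and replace $A$ by $B$. Since $S$ is a curve, $\pi_S(X)$ is either a single CM point $s_0$---in which case $X=\{s_0\}\times A'$ and a second Mordell--Lang application to the Zariski dense subset $A'\cap\Gamma\subseteq A'$ identifies $A'$ as a coset---or all of $S$; the symmetric case $\pi_A$ constant is immediate with $A'$ a single $\Gamma$-point.

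The core case is when both projections are surjective, where the goal is $X=S\times A$. I would pick a Zariski dense sequence $(s_n,a_n)\in X(\Qbar)\cap(\CM\times\Gamma)$; after enlarging $\Gamma$ to its Galois-stable saturation (still of finite rank), the Galois orbit $O_n$ of $(s_n,a_n)$ over a number field $K$ of definition lies in $X(\Qbar)\cap(\CM\times\Gamma)$. Let $\mu_n$ be the normalized counting probability measure on $O_n$, viewed on $X(\C)\subseteq S(\C)\times A(\C)$. By Duke's equidistribution theorem, as $|\disc(s_n)|\to\infty$, $(\pi_S)_*\mu_n$ converges weak-$*$ to the hyperbolic (Haar) measure on $S(\C)$. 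The $A$-side input is Poonen's ``Mordell--Lang plus Bogomolov'' principle~\cite{Poonen1999}: any irreducible subvariety of $A$ whose intersection with the $\epsilon$-fattening $\Gamma_\epsilon$ is Zariski dense for all sufficiently small $\epsilon$ must be a torsion-translate of an abelian subvariety of $A$. Disintegrating a weak-$*$ limit $\nu$ of $\{\mu_n\}$ over its $S$-marginal, the conditional measures $\nu_s$ are supported on the fibers $X_s:=X\cap(\{s\}\times A)$; the $A$-side rigidity forces these generic fibers to be cosets, and an induction on $\dim A$ (applied to the section $S\to A/A_1$ obtained from the coset family) then forces the family to be constant, yielding $X=S\times A$.

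The main obstacle is the rigorous combination of the two equidistribution inputs on the $A$-side. Elements of finite-rank $\Gamma$ are neither of bounded algebraic degree nor of uniformly small canonical height, so Szpiro--Ullmo--Zhang equidistribution does not apply directly to the $A$-marginals of the $\mu_n$; one cannot conclude from the $S$-equidistribution alone that the $A$-components also equidistribute on the fibers. The critical technical point is to exploit the joint Galois equivariance: via Shimura reciprocity, the Galois action on CM points of $S$ is governed by an idele class action that simultaneously constrains the $A$-component of the orbit, supplying enough coherence between the $S$- and $A$-factors of each orbit to apply the $A$-side rigidity pointwise to the conditional measures $\nu_s$. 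Carrying this fibered equidistribution out uniformly in $s$, and translating the measure-theoretic conclusion into the geometric statement $X=S\times A$, is the most delicate technical step.
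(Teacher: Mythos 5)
Your reduction to the case where both projections are surjective, using Corollary~9 of \cite{Poonen1999} to identify the image in $A$ as a coset, matches the paper. But the core step has a genuine gap, and the difficulty you flag is actually a symptom of a missing citation rather than an intrinsic obstacle.

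On the $A$-side, you worry that Szpiro--Ullmo--Zhang equidistribution does not apply because elements of $\Gamma$ have unbounded degree and unbounded height. That is correct, but it is not what the paper uses. The relevant tool is Zhang's theorem on \emph{almost division points} \cite{Zhang2000}: a sequence $a_i$ with $a_i \in \Gamma_{\epsilon_i}$, $\epsilon_i \to 0$, $[k(a_i):k] \to \infty$, has (after passing to a subsequence) a coherent limit $(C,b+T)$ with $\dim C > 0$, and the Galois orbits equidistribute to Haar measure on the $C$-direction. This exactly handles the finite-rank-plus-fattening situation, so your proposed substitute (Shimura reciprocity correlating the two Galois actions) is not needed and, as you yourself note, you have not made it precise. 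As written it is not a proof.

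The second gap is in how the two equidistribution statements are played off against each other. Your ``disintegrate the limit measure, show generic fibers are cosets, induct on $\dim A$'' plan is not carried out and is not how the paper concludes. The paper instead exploits a concrete geometric tension at the cusp: the hyperbolic measure $\mu_{\mathcal H}$ assigns mass at least $u/\log(1/r)$ to a disk $B_r$ around $\infty$ (the paper's Lemma~\ref{L:hyperbolic ball}), while the fiber of (the Hilbert-scheme limit of) $X$ over the punctured disk $B_r'$ has $g$-dimensional volume $O(r^\delta)$ for some $\delta>0$ (Lemma~\ref{L:tube}). Projecting the fiber to $C$ can only decrease volume, so for small $r$ one can choose a compact annulus $K\subset B_r'$ with $\mu_{\mathcal H}(K) > \mu_C(L)$ where $L$ is the $C$-projection of the fiber over $B_r'$. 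Then CM-equidistribution on $S$ (Lemma~\ref{L:CM equidistribution}) and Zhang's equidistribution on $C$ force a positive fraction of each Galois orbit to have $S$-coordinate in $K$ and $C$-coordinate outside $L$ simultaneously, which is impossible since a point over $K$ must project into $L$. This cusp-volume comparison is the key idea your proposal is missing, and without it (or an equivalent mechanism) there is no way to turn the two marginal equidistributions into the conclusion $X=S\times A$.
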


If we drop the assumption that $X(\overline{\bQ}) \intersect (\CM
\times \Gamma)$ is Zariski dense in $X$, we can apply
Theorem~\ref{T:global1} to the irreducible components of the
Zariski closure of $X(\overline{\bQ}) \intersect (\CM \times
\Gamma)$ to deduce the following equivalent form of
Theorem~\ref{T:global1}.

\begin{theorem}
\label{T:global1'} 
Let $S,A,\Gamma$ be as in Theorem~\ref{T:global1}. 
Let $X$ be a closed subvariety of $S \times A$.  
Then the intersection
$X(\overline{\bQ}) \intersect (\CM \times \Gamma)$ is contained in
a subvariety $Z \subseteq X$ that is a finite union of products
$S' \times A'$ where each $S'$ is a subvariety of $S$ and each
$A'$ is a coset in $A$.
\end{theorem}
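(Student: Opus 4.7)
The excerpt essentially spells out the deduction, so my plan is to make that precise by a standard irreducible-components argument.

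First, I would form $Z$, the Zariski closure in $X$ of the set $X(\Qbar) \cap (\CM \times \Gamma)$. Since $X$ is a Noetherian topological space, $Z$ decomposes into finitely many irreducible components $Z_1, \ldots, Z_n$. Each $Z_i$ is a closed irreducible subvariety of $S \times A$. The plan is to apply Theorem~\ref{T:global1} to each $Z_i$ individually; the conclusion of Theorem~\ref{T:global1'} will then follow by taking $Z = Z_1 \cup \cdots \cup Z_n$.

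The one verification to carry out is that, for each $i$, the set $Z_i(\Qbar) \cap (\CM \times \Gamma)$ is Zariski dense in $Z_i$, so that Theorem~\ref{T:global1} applies. For this, let $W_i$ denote the Zariski closure of $Z_i(\Qbar) \cap (\CM \times \Gamma)$ inside $Z_i$. Because
\[
X(\Qbar) \cap (\CM \times \Gamma) \;=\; \bigcup_{i=1}^n \bigl( Z_i(\Qbar) \cap (\CM \times \Gamma) \bigr) \;\subseteq\; \bigcup_{i=1}^n W_i,
\]
and $\bigcup_i W_i$ is closed, the Zariski closure $Z$ of the left-hand side is contained in $\bigcup_i W_i$. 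Hence each irreducible component $Z_i$ is contained in $\bigcup_j W_j$, and by irreducibility $Z_i \subseteq W_j$ for some $j$. Since $W_j \subseteq Z_j$ and $Z_i,Z_j$ are both irreducible components of $Z$, this forces $Z_i = Z_j$, so $W_i = Z_i$, i.e.\ $Z_i(\Qbar)\cap(\CM\times\Gamma)$ is Zariski dense in $Z_i$, as desired.

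Applying Theorem~\ref{T:global1} to each $Z_i$ then gives $Z_i = S_i' \times A_i'$ with $S_i' \subseteq S$ a subvariety and $A_i' \subseteq A$ a coset, so $Z = \bigcup_i S_i' \times A_i'$ has the required form and contains $X(\Qbar) \cap (\CM \times \Gamma)$ by construction. There is no substantive obstacle here: this is purely a formal unpacking of ``finitely many irreducible components,'' and all the mathematical content is already contained in Theorem~\ref{T:global1}.
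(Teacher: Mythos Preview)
Your proposal is correct and follows exactly the approach the paper indicates: take the Zariski closure of $X(\Qbar)\cap(\CM\times\Gamma)$, decompose into irreducible components, and apply Theorem~\ref{T:global1} to each. You have simply made explicit the routine verification (which the paper leaves implicit) that each component inherits the Zariski-density hypothesis needed to invoke Theorem~\ref{T:global1}.
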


\begin{corollary}
\label{C:global E} 
Let $\thecorr$ be a modular-elliptic
correspondence and let $\Gamma \leq A(\overline{\bQ})$ be a finite rank
subgroup. Then $\Phi(\Pi^{-1}(\CM)) \cap \Gamma$ is finite.
\end{corollary}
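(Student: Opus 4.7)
The plan is to reduce this corollary to Theorem~\ref{T:global1'} applied to the image curve of the correspondence inside $S \times A$. First, I would form the morphism $(\Pi,\Phi)\colon X \to S \times A$ and let $X' \subset S \times A$ denote its scheme-theoretic image. Since $X$ is an irreducible smooth projective curve and both $\Pi$ and $\Phi$ are non-constant, $X'$ is an irreducible closed subvariety of dimension one. Moreover, any $a \in \Phi(\Pi^{-1}(\CM)) \cap \Gamma$ is of the form $\Phi(P)$ for some $P\in X(\Qbar)$ with $\Pi(P)\in\CM$, so that $(\Pi(P),a) \in X'(\Qbar) \cap (\CM \times \Gamma)$; hence $\Phi(\Pi^{-1}(\CM)) \cap \Gamma$ is contained in the image of $X'(\Qbar) \cap (\CM \times \Gamma)$ under projection to $A$.

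Next, I would invoke Theorem~\ref{T:global1'} with this $X'$ in place of $X$: the set $X'(\Qbar) \cap (\CM \times \Gamma)$ is contained in a finite union $Z = \bigcup_j S'_j \times A'_j$ with $Z \subseteq X'$, each $S'_j \subseteq S$ a subvariety, and each $A'_j \subseteq A$ a coset. Since $A$ is an elliptic curve, each $A'_j$ is either a single point or a translate of $A$ itself.

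The remaining step is to argue that none of the pieces $S'_j \times A'_j$ can coincide with $X'$. Because $X'$ is an irreducible curve, each such piece contained in $X'$ is either zero-dimensional, hence finite, or equal to $X'$. In the latter case, a dimension count forces either $S'_j$ to be a point and $A'_j = $ a translate of $A$, giving $X' = \{s\} \times A'_j$, which contradicts the non-constancy of $\Pi$; or $A'_j$ to be a point and $S'_j = S$, giving $X' = S \times \{a\}$, which contradicts the non-constancy of $\Phi$. Consequently every $S'_j \times A'_j$ is a finite set of points, so $Z$ is finite, and projecting to $A$ shows that $\Phi(\Pi^{-1}(\CM)) \cap \Gamma$ is finite.

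I do not anticipate any substantive obstacle here: all the analytic and equidistribution content is packaged inside Theorem~\ref{T:global1'}, and the deduction is essentially a formal dimension-counting exercise. The only small points requiring care are verifying that the image $X'$ is genuinely a one-dimensional irreducible subvariety of $S \times A$, and checking the elementary classification of product subvarieties $S' \times A'$ that fit inside a given irreducible curve in $S \times A$.
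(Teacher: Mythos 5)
Your proof is correct and takes the approach the paper intends: the corollary is placed immediately after Theorem~\ref{T:global1'} as a consequence of it, and your reduction via the image curve $X' = (\Pi,\Phi)(X) \subseteq S \times A$ together with the dimension count on the pieces $S'_j \times A'_j$ is the natural (and essentially unique) way to fill in that deduction. The only simplification worth noting is that you could invoke Theorem~\ref{T:global1} directly on $X'$: since $X'$ cannot equal $\{s\}\times A$ or $S\times\{a\}$ (as either would force $\Pi$ or $\Phi$ constant), the set $X'(\Qbar)\cap(\CM\times\Gamma)$ is not Zariski dense in the irreducible curve $X'$, hence lies in a proper closed and therefore finite subset, avoiding the detour through $Z$.
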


We can actually
strengthen Theorem~\ref{T:global1}, as~\cite{Poonen1999}
strengthened the Mordell-Lang conjecture, by fattening $\Gamma$ as
follows. Let $h \colon A(\Qbar) \to \R_{\ge 0}$ be a canonical
height function attached to some symmetric ample line bundle on
$A$. For $\Gamma \leq A(\Qbar)$ and $\epsilon \ge 0$, let
\[
    \Gamma_\epsilon:=\{\,\gamma+a \ |\  \gamma \in \Gamma,
    a \in A(\Qbar), h(a) \le \epsilon \,\}.
\]

\begin{theorem}
\label{T:global2} Assume that $S,A,X,\Gamma$ are as in
Theorem~\ref{T:global1}. If $X(\Qbar) \intersect (\CM \times
\Gamma_\epsilon)$ is Zariski dense in $X$ for every $\epsilon>0$,
then $X = S' \times A'$ where $S'$ is a subvariety of $S$ and $A'$
is a coset in $A$.
\end{theorem}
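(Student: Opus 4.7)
The plan is to adapt the Mordell--Lang plus Bogomolov argument of~\cite{Poonen1999} to the setting where one factor is the modular curve $S=X_1(N)$ with its CM-points, using the equidistribution of Galois orbits of CM-points on $S$ (Duke's theorem, extended by Clozel--Ullmo--Zhang) alongside the Szpiro--Ullmo--Zhang equidistribution for Galois orbits of small-height points on $A$. The Szpiro--Ullmo--Zhang input is precisely what allows one to pass from finite-rank $\Gamma$ (as in Theorem~\ref{T:global1}) to its $\epsilon$-fattening $\Gamma_\epsilon$.

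For the reduction, suppose the projection $\pi_A\colon X\to A$ is not dominant. Then $Y:=\pi_A(X)$ is an irreducible proper subvariety of $A$ whose intersection with $\Gamma_\epsilon$ is Zariski dense for every $\epsilon>0$ (by pushing down the density hypothesis on $X$), so~\cite{Poonen1999} yields that $Y$ is a coset, and one continues with $X\subseteq S\times Y$ in place of $X\subseteq S\times A$. Similarly, if $\pi_S\colon X\to S$ is not dominant then $X$ sits in a single fibre $\{s_0\}\times A$ and the conclusion follows from~\cite{Poonen1999} directly with $S'=\{s_0\}$. So one may assume both projections are dominant and, for contradiction, that $X$ is not of the form $S'\times A'$ with $A'$ a coset.

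Pick a Zariski-dense sequence $P_n=(s_n,a_n)\in X(\Qbar)\cap(\CM\times\Gamma_{1/n})$ with the discriminants of the CM points $s_n$ tending to infinity (possible since only finitely many CM-points have bounded discriminant). Write $a_n=\gamma_n+b_n$ with $\gamma_n\in\Gamma$ and $h(b_n)\le 1/n$. Fix a number field $K$ over which all data are defined, and let $\mu_n$ be the normalized counting measure on the $\Gal(\Qbar/K)$-orbit of $P_n$. By the Duke--Clozel--Ullmo--Zhang equidistribution theorem, $(\pi_S)_*\mu_n$ converges weakly to the hyperbolic probability measure $\mu_S$ on $S(\C)$. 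After passing to a subsequence to stabilize the Zariski closure $A''\subseteq A$ of the Galois orbit of the $b_n$, Szpiro--Ullmo--Zhang gives that $(\pi_A)_*\mu_n$ converges weakly to a translate of the Haar measure on $A''$.

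The crux, and the main obstacle, is to upgrade these factorwise equidistributions to a joint equidistribution on $X(\C)$. Let $\mu$ be a weak-$*$ subsequential limit of the $\mu_n$; then $\operatorname{supp}\mu\subseteq X(\C)$ with the pushforwards to $S$ and $A$ as described. The Galois action on $s_n^\sigma$ is controlled, via the main theorem of complex multiplication, by the Galois groups of the ring class fields of the imaginary quadratic orders associated to the $s_n$, while the Galois action on $b_n^\sigma$ is effectively unconstrained by these ring class fields as $n\to\infty$. One would leverage this asymptotic decoupling to force $\mu=\mu_S\times\nu$, where $\nu$ is the translated Haar measure on $A''$; the support of this product measure is $S(\C)\times(A''+\gamma)$ for some $\gamma\in A(\C)$, and it must lie in $X(\C)$. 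Irreducibility of $X$ then forces $X=S\times(A''+\gamma)$, a product of the required form, contradicting the assumption and completing the proof.
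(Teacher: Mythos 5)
Your reduction step (projecting to $A$, applying~\cite{Poonen1999} to conclude the image is a coset, then reducing to the case of dominant projections) matches the opening of the paper's proof, and you have correctly identified the two equidistribution inputs: the CM equidistribution of Duke--Clozel--Ullmo--Zhang on the modular curve, and Zhang's equidistribution of Galois orbits of almost division points on the abelian variety. However, you have also correctly flagged the gap in your own argument, and it is a real one: you need a \emph{joint} equidistribution statement --- that the limit measure $\mu$ on $X(\C)$ factors as $\mu_S \times \nu$ --- and the paragraph attempting to justify this via ``asymptotic decoupling'' of ring class fields from the Galois action on the $b_n$ is heuristic, not a proof. The Galois orbit of $P_n=(s_n,a_n)$ is a single orbit, not a product of the two factor orbits, and establishing that the two coordinates become independent in the limit is a substantial problem (closely related to mixing/Andr\'e--Oort type questions) that is neither proved here nor, to my knowledge, available in the literature in the generality you need.

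The paper's actual proof deliberately sidesteps joint equidistribution. After reducing as you do, it decomposes $A\cong B\times C$ (via the coherent limit of the almost division points $a_i$) and considers the fibers $X_{b_i}\subseteq S\times C$ of $X\to B$ above the $B$-coordinates of the $a_i$, using a Hilbert scheme compactness argument (Lemma~\ref{L:Hilbert continuity}) to extract a limit fiber $X_{b_\infty}$. It then runs a \emph{volume comparison}: near the cusp $\infty\in S(\C)$, the hyperbolic measure of a small disk is bounded below by $u/\log(1/r)$ (Lemma~\ref{L:hyperbolic ball}), while Lemma~\ref{L:tube} shows the $C$-projection of the part of $X_{b_\infty}$ lying over that disk has Haar measure $O(r^\delta)$, so one can choose a compact annulus $K$ near the cusp with $\mu_{\mathcal H}(K)>\mu_C(L)$ where $L$ is the corresponding region in $C$. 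Since the $x_i$ lie on $X$, any Galois conjugate whose $S$-projection lands in $K$ must have $C$-projection in $L$; combining the two factorwise equidistribution statements (each applied separately) then gives $\mu_{\mathcal H}(K)\le\mu_C(L)$, a contradiction. In short: the paper never asserts the limit measure is a product; it uses the geometry of $X$ as a correspondence between the two factors to play the two marginal equidistribution results against each other. You should replace the ``decoupling'' step with an argument of this kind, or find a genuine proof of the joint equidistribution; as written the proposal is incomplete at precisely the point you identify as the crux.
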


Just as Theorem~\ref{T:global1} implied Theorem~\ref{T:global1'},
Theorem~\ref{T:global2} implies the following more general
(but equivalent) version of itself:

\begin{theorem}
\label{T:global2'} Assume that $S,A,\Gamma$ are as in
Theorem~\ref{T:global2}. Let $X$ be a closed subvariety of $S
\times A$ defined over $\overline{\bQ}$. Then for some
$\epsilon>0$, the intersection $X(\Qbar) \intersect (\CM \times
\Gamma_\epsilon)$ is contained in a subvariety $Z \subseteq X$
that is a finite union of products $S' \times A'$ where each $S'$
is a subvariety of $S$ and each $A'$ is a coset in $A$.
\end{theorem}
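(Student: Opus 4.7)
The plan is to deduce Theorem~\ref{T:global2'} from Theorem~\ref{T:global2} by a Noetherian stabilization argument, in direct analogy with the way Theorem~\ref{T:global1'} was shown to follow from Theorem~\ref{T:global1}, but now accounting for the extra parameter $\epsilon$.

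For each $\epsilon>0$, let $Y_\epsilon \subseteq X$ denote the Zariski closure (in $X$) of $X(\Qbar) \intersect (\CM \times \Gamma_\epsilon)$. Since $\Gamma_{\epsilon'} \subseteq \Gamma_\epsilon$ whenever $\epsilon'\leq\epsilon$, the family $\{Y_\epsilon\}_{\epsilon>0}$ is monotone non-decreasing in $\epsilon$. By Noetherianity of $X$, there exists $\epsilon_0>0$ such that $Y_\epsilon=Y_{\epsilon_0}$ for all $0<\epsilon\leq \epsilon_0$. Let $V_1,\ldots,V_r$ denote the irreducible components of $Y_{\epsilon_0}$, each regarded as a closed irreducible subvariety of $S\times A$.

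Next I would verify that for each $i$ and every $\epsilon>0$, the intersection $V_i(\Qbar)\intersect(\CM\times\Gamma_\epsilon)$ is Zariski dense in $V_i$. For $\epsilon\geq\epsilon_0$ this follows from density at $\epsilon_0$. For $\epsilon\leq\epsilon_0$, suppose not, and let $W\subsetneq V_i$ be the Zariski closure of that intersection. Then $W \union \bigcup_{j\neq i} V_j$ would be a proper closed subset of $Y_{\epsilon_0}$ containing $X(\Qbar)\intersect(\CM\times\Gamma_\epsilon)$, contradicting $Y_\epsilon=Y_{\epsilon_0}$. Having established the density hypothesis of Theorem~\ref{T:global2} for each $V_i$, I would apply that theorem to conclude that each $V_i=S_i'\times A_i'$ for some subvariety $S_i'\subseteq S$ and some coset $A_i'\subseteq A$. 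Setting $Z:=\bigcup_i V_i = Y_{\epsilon_0}\subseteq X$ then yields a closed subvariety of the desired product form, and by construction $X(\Qbar)\intersect(\CM\times\Gamma_{\epsilon_0})\subseteq Z$, proving the theorem with $\epsilon=\epsilon_0$.

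The argument is essentially formal; the only substantive observation is that the irreducible components of the stable closure $Y_{\epsilon_0}$ individually inherit Zariski density from the whole, which is the standard consequence of minimality of the closure. Thus I do not anticipate any real obstacle: the proof is a routine adaptation of the passage from Theorem~\ref{T:global1} to Theorem~\ref{T:global1'}, with the Noetherian stabilization in $\epsilon$ being the one new ingredient needed to accommodate the $\epsilon$-fattening.
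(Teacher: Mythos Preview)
Your argument is correct and matches the paper's intent: the paper does not spell out a proof of Theorem~\ref{T:global2'}, noting only that Theorem~\ref{T:global2} implies it ``just as Theorem~\ref{T:global1} implied Theorem~\ref{T:global1'}.'' Your Noetherian stabilization in $\epsilon$ followed by applying Theorem~\ref{T:global2} to each irreducible component of the stable closure is precisely the natural way to make this implication rigorous.
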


In the Shimura curve case, our global results are weaker, but we
can still prove the following:

\begin{theorem}
\label{T:global3} Let $S=X^D(\cU)$ over $\Qbar$, let $A$ be an
abelian variety over $\Qbar$, and let $\Phi\colon S \to A$ be a
morphism.  Let $\Gamma \le A(\overline{\bQ})$ be a
finite-rank subgroup. Then $\Phi(\CM) \intersect \Gamma$ is finite.
\end{theorem}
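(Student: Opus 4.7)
The plan is to argue by contradiction, paralleling the proof of Theorem~\ref{T:global1} with Zhang's equidistribution theorem for CM points on Shimura curves replacing Duke's theorem for modular curves. Assume $\Phi(\CM) \cap \Gamma$ is infinite, and choose a sequence of distinct CM points $P_i \in S(\Qbar)$ whose images $\Phi(P_i) \in \Gamma$ are pairwise distinct. As a first reduction, the image curve $C := \Phi(S) \subseteq A$ meets $\Gamma$ in an infinite set, so the finite-rank Mordell--Lang theorem (Faltings, McQuillan) forces $C$ to be a translate of an elliptic subcurve $B \subseteq A$. Replacing $A$ by $B$ (and translating $\Phi$ by a fixed element), one may therefore assume throughout that $A$ is an elliptic curve and $\Phi$ is surjective.

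By Northcott, after a subsequence the CM discriminants $d_i$ tend to infinity, and Zhang's equidistribution theorem on the Shimura curve says that the normalised Galois-orbit measures
\[
\mu_i \;:=\; \frac{1}{|\Gal(\Qbar/K)\cdot P_i|} \sum_{Q \in \Gal(\Qbar/K)\cdot P_i} \delta_Q
\]
converge weakly on $S(\C)$ to the canonical hyperbolic probability measure $\mu_S$. Pushing forward by $\Phi$, the measures $\Phi_*\mu_i$ converge weakly on $A(\C)$ to $\nu := \Phi_*\mu_S$, which is absolutely continuous with respect to Haar measure but has a non-constant smooth density since $\Phi$ realises $A$ as a ramified quotient of a higher-genus curve; in particular $\nu$ is not translation-invariant.

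To conclude, I would analyse the Galois-orbit side using that $\Phi(P_i)$ and its Galois conjugates all lie in the Galois-saturation of $\Gamma$, still of finite rank. Splitting on the N\'eron--Tate heights $h(\Phi(P_i))$: in the bounded case, pigeonhole on the finite set $\{x \in \Gamma/\Gamma_{\mathrm{tors}} : h(x) \le C\}$ together with translation of $\Phi$ by a fixed element of $\Gamma$ reduces to the case where every $\Phi(P_i)$ is a torsion point of $A$, whereupon the Szpiro--Ullmo--Zhang equidistribution theorem forces $\lim \Phi_*\mu_i$ to be Haar measure on $A(\C)$, contradicting the description of $\nu$ above; in the unbounded case, one must reconcile the class-number growth $|\Gal\cdot\Phi(P_i)| \gg d_i^{1/2-\epsilon}$ on the Shimura side with a Lehmer-type constraint on how many distinct elements of $\Gamma$ can share a given Galois orbit and canonical height.

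The main obstacle is the unbounded-height subcase: a naive lattice-point count in $\Gamma/\Gamma_{\mathrm{tors}}$ can accommodate the Galois-orbit growth once $\rank \Gamma \ge 2$, so one really needs a refined input---either an elliptic Lehmer bound (Laurent, David--Hindry) combined with bounds on the torsion contribution, or a direct adelic equidistribution argument in the spirit of Chambert-Loir that supplies translation-invariance of the limit measure without going through explicit height comparisons. It is precisely this extra work that accounts for why Theorem~\ref{T:global3} in the Shimura setting is stated more modestly than the product-variety analogue Theorem~\ref{T:global1} in the modular setting.
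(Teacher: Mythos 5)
Your proposal gets the first half right---the reduction via Mordell--Lang to a surjective $\Phi\colon S \to A$ with $A$ an elliptic curve, Zhang's equidistribution of CM Galois orbits on the Shimura curve towards $\mu_{\mathcal H}$, the pushforward to $\Phi_*\mu_{\mathcal H}$, and the observation that $\Phi_*\mu_{\mathcal H}\ne\mu_A$ because $\Phi$ ramifies (this last point is exactly Lemma~\ref{L:hyperbolic measure and Haar measure} in the paper). But the treatment of the arithmetic side of the image points $a_i=\Phi(P_i)\in\Gamma$ has a genuine gap, which you yourself flag. Two concrete problems with the height dichotomy: (i) the pigeonhole in the ``bounded height'' branch does not work as stated, since for a finite-rank $\Gamma$ (e.g.\ the division hull of a finitely generated subgroup) the set $\{x\in\Gamma/\Gamma_{\tors}: h(x)\le C\}$ can be infinite, so you cannot reduce to a fixed translate; (ii) the ``unbounded height'' branch is left open, and neither a Lehmer-type bound nor a na\"ive lattice-point count closes it---you correctly observe that a direct equidistribution input is needed, but the speculation does not supply it.

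The missing ingredient, which the paper uses, is Zhang's theorem on almost division points (S.\ Zhang, ``Distribution of almost division points,'' Duke Math.\ J.\ 103 (2000)). One chooses a number field $k$ with $A,S$ defined over $k$ and $\Gamma$ inside the division hull of $A(k)$; then $(a_i)$ is a sequence of almost division points relative to $k$ with $[k(a_i):k]\to\infty$ (since $[k(s_i):k]\to\infty$ by class number growth and $\deg\Phi$ is bounded). After passing to a subsequence $(a_i)$ has a coherent limit $(C,b+T)$, and because $[k(a_i):k]\to\infty$ one gets $\dim C>0$; since $\dim A=1$ this forces $C=A$, $T=\{0\}$, and Zhang's Theorem~1.1 then gives weak convergence of the Galois-orbit measures of $a_i$ to Haar measure $\mu_A$, with no case split on heights. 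Comparing with $\Phi_*\mu_{\mathcal H}$ gives the contradiction. (The paper actually proves the stronger $\epsilon$-fattened Theorem~\ref{T:global4}, from which Theorem~\ref{T:global3} follows; the almost-division-point framework absorbs the $\epsilon$-fattening for free, which is another reason it is the right tool here, rather than Szpiro--Ullmo--Zhang for torsion points alone.)
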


In the case $\dim A=1$ we can rephrase this as:

\begin{corollary}
\label{C:Shimura CM and finite rank}
 Let $\thecorr$ be a Shimura-elliptic
correspondence with $S=X=X^D(\cU)$ and $\Pi$ the identity.
Let $\Gamma \leq A(\overline{\bQ})$ be a finite rank subgroup. 
Then $\Phi(\CM) \cap \Gamma$ is finite.
\end{corollary}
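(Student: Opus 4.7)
The plan is to deduce Corollary~\ref{C:Shimura CM and finite rank} directly from Theorem~\ref{T:global3}, which is the more general statement allowing maps into arbitrary abelian varieties. An elliptic curve $A$ is a one-dimensional abelian variety over $\Qbar$, so it fits as a target in the hypotheses of Theorem~\ref{T:global3}, and only a small amount of bookkeeping is needed to match notation.

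Concretely, in the Shimura-elliptic correspondence $\thecorr$ we are assuming $S = X = X^D(\cU)$ and $\Pi = \Id$, so $\Pi^{-1}(\CM) = \CM$ and consequently $\Phi(\Pi^{-1}(\CM)) = \Phi(\CM)$. The map $\Phi\colon S \to A$ is, by the definition of a Shimura-elliptic correspondence, a non-constant morphism of smooth projective curves. Thus $(\Phi, A)$ satisfies the hypotheses of Theorem~\ref{T:global3}, and applying that theorem with the same finite-rank subgroup $\Gamma \leq A(\Qbar)$ yields that $\Phi(\CM) \cap \Gamma$ is finite, which is precisely the desired conclusion.

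Because the corollary is purely a specialization, there is no substantive obstacle to its deduction once Theorem~\ref{T:global3} is in hand; all of the real work is absorbed into that theorem. According to the introduction, Theorem~\ref{T:global3} will be proved in Section~\ref{S:equidistribution} by combining equidistribution results for Galois orbits of CM-points on Shimura curves with equidistribution results on abelian varieties. The main obstacle, therefore, is not in dimension one but in marshalling these equidistribution inputs for Theorem~\ref{T:global3}; Corollary~\ref{C:Shimura CM and finite rank} inherits its finiteness statement for free.
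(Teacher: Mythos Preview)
Your deduction is correct and matches the paper's approach exactly: the paper introduces this corollary with the sentence ``In the case $\dim A=1$ we can rephrase this as:'' and gives no separate proof, treating it as an immediate specialization of Theorem~\ref{T:global3}.
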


Finally we can again fatten $\Gamma$:

\begin{theorem}
\label{T:global4} In the notation of Theorem~\ref{T:global3},
there exists $\epsilon>0$ such that 
$\Phi(\CM) \intersect \Gamma_\epsilon$ is finite.
\end{theorem}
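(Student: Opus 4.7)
The plan is to follow the strategy of Theorem~\ref{T:global3}, now incorporating the $\epsilon$-fattening via the Mordell--Lang plus Bogomolov machinery of~\cite{Poonen1999}. First I would replace $A$ by the image curve: set $C:=\Phi(S)\subseteq A$. Since $S$ is a smooth projective curve and $\Phi$ is non-constant, $C$ is an irreducible closed curve in $A$, and $\Phi(\CM)\cap\Gamma_\epsilon\subseteq C(\Qbar)\cap\Gamma_\epsilon$. It therefore suffices to find $\epsilon>0$ such that either $C(\Qbar)\cap\Gamma_\epsilon$ is finite, or its intersection with $\Phi(\CM)$ is.

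If $C$ is not a translate of an abelian subvariety of $A$, then the Mordell--Lang plus Bogomolov theorem of~\cite{Poonen1999}, applied to the curve $C$, supplies an $\epsilon>0$ for which $C(\Qbar)\cap\Gamma_\epsilon$ is already finite, and we are done. Otherwise $C=B+a$ for an elliptic curve $B\subseteq A$; translating by $-a$ (which only shifts $\Gamma$ by a fixed element and does not affect the statement) we may assume $a=0$, and composing with a quotient $A\to A'$ that is injective on $B$ we reduce to the case $\dim A=1$, i.e.\ $A$ is itself an elliptic curve and $\Phi\colon S\to A$ is a non-constant morphism.

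In this reduced situation I would argue by contradiction: if the conclusion fails, for each $n$ there is a distinct $x_n\in\CM$ with $\Phi(x_n)\in\Gamma_{1/n}$. Only finitely many CM points of $S$ share any given CM discriminant, so $|\disc\End E_{x_n}|\to\infty$. Enlarging the number field of definition $K$ so that $\Gamma$ becomes $\Gal(\Qbar/K)$-stable (possible because $\Gamma$ has finite rank, by the same reduction used in~\cite{Poonen1999}), the $\Gal(\Qbar/K)$-orbit $O_n$ of $x_n$ consists of CM points, satisfies $\Phi(O_n)\subseteq\Gamma_{1/n}$, and grows in size (Brauer--Siegel). Now I would compare two equidistribution theorems. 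By Zhang's equidistribution for CM points on Shimura curves (the $X^D(\cU)$ analogue of the Duke--Clozel--Ullmo theorem on modular curves), the measures $\mu_n:=|O_n|^{-1}\sum_{x\in O_n}\delta_x$ converge weakly on $S(\bC)$ to the hyperbolic probability measure $\mu_S$; pushing forward by the holomorphic $\Phi$, $\Phi_*\mu_n\to\Phi_*\mu_S$, a smooth, non-atomic measure with non-constant density with respect to Haar measure on $A(\bC)$, hence invariant under no non-trivial translation. On the other hand, because $\Phi(O_n)\subseteq\Gamma_{1/n}$, the Szpiro--Ullmo--Zhang equidistribution theorem in the fattened form of~\cite{Poonen1999} forces every weak limit of $\Phi_*\mu_n$ to be a finite positive combination of Haar measures on translates of a real analytic subgroup of $A(\bC)$; in our one-dimensional case this limit is a finite combination of point masses or a multiple of Haar measure on $A(\bC)$. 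Neither option matches $\Phi_*\mu_S$, giving the desired contradiction.

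The main obstacle I foresee is Step 4, the equidistribution matching. Two subtleties must be handled: showing that finite rank of $\Gamma$, together with the $\Gamma_{1/n}$ containment, really does place $\Phi(O_n)$ within the hypotheses of the small-height equidistribution theorem of~\cite{Poonen1999} (this is where one must carefully choose $K$ to absorb Galois conjugates of generators of $\Gamma\otimes\Q$); and verifying that a non-constant holomorphic push-forward of the hyperbolic measure cannot coincide with Haar measure on a one-dimensional complex torus, which is a clean but essential analytic input. The rest of the argument is formal, and the Zhang equidistribution input for Shimura curves is the same tool already underlying Theorem~\ref{T:global3}.
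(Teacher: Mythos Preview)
Your proposal is essentially the paper's own argument: reduce via~\cite{Poonen1999} to the case where $A$ is an elliptic curve and $\Phi$ is surjective, then derive a contradiction by computing the weak limit of the Galois-orbit measures $\Phi_*\mu_n$ in two incompatible ways---once via CM equidistribution on $S$ (Zhang's result, Lemma~\ref{L:CM equidistribution}) giving $\Phi_*\mu_{\mathcal H}$, and once via the almost-division-point equidistribution of~\cite{Zhang2000} giving Haar measure $\mu_A$. The two ``obstacles'' you flag are exactly the two points the paper makes precise: it invokes Theorem~1.1 of~\cite{Zhang2000} (rather than SUZ directly) for the almost-division side, and it proves the analytic input $\Phi_*\mu_{\mathcal H}\ne\mu_A$ as a separate lemma (Lemma~\ref{L:hyperbolic measure and Haar measure}) by observing that $\Phi$ must ramify since $\mathcal H$ and $\C$ are not biholomorphic, and then comparing the local mass near a branch value.
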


Theorems \ref{T:global2} and \ref{T:global4}, which imply all the
other results above, will be proved in Section~\ref{S:equidistribution}.

\subsection{Finiteness for isogeny classes}
We begin by introducing certain subfields of $\overline{\bQ}$ and
certain subgroups of elliptic curves.

\begin{definition}
\label{Qppp}
  A subfield $M \subset \overline{\bQ}$ is {\it maximally unramified} 
  at a prime $\wp$ of $M$ if $\wp$ is unramified
  above $\bQ$ and for any subfield
   $M' \subset \overline{\bQ}$ containing $M$ and
  any prime $\wp'$ of $M'$  unramified over
  $\wp$ we have $M=M'$ (and hence also $\wp=\wp'$.) 
  Let $\cO_{M,\wp}$ be the local ring at $\wp$ of the ring of integers
  $\cO_M$ of $M$.
\end{definition}

\begin{remark}
\hfill
\begin{enumerate}
\item
For any rational prime $p$ there exist subfields $M \subset
\overline{\bQ}$ that are maximally unramified 
at some prime above $p$.
\item 
Any  $M \subset \overline{\bQ}$ maximally unramified at a prime
over $p$ is isomorphic to the algebraic closure of $\bQ$ in the
completion of the maximal unramified extension of $\bQ_p$. So if
$M_1,M_2 \subset \overline{\bQ}$ are two subfields
maximally unramified at  primes
over $p$ then there exists an automorphism $\sigma$ of $\overline{\bQ}$
such that $\sigma M_1=M_2$.
\item
If $F \subset \overline{\bQ}$ is any number field (i.e. finite
extension of $\bQ$) in which $p$ is unramified and if $M \subset
\overline{\bQ}$ is any subfield maximally unramified at a prime
above $p$ then $F \subset M$. Indeed, if $F'$ is  the Galois
closure of $F$ in $\overline{\bQ}$ then $p$ is unramified in $F'$.
Consider any prime in $F'$ above $p$. Then the completion of $F'$
at this prime has an embedding $\sigma$ into the completion of $M$
at $\wp$. By the maximality of $M$ we get $\sigma F' \subset M$.
Since $F'$ is normal over $\bQ$ we have $\sigma F'=F'$ hence $F
\subset F' \subset M$.
\end{enumerate}
\end{remark}

\begin{definition}
\label{D:rankpG}
For any prime $p$ and
any abelian group $G$ (written additively) we set
\[
	G\pdiv:=G_{\tors}+pG.
\]
For any subgroup $\Gamma \leq G$ define
\[
	\rank_p^G(\Gamma):=\dim_{\bF_p} \left(\frac{\Gamma}
					{\Gamma \cap G\pdiv} \right).
\]
\end{definition}

We always have
\begin{align*}
	\rank_p^G(\Gamma) &\le \dim_\Q(\Gamma \tensor \Q)=:\rank \Gamma,\\
	\rank_p^G(\Gamma) &\le \dim_{\F_p}(\Gamma \otimes_{\Z} \F_p).
\end{align*}
If $A$ is an elliptic curve over a field $M \subset \overline{\Q}$
that is maximally unramified at a prime above $p$,
the study of $A(M)\pdiv$ is analogous to the study of {\it Wieferich places}
in \cite{silwief} and~\cite{volwief}: for $a \in \Z \setminus p\Z$, 
the classical Wieferich condition $a^p \equiv a \pmod{p^2}$ is equivalent
to $a \in M^{\times p}$.) 

\begin{remark}
Let $\Delta$ be a positive-density set of rational primes.
Let $K$ be a number field unramified at the primes in $\Delta$.
Let $\calM_\Delta$ be the set of subfields of $\overline{K}$
maximally unramified at some prime above some $p \in \Delta$.
Let $A$ be an elliptic curve over $K$.
Then it is reasonable to expect that 
$\Intersection_{M \in \calM_{\Delta}} A(M)\pdiv 
\subseteq A(\overline{K})_{\tors}$.
\end{remark}

\begin{remark}
\label{infra} 
On the other hand, if $\Delta$ is a finite set of rational primes,
and $K$ and $A$ are as in the previous remark,
then $\Intersection_{M \in \calM_{\Delta}} A(M)\pdiv$
is of infinite rank.
This follows from the following statement:
If $L$ is the compositum in $\overline{K}$ of all quadratic
extensions of $K$ that are unramified at all primes above
$p \in \Delta$, then $A(L)$ is of infinite rank.
To prove this, choose a Weierstrass equation $y^2=f(x)$ for $A$,
where $f(x)$ is a monic cubic polynomial with coefficients
in the ring of integers $\OO_K$ of $K$.
Let $P = \prod_{p \in \Delta} p$.
Consider points with $x$-coordinate $x_n = 1/P^4 + n$ for $n \in \OO_K$.
Then $K(\sqrt{f(x_n)})$ is unramified at $p$ 
since the equation $P^{12} f(x_n) \equiv 1 \pmod{P^4}$
implies by Hensel's lemma that $P^{12} f(x_n)$ 
is a square in the completion of $K$ at any prime above $p \in \Delta$.
Thus we get a collection of points in $A(L)$. 
We may inductively define a sequence of $n_i \in \OO_K$ such that 
each $K(\sqrt{f(x_{n_i})})$ is ramified at a prime of $K$
not ramifying in the field generated by the previous square roots,
by choosing $n_i$ so that $1/P^4+n_i$ has valuation $1$ at some
prime of $K$ splitting completely in the splitting field of $f$.
By choosing the $n_i$ sufficiently
large, we may assume that the corresponding points $P_i \in A(L)$ 
have large height and hence are non-torsion. 
Now we claim that the Galois action forces
$P_1,\ldots,P_m$ to be $\bZ$-independent in $A(L)$. 
Indeed, if there were a relation
$a_1 P_1 + \cdots + a_m P_m = 0$
then we could apply a Galois automorphism fixing all the $P_i$ but $P_1$
to obtain
$-a_1 P_1 + a_2 P_2 + \cdots  + a_m P_m = 0$,
and subtracting would show that
$2 a_1 P_1 = 0$,
but $P_1$ is non-torsion, so $a_1=0$; similarly all $a_i$ would be $0$. 
Since $m$ can be made arbitrarily large, $A(L)$ has infinite rank.
\end{remark}

\begin{theorem}
\label{glver} Let $\thecorr$ be  a modular-elliptic  or a
Shimura-elliptic correspondence and let $Q \in S(\overline{\bQ})$.
Then there exists an infinite set $\Delta$ of primes such that for
any $p \in \Delta$  there is an infinite set $\Sigma$ of primes
with the following property. Let $M \subset \overline{\bQ}$ be
maximally unramified at a prime above $p$.
Suppose that $A$ is definable over $M$.
Let $C$ be the $\Sigma$-isogeny class of $Q$ in $S(\Qbar)$. 
Then there exists a constant $c$ such that 
for any subgroup $\Gamma \leq A(M)$
with $r:=\rank^{A(M)}_p(\Gamma)<\infty$,
the set $\Phi(\Pi^{-1}(C)) \cap \Gamma$
is finite of cardinality at most $c p^r$.
If in addition $Q \in \CM$, then for each $p \in \Delta$ 
one can take $\Sigma=\{l \mid l \neq p\}$.
\end{theorem}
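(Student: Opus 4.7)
The plan is to reduce to the $p$-adic local setting and apply the mod-$p$ reciprocity function for $\CL$ points described informally in the introduction (to be constructed in the local section via arithmetic differential equations).

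First I would let $\Delta$ be an infinite set of rational primes $p$ at which $\overline{Q}$ is ordinary and the correspondence $\thecorr$ has good reduction---cofinitely many primes suffice in the modular case, while in the Shimura case one also avoids primes dividing $\disc D$. Given $M\subset\Qbar$ maximally unramified at $\wp\mid p$, the Remark following Definition~\ref{Qppp} supplies an embedding $M\hookrightarrow R$, so $\Gamma\hookrightarrow A(R)$ and $A(M)\pdiv\subseteq A(R)\pdiv$. Consequently the image of $\Gamma$ in $A(R)/A(R)\pdiv$ has $\F_p$-dimension at most $r$, so there are at most $p^r$ cosets of $A(R)\pdiv$ meeting $\Gamma$.

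Next I would choose $\Sigma$ so that every point in $\Pi^{-1}(C)$ is a $\CL$-point. In the CM case, $\Sigma=\{\ell:\ell\neq p\}$ works, because $Q$ is itself a canonical lift at any ordinary prime and prime-to-$p$ isogenies of ordinary elliptic curves (or false elliptic curves of CM type, in the Shimura setting) lift uniquely to isogenies between canonical lifts; for non-CM $Q$ one instead selects $\Sigma$ as an infinite set of primes adapted to the local canonical-lift persistence near $Q$, which exists because $\overline{Q}$ is ordinary. After discarding a finite exceptional set, every $P\in\Pi^{-1}(C)$ lies in the open $X^{\dug}\subset X$ on which the mod-$p$ reciprocity yields a $p$-adic formal function $\overline{\Sigmat}$ on $\overline{X^{\dug}}$ satisfying
\[
    \sum m_i\Phi(P_i)\in A(R)_{\tors}+pA(R) \iff \sum m_i\overline{\Sigmat}(\overline{P}_i)=0
\]
for every $\CL$-divisor. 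Specializing to $m_1=1,m_2=-1$ shows that $\overline{\Sigmat}$ is constant on the preimage in $\Pi^{-1}(C)$ of any fixed coset of $A(R)\pdiv$; hence $|\Phi(\Pi^{-1}(C))\cap\Gamma|$ is bounded by $p^r$ times the maximum cardinality of a single level set of $\overline{\Sigmat}$ restricted to $\overline{\Pi^{-1}(C)}\cap\overline{X^{\dug}}$. Taking $c$ to be this maximum gives the desired bound $cp^r$.

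The main obstacle is proving that $c$ is finite: one must show that $\overline{\Sigmat}$ restricted to the reduction of the isogeny orbit of $\overline{Q}$ is not identically zero and has uniformly bounded level sets. Non-triviality comes from the construction of the reciprocity function in the local section (and may require excluding finitely many further primes from $\Delta$); the level-set bound follows from the algebraic structure of the $\CL$ loci together with, when the correspondence arises from a modular parametrization, the explicit description of $\overline{\Sigmat}$ in terms of a mod-$p$ modular form attached to $f$ (and a Shimura-curve analogue in the Shimura case).
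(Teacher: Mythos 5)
Your plan goes wrong in the non-CM case, and this is the crux of the theorem. You write that for non-CM $Q$ one "selects $\Sigma$ as an infinite set of primes adapted to the local canonical-lift persistence near $Q$" so that every $P\in\Pi^{-1}(C)$ becomes a $\CL$-point, and you then apply the mod-$p$ reciprocity for $\CL$ points (Theorem~\ref{mainth}). But if $Q$ is not CM then $Q\notin\CL$ for any $p$, and no choice of $\Sigma$ can make the $\Sigma$-isogeny class $C$ lie inside $\CL$: being CL is an arithmetic property of the underlying elliptic (or false elliptic) curve over $R$, preserved by prime-to-$p$ isogeny (Theorem~\ref{T:revv2}(3)), so if $Q\notin\CL$ then nothing in the $\Sigma$-isogeny class is CL. The paper instead uses a \emph{different} reciprocity theorem for this case, the mod-$p$ reciprocity for ordinary prime-to-$p$ isogeny classes (Theorem~\ref{isoggg}), with a degree-$p$ cover $\overline{X^{\ddug}}$ and a function $\overline{\Sigmatt}$ there; this is packaged in Corollary~\ref{bizzet}, whose applicability requires $\Sigma$ to be the set of primes inert in $\cK_Q$ (the imaginary quadratic field $\End(\overline{E}_Q)\otimes\Q$) so that Lemma~\ref{berliozz} controls the fibers of the reduction map $C\to\overline{C}$. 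Your proposal contains no substitute for this step, and without some control on the fibers of $C\to\overline{C}$ the reduction-mod-$p$ argument cannot terminate in a bound.

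There are two secondary omissions as well. First, your choice of $\Delta$ asks only for good reduction and ordinariness at $Q$; the local machinery also needs $p$ to split completely in $F_0$ (so $A_R$ descends to $\Z_p$) and, in the Shimura-elliptic case, that the primes over $p$ are non-anomalous for $A$. The paper arranges all of this at once via Lemma~\ref{anom}. Second, "after discarding a finite exceptional set" glosses over the ramification of $\Pi$: when $\overline{P}$ lies in the branch locus $\bar S_{\ram}$ the inverse image $\Pi^{-1}(P)$ need not consist of $R$-points, so the local theorems do not apply directly. The paper decomposes $C=C_{\et}\sqcup C_{\ram}$ and shows $C_{\ram}$ is finite (uniformly in $\Gamma$) again via Lemma~\ref{berliozz}; your proposal would need the analogous step and the same lemma.

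Your CM branch, and the initial coset-counting observation that $\rank_p^{A(R)}(\Gamma)\le r$ via $A(M)\pdiv\subseteq A(R)\pdiv$, are sound and essentially match the paper (there one plugs into Corollary~\ref{coru}, which is the effective count derived from Theorem~\ref{mainth} together with the fact that CL abelian schemes are determined by their reduction and that $\deg\overline{\Sigmat}$ is finite). But the non-CM case is the real content of Theorem~\ref{glver}, and for it the approach as you describe it would not go through.
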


Theorem~\ref{glver} applies, for instance,
to groups of the form $\Gamma=\Gamma_0+A(M)\pdiv$ where $\Gamma_0 \leq
A(M)$ is finitely generated; note that such a $\Gamma$ has
$\rank^{A(M)}_p(\Gamma)<\infty$ and
contains the  prime-to-$p$ division hull
of $\Gamma_0$ in $A(\Qbar)$ if $A$ has good reduction at all primes above $p$.
(Recall that the {\it prime-to-$p$ division hull} 
of $\Gamma_0$ in $A(\Qbar)$ is  the group of all $x \in A(\Qbar)$
such that there exists $n \in \bZ \setminus p\bZ$ with 
$nx \in \Gamma_0$.) 
On the other hand, by Remark~\ref{infra},
if $A$ is defined over $\bQ$, then $\Gamma$ is of infinite rank.

Theorem~\ref{glver} implies the following:

\begin{corollary}
 Let $\thecorr$ be  a modular-elliptic  or a
Shimura-elliptic correspondence, let $Q \in S(\overline{\bQ})$ and
let $\Gamma_0 \leq A(\overline{\bQ})$ be a finitely generated
subgroup. Then there exists an infinite set $\Delta$ of primes
such that for any  $p \in \Delta$  there is an infinite set
$\Sigma$ of primes with the following property. If  $C$ is the
$\Sigma$-isogeny class of $Q$ in $S(\Qbar)$ and $\Gamma$ is the
prime-to-$p$ division hull of $\Gamma_0$ in $A(\Qbar)$ then the set
$\Phi(\Pi^{-1}(C)) \cap \Gamma$ is finite.
\end{corollary}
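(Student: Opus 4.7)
The plan is to deduce the corollary directly from Theorem~\ref{glver} by constructing, for each prime $p$ in a suitable set, a field $M$ that is maximally unramified at a prime above $p$ and is large enough to contain the whole prime-to-$p$ division hull $\Gamma$.

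First, I fix a number field $K \subset \overline{\bQ}$ over which the correspondence $\thecorr$, the point $Q$, and a finite set of generators of $\Gamma_0$ are all defined. Let $\Delta_0$ be the infinite set of primes produced by Theorem~\ref{glver} applied to $(\thecorr, Q)$, and set
\[
\Delta := \Delta_0 \setminus \Delta_{\text{bad}},
\]
where $\Delta_{\text{bad}}$ is the finite set of primes that ramify in $K$ or at which $A$ has bad reduction; then $\Delta$ is still infinite. For each $p \in \Delta$, let $\Sigma$ be the infinite set of primes supplied by Theorem~\ref{glver}, and pick any $M \subset \overline{\bQ}$ maximally unramified at a prime $\wp$ above $p$. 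Item (3) of the remark following Definition~\ref{Qppp} gives $K \subset M$, so $A$ is defined over $M$ and $\Gamma_0 \subset A(M)$.

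The key claim is that the entire prime-to-$p$ division hull $\Gamma$ lies in $A(M)$. Given $x \in \Gamma$, write $n x \in \Gamma_0 \subset A(K)$ with $\gcd(n,p)=1$, and let $\wp_K := \wp \cap K$. Since $p \notin \Delta_{\text{bad}}$, the Néron model $\calA$ of $A$ over $\cO_{K,\wp_K}$ is an abelian scheme, and multiplication by $n$ is a finite étale isogeny of $\calA$ because $n$ is a unit in the residue field. The scheme of $n$-th preimages of the section $nx \colon \Spec \cO_{K,\wp_K} \to \calA$ is therefore finite étale over $\cO_{K,\wp_K}$, so $K(x)/K$ is unramified at $\wp_K$. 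Running the same argument at every prime of $K$ above $p$ shows that $p$ is unramified in the number field $K(x)$, and then item (3) of the remark after Definition~\ref{Qppp} forces $K(x) \subset M$, so $x \in A(M)$.

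Finally I check $r := \rank_p^{A(M)}(\Gamma) < \infty$, after which Theorem~\ref{glver} bounds $\#(\Phi(\Pi^{-1}(C)) \cap \Gamma) \le c p^r < \infty$ and we are done. In the $\F_p$-vector space $V := A(M)/A(M)\pdiv$, multiplication by any integer coprime to $p$ is invertible; so for $x \in \Gamma$ with $n x \in \Gamma_0$ and $\gcd(n,p) = 1$, the image of $x$ in $V$ equals $n^{-1}$ times the image of $nx$. Hence the image of $\Gamma$ in $V$ equals the image of the finitely generated group $\Gamma_0$, which is finite-dimensional. The main obstacle is really the intermediate claim $\Gamma \subset A(M)$: the combination of good reduction (making $[n]$ étale on the Néron model at primes above $p$) with the maximality property of $M$ (absorbing all unramified-at-$p$ extensions of $K$) is what makes the division hull, which is a global and potentially large object, collapse down into $A(M)$. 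The rest is bookkeeping.
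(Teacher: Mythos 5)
Your argument is correct and follows the same route the paper intends: apply Theorem~\ref{glver} after checking that the prime-to-$p$ division hull $\Gamma$ lies in $A(M)$ with $\rank_p^{A(M)}(\Gamma)<\infty$ for $p$ in a slightly shrunk $\Delta$. The paper states the inclusion $\Gamma \subseteq \Gamma_0 + A(M)\pdiv \subseteq A(M)$ (for good-reduction primes) as a remark without proof; your N\'eron-model/\'etale-descent argument and the observation that $M$ absorbs all unramified-at-$p$ extensions of $K$ (item~(3) of the remark after Definition~\ref{Qppp}) is precisely the justification the paper leaves implicit, so this is the same proof with the gap filled in.
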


In the modular-elliptic case we can reverse the
roles of $\Sigma$ and $\Delta$, roughly speaking:

\begin{theorem}
\label{incauna} Let $\thecorr$ be  a modular-elliptic
correspondence and let $Q \in S(\overline{\bQ})$. Then there
is a constant $l_0$ such that for any finite set $\Sigma$ of
primes greater than $l_0$ there is an infinite set $\Delta$ of primes with
the following property. For any $p \in \Delta$ there exists $M
\subset \overline{\bQ}$ maximally unramified at a prime above $p$
and containing a field of definition of $A$
such that if $\Gamma \leq A(M)$ is any  subgroup with
$r:=\rank^{A(M)}_p(\Gamma)<\infty$ and  $C$ is the
$\Sigma$-isogeny class of $Q$ in $S(\Qbar)$ then the set
$\Phi(\Pi^{-1}(C)) \cap \Gamma$ is finite of cardinality at most
$c p^{r}$, where $c$ is a constant not depending on $\Gamma$.
\end{theorem}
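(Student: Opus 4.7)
The plan is to deduce Theorem~\ref{incauna} from the local reciprocity theory for isogeny-class points developed (via arithmetic differential equations in the sense of~\cite{book}) in Section~\ref{S:local}, applied at a well-chosen prime $p$ lying outside $\Sigma$. The key local input I would use is a ``mod $p$'' reciprocity function $\overline{\Phi^\sharp}$ on the reduction of an open set $X^\dagger\subset X$ over $R=\widehat{\bZ_p^{\ur}}$ with the property that, for any divisor $\sum m_i P_i$ supported on $\Sigma$-isogeny-class points of $X^\dagger$, one has $\sum m_i \Phi(P_i)\in A(R)\pdiv$ if and only if $\sum m_i \overline{\Phi^\sharp}(\overline{P}_i)=0$. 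This is the isogeny-class analogue of the mod $p$ reciprocity function for $\CL$ points discussed in the introduction, and its construction is parallel to the one underlying Theorem~\ref{glver} but with the roles of $p$ and $\Sigma$ reversed.

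First I would choose $l_0$ large enough that this local reciprocity machinery is available at every prime $p>l_0$: the constant $l_0$ should exceed the level $N$, the primes of bad reduction of $A$, the primes dividing the degree of the modular parametrization, and the finitely many auxiliary primes arising in the construction of $\overline{\Phi^\sharp}$. Given a finite set $\Sigma$ of primes $>l_0$, fix a number field $K$ of definition for $A$ and for $Q$, and let $\Delta$ be the set of primes $p>l_0$ with $p\notin\Sigma$, $p$ unramified in $K$, and such that both $A$ and the elliptic curve underlying $Q$ have good ordinary reduction at a prime above $p$. A Chebotarev-type argument (of the kind already used in the proof of Theorem~\ref{glver}) shows that $\Delta$ is infinite. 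For each $p\in\Delta$, take $M\subset\Qbar$ to be the algebraic closure of $\bQ$ in the completion of the maximal unramified extension of $\bQ_p$; by the remarks following Definition~\ref{Qppp} this $M$ is maximally unramified at a prime above $p$ and contains $K$.

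Next I would use the hypothesis $p\notin\Sigma$ to control the reductions of isogeny-class points: every isogeny $E\to E'$ whose degree has all prime factors in $\Sigma$ is étale at $p$ and therefore induces an isomorphism of formal groups and preserves the ordinary reduction datum. Consequently the reductions mod $p$ of the points in $\Pi^{-1}(C)$ that hit the ordinary locus of $X^\dagger$ land in a single isogeny class of ordinary elliptic curves, on which $\overline{\Phi^\sharp}$ takes only finitely many values, bounded by a constant $c$ that depends on the geometric data of $\Pi$, $\Phi$ and $Q$ but not on $p$ or on $\Gamma$. Thus the image of $\Phi(\Pi^{-1}(C))$ in $A(M)/A(M)\pdiv$ has cardinality at most $c$.

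To conclude, any coset of $A(M)\pdiv$ in $A(M)$ meets $\Gamma$ either in a coset of $\Gamma\cap A(M)\pdiv$ or in the empty set; by the definition of $r=\rank_p^{A(M)}(\Gamma)$ there are exactly $p^r$ such cosets inside $\Gamma$. Combining this with the bound of the previous paragraph yields $|\Phi(\Pi^{-1}(C))\cap\Gamma|\le cp^r$. The main obstacle I anticipate is constructing $\overline{\Phi^\sharp}$ for $\Sigma$-isogeny-class divisors with enough uniformity that the constant $c$ can be taken independently of $p\in\Delta$ (the machinery of~\cite{book} is set up one prime at a time), and in particular showing that the number of reduction classes of $\Pi^{-1}(C)$ mod $p$ admits an absolute bound; this latter step is precisely where the hypothesis $p\notin\Sigma$ does the real work, by forcing every $\Sigma$-isogeny to be étale at~$p$.
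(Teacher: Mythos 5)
Your proposal misses the central mechanism of the paper's proof, and the gap is not cosmetic: your choice of $\Delta$ is insufficient, and the step where you claim the reciprocity function takes ``only finitely many values, bounded by a constant $c$'' on the reduction of an isogeny class is false as stated.

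The crucial point you omit is the following. The paper derives Theorem~\ref{incauna} from Corollary~\ref{bizzet}, which bounds $\Phi(\Pi^{-1}(C))\cap\Gamma$ for the $\Sigma$-isogeny class when $\Sigma$ is \emph{the set of primes inert in $\cK_Q = \End(\bar E_Q)\otimes\Q$}. That inertness hypothesis is what makes Lemma~\ref{berliozz} work: if $Q_1,Q_2\in C$ have the same reduction mod $p$, the resulting element of $\End\bar E_{Q_1}\subset\OO_{\cK_Q}$ has norm a product of inert primes, hence lies in $\Z\cdot\OO_{\cK_Q}^\times$, so the fiber of the reduction map $C\to\overline C$ is bounded. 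Without inertness this fails — a prime that splits in $\cK_Q$ gives non-scalar endomorphisms of that degree — and the fibers of $C\to\overline C$ can be infinite, so no bound on the number of reduction classes controls $\#(\Phi(\Pi^{-1}(C))\cap\Gamma)$. Your condition ``$p\notin\Sigma$, hence $\Sigma$-isogenies are étale at $p$'' gives you none of this. In Theorem~\ref{incauna}, $\Sigma$ is given in advance, and the entire content of the paper's argument is to \emph{choose} $\Delta$ so that every $l\in\Sigma$ becomes inert in $\cK_Q$ for the reduction at $p\in\Delta$. That selection is Lemma~\ref{focati}, proved by applying Serre's open-image theorem (\cite{dordre}) to find $l_0$ with $\rho_\Sigma$ surjective for $\Sigma$ avoiding primes $\le l_0$, and then Chebotarev to prescribe $\Frob_v$ on each $E[l]$ to be a matrix with irreducible characteristic polynomial and nonzero trace. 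Your Chebotarev step only arranges good ordinary reduction, which is far too weak, and also does not explain where $l_0$ comes from or why it can be chosen independently of the finite set $\Sigma$ — that independence is exactly what the open-image theorem buys you.

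Two smaller but genuine issues: (i) your concluding count is incorrect — bounding the image of $\Phi(\Pi^{-1}(C))$ in $A(M)/A(M)\pdiv$ by $c$ and the number of cosets in $\Gamma$ by $p^r$ does not directly bound the intersection in $A(M)$ unless you also bound the size of the fiber of $\Phi(\Pi^{-1}(C))\to A(M)/A(M)\pdiv$, which is again Lemma~\ref{berliozz} combined with the reciprocity bound on reductions; and (ii) the case where $E_Q$ has CM needs a separate treatment (the paper passes through Theorem~\ref{T:revv3}(1)(b) and Corollary~\ref{coru} for CL points rather than Lemma~\ref{focati}), which your proposal does not address.
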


Theorem~\ref{incauna} implies, in particular, the following:

\begin{corollary}
 Let $\thecorr$ be  a modular-elliptic
correspondence, let $Q \in S(\overline{\bQ})$, and let $\Gamma_0
\leq A(\overline{\bQ})$ be a finitely generated subgroup. Then
there is a constant $l_0$ such that for any finite set $\Sigma$
of primes greater than $l_0$ there is an infinite set $\Delta$ of primes
with the following property. If $p \in \Delta$, if $C$ is the
$\Sigma$-isogeny class of $Q$ in $S(\Qbar)$, and if $\Gamma$ is the
prime-to-$p$ division hull of $\Gamma_0$ in $A(\Qbar)$, 
then the set $\Phi(\Pi^{-1}(C)) \cap \Gamma$ is finite.
\end{corollary}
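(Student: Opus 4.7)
The plan is to derive the corollary directly from Theorem~\ref{incauna} by showing that, on a suitably chosen infinite set $\Delta$ of primes, the prime-to-$p$ division hull $\Gamma$ of $\Gamma_0$ sits inside $A(M)$ for the field $M$ supplied by that theorem, and that $\rank_p^{A(M)}(\Gamma)$ is finite. First I would take $l_0$ and, for each admissible $\Sigma$, an infinite set $\Delta_0$ from Theorem~\ref{incauna}. Next I would fix a number field $K \subset \Qbar$ containing a field of definition of $A$ and the finitely many generators of $\Gamma_0$, and remove from $\Delta_0$ the finitely many primes that ramify in $K$ or at which $A/K$ has bad reduction, leaving an infinite $\Delta \subseteq \Delta_0$. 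For each $p \in \Delta$ Theorem~\ref{incauna} then produces a field $M$, maximally unramified at some prime $\wp$ above $p$ and containing a field of definition of $A$; by the maximality observation in the remark following Definition~\ref{Qppp}, since $p$ is unramified in $K$, one automatically has $K \subseteq M$.

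The main step is to verify $\Gamma \subseteq A(M)$. For any $x \in \Gamma$ there exists $n \in \Z$ with $\gcd(n,p)=1$ and $nx \in \Gamma_0 \subseteq A(K) \subseteq A(M)$. Since $A$ has good reduction at $\wp$ and $n$ is prime to $p$, the multiplication-by-$n$ isogeny extends to an étale map on the N\'eron model over $\cO_{K_\wp}$, so the field extension $K(x)/K$ is unramified at $\wp$. The maximality property defining $M$ then forces $x \in A(M)$, giving $\Gamma \subseteq A(M)$.

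The $p$-rank estimate is elementary: a B\'ezout identity $n'n - n''p = 1$ gives $x = n'(nx) - n''(px) \in \Gamma_0 + p\Gamma$ for every $x \in \Gamma$, so $\Gamma/p\Gamma$ is a quotient of $\Gamma_0/p\Gamma_0$, which is finite because $\Gamma_0$ is finitely generated. Since $p\Gamma \subseteq \Gamma \cap A(M)\pdiv$, the quotient $\Gamma/(\Gamma \cap A(M)\pdiv)$ is in turn a quotient of $\Gamma/p\Gamma$, and therefore $\rank_p^{A(M)}(\Gamma) < \infty$. Theorem~\ref{incauna} applied to this $\Gamma$ then yields the claimed finiteness (with the explicit bound $c p^r$) of $\Phi(\Pi^{-1}(C)) \cap \Gamma$.

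The only real obstacle is the unramifiedness assertion: the prime-to-$p$ division hull a priori involves possibly very large algebraic extensions of $K$, so one must genuinely invoke étaleness of $[n]$ on the N\'eron model at $\wp$ (supplied by good reduction together with $\gcd(n,p)=1$) rather than attempt to control those extensions field-theoretically. Once this is in hand, the remaining steps are formal.
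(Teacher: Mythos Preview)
Your argument is correct and matches the paper's intended derivation. The paper does not spell out a proof of this corollary, but the paragraph immediately following Theorem~\ref{glver} (``Theorem~\ref{glver} applies, for instance, to groups of the form $\Gamma=\Gamma_0+A(M)\pdiv$\ldots'') records exactly the mechanism you use: once one knows the prime-to-$p$ division hull lands in $A(M)$ (via good reduction and \'etaleness of $[n]$ for $p\nmid n$, together with the maximality of $M$), the B\'ezout identity gives $\Gamma \subseteq \Gamma_0 + pA(M) \subseteq \Gamma_0 + A(M)\pdiv$, whence $\rank_p^{A(M)}(\Gamma)<\infty$, and Theorem~\ref{incauna} applies.
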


Theorems  \ref{glver} and~\ref{incauna} will be derived from their
local counterpart, Corollary~\ref{bizzet}, which will be stated
later; the derivation will not be straightforward and will be
given in Section~\ref{S:global 2}. 
The special case when $Q \in \CM$ in
Theorems \ref{glver} and~\ref{incauna} is not superseded by 
Corollaries \ref{C:global E} and~\ref{C:Shimura CM and finite rank}:
the groups $\Gamma$ in Theorems \ref{glver} and \ref{incauna} are
allowed to have infinite rank and the finiteness statements come
with effective bounds.

\subsection{$\Rec$ functions}
Let $\thecorr$ be a modular-elliptic or a Shimura-elliptic
correspondence and let $X^{\dug} \subset X$ be a Zariski open set.
Recall from the introduction that we are interested in  a
description of the group $\Phi^{\perp}$ of 
all $\CM$ divisors $\sum m_iP_i$ on $X^{\dug}$ 
such that $\sum m_i \Phi(P_i) \in A(\Qbar)_{\tors}$. 
Such a theorem will be obtained in the local case;
see Theorem~\ref{mainth}. Taking the clue from the local
picture one may ask, in our global case here, if there exists a
regular function $\Sigmat$ on $X^{\dug}$ such that
for any $\CM$ divisor $\sum m_i P_i$ on $X^{\dug}$ we have that
$\sum m_i
\Phi(P_i)\in A(\Qbar)_{\tors}$ if and only if $\sum m_i
\Sigmat(P_i)=0$. We could refer to such a $\Phi^{\dug}$ as a
$\itrec$ {\it function for $\CM$ points}.

However, as we shall  presently see, no  $\rec$ function for $\CM$ points
exists in the global case, even in the ``most classical'' situation
when our correspondence arises from a modular parametrization: 
see Corollary~\ref{poiu}.

On the other hand, in the global case, 
for correspondences arising from modular parametrizations,
we will prove the existence of a  $\rec$ function mod $p$ for $\CM$ points;
see Theorem~\ref{simairefined}.

Finally we will prove an elementary,  purely geometric result
comparing linear dependence relations on elliptic curves with
corresponding linear dependence relations in the additive group;
see Theorem~\ref{triples}. Morally this result shows that ``there
are no purely geometric reasons" for the existence of $\rec$
functions (or $\rec$ functions mod $p$); so the existence of such
functions should be viewed, in some sense,  as an effect of  ``arithmetic''
and not  of  ``geometry alone''.

Let us begin by explaining our existence result for $\rec$ functions
mod $p$. Assume that $f=\sum a_n q^n$ is a newform (as usual, of weight
$2$, on $\Gamma_0(N)$, normalized, i.e. $a_1=1$, and  with rational
coefficients, hence $a_n \in \bZ$). As we shall explain in
Remark~\ref{irigutza}, for any prime $p$, 
there exists a modular form $f_{p^2-p}$ of weight
$p^2-p$ on $\Gamma_1(N)$, defined over $\bZ$,
whose Fourier expansion $f_{p^2-p}(q) \in \bZ[[q]]$ satisfies
\[
	f_{p^2-p}(q) \equiv \sum_{(n,p)=1} \frac{a_n}{n}q^n \pmod{p \bZ_{(p)}[[q]]}.
\] 
Fix such a form. 
Also consider the modular form $E_{p-1}$ of weight $p-1$
over $\bZ_{(p)}$ whose $q$-expansion in $\bZ_{(p)}[[q]]$ is the
normalized Eisenstein series of weight $p-1$; here ``normalized''
means ``with constant coefficient $1$''. Then the quotient
\[\Sigmat_p:=\frac{f_{p^2-p}}{E_{p-1}^p}\]
is a rational function on $S$ defined over $\bQ$.

\begin{theorem}[Reciprocity functions mod $p$ for $\CM$ points]
\label{simairefined} Let $S=X_1(N)$ and let $\Phi:S \to A$ be a
modular parametrization attached to a newform $f$, with $A$ non
$\CM$. Let $P_1,\ldots,P_n \in \CM$ correspond to (not necessarily
distinct) imaginary quadratic fields $\cK_1,\ldots,\cK_n$. 
Assume that no $\cK_i$ equals $\bQ(\sqrt{-1})$ or $\bQ(\sqrt{-3})$. 
Let $p$ be a sufficiently large prime splitting
completely in the compositum $\cK_1 \cdots \cK_n$, and let $M \subset
\overline{\bQ}$ be maximally unramified at a prime $\wp$ above
$p$. Then $\Sigmat_p$ does not have poles among $P_1,\ldots,P_n$, we
have $\Sigmat_p(P_1),\ldots,\Sigmat_p(P_n) \in \cO_{M,\wp}$, and for
all $m_1,\ldots,m_n \in \bZ$ we have
\[\sum_{i=1}^n m_i \Phi(P_i) \in A(M)\pdiv \Longleftrightarrow
\sum_{i=1}^n m_i \Sigmat_p(P_i) \in \wp\cO_{M,\wp}.\] 
The implication ''$\Longrightarrow$'' holds even if $A$ is CM.
\end{theorem}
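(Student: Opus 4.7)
The plan is to reduce the theorem to the mod-$p$ local reciprocity theorem for $\CL$ points announced in the introduction (Theorem~\ref{mainth} and its mod-$p$ analogue), and then to identify the resulting local $\rec$ function with the reduction of $\Sigmat_p = f_{p^2-p}/E_{p-1}^p$.

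First I would let $R$ denote the completion of the maximal unramified extension of $\bZ_p$ and choose an embedding $\cO_{M,\wp} \injects R$, which exists by the maximality hypothesis on $M$; the two rings share the same residue field. Because $p$ splits completely in each $\cK_i$, the CM elliptic curve $E_i$ attached to $P_i$ has good ordinary reduction at $\wp$, and Serre-Tate theory identifies $P_i$ with the canonical lift of its reduction $\overline{P}_i$; thus $P_i$ is a $\CL$ point in the sense of Section~\ref{S:local}. The exclusion $\cK_i \neq \bQ(\sqrt{-1}), \bQ(\sqrt{-3})$ eliminates extra automorphisms so that the moduli interpretation of $X_1(N)$ is unambiguous at each $P_i$, and for $p$ sufficiently large $A$ has good reduction at $\wp$, so $A(M)\pdiv = A(M) \cap (A(R)_{\tors} + p A(R))$.

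Second, I would apply the mod-$p$ reciprocity theorem for $\CL$ points to obtain a reciprocity function $\overline{\Sigmat}$ defined on the reduction of a Zariski open $X^{\dug} \subset X$ over $R$ containing the ordinary locus, satisfying
$$\sum_i m_i \Phi(P_i) \in A(R)_{\tors} + p A(R) \Longleftrightarrow \sum_i m_i \overline{\Sigmat}(\overline{P}_i) = 0$$
for every $\CL$ divisor $\sum m_i P_i$ on $X^{\dug}$. Combined with the first step, this already yields the desired biconditional for some $\overline{\Sigmat}$, and reduces the theorem to identifying $\overline{\Sigmat}$ with $\Sigmat_p \bmod p$ on the ordinary locus.

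The third step is the technical heart of the argument. The local construction in Section~\ref{S:local} attaches to $f$ a $\delta$-modular form of weight $p^2-p$ built as a kind of ``$p$-adic Eichler primitive with the $p$-part trimmed off'', whose $q$-expansion reduces modulo $p$ to $\sum_{(n,p)=1}(a_n/n)\, q^n$. By the $q$-expansion principle for Katz modular forms on $\Gamma_1(N)$ over $\bF_p$, this $\delta$-form reduces to $f_{p^2-p} \bmod p$. Dividing by $E_{p-1}^p$, which is congruent to $1$ on $q$-expansions and invertible on the ordinary locus over $R$, converts the weight $p^2-p$ form into a weight-zero rational function whose restriction to the ordinary locus equals $\Sigmat_p \bmod p$, so $\overline{\Sigmat} = \Sigmat_p \bmod p$ wherever both are regular. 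Regularity of $\Sigmat_p$ at each $P_i$ and the inclusion $\Sigmat_p(P_i) \in \cO_{M,\wp}$ then follow from invertibility of $E_{p-1}$ at ordinary points together with the integrality of $f_{p^2-p}$.

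Fourth, the role of the non-CM hypothesis on $A$ concerns only the converse implication: the direction ``$\Longrightarrow$'' follows from the local reciprocity theorem regardless of whether $A$ is CM, since any relation landing in $A(R)_{\tors} + pA(R)$ is detected by the vanishing of $\overline{\Sigmat}$. For the converse one needs $\overline{\Sigmat}$ not to be identically zero on the ordinary locus, which for large $p$ is guaranteed in the non-CM case by a nonvanishing statement for the $\delta$-symbol of $f$; in the CM case this $\delta$-symbol can degenerate, which is why only the forward implication is claimed. The main obstacle is thus the third step: matching the abstractly constructed local reciprocity function with the explicit ratio $f_{p^2-p}/E_{p-1}^p$, which requires the full arithmetic-differential apparatus of~\cite{book} recalled in Section~\ref{S:local}.
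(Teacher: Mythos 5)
Your high-level strategy matches the paper's: embed $\cO_{M,\wp}$ into $R$, use $p$ split in $\cK_i$ (plus $p \gg 0$) to conclude via Serre--Tate that each $P_i$ is a $\CL$ point, invoke Theorem~\ref{mainth}, and identify the local reciprocity function mod $p$ with $\Sigmat_p \bmod p$ via Theorem~\ref{refined}(2). However, your explanations of why two of the key hypotheses are needed are wrong, and these are not cosmetic.

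First, the exclusion $\cK_i \neq \Q(\sqrt{-1}), \Q(\sqrt{-3})$ has nothing to do with automorphism ambiguities in the moduli interpretation --- $X_1(N)$ for $N>3$ is a fine moduli scheme, so there is no such ambiguity for any elliptic curve. The actual reason is that the reciprocity function $\Sigmat$ of Theorem~\ref{refined} is only defined on the open set $\bar{X}^{\dug}=\overline{Y_1(N)}^{\ord}\setminus\{x\mid j(x)=0,1728\}$: the construction in Section~\ref{S:d-modular-elliptic} requires inverting $a_4$ and $a_6$, and $j=0$ and $j=1728$ are precisely the $j$-invariants where $a_4$ or $a_6$ vanish. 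Since $j=0$ and $j=1728$ correspond exactly to CM by $\Q(\sqrt{-3})$ and $\Q(\sqrt{-1})$, the hypothesis on the $\cK_i$ guarantees $\bar{P}_i \in \bar{X}^{\dug}$, which is what the paper observes.

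Second, your account of the non-CM hypothesis is also off. You claim the issue is nonvanishing of $\overline{\Sigmat}$ on the ordinary locus, and that in the CM case "this $\delta$-symbol can degenerate." But Theorem~\ref{mainth} already asserts that $\overline{\Sigmat}$ is non-constant for \emph{any} good prime and any elliptic quotient, CM or not, so nonvanishing is never the problem. The actual issue is which explicit formula Theorem~\ref{refined} gives. If $A$ is non-CM then $A_R$ cannot be $\CL$, so Theorem~\ref{refined}(2) applies and $\overline{\Sigmat}=\bar f^{(-1)}=\overline{\Sigmat_p}$; the biconditional of Theorem~\ref{mainth} then transfers directly. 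If $A$ has CM, then $A_R$ may be $\CL$, and Theorem~\ref{refined}(3) then only gives the Artin--Schreier-type relation $(\overline{\Sigmat})^p-\bar a_p\overline{\Sigmat}=\bar f^{(-1)}$. From $\sum m_i\overline{\Sigmat}(\bar P_i)=0$ one still deduces $\sum m_i \bar f^{(-1)}(\bar P_i)=0$ because the map $y\mapsto y^p-\bar a_p y$ is $\F_p$-linear (since $m^p\equiv m$ for $m\in\Z$), so the forward implication survives; but the converse can fail since $y^p-\bar a_p y=0$ has nonzero solutions $y$ in $k$. Your proposal does not contain this mechanism, and the explanation you give in its place is incorrect.
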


 We expect that the condition $\cK_i
\neq \bQ(\sqrt{-1}),\bQ(\sqrt{-3})$ can be removed.
Theorem~\ref{simairefined}
will be derived in Section~\ref{S:global 2} from its local counterpart, 
Theorem~\ref{refined}. 

Theorem~\ref{simairefined} immediately implies the following
necessary criterion for the trace of a $\CM$ point to be torsion:

\begin{corollary}
Let $S=X_1(N)$ and let $\Phi:S \to A$ be a
modular parametrization attached to a newform $f$. 
Let $\cK$ be an imaginary quadratic field not equal to
$\Q(\sqrt{-1})$ or $\Q(\sqrt{-3})$,
let $L$ be a finite extension of $K$,
and let $P \in S(L)$ be a point corresponding to an elliptic
curve with CM by an order in $\cK$.
If
\[
	\Tr_{L/\cK} \Phi(P) \in A(\cK)_{\tors},
\]
then for any degree-$1$ prime $\wp$ of $\cK$ with
$p:=\Char(\cO_{\cK}/\wp) \gg 0$ we have
\[
	\Tr_{L/\cK} \Sigmat_p(P) \in \wp \cO_{\cK,\wp}.
\]
\end{corollary}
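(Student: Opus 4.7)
The plan is to apply Theorem~\ref{simairefined} directly to the Galois orbit of $P$ over $\cK$. Let $P_1,\ldots,P_n$ be the distinct $\Gal(\overline{\bQ}/\cK)$-conjugates of $P$, and set $m := [L:\cK]/n$. Since the CM field is a Galois invariant of a CM elliptic curve, each $P_i$ corresponds to an order in the same field $\cK_i = \cK$, and by hypothesis $\cK \neq \bQ(\sqrt{-1}),\bQ(\sqrt{-3})$. Expanding traces over the $[L:\cK]$ embeddings $L \hookrightarrow \overline{\cK}$ fixing $\cK$ gives
\[
\Tr_{L/\cK}\Phi(P) \;=\; m \sum_{i=1}^{n} \Phi(P_i),
\qquad
\Tr_{L/\cK}\Sigmat_p(P) \;=\; m \sum_{i=1}^{n} \Sigmat_p(P_i).
\]

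Next, fix a degree-one prime $\wp$ of $\cK$ above a rational prime $p \gg 0$. Because $\cK/\bQ$ is Galois of degree $2$, the existence of a degree-one prime over an unramified $p$ forces $p$ to split completely in $\cK = \cK_1\cdots\cK_n$. For such $p$ I would then choose a subfield $M \subset \overline{\bQ}$ maximally unramified at some prime $\wp'$ above $\wp$; by the remark following Definition~\ref{Qppp} one has $\cK \subset M$, and we may arrange $\wp' \mid \wp$.

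All hypotheses of Theorem~\ref{simairefined} are now in place for $P_1,\ldots,P_n$ with common coefficient $m$. From
\[
\Tr_{L/\cK}\Phi(P) \;\in\; A(\cK)_{\tors} \;\subseteq\; A(M)_{\tors} \;\subseteq\; A(M)\pdiv,
\]
the ``$\Longrightarrow$'' direction of that theorem (valid even when $A$ is CM) yields
\[
m \sum_{i=1}^{n} \Sigmat_p(P_i) \;\in\; \wp'\cO_{M,\wp'}.
\]
The left-hand side equals $\Tr_{L/\cK}\Sigmat_p(P)$, which lies in $\cK$ because $\Sigmat_p$ is defined over $\bQ$, so the conclusion follows from the elementary contraction $\wp'\cO_{M,\wp'} \cap \cK = \wp\cO_{\cK,\wp}$, which holds because $M/\cK$ is unramified at $\wp$. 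The only step of any substance is this last ideal contraction; every other ingredient, including the absence of poles of $\Sigmat_p$ at the $P_i$ and the integrality of its values there, is supplied directly by Theorem~\ref{simairefined}, so no genuine obstacle arises.
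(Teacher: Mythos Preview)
Your proof is correct and is precisely the argument the paper has in mind: the text simply says that Theorem~\ref{simairefined} ``immediately implies'' the corollary, and your expansion---writing the trace as $m\sum_i \Phi(P_i)$ over the $\Gal(\overline{\bQ}/\cK)$-orbit, noting all $\cK_i=\cK$, applying the ``$\Longrightarrow$'' half of the theorem, and contracting from $\wp'\cO_{M,\wp'}$ down to $\wp\cO_{\cK,\wp}$---is exactly how one unpacks that word. There is nothing to add.
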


\begin{theorem}[Non-existence of reciprocity functions for isogeny classes]
\label{nonu} 
Let $\Phi \colon S=X_1(N) \to A$ be a modular parametrization.
Let $C \subset S(\Qbar)$ be an isogeny class and let $\Sigmat$ be a
rational function on $S$ none of whose poles is in $C$. Assume
that for any $P_1,\ldots,P_n \in C$ and any $m_1,\ldots,m_n \in \bZ$ we
have
\begin{equation}
\label{vivald} \sum_{i=1}^n m_i\Phi(P_i) \in A(\Qbar)_{\tors} \ \ \
\Rightarrow\ \ \ \sum_{i=1}^n m_i \Sigmat(P_i)=0\in \Qbar.
\end{equation} 
Then $\Sigmat=0$.
\end{theorem}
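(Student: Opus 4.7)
The plan is to turn the reciprocity hypothesis~\eqref{vivald} into a Hecke eigenvalue equation $\tilde{T}_l \Sigmat = a_l \Sigmat$ on $X_1(N)$, and then exploit the mismatch between weight~$0$ and weight~$2$ by matching $q$-expansions with those of the holomorphic differential $\omega_f$ on $X_1(N)$ attached to $f$.

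First, since the isogeny class $C$ is infinite and $X_1(N)$ is an irreducible curve, $C$ is Zariski dense in $X_1(N)$. For each prime $l \nmid N$ and each $P \in C$, let $T_l P = Q_0 + \cdots + Q_l$ be the divisor of the $l+1$ cyclic $l$-isogeny neighbors of $P$ under the Hecke correspondence; each $Q_i$ lies in $C$ by definition. The Eichler--Shimura relation applied to the newform $f$ (combined with $\Phi(x_\infty)=0$ and the Manin--Drinfeld theorem, which makes the images of all cusps torsion in $A$) yields
\[
	\sum_{i=0}^{l} \Phi(Q_i) - a_l\, \Phi(P) \in A(\Qbar)_{\tors}.
\]
Applying~\eqref{vivald}, we conclude $\sum_i \Sigmat(Q_i) = a_l \Sigmat(P)$ for every $P \in C$ and every prime $l \nmid N$. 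Since the trace $\tilde{T}_l \Sigmat$ along the Hecke correspondence is itself a rational function on $X_1(N)$ and $C$ is Zariski dense, this upgrades to the identity $\tilde{T}_l \Sigmat = a_l \Sigmat$ of rational functions on $X_1(N)$.

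Next, expand $\Sigmat(q) = \sum_n c_n q^n$ as a Laurent series at the cusp $\infty$. The standard coset representatives $\tau \mapsto l\tau$ and $\tau \mapsto (\tau+b)/l$ for $b=0,\ldots,l-1$ give
\[
	\tilde{T}_l \Sigmat(q) = \sum_n c_n q^{nl} + l \sum_m c_{ml}\, q^m,
\]
so matching coefficients against $a_l \Sigmat(q)$ yields $l c_{ml} = a_l c_m$ when $l \nmid m$ and $c_{m/l} + l c_{ml} = a_l c_m$ when $l \mid m$. The Ramanujan bound $|a_l| \le 2\sqrt{l}$ (giving $a_l \ne l+1$) combined with the existence of infinitely many primes $l$ with $a_l \ne 0$ forces $c_n = 0$ for $n \le 0$; using the Hecke recursion among the $a_n$, an induction in $n$ then gives $c_n = c_1 a_n/n$ for every $n \ge 1$.

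Consequently $d\Sigmat$ has $q$-expansion $c_1 \sum_n a_n q^{n-1}\,dq$, a nonzero scalar multiple of the $q$-expansion of $\omega_f$. Since a rational differential on the irreducible curve $X_1(N)$ is determined by its Laurent expansion at any point, $d\Sigmat = c\cdot \omega_f$ for some constant $c$ proportional to $c_1$. If $c \ne 0$, then $\omega_f$ being holomorphic (because $f$ is a cusp form) forces $\Sigmat$ to be regular everywhere on the projective curve $X_1(N)$, hence constant, contradicting $d\Sigmat = c\,\omega_f \ne 0$. Therefore $c_1 = 0$, the recursion propagates to give $c_n = 0$ for all $n$, and $\Sigmat = 0$. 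The main obstacle in carrying this out is the precise Fourier analysis on $X_1(N)$ (rather than $X_0(N)$): one must verify that the coset decomposition of the Hecke correspondence takes the same form, which is standard for $l \nmid N$; alternatively, one can first descend $\Sigmat$ along the cover $X_1(N) \to X_0(N)$ using the identities just derived.
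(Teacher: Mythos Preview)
Your Steps~1--3 coincide with the paper's argument: from the Eichler--Shimura relation and Manin--Drinfeld one obtains $\tilde T_l\Sigmat = a_l\Sigmat$ as rational functions, and the Ramanujan bound forces $v_q(\Sigmat)\ge 1$. The divergence is in the endgame.

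Your Step~4 has a genuine gap. The recursions you derive, namely $lc_{ml}=a_lc_m$ for $l\nmid m$ and $c_{m/l}+lc_{ml}=a_lc_m$ for $l\mid m$, are established only for primes $l\nmid N$. These determine $c_n$ in terms of the values $c_{n_0}$ for $n_0\mid N^\infty$, but they give no relation among the $c_{n_0}$ themselves. So the claimed induction ``$c_n=c_1 a_n/n$ for every $n\ge 1$'' does not go through once $\gcd(n,N)>1$, and consequently neither does the identification $d\Sigmat=c_1\omega_f$ nor the final propagation ``$c_1=0\Rightarrow c_n=0$ for all $n$''. The obstacle you flag at the end (Fourier analysis on $X_1(N)$ versus $X_0(N)$) is not the real one; the real one is $l\mid N$ versus $l\nmid N$. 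To repair this route you would need to also derive and use the eigen-equation at each $l\mid N$, where the degeneracy picture and hence the $q$-expansion formula for $\tilde T_l$ are different.

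The paper's endgame avoids this entirely. After reaching $v_q(\Sigmat)>0$ it works integrally: the coefficients of $\Sigmat(q)$ lie in a ring $\mathcal O_{K,\mathcal S}$ of $\mathcal S$-integers, and for a prime $l\nmid N$ not under $\mathcal S$ with the leading coefficient an $l$-unit, reducing the identity $\Sigmat(q^l)+\sum_b\Sigmat(\zeta^b q^{1/l})=a_l\Sigmat(q)$ modulo $l$ gives $\Sigmat(q^l)\equiv a_l\Sigmat(q)$. Matching leading coefficients forces $a_l$ to be an $l$-unit, and then the $q$-valuations of the two sides differ (since $v_q>0$), a contradiction. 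This needs only one good prime $l\nmid N$, so the bad primes never enter.
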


Theorem~\ref{nonu} will be proved in Section~\ref{S:Hecke divisors}.
It trivially implies the following:

\begin{corollary}[Non-existence of reciprocity functions for $\CM$ points]
\label{poiu} Let $\Phi:X_1(N) \to A$ be a modular parametrization.
Assume there is a  non-empty Zariski open set $X^{\dug} \subset
X_1(N)$ and a regular function $\Sigmat \in \cO(X^{\dug})$ having
the property that for any $P_1,\ldots,P_n \in \CM \cap X^{\dug}(\Qbar)$
and any $m_1,\ldots,m_n \in \bZ$ we have \[ \sum_{i=1}^n m_i\Phi(P_i)
\in A(\Qbar)_{\tors} \ \ \ \Rightarrow\ \ \ \sum_{i=1}^n m_i
\Sigmat(P_i)=0\in \Qbar.
\]
 Then $\Sigmat=0$.
\end{corollary}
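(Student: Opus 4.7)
The plan is to deduce Corollary~\ref{poiu} directly from Theorem~\ref{nonu} by exhibiting a CM isogeny class contained entirely in $X^{\dug}$. First, extend $\Sigmat \in \cO(X^{\dug})$ uniquely to a rational function on the smooth projective curve $S = X_1(N)$; its pole locus is then contained in the finite set $F := S(\Qbar) \setminus X^{\dug}(\Qbar)$.

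The key observation is that the CM isogeny classes in $S(\Qbar)$ are naturally parameterized by the imaginary quadratic fields $\calK$: if $(E, \alpha) \in S(\Qbar)$ has $E$ with CM by an order in $\calK$, then its isogeny class in $S(\Qbar)$ consists precisely of the points $(E', \alpha')$ for which $E'$ has CM by an order in the same $\calK$. Indeed, isogenous elliptic curves share their endomorphism algebra over $\Qbar$, and conversely any two CM elliptic curves with CM by orders in a common imaginary quadratic field are $\Qbar$-isogenous; the level $N$ structure imposes no constraint because the definition of isogeny class ignores it. Since there are infinitely many imaginary quadratic fields but $F$ is finite, we may choose $\calK$ so that the associated CM isogeny class $C$ is disjoint from $F$; then $C \subset X^{\dug}(\Qbar)$.

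Finally, $\Sigmat$, regarded as a rational function on $S$, has no poles in $C$, and the reciprocity hypothesis of the corollary, applied to $P_1, \ldots, P_n \in C \subset \CM \cap X^{\dug}(\Qbar)$, is exactly the hypothesis of Theorem~\ref{nonu} for the pair $(\Sigmat, C)$. Theorem~\ref{nonu} then yields $\Sigmat = 0$, as desired. The argument is a formal reduction and presents no serious obstacle; the only point that genuinely needs verification is the existence of a CM isogeny class avoiding the finitely many points of $F$, which the bijection above supplies immediately.
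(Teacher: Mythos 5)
Your proof is correct and is essentially the implication the paper has in mind (the paper states only that Theorem~\ref{nonu} ``trivially implies'' Corollary~\ref{poiu}, leaving the reduction to the reader). You correctly identify the one point that must be checked — that some CM isogeny class avoids the finite pole locus of $\Sigmat$ viewed as a rational function on $S$ — and your justification via the bijection between CM isogeny classes and imaginary quadratic fields is accurate.
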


We end our discussion here by stating our ``purely geometric''
result comparing linear dependence relations on elliptic curves with
linear dependence relations in the additive group.

\begin{theorem}
\label{triples}
Let $\Phi\colon X \to A$ be a non-constant morphism between smooth
projective curves over an algebraically closed field $k$ of
characteristic $p \geq 0$. 
Let $n \ge 3$, and let $a_1,\ldots,a_n$ be nonzero integers
not all divisible by $p$.
Suppose that $X^{\dug} \subset X$ is an affine open subset 
and $\Sigmat \in \cO(X^{\dug})$ is a regular function 
such that for any $P_1,\ldots,P_n \in X^{\dug}(k)$ we have
\begin{equation} 
\label{ticc} 
\sum_{i=1}^n a_i \Phi(P_i)=0\ \ \
\Longrightarrow\ \ \ \sum_{i=1}^n a_i \Sigmat(P_i)=0.
\end{equation}
Then $\Sigmat$ is constant.
In particular, if $\sum_{i=1}^n a_i$ is not divisible by $p$, 
then $\Phi^{\dug}=0$.
\end{theorem}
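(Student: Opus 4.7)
The plan is to first reduce to a statement about rational functions on the elliptic curve $A$, then derive an additivity identity that forces constancy.

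Without loss of generality assume $p\nmid a_1$. For any $P,P'\in X^{\dug}(k)$ with $\Phi(P)=\Phi(P')$, choose $P_2,\ldots,P_n\in X^{\dug}(k)$ with $\sum_{i\ge 2}a_i\Phi(P_i)=-a_1\Phi(P)$; this is possible since $\Phi$ and each $[a_i]\colon A\to A$ are surjective. Applying \eqref{ticc} to $(P,P_2,\ldots,P_n)$ and to $(P',P_2,\ldots,P_n)$ and subtracting yields $a_1(\Sigmat(P)-\Sigmat(P'))=0$, so $\Sigmat(P)=\Sigmat(P')$ since $p\nmid a_1$. Hence $\Sigmat=G\circ\Phi$ for a rational function $G$ on $A$, and surjectivity of $\Phi$ upgrades the hypothesis to: $\sum a_iG(Q_i)=0$ whenever $\sum a_iQ_i=0$ in $A^n$.

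Parameterize the identity component of $\{\sum a_iQ_i=0\}\subset A^n$ by $(t_2,\ldots,t_n)\in A^{n-1}$ via $Q_1=\sum_{k\ge 2}a_kt_k$ and $Q_k=-a_1t_k$ for $k\ge 2$. The hypothesis becomes the identity $a_1 G\bigl(\sum_{k\ge 2}a_k t_k\bigr)+\sum_{k\ge 2}a_k G(-a_1 t_k)=0$ of rational functions on $A^{n-1}$. Set $\phi(t):=G(-a_1t)$ and restrict to $t_4=\cdots=t_n=0$; evaluating the resulting identity at the four points $(t_2,0)$, $(0,t_3)$, $(t_2,t_3)$, $(0,0)$ and taking the linear combination with coefficients $+1,+1,-1,-1$ causes all $\phi$-terms and all contributions involving $a_k$ for $k\ge 4$ to cancel, leaving
\[
a_1\bigl[G(a_2 t_2)+G(a_3 t_3)-G(a_2 t_2+a_3 t_3)-G(0)\bigr]=0.
\]
Dividing by $a_1$, and observing that $(a_2 t_2, a_3 t_3)$ ranges over all of $A\times A$ because $[a_2]$ and $[a_3]$ are surjective on $A$, the function $\tilde G:=G-G(0)$ satisfies $\tilde G(u+v)=\tilde G(u)+\tilde G(v)$ as rational functions on $A\times A$.

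Comparing pole divisors of the two sides of this addition formula—the left is the pullback of the pole divisor of $\tilde G$ along the sum map $A\times A\to A$, the right is the sum of its pullbacks along the two projections, and these Weil divisors cannot coincide unless each is zero—forces $\tilde G$ to have no poles. Then $\tilde G\colon A\to\mathbf{A}^1$ is a morphism from a proper variety to an affine one, hence constant, and $\tilde G(0)=0$ forces $\tilde G\equiv 0$; therefore $G$ and $\Sigmat$ are constant. For the final assertion, if $p\nmid\sum a_i$, choose $(P_1,\ldots,P_n)\in(X^{\dug})^n$ with $\sum a_i\Phi(P_i)=0$ (such a tuple exists because the constraint locus has dimension $n-1\ge 2$ and is not contained in the codimension-one complement of $(X^{\dug})^n$); applying \eqref{ticc} to the constant $\Sigmat$ yields $(\sum a_i)\Sigmat\equiv 0$, so $\Sigmat\equiv 0$. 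The main technical obstacle is extracting the addition formula: one must choose the parameterization of the constraint locus correctly and combine specializations so that the auxiliary data $\phi,\phi(0),G(0)$ drops out cleanly, leaving a pure additive identity on $A\times A$.
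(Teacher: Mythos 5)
Your argument takes a genuinely different route from the paper's, descending $\Phi^{\dug}$ to a rational function $G$ on $A$ and extracting a Cauchy additivity identity $\tilde G(u+v)=\tilde G(u)+\tilde G(v)$ that a pole-divisor comparison forces to be trivial. The paper instead stays entirely on $X$: it picks a curve $C$ mapping into the constraint locus $Y\subset X\times(X^{\dug})^{n-1}$ through a preimage of an arbitrary $P\in X(k)$, and from the rational-function identity $\sum a_i\Phi^{\dug}(\pi_i(\gamma(c)))=0$ on $C$ transfers the regularity of the last $n-1$ summands (which land in $X^{\dug}$) to the first, so $\Phi^{\dug}$ is regular at $P$, hence everywhere, hence constant.

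The step ``Hence $\Phi^{\dug}=G\circ\Phi$ for a rational function $G$ on $A$'' is a genuine gap when $p>0$ and $\Phi$ is not separable. What precedes it only shows that $\Phi^{\dug}$ is constant on fibers of $\Phi$ as a condition on $k$-points; this yields $\Phi^{\dug}\in\Phi^*k(A)$ by Galois descent when $\Phi$ is separable, but is too weak if $\Phi$ has an inseparable part. Take $\Phi$ to be the $p$-power Frobenius $F\colon A\to A$ with $A$ defined over $\F_p$: then $\Phi$ is a bijection on $k$-points, so ``constant on fibers'' is vacuous, yet $\Phi^*k(A)=k(A)^p\subsetneq k(A)$ and $G:=\Phi^{\dug}\circ F^{-1}$ is merely a set map, not a rational function. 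The subsequent identities of rational functions on $A^{n-1}$ and the pole-divisor comparison are then meaningless. The gap is repairable: factor $\Phi=\Phi_s\circ F^r$ with $\Phi_s$ separable, observe that $(\Phi^{\dug})^{p^r}$ \emph{does} descend to some $G_1\in k(A)$, and raise the conclusion of \eqref{ticc} to the $p^r$-th power to obtain $\sum a_iQ_i=0\Rightarrow\sum a_i^{p^r}G_1(Q_i)=0$; your $(+1,+1,-1,-1)$ specialization then isolates $a_1^{p^r}\bigl[G_1(u)+G_1(v)-G_1(u+v)-G_1(0)\bigr]=0$ with $p\nmid a_1^{p^r}$, so the pole-divisor argument applies to $G_1$ and constancy of $(\Phi^{\dug})^{p^r}$, hence of $\Phi^{\dug}$, follows. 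As written, though, your proof covers only separable $\Phi$ (in particular characteristic $0$), which is strictly narrower than the theorem.
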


Theorem~\ref{triples} will be proved in Section~\ref{S:Hecke divisors}.
Theorem~\ref{triples} fails for both $n=2$ and $n=1$.

\section{Detailed exposition of the local results}
\label{S:local statements}

\subsection{General conventions and notation}
Fix a prime $p$.
Let $\Z_p$ be the ring of $p$-adic integers.
Let $\Z_p^{\ur}$ be the maximal unramified extension of $\Z_p$.
Let $R:=\hat{\Z}_p^{\ur}$ be the completion of $\Z_p^{\ur}$.
We set $k=R/pR$ and $K:=R[1/p]$. 
Thus $k \isom \Fbar_p$, and $R$ is the Witt ring $W(k)$.
Let $\Fr \colon k \to k$ be the automorphism $\Fr(x):=x^p$,
and let $\phi \colon R \to R$ 
be the unique automorphism lifting $\Fr$. 

For each prime $p$, fix a
subfield  $M \subset \overline{\bQ}$  maximally unramified at a
prime $\wp$  above $p$. The completion of $M$ at $\wp$ is
isomorphic to $K$. We fix such an isomorphism. We then get
 an embedding $M \injects K$, which we shall view as an inclusion,
\begin{equation} \label{iinclusion} M \to K,
\end{equation} with $pR$ lying over $\wp$.
Then $M$ is the algebraic closure of $\bQ$ in $K$,
we have
 \begin{equation}
 \label{erasamorr}
 \cO_{M,\wp}=M \cap R,
 \end{equation}
 and $R$ is the completion of $\cO_{M,\wp}$.
 If $F$ is a number field
with ring of integers  $\cO_F$ and $p$ is unramified in $F$, then
$F  \subset M \subset K$, so
  $\cO_{F,\wp_F} \subset \cO_{M,\wp} \subset  R$, 
where $\wp_F:=\wp \cap \cO_F$.

We will use the notion of a {\em canonical lift} (CL) abelian scheme 
over $R$: see Section~\ref{S:review of CL and CM}
for the definition.

\subsection{Hecke correspondences}
\label{S:Hecke correspondences}

For any prime $l$ let
$Y_1(N,l)$ be the affine curve over $\Qbar$ 
parameterizing triples $(E,\alpha,H)$ in
which $(E,\alpha)$, with $\alpha\colon \bZ/N\bZ \hookrightarrow E(\Qbar)$, 
represents a point in $Y_1(N)$ 
and $H \leq E(\Qbar)$ is an order-$l$ subgroup
intersecting $\alpha(\Z/N\Z)$ trivially: see~\cite[p.~207]{conrad}.
Define {\em degeneracy maps}  $\sigma_1,\sigma_2\colon Y_1(N,l) \to Y_1(N)$
by $\sigma_1(E,\alpha,H):=(E,\alpha)$
and $\sigma_2(E,\alpha,H):=(E/H,u \circ \alpha)$,
where $u \colon E \to E/H$ is the quotient map.

Let $X_1(N,l)$ be the smooth projective model of $Y_1(N,l)$.
The $\sigma_i$ extend to $\sigma_i \colon X_1(N,l) \to X_1(N)$.
Define the {\em Hecke operator} $T(l)_*$ on $\Div(X_1(N)(\Qbar))$
by $T(l)_* D := \sigma_{2*} \sigma_1^* D$.
For $P \in X_1(N)(\Qbar)$ write $T(l)_*P =: \sum_i P_i^{(l)}$;
the sum involves $l+1$ or $l$ terms
according as $l \nmid N$ or $l \mid N$.
If in addition $f = \sum a_n q^n \in \Z[[q]]$ is a newform,
then the divisor $\sum_i P_i^{(l)}-a_l P$
will be called a {\it Hecke divisor}.

\subsection{Conventions on modular-elliptic correspondences} 
\label{S:conventions modular-elliptic}
The $\Z[1/N]$-scheme $Y_1(N)$ represents the functor taking
a $\Z[1/N]$-algebra $B$ to the set of isomorphism classes of pairs
$(E,\alpha)$ where $E$ is an elliptic curve over $B$
and $\alpha\colon (\Z/N\Z)_B \to E$ is a closed immersion of group
schemes.
For each $P \in Y_1(N)(B)$, 
let $(E_P,\alpha_P)$ be a pair in the corresponding isomorphism class.
The $\Z[1/N]$-scheme $S=X_1(N)$ is the Deligne-Rapoport compactification:
see~\cite[pp.~78--81]{DI}.
The base extension of $S$ to $\C$ will also be denoted $S$.
The cusp $\infty$ on $X_1(N)$ is defined over $\Q(\zeta_N)$,
where $\zeta_N$ is a primitive $N^{\operatorname{th}}$ root of $1$.

\begin{remark}
\label{R:zeta_N}
Some of the references we cite use a modular curve
parameterizing elliptic curves with an
embedding of $\mu_N$ instead of $\bZ/N\bZ$,
but the two theories are isomorphic provided we work
over $\Z[1/N,\zeta_N]$-algebras.
\end{remark}

Assume that we are given a modular-elliptic correspondence
$\thecorr$ with $S=X_1(N)$. 
We may assume that $A$ comes from a model over $\OO_{F_0}[1/Nm]$,
and that $X,S,\Pi,\Phi$ come from models over $\OO_F[1/Nm]$,
where $F_0 \subseteq F$ are number fields,
and $\OO_{F_0}$ and $\OO_F$ are their rings of integers,
and $m \in \Z_{>0}$.
Then $x_\infty=\Pi(\infty)$ has a model over $\OO_{F_1}[1/Nm]$,
where $F_1$ is a number field containing $F(\zeta_N)$.

If $p$ is large enough to be unramified in $F_1$,
then we may view $F_1$ as a subfield of $M$, which was
embedded in $K$; 
then we obtain an embedding $\OO_{F_1}[1/Nm] \subset R$.
A point $P \in S(R)$ is called {\it ordinary} 
(respectively, a CL-{\it point})
if $P \in Y_1(N)(R)$ and $E_P$ has ordinary reduction $\overline{E}_p$
(respectively, $E_P$ is CL).
If $P \in S(R)$ is ordinary let $\cK_P$ be the imaginary
quadratic field $\End(\overline{E}_P) \otimes \bQ$. 
Finally let $\CL$ be the set of all CL-points of $S(R)$. 
Call $\Phi(\Pi^{-1}(\CL)) \subset A(R)$ the set of CL-{\it points} of $A$.

  \subsection{Conventions on Shimura-elliptic correspondences}
      \label{conventions on m-e}
Now suppose instead that $S$ is a Shimura curve $X^D(\cU)$,
where $\cU$ satisfies 
Let $\cU$ satisfy the conditions in~\cite{buzzard};
then for some $m \in \Z_{>0}$ 
the Shimura curve $S=X^D(\cU)$ is a $\bZ[1/m]$-scheme
with geometrically integral fibers,
such that for any $\bZ[1/m]$-algebra $B$ the set $S(B)$
is in bijection with the set of isomorphism
  classes of triples $(E,i,\alpha)$ where
$(E,i)$ is a false elliptic curve over $B$ (i.e. $E/B$ is an
abelian scheme of relative dimension $2$ and $i\colon \cO_D \ra
\End(E/B)$ is an injective ring homomorphism)
  and $\alpha$ is a level $\cU$ structure.

Assume that we are given a  Shimura-elliptic correspondence $\thecorr$.
With notation as in Section~\ref{S:conventions modular-elliptic},
Replacing $m$ by a multiple if necessary,
we may assume that $A$ comes from a model over $\OO_{F_0}[1/m]$
and that $X,S,\Pi,\Phi$ come from models over $\OO_{F}[1/m]$,
where $F_0 \subseteq F$ are number fields.
Assuming that $p$ is suitably large,
we again have an embedding $\OO_{F}[1/m] \subseteq R$.
A point $P \in S(R)$ is  called {\it ordinary} (respectively a
{\it $\CL$-point}) if $P$ corresponds to a triple
$(E_P,i_P,\alpha_P)$ where $E_P$ has ordinary reduction
$\overline{E}_P$ (respectively $E_P$ is CL). If $P \in S(R)$ is
ordinary, let $\cK_P$ be the imaginary quadratic field
$\End(\overline{E}_P,\overline{i}_P) \otimes \bQ$. 
Finally $\CL$ is the set of all CL-points of $S(R)$. 
Call $\Phi(\Pi^{-1}(\CL)) \subset A(R)$ 
the set of CL-{\it points} of $A$.

\subsection{$\Rec$ functions for $\CL$ points}
\begin{definition}
A degree-$1$ place $v$ of a number field $F_0$ is {\em anomalous}
for an elliptic curve $A$ over $F_0$
if the $p$-power Frobenius on the reduction $A$ mod $v$ 
has trace $a_v \equiv 1 \pmod{p}$.
(See~\cite[p.~186]{mazur}.)
\end{definition}

Let notation be as in Section~\ref{S:conventions modular-elliptic}
or Section~\ref{conventions on m-e}.

\begin{definition}
A rational prime $p$ is {\em good} (for our correspondence)
if $p$ splits completely in $F_0$, 
the elliptic curve $A$ has good reduction at all primes $v|p$,
and in the Shimura-elliptic case each $v|p$ is not anomalous for $A$. 
\end{definition}

\begin{remark}
The Chebotarev density theorem implies that 
there are infinitely many good primes: 
see Lemma~\ref{anom} for details.
\end{remark}

Let $p$ be sufficiently large and set $X_{R}:=X \tensor R$. 
(More generally, throughout this paper the subscript $R$ always
means ``base extension to $R$'' and we use the same
convention for any other ring in place of $R$.
In particular, if $p$ is a good prime, $A_{R}$ comes from
an elliptic curve $A_{\bZ_p}$ over $\bZ_p$ and we let $a_p$ 
be the trace of the $p$-power Frobenius on $A_{\bF_p}$.)
Let $\bar{X}:=X_{k}=X \otimes k$. For any $P \in X(R)$,
let $\bar{P}$ denote the image of $P$ in $\bar{X}(k)$. 
(More generally,
throughout  this paper, when we are dealing with a situation that
is ``localized at $p$'', an upper bar always means ``reduction mod $p$''.)  
Let $\hat{X}_{R}$ the $p$-adic completion of $X_{R}$ viewed as a
formal scheme over $R$. (More generally, throughout this paper,
an upper $\hat{\ }$ will denote ``$p$-adic completion''.) If
$X^{\dug} \subset X_{R}$ is an affine Zariski open set then any
global function
 $\Sigmat \in \cO(\hat{X}^{\dug})=\cO(X^{\dug})\h$ defines a map
  $\Sigmat\colon X^{\dug}(R) \to R$. The reduction $\overline{\Sigmat}
  \in \cO(\bar{X}^{\dug})$
induces a regular map $\overline{\Sigmat}\colon \bar{X}^{\dug}(k) \to k$.
Define
\[A(R)\pdiv:=A(R)_{\tors}+pA(R)\leq A(R).\]
Then $A(R)_{\pdiv} \cap A(M)=A(M)_{\pdiv}$.

\begin{theorem}[Reciprocity functions for $\CL$ points]
\label{mainth} Assume that $\thecorr$ is  a modular-elliptic
 or a Shimura-elliptic correspondence  and that
  $p$ is a sufficiently large good prime.
 Then  there exists an affine Zariski open
subset $X^{\dug} \subset X_{R}$  and a
function $\Sigmat \in \cO(\hat{X}^{\dug})$ with non-constant
reduction $\overline{\Sigmat} \in \cO(\bar{X}^{\dug}) \setminus
k$, such that for any $P_1,\ldots,P_n \in \Pi^{-1}(\CL) \cap
X^{\dug}(R)$ and any $m_1,\ldots,m_n \in \bZ$ we have
\[\begin{array}{lll}
\sum_{i=1}^n m_i\Phi(P_i) \in A(R)_{\tors} & \Longleftrightarrow
&  \sum_{i=1}^n m_i \Sigmat(P_i) =0\in R,\\
\  & \  & \  \\
\sum_{i=1}^n m_i\Phi(P_i) \in A(R)\pdiv  & \Longleftrightarrow &
\sum_{i=1}^n m_i \overline{\Sigmat}(\bar{P}_i) =0\in
k.\end{array}
\]
\end{theorem}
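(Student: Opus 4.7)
The plan is to construct $\Sigmat$ by composing an arithmetic differential (``$\delta$-'') character on $A$, in the sense of~\cite{book}, with $\Phi$, and then using the canonical lift of Frobenius on the ordinary locus of $X$ to convert the resulting $\delta$-function on $X$ into a genuine $p$-adic formal function. Since $p$ is a good prime, $A$ has ordinary (and, in the Shimura case, non-anomalous) reduction at $p$, so the theory of~\cite{book} should furnish a non-zero $\delta$-character $\psi$ of $\delta$-order at most $2$ on $\hat A$: this is a formal function on the second $p$-jet space of $\hat A$ whose associated set-theoretic map $A(R) \to R$ is a group homomorphism with kernel exactly $A(R)_{\tors}$, and whose mod-$p$ reduction descends to a homomorphism $\overline A(k) \to k$ whose kernel is the image of $A(R)\pdiv$ in $\overline A(k)$. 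By the $\bZ$-linearity of $\psi$, both biconditionals in the theorem reduce to proving the pointwise identity $\Sigmat(P)=\psi(\Phi(P))$ for every $P \in \Pi^{-1}(\CL) \cap X^{\dug}(R)$.

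The pullback $\Phi^{*}\psi$ is a priori only a $\delta$-function on $X$, i.e., a formal function on $J^{2}(X)$ rather than on $\hat X$. To promote it to a formal function on $\hat X^{\dug}$, I would invoke the canonical lift of Frobenius $\phi_X$ on the $p$-adic formal completion of the ordinary locus of $X$: Serre-Tate theory provides this in the modular-elliptic case, and its analogue for false elliptic curves handles the Shimura-elliptic case. The decisive property is that $\phi_X$ fixes precisely the $\CL$ locus, so at any CL point the higher $p$-jet coordinates of $X$ are algebraically determined by the point itself via $\phi_X$. Take $X^{\dug}$ to be an affine Zariski open subset of the ordinary locus of $X_R$ on which the construction below is regular. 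Substituting the universal Frobenius coordinates appearing in $\Phi^{*}\psi$ by the values imposed by $\phi_X$ then yields a bona fide formal function $\Sigmat \in \cO(\hat X^{\dug})$ which, by construction, agrees with $P \mapsto \psi(\Phi(P))$ on every $P \in \Pi^{-1}(\CL) \cap X^{\dug}(R)$. The mod-$p$ statement follows by reducing this identity modulo $p$, and the non-constancy of $\overline{\Sigmat}$ comes from the non-triviality of $\overline{\psi}$ combined with the non-constancy of $\Phi$.

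The hard part is the substitution step: I must verify that replacing the universal Frobenius variables in $\Phi^{*}\psi$ by $\phi_X$ gives an expression that extends to a formal function on all of $\hat X^{\dug}$ (not merely to CL points), and that its mod-$p$ reduction is not identically constant. This rests on both the existence and the rigidity of the canonical Frobenius lift on the ordinary locus, and in the Shimura case on the non-anomalous hypothesis baked into the definition of ``good prime''---without it, the natural candidate for $\Sigmat$ degenerates mod $p$ or fails to exist as a formal function. A secondary, more technical challenge is carrying out the Shimura case uniformly with the modular case, working with a canonical Frobenius on the ordinary locus of $X^D(\cU)$ compatible with the $\cO_D$-action. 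All of this is precisely the arithmetic-jet-space technology of~\cite{book} that the introduction promises to deploy in Section~\ref{S:local}.
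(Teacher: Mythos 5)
The overall skeleton of your proposal --- pull back a $\delta$-character $\psi$ on $A$ via $\Phi$, then use a vanishing condition at CL points to eliminate the jet coordinates and produce a genuine formal function $\Sigmat$ on $\hat{X}^{\dug}$, finally deducing the biconditionals from properties of $\psi$ --- does match the paper. The paper carries out the elimination by working modulo the ideal $(f^{\flat},\delta f^{\flat})$ generated by the isogeny-covariant $\delta$-modular form $f^1$ of~\cite{difmod} (an arithmetic Kodaira--Spencer class), rather than by directly invoking a canonical Frobenius lift on the ordinary locus; these are closely related mechanisms, since $f^1$ is the obstruction to such a lift being compatible with a given $R$-point, but the paper's choice is what makes the elimination step (Lemmas~\ref{lem1} and~\ref{lemuli}) and the subsequent non-constancy check via Fourier/Serre--Tate expansions go through explicitly.

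There is, however, a genuine gap. You assert that $\psi\colon A(R)\to R$ ``is a group homomorphism with kernel exactly $A(R)_{\tors}$.'' This is false. By Theorem~B$'$ of~\cite{char} (recorded in the paper as Lemma~\ref{thofker}(2)), the kernel of $\psi$ is $p^{\infty}A(R):p^{\infty}$, the saturation of the group of infinitely $p$-divisible points; over the completed ring $R=\hat{\Z}_p^{\ur}$ this is strictly larger than $A(R)_{\tors}$. Your reduction of ``both biconditionals'' to the pointwise identity $\Sigmat(P)=\psi(\Phi(P))$ therefore breaks for the first one: $\sum m_i\Sigmat(P_i)=0$ only tells you that $\sum m_i\Phi(P_i)\in\ker\psi$, which does not yet give torsion. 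The paper repairs this by using the fact that $\CL\subset\CM\subset S(\Qbar)$, so each $P_i\in X(\Qbar)\cap X(R)$ and hence $Q:=\sum m_i\Phi(P_i)\in A(\Qbar)\cap A(R)\subset A(\Z_p^{\ur})$; over the uncompleted ring $\Z_p^{\ur}$ the kernel of $\psi$ really is torsion (Lemma~\ref{thofker}(5), which comes from~\cite{frob}). This use of the algebraicity of CL points is not a routine detail and cannot be omitted. Relatedly, your claim that $\overline{\psi}$ descends to a homomorphism on $\overline{A}(k)$ with kernel the image of $A(R)\pdiv$ overstates what is available; what is actually used for the second equivalence is the statement $\psi^{-1}(pR)=A(R)\pdiv$ (Lemma~\ref{thofker}(4)), which is a property of $\psi$ itself, not a descended map on $\overline{A}(k)$.
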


Theorem~\ref{mainth} will be proved in Section~\ref{S:local}.  
It is useful to compare Theorem~\ref{mainth} to 
Corollary~\ref{poiu} and Theorem~\ref{triples}.

\begin{remark}
\label{kokk} As the proof of Theorem~\ref{mainth} will show, the
functions $\Sigmat$ will be functorially associated to tuples
$(X,S,A,\Pi,\Phi,\omega_A)$,
 where $\omega_A$ is a nonzero global $1$-form  on $A$ defined over $F_0$.
More precisely the functions $\Sigmat$ will be constructed such
that the following hold:
\begin{enumerate}
\item
{\it Functoriality in $X$}: If $v:\tilde{X} \to X$ is a
non-constant morphism then
\[(\Phi \circ v)^{\dug}=\Sigmat \circ v.\]
\item
{\it Functoriality in $A$}: If $u \colon A \to \tilde{A}$ is an
isogeny defined over $F_0$ then
\[(u \circ \Phi)^{\dug}=\frac{u^* \omega_{\tilde{A}}}{\omega_A}
\cdot \Sigmat.\]
\item
{\it Invariance with respect to change of level}: If one
replaces $\Pi$ by $w \circ \Pi$ where $w$ is a map between two
modular (respectively Shimura) curves coming from changing levels
then $\Sigmat$ does not change.
\item
{\it Invariance with respect to Hecke correspondences}: 
Suppose we are in the modular-elliptic case.
Recall the degeneracy maps $\sigma_1,\sigma_2\colon X_1(N,l) \to X_1(N)$.
Suppose that $\Pi = \pi \circ \sigma_1$ for some $\pi\colon X \to X_1(N,l)$.
Then $\Sigmat$ does not change if one replaces $\sigma_1 \circ \pi$ by
$\sigma_2 \circ \pi$.
The analogous statement holds in the Shimura-elliptic case,
with $X^D(\cU,\Gamma_0(l))$ for some $l \nmid M_{\cU}\disc(D)$
playing the role of $X_1(N,l)$,
where $M_{\cU}$ is as in~\cite[p.~595]{buzzard}.
\end{enumerate}
\end{remark}

\begin{remark}
\label{aremark} Let $\cC=\Pi^{-1}(\CL) \cap X^{\dug}(R)$ and let
$\Div(\cC)$ be  the free abelian group generated by $\cC$. Then one
can consider the maps $\Phi_*\colon \Div(\cC)\ra
A(R)/A(R)_{\tors}$ and $\Sigmat_*\colon \Div(\cC) \to R$
naturally induced by $\Phi$ and $\Sigmat$ by additivity. Set
$\Phi^{\perp}:=\ker \Phi_*$ and $(\Sigmat)^{\perp}=\ker \Sigmat_*$.
Then the first equivalence in Theorem~\ref{mainth} says that
$\Phi^{\perp}=(\Sigmat)^{\perp}$. A similar description can be given
for the second equivalence. There is a formal similarity between
such a formulation of Theorem~\ref{mainth} and the way classical
reciprocity laws are formulated in number theory and algebraic
geometry. Indeed, in classical reciprocity laws one is usually
presented with maps $\Phi:\cC \to G$ and $\Phi^{\dug}:\cC \ra
G^{\dug}$ from a set $\cC$ of places of a global field to two 
groups $G$ and $G^{\dug}$ (typically a Galois group and a class
group), and one claims the equality of the kernels of
the induced maps $\Phi_*:\Div(\cC) \to G$ and $\Phi^{\dug}_*:\Div(\cC)
\to G^{\dug}$.
\end{remark}

Let us discuss some consequences of Theorem~\ref{mainth}. 

\begin{corollary}
\label{cor1} In the notation of Definition~\ref{D:rankpG}
and Theorem~\ref{mainth},
 we have
\[
\begin{array}{lcl}
\rank \left(\sum_{i=1}^n \bZ \cdot   \Phi(P_i)\right) & = & \rank
\left(\sum_{i=1}^n \bZ \cdot  \Sigmat(P_i)
\right)\\
\  & \  & \  \\
\rank_p^{A(R)}\left(\sum_{i=1}^n \bZ \cdot   \Phi(P_i)\right)
& = & \dim_{\bF_p} \left( \sum_{i=1}^n \bF_p \cdot
\overline{\Sigmat}(\bar{P}_i) \right).\end{array}
\]
\end{corollary}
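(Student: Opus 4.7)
The plan is to deduce both equalities of Corollary~\ref{cor1} directly from the two equivalences in Theorem~\ref{mainth} by interpreting each rank as a codimension inside a surjection from a free $\bZ$- or $\bF_p$-module. Introduce the $\bZ$-linear maps
\[
\phi\colon \bZ^n \to A(R), \qquad (m_i) \mapsto \sum_{i=1}^n m_i \Phi(P_i),
\]
\[
\sigma\colon \bZ^n \to R, \qquad (m_i) \mapsto \sum_{i=1}^n m_i \Sigmat(P_i),
\]
whose images are exactly the groups $\Gamma_\Phi := \sum \bZ\cdot\Phi(P_i)$ and $\Gamma_\Sigma := \sum \bZ\cdot\Sigmat(P_i)$ appearing in the corollary.

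For the first equality, set $N_\Phi := \phi^{-1}(A(R)_{\tors})$ and note that $N_\Phi/\ker \phi$ injects into the torsion group $A(R)_{\tors}$, so $\rank N_\Phi = \rank \ker \phi$. Tensoring $0 \to \ker \phi \to \bZ^n \to \Gamma_\Phi \to 0$ with $\bQ$ gives $\rank \Gamma_\Phi = n - \rank \ker \phi = n - \rank N_\Phi$. On the $\sigma$-side, since $R$ is torsion-free, $\ker \sigma = \{(m_i) : \sum m_i \Sigmat(P_i) = 0\}$, and the analogous computation yields $\rank \Gamma_\Sigma = n - \rank \ker \sigma$. The first equivalence in Theorem~\ref{mainth} identifies $N_\Phi$ with $\ker \sigma$, and the two ranks agree.

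For the second equality, set $L_\Phi := \phi^{-1}(A(R)\pdiv)$. Since $p\,A(R) \subseteq A(R)\pdiv$, we have $p\bZ^n \subseteq L_\Phi$, so $\bZ^n/L_\Phi$ is an $\bF_p$-vector space naturally isomorphic to $\bF_p^n/\bar{L}_\Phi$ where $\bar{L}_\Phi := L_\Phi/p\bZ^n$. The surjection $\bZ^n \to \Gamma_\Phi$ descends to an isomorphism $\bZ^n/L_\Phi \isom \Gamma_\Phi/(\Gamma_\Phi \cap A(R)\pdiv)$, whose dimension is $\rank_p^{A(R)}(\Gamma_\Phi)$ by definition. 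On the other hand, the reduced map $\bar\sigma\colon \bF_p^n \to k$, $(m_i)\mapsto \sum m_i \overline{\Sigmat}(\bar{P}_i)$, has image $\sum \bF_p\cdot \overline{\Sigmat}(\bar{P}_i)$ and kernel of dimension $n - \dim_{\bF_p}\im\bar\sigma$. The second equivalence in Theorem~\ref{mainth} says precisely that $\bar{L}_\Phi = \ker \bar\sigma$, so
\[
\rank_p^{A(R)}(\Gamma_\Phi) = \dim_{\bF_p}(\bF_p^n/\bar{L}_\Phi) = \dim_{\bF_p}\im\bar\sigma = \dim_{\bF_p}\Bigl(\sum_{i=1}^n \bF_p\cdot \overline{\Sigmat}(\bar{P}_i)\Bigr).
\]

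No serious obstacle is expected: once Theorem~\ref{mainth} is in hand the argument is pure linear algebra, and the only minor point requiring attention is the observation that $p\bZ^n$ lies inside $L_\Phi$, which legitimizes reducing modulo $p$ to compare with the kernel of $\bar\sigma$. Everything else is a diagram chase through the defining exact sequences for $\Gamma_\Phi$ and $\Gamma_\Sigma$.
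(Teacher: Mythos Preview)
Your proposal is correct and is precisely the straightforward linear-algebra deduction from Theorem~\ref{mainth} that the paper has in mind; the paper states Corollary~\ref{cor1} without proof, treating both equalities as immediate consequences of the two equivalences. One cosmetic point: you use $\phi$ for the map $\bZ^n\to A(R)$, which clashes with the paper's use of $\phi$ for the Frobenius lift on $R$; renaming that map would avoid confusion.
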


\begin{corollary}
 \label{coru}
Assume that $\thecorr$ is  a modular-elliptic
 or a Shimura-elliptic correspondence  and assume
that  $p$ is a sufficiently large good prime.
Then there exists a constant $c$
such that for any subgroup $\Gamma \leq A(R)$ with
$r:=\rank_p^{A(R)}(\Gamma)<\infty$,
the set $\Phi(\Pi^{-1}(\CL)) \cap \Gamma$
is finite of cardinality at most $c p^r$.
 \end{corollary}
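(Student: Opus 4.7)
\textbf{Proof plan for Corollary~\ref{coru}.}
The idea is to deduce the cardinality bound directly from Corollary~\ref{cor1}, and then translate from values of $\overline{\Sigmat}$ to points via fiber-finiteness of $\overline{\Sigmat}$ and the Serre--Tate uniqueness of canonical lifts. First, Theorem~\ref{mainth} furnishes the affine open $X^\dug \subset X_R$ and the function $\Sigmat$. The complement $X_R\setminus X^\dug$, being a proper closed subscheme of the smooth $R$-curve $X_R$, has only finitely many $R$-points---a constant $c_1$ independent of $\Gamma$; the corresponding elements of $\Phi(\Pi^{-1}(\CL))\cap\Gamma$ contribute at most $c_1$, which will be absorbed into $c$. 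So it suffices to bound the contribution from $P \in \Pi^{-1}(\CL)\cap X^\dug(R)$ with $\Phi(P)\in\Gamma$.

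For any finite subfamily $P_1,\ldots,P_n$ of such points, Corollary~\ref{cor1} applied to the subgroup $\sum_i\Z\cdot\Phi(P_i) \le \Gamma$ yields
\[
\dim_{\F_p}\Bigl(\sum_i \F_p\cdot \overline{\Sigmat}(\bar P_i)\Bigr)=\rank_p^{A(R)}\Bigl(\sum_i \Z\cdot\Phi(P_i)\Bigr) \le r.
\]
Letting the family exhaust $\Pi^{-1}(\CL)\cap X^\dug(R)\cap \Phi^{-1}(\Gamma)$, the set of values $V:=\{\,\overline{\Sigmat}(\bar P)\,\}\subset k$ spans an $\F_p$-subspace of dimension $\le r$, so $|V|\le p^r$.

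Since $\overline{\Sigmat}$ extends to a non-constant morphism $\bar X \to \PP^1$ of some finite degree $d$, each value in $V$ has at most $d$ preimages in $\bar X^\dug(k)$, yielding at most $dp^r$ reductions $\bar P$. By Serre--Tate each ordinary point of $\bar S(k)$ has a unique CL-lift, so $\Pi(P)$ is determined by $\bar\Pi(\bar P)$: at most $dp^r$ points $Q:=\Pi(P)$. Each fiber $\Pi^{-1}(Q)(R)$ has at most $\deg\Pi$ elements, so there are at most $d\cdot\deg\Pi\cdot p^r$ choices of $P$, giving the same bound on $\Phi(P)\in\Gamma$. Combining with the $c_1$ exceptional elements gives $|\Phi(\Pi^{-1}(\CL))\cap\Gamma|\le cp^r$ with $c := c_1 + d\cdot\deg\Pi$.

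The only delicate point is the Serre--Tate uniqueness of CL-lifts, but this is already built into the notion of canonical-lift point used in the excerpt; after invoking Corollary~\ref{cor1} the proof reduces to elementary counting.
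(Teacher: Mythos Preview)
Your approach is essentially the paper's: invoke Corollary~\ref{cor1} to bound the $\F_p$-span of the values $\overline{\Sigmat}(\bar P)$, pull back through the finite-degree map $\overline{\Sigmat}$, and use Serre--Tate uniqueness of canonical lifts to pass from $\bar X^{\dug}(k)$ back to $X^{\dug}(R)$. The counting is the same, and in fact your version is marginally sharper than the paper's because you observe that the level structure (being prime-to-$p$, hence \'etale) lifts uniquely along with the canonical lift, so you do not need the paper's extra factor $d_2$.

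There is, however, a genuine slip in your treatment of the complement. You write that $X_R\setminus X^{\dug}$, as a proper closed subscheme $Z$ of the relative curve $X_R$, has only finitely many $R$-points. That is true, but it is not what you need: the relevant set is $\Pi^{-1}(\CL)\cap\bigl(X(R)\setminus X^{\dug}(R)\bigr)$, and $X(R)\setminus X^{\dug}(R)$ is \emph{not} $Z(R)$. An $R$-point $P$ of $X_R$ fails to lie in $X^{\dug}(R)$ as soon as its \emph{special} point lands in $\bar Z$, while its generic point may well lie in $X^{\dug}_K$; there are infinitely many such sections through any fixed $\bar z\in\bar Z(k)$. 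So your $c_1$ does not bound what you claim it bounds.

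The fix is immediate and uses nothing beyond what you already invoke for the main term: if $P\in\Pi^{-1}(\CL)\setminus X^{\dug}(R)$ then $\bar P$ lies in the finite set $\bar X(k)\setminus \bar X^{\dug}(k)$, hence $\overline{\Pi(P)}$ lies in a finite set of (necessarily ordinary) points of $\bar S(k)$; Serre--Tate then pins down $\Pi(P)$, and the fiber of $\Pi$ over it has at most $\deg\Pi$ points. This is exactly how the paper handles it (their constant $d_1d_2d_3$). Once you replace your ``$Z(R)$ is finite'' sentence with this argument, the proof is complete.
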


\begin{proof}
By Corollary \ref{cor1}, the $\bF_p$-span of
\[\overline{\Sigmat}(\overline{\Phi^{-1}(\Gamma) \cap \Pi^{-1}(\CL)\cap
X^{\dug}(R)})\] has dimension $\leq r$ over $\bF_p$. So
\[
	\# \overline{\Phi^{-1}(\Gamma) \cap \Pi^{-1}(\CL)\cap X^{\dug}(R)}
	\le p^{r} \deg(\overline{\Sigmat}). 
\] 
Now CL elliptic curves over $R$ are uniquely determined, up to
isomorphism, by their reduction mod $p$: see Theorem~\ref{T:revv2}.
Similarly, by loc.\ cit., 
if $(E_1,i_1)$ and $(E_2,i_2)$ are two false elliptic curves such
that $E_1,E_2$ are CL and $(\bar{E}_1,\bar{i}_1) \simeq
(\bar{E}_2,\bar{i}_2)$ then $(E_1,i_1) \simeq (E_2,i_2)$.
Thus \[\Phi^{-1}(\Gamma) \cap \Pi^{-1}(\CL)\cap
X^{\dug}(R)\] has at most $p^{r} \deg(\overline{\Sigmat})
\cdot d_1d_2$ elements, where $d_1 := \deg \Pi$ and
$d_2$ is the number of level $\Gamma_1(N)$ structures
(respectively, level $\cU$ structures) on a given elliptic
(respectively, false elliptic) curve. Also, 
$\# \left( \Pi^{-1}(\CL) \setminus  X^{\dug}(R) \right) \le d_1d_2d_3$,
where
$d_3 = \# \left( \Pi^{-1}(\overline{S}^{\ord}(k)) \setminus \bar{X}^{\dug}(k) \right)$,
where the {\it ord} superscript indicaes the ordinary locus.
So
\begin{equation}\label{estt}
\#\Phi(\Pi^{-1}(\CL))\cap \Gamma 
\le
\#\Phi^{-1}(\Gamma) \cap \Pi^{-1}(\CL)
\le
(p^{r} \deg(\overline{\Sigmat})  +d_3) d_1 d_2,
\end{equation}
which is at most $c p^r$,
where $c:=\deg(\overline{\Sigmat})  + d_1d_2 d_3$.
\end{proof}

Corollary \ref{elll} will make the bound in \eqref{estt} 
explicit in the case where $S=X=X_1(N)$, $\Pi=\Id$, 
and $\Phi$ is a modular parametrization.

To explain our next application of Theorem~\ref{mainth} we fix a
modular-elliptic or Shimura-elliptic correspondence $\thecorr$
 and  a vector $\ba=(a_1,\ldots,a_n)\in \bZ^n$ of nonzero integers.
For a prime $p$, consider the set
\[D_{\ba}:=\{(Q_1,\ldots,Q_n) \in A^n({R})\ |\ \sum_{i=1}^n a_iQ_i=0\}\]
of all tuples ``killed by  $\ba$''. (E.g. if $n=3$ and
$a_1=a_2=a_3=1$ then $D_{\ba}$ is the set of  triples of collinear
points on $A(R)$ if we use a Weierstrass model for $A$;
any triple in $X^3(R)$ mapping to a  triple of collinear points
can be referred to  as a {\it triple of collinear points on}
$X(R)$.)

If $p$ is sufficiently large, it does not divide all the $a_i$,
and then the image of $D_{\ba}$ in $A^n(k)$ coincides with the set
$\bar{D}_{\ba}$ of all tuples  in $A^n(k)$ killed by $\ba$.
Clearly  $\bar{D}_{\ba}$ is (the set of points of) an irreducible
divisor (isomorphic to $\bar{A}^{n-1}$). We may consider the map
$\Phi_n := \Phi \times \cdots \times \Phi \colon X^n(R) \to A^n(R)$.
Similarly we have a map $\Pi_n \colon X^n(R) \ra
S^n(R)$ and a map $\bar{\Phi}_n\colon X^n(k) \to A^n(k)$.
Then $\bar{\Phi}_n^{-1}(\bar{D}_{\ba})$ is (the set of points of)
a possibly reducible divisor in $\bar{X}^n$. The next corollary is
a ``degeneracy'' result for CL points:

\begin{corollary}
\label{cor2} Let $\ba \in \bZ^n$ be a tuple of nonzero integers,
$n \geq 3$ and assume that $\thecorr$ is  a modular-elliptic
 or a Shimura-elliptic correspondence.  Assume
  $p$ is a sufficiently large good prime.
 Then
  the set $\overline{\Phi_n^{-1}(D_{\ba}) \cap \Pi_n^{-1}(\CL^n)}$
is not Zariski dense in $\bar{\Phi}_n^{-1}(\bar{D}_{\ba})$.
\end{corollary}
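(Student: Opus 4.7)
The plan is to argue by contradiction, combining the reciprocity Theorem~\ref{mainth} with the geometric rigidity Theorem~\ref{triples}. Suppose, to the contrary, that $Z := \overline{\Phi_n^{-1}(D_{\ba}) \cap \Pi_n^{-1}(\CL^n)}$ is Zariski dense in $Y := \bar{\Phi}_n^{-1}(\bar{D}_{\ba})$. First I would invoke Theorem~\ref{mainth} to produce an affine open $X^{\dug} \subset X_R$ and a function $\Sigmat \in \cO(\hat{X}^{\dug})$ whose reduction $\overline{\Sigmat} \in \cO(\bar{X}^{\dug})$ is non-constant and satisfies the stated reciprocity property.

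For each tuple $(P_1,\ldots,P_n) \in \Phi_n^{-1}(D_{\ba}) \cap \Pi_n^{-1}(\CL^n) \cap (X^{\dug})^n(R)$, the equality $\sum_i a_i \Phi(P_i) = 0$ places the sum in $A(R)_{\tors}$, so the first equivalence of Theorem~\ref{mainth} gives $\sum_i a_i \Sigmat(P_i) = 0$ in $R$, and reducing modulo $p$ yields $\sum_i a_i \overline{\Sigmat}(\bar{P}_i) = 0$ in $k$. Because $X^{\dug}$ is the complement of a closed subscheme of $X_R$, a point $P \in X(R)$ lies in $X^{\dug}(R)$ iff $\bar{P} \in \bar{X}^{\dug}(k)$. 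Hence the regular function $F := \sum_i a_i\, \mathrm{pr}_i^*\overline{\Sigmat}$ on $(\bar{X}^{\dug})^n$ vanishes on the reduction in $(\bar{X}^{\dug})^n(k)$ of the tuples above, a subset which is Zariski dense in $Z \cap (\bar{X}^{\dug})^n$. Combining the supposed density of $Z$ in $Y$ with the elementary fact that dense subsets remain dense under restriction to a Zariski open subvariety (applied component by component, since $Y$ may be reducible), I conclude that $F$ vanishes on all of $Y \cap (\bar{X}^{\dug})^n$.

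Unwinding, this says that for every $(Q_1,\ldots,Q_n) \in \bar{X}^{\dug}(k)^n$ satisfying $\sum_i a_i \bar{\Phi}(Q_i) = 0$, one has $\sum_i a_i \overline{\Sigmat}(Q_i) = 0$. For $p$ sufficiently large, $\bar{\Phi}\colon\bar{X} \to \bar{A}$ remains non-constant and no $a_i$ is divisible by $p$, so Theorem~\ref{triples} applied to $\bar{\Phi}$, $\bar{X}^{\dug}$, and $\overline{\Sigmat}$ forces $\overline{\Sigmat}$ to be constant, contradicting the non-constancy guaranteed by Theorem~\ref{mainth}. The main obstacle I anticipate is the density-propagation step: since $Y$ is in general reducible, one must verify that on each irreducible component of $Y$ meeting $(\bar{X}^{\dug})^n$, the reductions of CL-tuples annihilated by $\ba$ form a Zariski dense subset, which should follow by decomposing the assumed dense subset $Z$ along the irreducible components of $Y$ and invoking the standard open/dense restriction principle on each.
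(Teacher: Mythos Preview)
Your proof is correct and follows essentially the same route as the paper: assume density, use Theorem~\ref{mainth} to force $\sum_i a_i\,\overline{\Sigmat}(\bar P_i)=0$ on all of $\bar\Phi_n^{-1}(\bar D_{\ba})\cap(\bar X^{\dug})^n$, and then invoke Theorem~\ref{triples} to contradict the non-constancy of $\overline{\Sigmat}$. Your concern about propagating density through the possibly reducible $Y$ is harmless but unnecessary, since a subset dense in $Y$ automatically restricts to a dense subset of any open $U\subset Y$.
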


The overline means, as usual, the reduction mod $p$ map which in
this case is a map $X^n(R) \to X^n(k)$. In particular the set of
reductions mod $p$ of  triples of collinear CL points in $X(R)$ is
not Zariski dense in the set of triples of collinear points on
$X(k)$.  Corollary \ref{cor2} ceases to be true in case
$n=2$; indeed if $a_1=1$, $a_2=-1$, the set
$\overline{\Phi_2^{-1}(D_{\ba}) \cap \Pi_2^{-1}(\CL^2)}$ is the
complement in the curve $\bar{\Phi}_2^{-1}(\bar{D}_{\ba})=\bar{X}
\times_{\bar{A}} \bar{X}$ of a finite set.

\begin{proof}[Proof of  Corollary \ref{cor2}]
Let $X^{\dug}$ and $\Sigmat$ be as in the conclusion of Theorem
\ref{mainth};  in particular  $\overline{\Sigmat}$ is a non-constant
function and, for any tuple $(\bar{P}_1,\ldots,\bar{P}_n)$ in the set
\begin{equation}
\label{bigset} \overline{\Phi_n^{-1}(D_{\ba}) \cap \Pi_n^{-1}(\CL^n)
\cap X^{\dug}(R)^n}= \overline{\Phi_n^{-1}(D_{\ba}) \cap
\Pi_n^{-1}(\CL^n)} \cap X^{\dug}(k)^n\end{equation} we have
\begin{equation}
\label{dantz} \sum_{i=1}^n a_i
\overline{\Sigmat}(\bar{P}_i)=0.\end{equation} Now if the conclusion
of Corollary \ref{cor2} is false, the set
\ref{bigset} is Zariski dense in $\bar{\Phi}_n^{-1}(\bar{D}_{\ba})
\cap X^{\dug}(k)^n$ so \eqref{dantz} holds for all
$(\bar{P}_1,\ldots,\bar{P}_n)$ in $\bar{\Phi}_n^{-1}(\bar{D}_{\ba})
\cap X^{\dug}(k)^n$. But then, Theorem~\ref{triples} implies
that $\overline{\Sigmat}$ is a constant function, a contradiction.
\end{proof}

\subsection{Refinement of results on $\CL$ points 
for modular parametrizations} 
\label{S:refinement for modular CL}
Theorem~\ref{refined} below is
a refinement of Theorem~\ref{mainth} in the special case 
of a modular-elliptic correspondence $\thecorr$ arising from a
modular parametrization attached
to a newform $f=\sum a_nq^n$; recall that $S=X=X_1(N)$, $\Pi=\Id$, and
we always
assume $f$ of weight $2$, on $\Gamma_0(N)$, normalized, with
rational Fourier coefficients. In this case we may (and will)
take $F=F_0=\bQ$. Recall  that $a_1=1$, that $a_n \in \bZ$ for $n
\geq 1$, and that for sufficiently large $p$, the coefficient $a_p$ 
equals the trace of Frobenius on $A_{\bF_p}$.
One can ask if
in this case the function $\Sigmat$ also has a description in
terms of eigenforms. This is indeed the case, as we shall explain
below.
Consider the series
\begin{equation}
\label {fruct1}
 f^{(-1)}(q) := \sum_{(n,p)=1} \frac{a_n}{n} q^n \in
 \bZ_p[[q]].\end{equation}
The series $f^{(-1)}(q)$ is called $f|R_{-1}$ in~\cite[p.~115]{serre}.
Assume that $p\gg 0$
and that $A_{R}$ has ordinary reduction.
Then $a_p  \not\equiv 0 \pmod{p}$.
Let $up \in \bZ_p^{\times}$ 
be the unique root in $p \bZ_p$ of the equation
$x^2-a_px+p=0$;
thus $\bar{a}_p \bar{u} = 1$. 
Let $V:\bZ_p[[q]]\to \bZ_p[[q]]$ be the operator
$V(\sum c_n q^n)=\sum c_n q^{np}$. Define
  \begin{equation}
  \label{fruct2}f^{(-1)}_{[u]}(q) :=\left( \sum_{i=0}^{\infty} u^i
  V^i\right) f^{(-1)}(q)=
  \sum_{i \geq 0}  \sum_{(n,p)=1} u^i \frac{a_n}{n} q^{np^i} \in
 \bZ_p[[q]].
\end{equation}
Then
\begin{equation}
\label{fruct4}
-\left(\overline{f^{(-1)}_{[u]}(q)}\right)^p+\bar{a}_p
\overline{f^{(-1)}_{[u]}(q)}=
\bar{a}_p\overline{f^{(-1)}(q)},
\end{equation}
in $\F_p[[q]]$, where the bars denote reduction modulo $p$, as usual.
The series $\overline{f^{(-1)}(q)}$ has a nice interpretation in
terms of modular forms mod $p$.
Indeed, recall from \cite[pp.~451,~458]{gross} that
if $M_m$ is the $k$-linear space of modular forms
over $k$ on $\Gamma_1(N)$ of weight $m$ then there is an
injective $q$-{\it expansion map} $M_m \to k[[q]]$ and a 
{\em Serre operator} $\theta\colon M_m \to M_{m+p+1}$ that on
$q$-expansions acts as $q \, d/dq$.
Let $\bar{E}_{p-1} \in M_{p-1}$ 
be the reduction mod $p$ of the modular form $E_{p-1}$
over $\bZ_{(p)}$ whose $q$-expansion in $\bZ_{(p)}[[q]]$ is the
normalized Eisenstein series of weight $p-1$; hence
$\bar{E}_{p-1}$ is the Hasse invariant and has $q$-expansion $1$
in $\bF_p[[q]]$.  

Define the affine curve
\[
	\overline{X_1(N)}^{\ord}
	:=\overline{X_1(N)} \setminus \{\text{zero locus of $\bar{E}_{p-1}$}\}.
	= \overline{Y_1(N)}^{\ord} \cup \{\text{cusps}\}
\]
where $\overline{Y_1(N)}^{\ord}$ 
is the open set of points in $\overline{Y_1(N)}$ 
represented by ordinary elliptic curves.

If $\alpha \in M_{m+w}$, and $\beta \in M_m$ is nonzero,
call $\alpha/\beta$ a {\em weight-$w$ quotient} of modular forms over $k$.
A weight-$0$ quotient of modular forms is a rational function
on $\overline{X_1(N)}$.
In particular, $\theta^{p-2} \bar{f}, \bar{E}_{p-1}^p \in M_{p^2-p}$, and
\begin{equation} \label{fmu} 
	\bar{f}^{(-1)}:=(\theta^{p-2} \bar{f})/\bar{E}_{p-1}^p
\end{equation}
is a regular function on $\overline{X_1(N)}^{\ord}$.
Let $g \mapsto g_\infty$ be the natural $q$-expansion map
$k(\overline{X_1(N)}) \to k((q))$.
The corresponding point in $\overline{X_1(N)}(k((q)))$
will be called the {\it Fourier} $k((q))$-{\it point}.
Then $\bar{f}^{(-1)}_{\infty} =\overline{f^{(-1)}(q)}$. 
For primes $l \neq p$, define the {\em Hecke operator}
$T(l)\colon k[[q]] \to k[[q]]$ 
by $T(l)(\sum c_nq^n)=\sum c_{ln}q^n+\epsilon(l)l^{-1} \sum c_nq^{ln}$, 
where $\epsilon(l)=0$ or $1$ according as $l$ divides $N$ or not. 
Define the {\em $U$-operator} $U \colon k[[q]]\to k[[q]]$ 
by $U(\sum c_nq^n):=\sum c_{np}q^n$.
By~\cite[p.~458]{gross}, $\overline{f^{(-1)}(q)}$
is an eigenvector of $T_l$ for every $l \ne p$;
moreover, $\overline{f^{(-1)}(q)} \in \ker U$.
Finally, for any open set $X' \subset
X_1(N)_{R}$ containing the $\infty$ section $[\infty]$ we have a
natural injective $q$-expansion map 
$\cO(X' \setminus [\infty])\h \to R((q))\h$, 
which we write as $G \mapsto G_{\infty}$. 
(See Section~\ref{S:delta-Fourier} for more details.) 

\begin{definition}
\label{D:standard}
An open set of the form $X' \setminus [\infty]$ with $X'$ as above
will be called {\em standard}.
\end{definition}

\begin{remark}
\label{irigutza} 
The following discussion is relevant 
in the context of Theorem~\ref{simairefined},
but will not be used.
The modular form $\theta^{p-1}\bar{f}$ is defined over $\F_p$. 
Hence, by Theorems 12.3.2 and 12.3.7 of~\cite{DI}, 
there exists a modular form $f_{p^2-p}$ over $\bZ$ on $\Gamma_1(N)$, 
of weight $p^2-p$, whose reduction mod $p$ is $\theta^{p-2}\bar{f}$.
In particular, the Fourier expansion $f_{p^2-p}(q) \in \bZ[[q]]$ satisfies
\[f_{p^2-p}(q) \equiv f^{(-1)}(q)\ \ mod\ \ p\]
in $\bZ_{(p)}[[q]]$.  
By the Fourier expansion principle over $\bF_p$, 
any modular form over $\bZ$ on
$\Gamma_1(N)$ of weight $p^2-p$ whose Fourier expansion is
congruent to $f^{(-1)}(q)$ mod $p$ 
has reduction mod $p$ equal to $\theta^{p-1} \bar{f}$.
\end{remark}

Let $j(x) \in k$ be the $j$-invariant of $x \in \overline{Y_1(N)}(k)$.

\begin{theorem}[Explicit reciprocity functions for $\CL$ points]
\label{refined} 
Assume, in Theorem~\ref{mainth}, 
that $X=S=X_1(N)$, $\Pi=\Id$, and $\Phi$ is a modular parametrization
attached to a newform $f$.
Then one can choose $X^{\dug}$ and $\Sigmat$ in Theorem~\ref{mainth} 
such that
\begin{enumerate}
\item 
$X^{\dug}$ is standard and 
$\bar{X}^{\dug}=\overline{Y_1(N)}^{\ord}\setminus \{x \mid j(x)=0,1728\}$.
\item
If $A_R$ is not $\CL$ then $\Sigmat_{\infty}=f^{(-1)}(q)$. 
In particular, $\overline{\Sigmat}=\bar{f}^{(-1)}$.
\item 
If $A_R$ is $\CL$ then $\Sigmat_{\infty}=-uf^{(-1)}_{[u]}(q)$. 
In particular,
$(\overline{\Sigmat})^p-\bar{a}_p \overline{\Sigmat}=\bar{f}^{(-1)}$.
\end{enumerate}
\end{theorem}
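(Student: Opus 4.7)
The plan is to unpack the construction of $\Sigmat$ from the proof of Theorem~\ref{mainth}, specialized to the case $X=S=X_1(N)$, $\Pi=\Id$, and $\Phi$ a modular parametrization attached to the newform $f$, and then compute its Fourier expansion at the cusp $\infty$ explicitly. First I would pin down the open set $X^{\dug}$. The $\delta$-geometry underlying the construction of $\Sigmat$ requires three things: invertibility of the Hasse invariant $\bar{E}_{p-1}$ (forcing removal of the supersingular locus); absence of extra automorphisms on the parametrized elliptic curves (forcing removal of the points with $j(x)=0,1728$); and regularity of the $\delta$-Fourier expansion on a neighborhood of the cusp $\infty$ (forcing $X^{\dug}$ to be standard in the sense of Definition~\ref{D:standard}). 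Together these give exactly the open set described in~(1).

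For~(2), the key is that $\Sigmat$ is, by construction, a $\delta$-primitive of $\Phi^*\omega_A$: it is a function on $\hat{X}^{\dug}$ whose image under the Frobenius lift $\phi$, minus an $a_p$-multiple of itself, recovers (up to a $p$-th power correction) the formal primitive of the pulled-back invariant differential. Since $\Phi^*\omega_A$ has $q$-expansion $f(q)\, dq/q$ with our normalizations, and since $A_R$ being non-$\CL$ means that $\phi$ acts on $A$ without the Serre-Tate canonical lift structure, solving this functional equation in the completion of $R((q))$ yields $\Sigmat_{\infty}=f^{(-1)}(q)$: the operator $q\,d/dq$ applied term by term to $f^{(-1)}(q)$ restores precisely the prime-to-$p$ part of $f$, which is exactly what the defining equation for $\Sigmat$ demands. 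Reducing mod $p$ and using the identification~\eqref{fmu} of $\bar{f}^{(-1)}$ with $(\theta^{p-2}\bar{f})/\bar{E}_{p-1}^p$ then gives $\overline{\Sigmat}=\bar{f}^{(-1)}$.

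For~(3), in the $\CL$ case one must incorporate the additional Frobenius action induced by the canonical lift structure on $A_R$: the functional equation now has a nontrivial solution involving iterates of the $V$-operator, and tracking the unique root $u \in p\bZ_p$ of $T^2-a_pT+p$ produces $\Sigmat_{\infty}=-u f^{(-1)}_{[u]}(q)$; the prefactor $-u$ comes from matching the $\delta$-primitive with the canonical unit-root splitting in the crystalline cohomology of $A_R$. The identity $(\overline{\Sigmat})^p - \bar{a}_p \overline{\Sigmat} = \bar{f}^{(-1)}$ is then obtained by reducing~\eqref{fruct4} modulo $p$. The main obstacle will be the explicit Fourier-expansion calculation in the $\CL$ case: one must match the Frobenius lift coming from the CL structure on the universal ordinary elliptic curve with the operator $V$ on $q$-expansions, and verify $p$-adic convergence of the resulting geometric series $\sum u^i V^i$ acting on $f^{(-1)}(q)$. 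Once $\Sigmat_{\infty}$ is pinned down in both cases, the $q$-expansion principle ensures uniqueness of the global function $\Sigmat$ with these expansions, and all the stated identities follow by comparison in $\bF_p[[q]]$.
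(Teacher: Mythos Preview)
Your description of what $\Sigmat$ \emph{is} does not match the construction in the proof of Theorem~\ref{mainth}, and this is where the proposal breaks down. In that proof, $\Sigmat$ is not characterized by a functional equation of the shape ``$\phi(\Sigmat)-a_p\Sigmat=\ldots$'' to be solved in $R((q))\h$; rather, $\Sigmat$ is the unique element of $\cO(X^{\dug})\h$ congruent to $f^{\sharp}=\psi\circ\Phi$ modulo the ideal $(\varphic,\d\varphic)$ in $\cO^2(X^{\dug})$ (see~\eqref{aproape} and Lemma~\ref{lem1}). The decisive observation is that the $\d$-Fourier expansions $\varphic_{\infty}$ and $(\d\varphic)_{\infty}$ lie in the ideal $(q',q'')$ of $R((q))\h[q',q'']\h$ (equation~\eqref{elephant}), so applying the substitution $q'=q''=0$ (the operation $\natural$) to~\eqref{aproape} yields $\Sigmat_{\infty}=(f^{\sharp}_{\infty})_{\natural}$. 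This is equation~\eqref{tototo}, and it is the step your outline is missing.

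Once one has $\Sigmat_{\infty}=(f^{\sharp}_{\infty})_{\natural}$, the remaining work is a concrete power-series computation carried out in Lemma~\ref{compeverything}: one starts from the explicit formula~\eqref{E:psi} for $\psi$, pulls back by $\Phi$ (so $b_n=a_n$ since $\Phi^*\omega=\sum a_n q^{n-1}\,dq$), sets $q'=q''=0$, and simplifies using the Hecke relations~\eqref{tzu1} and~\eqref{tzu2} to obtain~\eqref{floare}. Your talk of ``$\delta$-primitives,'' ``solving a functional equation,'' and ``canonical unit-root splitting in crystalline cohomology'' does not correspond to any of these steps, and in particular the non-CL formula $\Sigmat_\infty=f^{(-1)}(q)$ does \emph{not} arise from inverting some operator on $f$ but from the cancellations forced by $a_{p^i}a_p=a_{p^{i+1}}+pa_{p^{i-1}}$ in $(f^\sharp_\infty)_\natural$. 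For part~(1), the paper simply takes $g=E_{p-1}$ in the setup of Section~\ref{S:d-modular-elliptic}; this already forces $\bar X^{\dug}=\overline{Y_1(N)}^{\ord}\setminus\{j=0,1728\}$ by the description of $Y(1)^{\dug}$ there, and $X^{\dug}$ is standard by construction.
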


In both cases, (2) and~(3), the function $\overline{\Sigmat}$ is
integral over the integrally closed ring
$\cO(\overline{X_1(N)}^{\ord})$ and belongs to the fraction field
of $\cO(\overline{X_1(N)}^{\ord})$.  
So $\overline{\Sigmat} \in \cO(\overline{X_1(N)}^{\ord})$. 
Theorem~\ref{refined} will be proved in Section~\ref{S:local}.

\begin{remark}
If $A_{R}$ is $\CL$, then assertion $3$ in
Theorem~\ref{refined} implies that $\overline{f^{(-1)}_{[u]}(q)}$ 
is the Fourier expansion
of a rational function on $\overline{X_1(N)}$, hence
of a quotient $\frac{\alpha}{\beta}$ where $\alpha,\beta \in M_{\nu}$
are modular forms defined over $k$ of some weight $\nu$.
Is there a direct argument for this?
\end{remark}

\begin{corollary}
\label{elll} Let $\Phi \colon X_1(N) \to A$ be a modular
parametrization and let
$\Gamma \leq A(R)$ be a subgroup with $r:=\rank_p^{A(R)}(\Gamma)<\infty$.
Then the set $\Phi(\CL) \cap \Gamma$ is finite of cardinality at most
\[\left[(2g-2+\nu)\cdot \frac{p^2-p}{2} \cdot p^{r}+2
\lambda \right]\lambda,\] where $g$ is the genus of
$X_1(N)$,  $\nu$ is the number of  cusps of $X_1(N)$, and
$\lambda$ is the degree of $X_1(N) \to X_1(1)$.
\end{corollary}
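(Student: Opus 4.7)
The plan is to apply the counting scheme of Corollary~\ref{coru} verbatim, inserting the explicit choices of $X^{\dug}$ and $\overline{\Sigmat}$ supplied by Theorem~\ref{refined}. Since $\Pi=\Id$, one has $d_1=\deg\Pi=1$. The $j$-map $X_1(N)\to X_1(1)$ has degree $\lambda$, so the number $d_2$ of level-$\Gamma_1(N)$ structures on a generic elliptic curve (the generic fibre size of this map) is $\lambda$; and the complement $\Pi^{-1}(\bar{S}^{\ord}(k))\setminus \bar{X}^{\dug}(k)$ consists of the (at most) $2\lambda$ points of $\overline{X_1(N)}(k)$ above $j=0$ or $j=1728$, so $d_3\le 2\lambda$.

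The remaining ingredient is the degree of $\overline{\Sigmat}$. When $A_R$ is not $\CL$, Theorem~\ref{refined}(2) gives $\overline{\Sigmat}=\bar{f}^{(-1)}=(\theta^{p-2}\bar{f})/\bar{E}_{p-1}^p$, the ratio of two modular forms on $\Gamma_1(N)$ over $k$ of weight $p^2-p$, i.e.\ of sections of $\omega^{\otimes(p^2-p)}$ on $\overline{X_1(N)}$. The Kodaira--Spencer isomorphism $\omega^{\otimes 2}\isom\Omega^1_{\overline{X_1(N)}}(\mathrm{cusps})$ yields $\deg\omega=(2g-2+\nu)/2$, so $\deg\overline{\Sigmat}\le(p^2-p)(2g-2+\nu)/2$.

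The main obstacle is the case where $A_R$ is $\CL$: the function $\overline{\Sigmat}$ from Theorem~\ref{refined}(3) can have strictly larger degree, which would give a bound weaker by a factor of~$p$. The fix is that the proof of Corollary~\ref{coru} uses only that the values $\overline{\Sigmat}(\bar{P}_i)$ span an $\F_p$-subspace $V\subset k$ of dimension at most $r$. The identity $\overline{\Sigmat}^{\,p}-\bar{a}_p\overline{\Sigmat}=\bar{f}^{(-1)}$ then forces the values $\bar{f}^{(-1)}(\bar{P}_i)$ into the image of $V$ under the map $v\mapsto v^p-\bar{a}_p v$, a set of cardinality at most $|V|\le p^r$. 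Hence the number of $\bar{P}_i$ lying in $\bar{X}^{\dug}(k)$ is at most $p^r\cdot\deg\bar{f}^{(-1)}\le p^r\cdot(p^2-p)(2g-2+\nu)/2$ in either case, replacing the $p^r\deg\overline{\Sigmat}$ term in Corollary~\ref{coru}'s estimate.

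Plugging $d_1=1$, $d_2=\lambda$, $d_3\le 2\lambda$, and this modified count into Corollary~\ref{coru}'s inequality $\#\Phi(\Pi^{-1}(\CL))\cap\Gamma\le(p^r\deg\overline{\Sigmat}+d_3)d_1d_2$ yields
\[
	\#\Phi(\CL)\cap\Gamma\le\Bigl[(2g-2+\nu)\cdot\frac{p^2-p}{2}\cdot p^r+2\lambda\Bigr]\lambda,
\]
as desired.
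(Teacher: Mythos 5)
Your proof is correct and follows the paper's overall strategy (specialize Corollary~\ref{coru} using the explicit $X^{\dug}$ and $\overline{\Sigmat}$ from Theorem~\ref{refined}, with $d_1=1$, $d_2=\lambda$, $d_3\le 2\lambda$, and the Kodaira--Spencer identification $\omega^{\otimes 2}\isom\Omega^1(\text{cusps})$ to bound degrees), but the ``obstacle'' you identify in the $\CL$ case is not real, and your workaround is therefore unnecessary.

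You worry that when $A_R$ is $\CL$, the reciprocity function $\overline{\Sigmat}$ from Theorem~\ref{refined}(3) ``can have strictly larger degree'' than $\bar{f}^{(-1)}$. In fact, the relation $(\overline{\Sigmat})^p-\bar{a}_p\overline{\Sigmat}=\bar{f}^{(-1)}$ exhibits $\bar{f}^{(-1)}$ as the composition of $\overline{\Sigmat}\colon \overline{X_1(N)}\dashrightarrow\PP^1$ with the degree-$p$ morphism $x\mapsto x^p-\bar{a}_p x$ of $\PP^1$. Degrees multiply under composition, so $\deg(\bar{f}^{(-1)})=p\cdot\deg(\overline{\Sigmat})$; that is, $\deg(\overline{\Sigmat})=\deg(\bar{f}^{(-1)})/p$ is \emph{smaller}, not larger, than $\deg(\bar{f}^{(-1)})$. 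So in both cases of Theorem~\ref{refined} one has $\deg(\overline{\Sigmat})\le\deg(\bar{f}^{(-1)})$, and one can simply plug this into Corollary~\ref{coru}'s estimate $(p^r\deg(\overline{\Sigmat})+d_3)d_1d_2$ without any modification to the counting argument. This is exactly what the paper does, and it is cleaner than your detour through the image of $V$ under $v\mapsto v^p-\bar{a}_p v$. That said, your alternative count is also valid and yields the identical numerical bound $p^r\deg(\bar{f}^{(-1)})$, so the final answer is unaffected; you have just taken a slightly roundabout path to circumvent a difficulty that was never there.
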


\begin{proof} 
By Theorem~\ref{refined} 
we have $d_1=1$, $d_2=\lambda$, and $d_3\leq 2\lambda$ in \eqref{estt}.
So it will be enough to check that
\begin{equation}
\label{ineqdebaza} \deg(\overline{\Sigmat}) \leq (2g-2+\nu)
\cdot \frac{p^2-p}{2}.
\end{equation}
Taking degrees in parts (2) and~(3) of Theorem~\ref{refined}
yields either
$\deg(\overline{\Sigmat}) = \deg(\bar{f}^{(-1)})$
or
$p \deg(\overline{\Sigmat}) = \deg(\bar{f}^{(-1)})$.
In both cases, $\deg(\overline{\Sigmat}) \le \deg(\bar{f}^{(-1)})$.
Now \eqref{ineqdebaza} follows from the fact that the numerator and
denominator of the fraction in \eqref{fmu} are sections of the line bundle
$(\Omega^1(\text{cusps}))^{\frac{p^2-p}{2}}$, where $\Omega^1$ is the
cotangent bundle on $\overline{X_1(N)}$.
\end{proof}

We next discuss a uniqueness property for the function $\Sigmat$
in Theorem~\ref{refined}. 
Let $S=X_1(N)$, let $X^{\dug} \subset S$
be a standard Zariski open subset over $R$ such that
\begin{equation}
\label{faraj} \bar{X}^{\dug} \subset \overline{Y_1(N)}^{\ord}
\setminus \{x|j(x)=0,1728\}\end{equation}
 and define
\begin{equation}
\label{cineiP}
 {\mathcal P} :=\{P \in \CL \mid 
		\text{$\bar{P}$ is not in the isogeny class of any
		of the $k$-points of $Y_1(N) \setminus X^{\dug}$} \}.
\end{equation}
Clearly $\overline{\mathcal P}$ is infinite. 
We have $\calP \subset X^{\dug}(\cO_{M,\wp})$ by~\eqref{erasamorr}.
Let $f=\sum a_n q^n$ be a newform.
Let $\sum P_i^{(l)}-a_lP$ be the Hecke divisor on $S(\Qbar)$
associated to any $P\in {\mathcal P}$ 
and any prime $l \neq p$ (see Section~\ref{S:Hecke correspondences}).
Then $P_i^{(l)} \in \CL \cap X^{\dug}(\cO_{M,\wp})$. 
For $d \in (\Z/N\Z)^\times$,
let $\langle d \rangle$ be the diamond operator acting on $\overline{X_1(N)}$
and on $\cO(\overline{X_1(N)}^{\ord})$.
Consider the
$k$-linear space 
\begin{equation} 
\label{raade}
	\cF:=\left\{\overline{\Theta}\in \cO(\overline{X_1(N)}^{\ord}) \mid
	\<d\>\overline{\Theta}=\overline{\Theta} 
	\text{ for all $d \in (\bZ/N\bZ)^{\times}$ and } 
	U \overline{\Theta}(q)=0 \right\},
\end{equation}
where $\overline{\Theta}(q) \in k[[q]]$ 
is the Fourier expansion of $\overline{\Theta}$.
Note that $\bar{f}^{(-1)} \in \cF$.

\begin{theorem}[Uniqueness of reciprocity functions for CL points]
\label{uunnii} Let $S=X_1(N)$, and let $\Phi \colon S \to A$ be a
modular parametrization attached to a newform $f=\sum a_nq^n$ and
let $p$ be a sufficiently large good prime. Assume that $X^{\dug}
\subset S$ is a Zariski open subset over $R$ as in 
\eqref{faraj}.  Let $\cP$ be as in \eqref{cineiP}.
Then the following conditions on $\overline{\Theta} \in \cF$
are equivalent.

1) For any $P_1,\ldots,P_n \in \CL \cap X^{\dug}(R)$ and any
integers $m_1,\ldots,m_n$ we have \[ \sum_{i=1}^n m_i\Phi(P_i) \in
A(R)\pdiv \Longrightarrow
  \sum_{i=1}^n m_i \overline{\Theta}(\bar{P}_i) =0\in k.\]

2) For any $P \in \cP$ and any prime $l \neq p$ we have
\[\sum \overline{\Theta}(\bar{P}_i^{(l)})-a_l\overline{\Theta}(\bar{P})=0.\]

3) $\overline{\Theta}=\bar{\lambda} \cdot \overline{f^{(-1)}}$ for
some $\bar{\lambda} \in k$.
\end{theorem}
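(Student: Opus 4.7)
The plan is to establish the cycle $(3) \Rightarrow (1) \Rightarrow (2) \Rightarrow (3)$, with the first two implications being essentially formal consequences of Theorems~\ref{mainth} and~\ref{refined} and the fact that Hecke divisors lie in $\Phi^\perp$, while $(2)\Rightarrow(3)$ carries the real content.

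For $(3)\Rightarrow(1)$, Theorem~\ref{mainth} reduces $\sum m_i\Phi(P_i)\in A(R)\pdiv$ to $\sum m_i\overline{\Sigmat}(\bar P_i)=0$. If $A_R$ is not $\CL$, Theorem~\ref{refined}(2) identifies $\overline{\Sigmat}$ with $\bar f^{(-1)}$ and scaling by $\bar\lambda$ is harmless. If $A_R$ is $\CL$, Theorem~\ref{refined}(3) gives $\bar f^{(-1)}=(\overline{\Sigmat})^p-\bar a_p\overline{\Sigmat}$, and one concludes using $m_i^p\equiv m_i\pmod p$ and additivity of Frobenius to write
\[\sum_i m_i \bar f^{(-1)}(\bar P_i)=\Bigl(\sum_i m_i\overline{\Sigmat}(\bar P_i)\Bigr)^p-\bar a_p\sum_i m_i\overline{\Sigmat}(\bar P_i)=0.\]
For $(1)\Rightarrow(2)$, I would feed $(1)$ the Hecke divisor $\sum_i P_i^{(l)}-a_l P$ for $P\in\cP$ and $l\neq p$. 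Each $P_i^{(l)}$ is again $\CL$ because canonical lifts are functorial under quotients by étale subgroup schemes of order prime to $p$; each $\bar P_i^{(l)}$ lies in $\bar X^{\dug}(k)$ because the definition~\eqref{cineiP} of $\cP$ was arranged precisely so that the full prime-to-$p$ isogeny class of $\bar P$ stays inside $\bar X^{\dug}$. Since $\Phi$ is a modular parametrization attached to $f$, the Hecke operator $T(l)$ acts as $a_l$ on $\Phi_*$, so the Hecke divisor maps to $0\in A(R)\pdiv$ and $(1)$ yields $(2)$.

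The main step is $(2)\Rightarrow(3)$. For $\bar P$ away from the finite branch locus of the $l$th Hecke correspondence, the Hecke operator on functions satisfies $(T(l)\overline{\Theta})(\bar P)=\sum_i\overline{\Theta}(\bar P_i^{(l)})$, so $(2)$ becomes $(T(l)\overline{\Theta}-a_l\overline{\Theta})(\bar P)=0$ for all $P\in\cP$. Since $\overline{\cP}$ is an infinite subset of the irreducible curve $\overline{X_1(N)}^{\ord}$, it is Zariski dense, and therefore $T(l)\overline{\Theta}=a_l\overline{\Theta}$ as rational functions, hence at the level of $q$-expansions. Combined with the defining hypotheses of $\cF$ in~\eqref{raade} (diamond invariance and $U\overline{\Theta}(q)=0$), the series $\overline{\Theta}(q)\in k[[q]]$ is a Hecke eigenvector sharing the full eigensystem of $\bar f^{(-1)}(q)$. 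Multiplying by $\bar E_{p-1}^p$, whose $q$-expansion is $1$, realizes $\bar E_{p-1}^p\cdot\overline{\Theta}$ as an element of $M_{p(p-1)}(\Gamma_1(N);k)$ with the same $T(l)$-eigenvalues as $\theta^{p-2}\bar f$ and annihilated by $U$; a mod-$p$ multiplicity-one argument then forces $\bar E_{p-1}^p\cdot\overline{\Theta}=\bar\lambda\cdot\theta^{p-2}\bar f$ for some $\bar\lambda\in k$, and dividing by $\bar E_{p-1}^p$ gives $(3)$.

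The main obstacle is precisely this final multiplicity-one input: one needs the system of Hecke eigenvalues of $\bar f$ (for $l\neq p$), together with the condition $U=0$, to cut out a one-dimensional subspace of $M_{p(p-1)}(\Gamma_1(N);k)$. The role of the hypothesis $U\overline{\Theta}(q)=0$ built into~\eqref{raade} is to rule out the second $p$-stabilization, leaving only the ordinary eigenform represented by $\theta^{p-2}\bar f$; the required statement is then available from Gross's analysis of $\theta$-cycles and ordinary mod-$p$ forms in~\cite{gross}. A subsidiary point worth verifying along the way is that the formula for $T(l)$ on $q$-expansions in weight $p(p-1)$ agrees with the weight-$0$ formula used in~\eqref{raade}, which holds because $l^{p(p-1)-1}\equiv l^{-1}\pmod p$ by Fermat's little theorem, so the multiplication-by-$\bar E_{p-1}^p$ step is genuinely Hecke-equivariant.
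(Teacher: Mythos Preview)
Your proposal is correct and follows essentially the same cycle $3\Rightarrow 1\Rightarrow 2\Rightarrow 3$ as the paper, with $1\Rightarrow 2$ via the Hecke relation~\eqref{heckerel}, $2\Rightarrow 3$ via a mod-$p$ multiplicity-one argument (this is the paper's Lemma~\ref{nuzzi}), and $3\Rightarrow 1$ via Theorem~\ref{refined}; your Frobenius trick for the $\CL$ case of $3\Rightarrow 1$ fills in detail the paper leaves implicit. One small correction: you cannot assume $\bar E_{p-1}^p\cdot\overline{\Theta}$ is already a modular form, since $\overline{\Theta}\in\cO(\overline{X_1(N)}^{\ord})$ may have poles of arbitrary order at supersingular points; take instead $\bar G:=\bar E_{p-1}^m\cdot\overline{\Theta}$ for $m$ large enough (as the paper does), noting that the $q$-expansion and hence the multiplicity-one conclusion are unaffected.
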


\begin{proof}
Condition 1 implies condition 2 by \eqref{heckerel}.
That condition 2 implies condition 3 will be proved in
Section~\ref{S:local}: see Lemma \ref{nuzzi}. Finally
condition 3 implies condition 1 by Theorem~\ref{refined}.
\end{proof}

\subsection{$\Rec$ functions and finiteness for isogeny classes} 
Fix a set $\Sigma$ of rational primes.

Suppose that $S=X_1(N)$.
Let $B$ be a $\bZ[1/N]$-algebra.
Let $Q$ be a $B$-point of $Y_1(N)$, represented by $(E_Q,\alpha_Q)$. 
The $\Sigma$-{\it isogeny class} 
(respectively, the {\it prime-to-$\Sigma$ isogeny class}) 
of $Q$ in $S(B)$ is the set $C=C_Q \subset S(B)$ 
of all $B$-points of $Y_1(N)$ represented by
$(E_{Q'}, \alpha_{Q'})$ such that there exists an isogeny $E_Q
\to E_{Q'}$ of degree divisible only by primes in $\Sigma$
(respectively, outside $\Sigma$).
We do not require the isogeny to be compatible with $\alpha_Q$ 
and $\alpha_{Q'}$.

The definition for $S=X^D(\cU)$ is similar.
Let $B$ be a $\bZ[1/m]$-algebra.
Let $Q \in S(B)$ be represented by $(E_Q,i_Q,\alpha_Q)$. 
The $\Sigma$-{\it isogeny class} 
(respectively, the {\it prime-to-$\Sigma$ isogeny class}) of
$Q$ in $S(B)$ is the set $C=C_Q \subset S(B)$ of all $B$-points of
$S$ represented by $(E_{Q'}, i_{Q'},\alpha_{Q'})$ such that
there exists an isogeny $E_Q \to E_{Q'}$, compatible with the
$\cO_D$-action, and of degree divisible only by primes in $\Sigma$
(respectively, outside $\Sigma$).
Again the isogeny need not respect $\alpha_Q$ and $\alpha_{Q'}$.

Let now $S$ be either $X_1(N)$ or $X^D(\cU)$ and let $C$ be a
$\Sigma$-isogeny class where $p \notin \Sigma$ or a prime to
$\Sigma$ isogeny class where $p \in \Sigma$. 
Say that $C$ is
{\em ordinary} (respectively {\em CL}) 
if it contains an ordinary point (respectively a CL point); 
in this case all points in $C$ are ordinary (respectively, CL).

\begin{theorem}[Reciprocity functions mod $p$ for isogeny classes]
\label{isoggg} Assume that $\thecorr$ is a  modular-elliptic or 
Shimura-elliptic correspondence, assume that $p$ is a sufficiently
large good prime, and assume $C$ is an
ordinary prime-to-$p$ isogeny class in $S(R)$. Then there exist
an affine Zariski open subset $X^{\dug} \subset X$, a (not
necessarily connected) finite \'{e}tale cover
 $\pi\colon \bar{X}^{\ddug} \to \bar{X}^{\dug}$
of degree $p$, a regular function $\overline{\Sigmatt} \in
\cO(\bar{X}^{\ddug})$ that is non-constant on each component of
$\bar{X}^{\ddug}$, and a map $\sigma\colon \Pi^{-1}(C) \cap
X^{\dug}(R)
 \ra
\bar{X}^{\ddug}(k)$
such that $\pi(\sigma(P))=\bar{P}$ for all $P$,
and for any $P_1,\ldots,P_n \in \Pi^{-1}(C) \cap X^{\dug}(R)$
and any $m_1,\ldots,m_n \in \bZ$ we have
\begin{equation}
\label{trebuie}
\sum_{i=1}^n m_i\Phi(P_i) \in A(R)\pdiv
\ \ \ \ \Longleftrightarrow \ \ \ \ \sum_{i=1}^n m_i
\overline{\Sigmatt}(\sigma(P_i)) =0\in k.\end{equation}
\end{theorem}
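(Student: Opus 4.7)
The plan is to run the arithmetic-jet-space argument underlying Theorem~\ref{mainth}, but to evaluate it at points whose reduction lies in the fixed ordinary isogeny class $\bar{C}$ rather than only at $\CL$ points. As in the proof of Theorem~\ref{mainth}, I would pull back the canonical order-$2$ $\delta$-character of $A$ along $\Phi \circ \Pi$ to obtain a $\delta$-function $F$ on the arithmetic jet space $J^2(X)$ whose reduction mod $p$ encodes membership in $A(R)\pdiv$. For a CL point $P$ the $2$-jet of $P$ is canonical and $F$ descends to the function $\Sigmat$ of Theorem~\ref{mainth}; for a merely ordinary point in $\Pi^{-1}(C)$ the $2$-jet depends on the Serre-Tate parameter of $E_P$ modulo $p^2$, so $F$ does not descend to a function of $\bar{P}$ alone.

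My plan is therefore to construct $\pi\colon \bar{X}^{\ddug} \to \bar{X}^{\dug}$ so that a $k$-point of $\bar{X}^{\ddug}$ above $\bar{P}$ records exactly the extra datum needed to evaluate $F$ at $P$. Motivated by case~(3) of Theorem~\ref{refined}, where the CL-vs-non-CL dichotomy for $A$ produces an Artin-Schreier relation $y^p - \bar{a}_p y = \bar{f}^{(-1)}$, I would take $\bar{X}^{\ddug}$ to be the Artin-Schreier cover of $\bar{X}^{\dug}$ defined by the analogous equation attached to the correspondence, and let $\overline{\Sigmatt}$ be the tautological root $y$. Under the ``good prime'' assumption $\bar{a}_p$ is a unit (using ordinariness of $C$ in the modular-elliptic case and the non-anomalous condition in the Shimura-elliptic case), so this cover is étale of degree $p$, and non-constancy of $\overline{\Sigmatt}$ on each component follows from non-constancy of the right-hand side, which is built from $\overline{\Sigmat}$ as in Theorem~\ref{mainth}.

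To build $\sigma$, I would fix once and for all a base point $Q_0 \in C$ together with a canonical lift $\tilde{E}_0$ of $\bar{E}_{Q_0}$. For any $P \in \Pi^{-1}(C) \cap X^{\dug}(R)$, a prime-to-$p$ isogeny $E_{Q_0} \to E_P$ (existing by the isogeny-class hypothesis), pushed through $\tilde{E}_0$, determines a canonical element of the fiber of $\pi$ over $\bar{P}$; I declare this to be $\sigma(P)$. The equivalence \eqref{trebuie} then reduces, via the isogeny-functoriality of the $\delta$-character (parallel to Remark~\ref{kokk}(2)) together with the Artin-Schreier relation defining $\bar{X}^{\ddug}$, to the CL-case reciprocity of Theorem~\ref{mainth} applied to the CL family $\{\tilde{E}_0 / \tilde{H}\}$.

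The main obstacle will be the canonical construction of $\sigma$: one must check that the element of the Artin-Schreier fiber attached to $P$ is independent of the choice of isogeny $E_{Q_0} \to E_P$ (which is determined only up to composition with prime-to-$p$ automorphisms of source and target), and that changing the base point $Q_0$ merely translates $\sigma$ by a global constant absorbed by the reciprocity equivalence. A secondary task is to shrink $X^{\dug}$ so that $\pi$ is genuinely étale of degree $p$, paralleling the treatment of the loci $j \in \{0,1728\}$ and of the supersingular locus in Theorem~\ref{refined}.
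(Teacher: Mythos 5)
Your plan has a genuine gap at the core step, the claimed reduction to the $\CL$ case of Theorem~\ref{mainth}. If $P\in\Pi^{-1}(C)$ and $P'$ is the $\CL$-lift of $\bar P$ (e.g.\ the point associated to $\tilde E_0/\tilde H$), then $\Phi(P)-\Phi(P')$ lies in the kernel of reduction $A(R)\to A(k)$, which is the formal group $\hat A(pR)$. This kernel is \emph{not} contained in $A(R)\pdiv=A(R)_{\tors}+pA(R)$: since $R=W(k)$ is absolutely unramified, $\hat A(pR)$ is torsion-free, and $\hat A(pR)/p\hat A(pR)$ is infinite (it maps onto $pR/p^2R\cong k$); more concretely, by Lemma~\ref{thofker}(4) one has $A(R)\pdiv=\psi^{-1}(pR)$, and for $x\in\hat A(pR)$ with logarithm $pw$ one computes $\psi(x)\equiv w^{p^2}-\bar a_p w^p\pmod p$, which is generically nonzero. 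Hence replacing the $P_i$ by $\CL$-lifts of their reductions changes the class of $\sum m_i\Phi(P_i)$ in $A(R)/A(R)\pdiv$, and the second equivalence of Theorem~\ref{mainth} applied to the $\CL$ family does not transport to the original points. This obstruction is precisely what the degree-$p$ cover is meant to absorb, so you cannot outsource the work back to the $\CL$ case.

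The paper avoids the issue by a different construction of both $\bar X^\ddug$ and $\sigma$, built around the isogeny-covariant $\d$-modular form $f_\lambda$ rather than any equation attached to $A$. One picks a Weierstrass model $y^2=x^3+Ax+B$ for a representative $Q$ of $C$ and uses Lemma~\ref{9438hfb}(1) to find $\lambda\in R^\times$ with $f_\lambda(A,B)=0$; isogeny covariance (Lemma~\ref{9438hfb}(2),(3)) then forces $f_\lambda^\flat$ to vanish at \emph{every} $P\in\Pi^{-1}(C)\cap X^\dug(R)$. The cover is
\[
\bar X^\ddug:=\Spec\Bigl((\cO^1(X^\dug)\otimes_R k)\big/(\overline{f_\lambda^\flat})\Bigr),
\]
finite \'etale of degree $p$ after shrinking $X^\dug$ (Lemmas~\ref{coppies}, \ref{uewrybcxmzni}), and $\overline{\Sigmatt}$ is the image of $\overline{f^\sharp}$ in this quotient. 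The map $\sigma$ is then tautological: for $P\in\Pi^{-1}(C)\cap X^\dug(R)$, evaluation at $P$ gives a homomorphism $\cO^1(X^\dug)\to R$ killing $f_\lambda^\flat$, hence a $k$-point of $\bar X^\ddug$ above $\bar P$. No base point $Q_0$, no choice of isogeny, and no well-definedness lemma is needed; the identity $\overline{\Sigmatt}(\sigma(P))=\overline{f^\sharp(P)}=\overline{\psi(\Phi(P))}$ is immediate, and the equivalence follows from Lemma~\ref{thofker}(4) exactly as in the proof of Theorem~\ref{mainth}. Note also that the defining equation of $\bar X^\ddug$ depends on the isogeny class through $\lambda$ but not on $A$ through $\bar a_p$; your guessed relation $y^p-\bar a_p y=\bar f^{(-1)}$ is a relation satisfied by the $\CL$ reciprocity function $\overline{\Sigmat}$ on $\bar X^\dug$ itself (Theorem~\ref{refined}(3)), not the equation cutting out the cover — the latter is an additive polynomial in $\overline{f^\flat}$, whose characteristic polynomial is the one recorded in the corollary to Theorem~\ref{refined2}, involving $\bar\lambda$ and $h=(\overline{\Phi^{\dug\dug}})^{(p-1)/2}$.
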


Theorem~\ref{isoggg} will be proved in Section~\ref{S:local}.

 \begin{remark} 

 1) Again, as the proof will show,
 the maps $\overline{\Sigmatt}$ and $\sigma$ will have a functorial nature.
In Theorem~\ref{isoggg} $\sigma$ is simply a map of sets,
but the proof will show that $\sigma$ has actually an
algebro-geometric flavor.

2) Theorem~\ref{isoggg} is an analogue of the second equivalence
in Theorem~\ref{mainth}.
Is there also an isogeny-class analogue of the first equivalence
in Theorem~\ref{mainth}?

3) The sum in the right half of \eqref{trebuie} may be viewed as 
a function $\eta^{\ddug}$ on $\overline{X^{\ddug}}^n$ 
evaluated at $(\sigma(P_1),\ldots,\sigma(P_n))$.
If the value is zero, then so is $\eta^{\dug}(\bar{P}_1,\ldots,\bar{P}_n)$,
where $\eta^{\dug}$ is the norm of $\eta^{\ddug}$
in the degree-$p^n$ extension 
$\OO\left( \overline{X^{\ddug}}^n \right)$ 
of $\OO\left( \overline{X^{\dug}} ^n\right)$.
Here $\eta^{\dug}$ may be expressed as a polynomial in the $m_i$
and the coefficients of the characteristic polynomial of
multiplication-by-$\Phi^{\ddug}$ on the locally free 
$\cO(\overline{X^{\dug}})$-algebra
 $\cO(\overline{X^{\ddug}})$.
Thus the left half of \eqref{trebuie} implies a statement 
expressible in terms of evaluation of functions on $\overline{X^{\dug}}$
instead of $\overline{X^{\ddug}}$.
Theorem~\ref{refined2}(4) will show
that $\eta^{\dug}$ is not always zero (consider the case $n=1$, for example),
so the statement is not always vacuous.
\end{remark}

Theorem~\ref{isoggg} trivially implies

\begin{corollary}
\label{trid}
 In the notation of Theorem~\ref{isoggg}
  we have
\[\rank_p^{A(R)} \left(\sum_{i=1}^n \bZ \cdot  \Phi(P_i)\right)=
 \dim_{\bF_p} \left( \sum_{i=1}^n \bF_p \cdot
 \overline{\Sigmatt}(\sigma(P_i))\right).\]
 \end{corollary}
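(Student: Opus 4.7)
The plan is to translate the biconditional of Theorem~\ref{isoggg} into a linear-algebra statement about two maps out of $\Z^n$, and then read off the corollary by comparing kernels and images.

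First I would define two group homomorphisms from $\Z^n$. Let
\[
\psi\colon \Z^n \to A(R)/A(R)\pdiv, \qquad (m_1,\dots,m_n) \mapsto \sum_{i=1}^n m_i \Phi(P_i) \bmod A(R)\pdiv,
\]
and
\[
\chi\colon \Z^n \to k, \qquad (m_1,\dots,m_n) \mapsto \sum_{i=1}^n m_i \overline{\Sigmatt}(\sigma(P_i)).
\]
Since the target of $\psi$ is killed by $p$ (as $pA(R) \subseteq A(R)\pdiv$) and the target of $\chi$ has characteristic $p$, both maps factor through $\F_p^n$, yielding $\bar\psi\colon \F_p^n \to A(R)/A(R)\pdiv$ and $\bar\chi\colon \F_p^n \to k$.

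Next I would identify images. By definition, $\Gamma := \sum_i \Z\cdot\Phi(P_i)$ and the image of $\bar\psi$ equals $(\Gamma+A(R)\pdiv)/A(R)\pdiv \isom \Gamma/(\Gamma\cap A(R)\pdiv)$, so
\[
\dim_{\F_p}(\im\bar\psi) = \rank_p^{A(R)}(\Gamma).
\]
Similarly, the image of $\bar\chi$ is the $\F_p$-span $V := \sum_i \F_p \cdot \overline{\Sigmatt}(\sigma(P_i))$, so $\dim_{\F_p}(\im\bar\chi) = \dim_{\F_p}(V)$.

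The crucial input is now Theorem~\ref{isoggg}, which says exactly that for any $(m_1,\dots,m_n) \in \Z^n$ the condition $\sum m_i \Phi(P_i) \in A(R)\pdiv$ is equivalent to $\sum m_i \overline{\Sigmatt}(\sigma(P_i)) = 0$. This is the equality $\ker\psi = \ker\chi$ of subgroups of $\Z^n$; reducing mod $p$ gives $\ker\bar\psi = \ker\bar\chi$ inside $\F_p^n$. By rank-nullity applied to both maps, $\dim_{\F_p}(\im\bar\psi) = \dim_{\F_p}(\im\bar\chi)$, which is the claimed identity.

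There is essentially no obstacle here: the statement is a direct linear-algebra reformulation of Theorem~\ref{isoggg}, and the only small point to verify is that both maps genuinely factor through $\F_p^n$ (so that the $p$-rank on the left matches the $\F_p$-dimension on the right). This is why the corollary is described as trivially implied by the theorem.
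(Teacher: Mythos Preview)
Your argument is correct and is exactly the natural unpacking of the phrase ``trivially implies'' that the paper uses: Theorem~\ref{isoggg} asserts equality of the two kernels in $\Z^n$, both maps factor through $\F_p^n$, and rank--nullity finishes. One cosmetic remark: you reuse the symbol $\psi$, which in the paper already denotes the $\delta$-character of $A$; it would be cleaner to pick a different name for your auxiliary map.
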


Just as Corollary~\ref{cor1} implied Corollary~\ref{coru},
Corollary \ref{trid} applied to subsets $\{P_1,\ldots,P_n\}$
of $\Phi^{-1}(\Gamma) \intersect \Pi^{-1}(C) \intersect X^{\dug}(R)$ 
implies

 \begin{corollary}
\label{tridd} Assume $\thecorr$ is a  modular-elliptic  or a
Shimura-elliptic correspondence  and assume $p$ is a sufficiently
big, good prime. Let $C$
 be an
ordinary  prime-to-$p$ isogeny class in $S(R)$ and let $\Gamma \leq
A(R)$ be a subgroup with $r:=\rank_p^{A(R)}(\Gamma)<\infty$.
 Then the set
 $\overline{\Phi(\Pi^{-1}(C))\cap \Gamma} \subseteq A(k)$
 is finite of cardinality at most $c p^{r}$
 where $c$ is a constant not depending on $\Gamma$. In
 particular, the set $\Phi(\Pi^{-1}(C))\cap A(R)_{\tors}$ is finite.
 \end{corollary}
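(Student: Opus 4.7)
The plan is to derive this directly from Corollary~\ref{trid}, exactly as the text suggests and as Corollary~\ref{coru} was derived from Corollary~\ref{cor1}. Using the open set $X^{\dug}$, the finite \'etale cover $\pi\colon \bar X^{\ddug} \to \bar X^{\dug}$, the map $\sigma$, and the function $\overline{\Sigmatt}$ provided by Theorem~\ref{isoggg}, set
\[
	S_0 := \Phi^{-1}(\Gamma)\cap\Pi^{-1}(C)\cap X^{\dug}(R)\subseteq X(R).
\]
For any finite subset $\{P_1,\ldots,P_n\}\subseteq S_0$, the subgroup $\sum_i\Z\cdot\Phi(P_i)$ is contained in $\Gamma$, so its $p$-rank is at most $r$; Corollary~\ref{trid} then gives
\[
	\dim_{\F_p}\sum_{i=1}^n\F_p\cdot\overline{\Sigmatt}(\sigma(P_i))\leq r.
\]
Letting such subsets exhaust $S_0$ shows that the full image $\overline{\Sigmatt}(\sigma(S_0))\subseteq k$ lies in an $\F_p$-subspace of dimension at most $r$, and hence has at most $p^r$ elements.

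Since $\bar X^{\ddug}$ has only finitely many irreducible components and $\overline{\Sigmatt}$ is non-constant on each, every fiber of $\overline{\Sigmatt}$ is bounded by some constant $d$ independent of $\Gamma$, yielding $\#\sigma(S_0)\leq d\,p^r$; combined with $\pi\circ\sigma=(\text{reduction mod }p)$ this forces $\#\overline{S_0}\leq d\,p^r$. The remaining points of $\Phi^{-1}(\Gamma)\cap\Pi^{-1}(C)$ reduce into the finite set $(\bar X\setminus\bar X^{\dug})(k)$, contributing at most $d_0:=\#(\bar X\setminus\bar X^{\dug})(k)$ further points. Because reduction commutes with $\Phi$,
\[
	\overline{\Phi(\Pi^{-1}(C))\cap\Gamma}=\bar\Phi\bigl(\overline{\Phi^{-1}(\Gamma)\cap\Pi^{-1}(C)}\bigr),
\]
whose cardinality is at most $d\,p^r+d_0\leq cp^r$ with $c:=d+d_0$ depending only on the correspondence, on $C$, and on $p$, but not on $\Gamma$.

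For the ``in particular'' assertion, take $\Gamma=A(R)_{\tors}$, so that $r=0$ and the image in $A(k)$ has at most $c$ elements. For $p$ sufficiently large the formal group $\hat A(pR)$ is torsion-free (the formal logarithm identifies it with $(pR,+)$), so $A(R)_{\tors}\to A(k)$ is injective and the finite image lifts to a finite subset of $A(R)$. The derivation is thus essentially bookkeeping, as the text anticipates; there is no substantive obstacle at this stage, the real work having been done in Theorem~\ref{isoggg}, whose proof is deferred to Section~\ref{S:local}.
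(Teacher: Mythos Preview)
Your proof is correct and follows essentially the same approach as the paper, which simply remarks that the argument mirrors the derivation of Corollary~\ref{coru} from Corollary~\ref{cor1}; you have written out the details the paper leaves implicit, including the passage through $\sigma$ and $\pi$ needed because $\overline{\Sigmatt}$ lives on the cover $\bar X^{\ddug}$ rather than on $\bar X^{\dug}$. Your treatment of the ``in particular'' clause via injectivity of $A(R)_{\tors}\to A(k)$ is also exactly what the paper invokes.
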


The first conclusion of Corollary~\ref{tridd} implies the last
because the reduction map $A(R)_{\tors} \to A(k)$ is injective 
for large $p$.

One can ask if the set $\Phi(\Pi^{-1}(C))\cap \Gamma$ is finite
for every $\Gamma$ with $\rank_p^{A(R)}(\Gamma)<\infty$. 
We prove a result of this type for certain $\Sigma$-isogeny classes
instead of prime-to-$p$ isogeny classes:

\begin{corollary}
\label{bizzet} Assume that $\thecorr$ is a modular-elliptic or a
Shimura-elliptic correspondence and that $p$ is a sufficiently
large good prime.
Let $Q \in S(R)$ be an ordinary point.
Let $\Sigma$ be the set of all rational primes that are inert in the
imaginary quadratic field $\cK_Q$ (so $p \notin \Sigma$).
Let $C$ be the $\Sigma$-isogeny class of $Q$ in $S(R)$. 
Let $\Gamma \leq A(R)$ be a subgroup 
with $r:=\rank_p^{A(R)}(\Gamma)<\infty$. 
Then the set $\Phi(\Pi^{-1}(C))\cap \Gamma$
is finite of cardinality at most $c p^{r}$,
where $c$ is a constant not depending on $\Gamma$.
\end{corollary}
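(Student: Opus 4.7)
Plan. I would deduce Corollary~\ref{bizzet} from Corollary~\ref{tridd}, which already bounds the reduction modulo $p$ of $\Phi(\Pi^{-1}(C))\cap \Gamma$ by $c_0 p^r$ inside $A(k)$. The task is then to promote that bound on the image to a bound on $\Phi(\Pi^{-1}(C))\cap \Gamma$ itself, by controlling the fibers of the reduction map on $\Pi^{-1}(C)\subset X(R)$; this is precisely where the inertness of the primes in $\Sigma$ enters. Since $Q$ is ordinary, $p$ splits in $\cK_Q$, so $p\notin\Sigma$, and $C$ is contained in the ordinary prime-to-$p$ isogeny class of $Q$. Corollary~\ref{tridd} therefore applies and yields $|\overline{\Phi(\Pi^{-1}(C))\cap \Gamma}|\le c_0 p^r$ for a constant $c_0$ independent of $\Gamma$.

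The heart of the argument is the following fiber-bound claim: the reduction map $\Pi^{-1}(C)\cap X(R)\to \bar X(k)$ has fibers of size bounded by a constant depending only on the correspondence. To prove it I would take two points $Q_1,Q_2\in C$ with $\bar Q_1=\bar Q_2$, together with the given $\Sigma$-isogenies $\psi_i\colon E_Q\to E_{Q_i}$ of degrees $d_i$ and an isomorphism $\iota\colon \bar E_{Q_1}\stackrel{\sim}{\to}\bar E_{Q_2}$. Composition produces an endomorphism
\[ \bar\alpha := \bar\psi_2^{\vee}\circ \iota\circ \bar\psi_1 \;\in\; \End(\bar E_Q)\subset \cO_{\cK_Q} \]
of degree $d_1 d_2$, which is a product of primes inert in $\cK_Q$. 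Because the only ideals of $\cO_{\cK_Q}$ whose norms are supported on inert primes are principal and generated by rational integers (for $\ell$ inert the unique prime above $\ell$ is $(\ell)$ itself, of norm $\ell^2$), the integer $d_1 d_2$ must be a perfect square $d^2$, and $\bar\alpha = u\cdot d$ for some unit $u\in \cO_{\cK_Q}^{\times}$, a group of order at most~$6$. Since $d\in\Z$ lifts tautologically to $\End(E_Q)$, and since the kernel of any isogeny of degree prime to $p$ over $R$ is \'etale and hence uniquely determined by its reduction, a Serre--Tate rigidity argument shows that $E_{Q_1}$ and $E_{Q_2}$ are isomorphic over $R$, lifting $\iota$ modulo the unit $u$. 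This bounds the fibers of reduction on $\Pi^{-1}(C)$ by $|\cO_{\cK_Q}^{\times}|\le 6$. The Shimura-elliptic case is analogous, since the target false elliptic curves inherit the $\cO_D$-action from $E_Q$ and inertness is a property of the embedding $\cK_Q\hookrightarrow D$.

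The main obstacle is this fiber-bound claim, specifically the rigidity step concluding $E_{Q_1}\simeq E_{Q_2}$ over $R$ modulo a bounded unit ambiguity; an additional subtlety arises when $\cK_Q\in\{\Q(\sqrt{-1}),\Q(\sqrt{-3})\}$, because then $|\cO_{\cK_Q}^{\times}|>2$ and the unit $u$ need not lift to $\End(E_Q)$ when $E_Q$ is not itself a canonical lift. Nevertheless the resulting ambiguity is absolute in the sense that it depends only on the correspondence and not on $\Gamma$. Combining the fiber bound with the image bound from Corollary~\ref{tridd}, and absorbing the degree of $\Pi$, the level-structure counts, the finite locus outside a standard open $X^{\dug}$, and the unit-group bound into a single constant $c$, one obtains $|\Phi(\Pi^{-1}(C))\cap \Gamma|\le c p^r$ as claimed.
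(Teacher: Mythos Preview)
Your plan is essentially the paper's own proof: combine Corollary~\ref{tridd} with a fiber-bound for the reduction map on $C$ (the paper packages this as Lemma~\ref{berliozz}), obtained exactly via the observation that an element of $\cO_{\cK_Q}$ whose norm is supported on inert primes lies in $\Z\cdot\cO_{\cK_Q}^{\times}$.

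Two small points where the paper is a bit cleaner. First, instead of composing through $E_Q$, the paper takes the $\Sigma$-isogeny $u\colon E_{Q_1}\to E_{Q_2}$ directly (such an isogeny exists since both lie in the $\Sigma$-isogeny class of $Q$) and reads $\iota^{-1}\circ\bar u$ as an element of $\End(\bar E_{Q_1})\subset\cO_{\cK_Q}$; this avoids the detour. Second, your worry about the extra units when $\cK_Q\in\{\Q(\sqrt{-1}),\Q(\sqrt{-3})\}$ is unnecessary: once $\bar u=\epsilon\cdot[n]$ with $\epsilon$ a unit, one does not lift $\epsilon$; one observes that $\ker u$ and $\ker[n]$ are \'etale subgroup schemes of $E_{Q_1}$ of the same order with equal reductions, hence equal over $R$, so $E_{Q_2}=E_{Q_1}/\ker u\simeq E_{Q_1}/\ker[n]\simeq E_{Q_1}$. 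No ambiguity enters, and the fiber bound depends only on the number of level structures (and on $\deg\Pi$, etc.), not on $|\cO_{\cK_Q}^{\times}|$.
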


The proof of Corollary \ref{bizzet} is not straightforward and
will be given in Section~\ref{S:local}.

\subsection{Refinement of results on isogeny classes for 
modular parametrizations}  
\label{S:refinement for isogeny classes}

Suppose that $\thecorr$ arises from a newform $f=\sum a_n q^n$.
Our goal in this subsection is to state Theorem~\ref{refined2}, 
which describes the cover $\overline{X^{\ddug}}$ and 
the function $\overline{\Sigmatt}$ explicitly in this case.

Let $I_1(N)$ be the Igusa curve from pp.~460--461 of~\cite{gross},
except that we view $I_1(N)$ as a smooth projective integral curve.
It is a Galois cover of $\overline{X_1(N)}$ ramified only over
supersingular points,
and the Galois group is naturally isomorphic to $\F_p^\times$.
Let $J:=I_1(N)/\langle -1 \rangle$ be the intermediate cover
of degree $(p-1)/2$ obtained by taking the quotient of $I_1(N)$
by the involution corresponding to $-1 \in \F_p^\times$.
We will describe $\overline{X^{\ddug}}$ in terms of $J$.
There is a point $\infty$ on each of these covers that is unramified
over $\infty \in \overline{X_1(N)}$.
In particular, rational functions on $I_1(N)$ and $J$
have Fourier expansions in $k((q))$.

Let 
\begin{equation}
\label{fructnou1}
 f^{(0)}(q) := \sum_{(n,p)=1} a_n q^n \in
 \bZ_p[[q]].\end{equation}
(The series $f^{(0)}(q)$ is called $f|R_{0}$ in~\cite[p.~115]{serre}.)
Let
\begin{equation}
\label {fructnou2}
 f^{(0)}_{[a_p]}(q) := \left( \sum_{i=0}^{\infty} a_p^i V^i\right)
 f^{(0)}(q) =\sum_{i=0}^{\infty} \sum_{(n,p)=1} a_p^i a_n q^{np^i} \in
 \bZ_p[[q]].\end{equation}
Corollary~\ref{C:series are modular}
and Lemma~\ref{L:series is modular function}
will show that for $p \gg 0$, the series $\overline{f^{(0)}_{[a_p]}(q)}$
is the Fourier expansion of some $\eta \in k(J)$.
For a constant $\bar{\lambda} \in k$ to be specified later, define
\[
\overline{\Phi^{\dug \dug}}:=
\begin{cases}
\bar{\lambda} \eta^{p^2} - \bar{a}_p \eta^p, &\text{ if $A_{R}$ is not CL} \\
\eta^p, &\text{ if $A_{R}$ is CL.}
\end{cases}
\]

\begin{theorem}[Explicit reciprocity functions mod $p$ for isogeny classes]
\label{refined2}
 Assume, in Theorem~\ref{isoggg},
that $\thecorr$ arises from  a modular
parametrization attached to a newform $f$ on $\Gamma_0(N)$.
Then there exists $\bar{\lambda} \in k^{\times}$
such that  $X^{\dug}$, $\overline{X^{\ddug}}$, and $\overline{\Sigmatt}$ 
can be chosen to satisfy:

1) 
The cover $\overline{X^{\ddug}}$ of $\overline{X^{\dug}}$ 
is a disjoint union
$\overline{X^0} \coprod \overline{X^+} \coprod \overline{X^-}$,
where $\overline{X^0} \simeq \overline{X^{\dug}}$ is the trivial cover
and $\overline{X^+}$ and $\overline{X^-}$ are each isomorphic to 
the inverse image of $\overline{X^{\dug}}$ under $J \to \overline{X_1(N)}$.

2) The restrictions of $\overline{\Sigmatt}$ 
to $\overline{X^0}$, $\overline{X^+}$, $\overline{X^-}$ equal
\[
	\overline{\Phi^{\dug}}, \qquad 
	\overline{\Phi^{\dug}}+\lambda_{+} \overline{\Phi^{\dug \dug}}, \qquad
	\overline{\Phi^{\dug}}+\lambda_{-} \overline{\Phi^{\dug \dug}},
\]
respectively, where $\lambda_{\pm} \in k$ are such that 
$\lambda_{+}^{(p-1)/2},\lambda_{-}^{(p-1)/2}$
are the two square roots of $\bar{\lambda}$.
\end{theorem}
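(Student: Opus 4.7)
The plan is to trace through the construction of $\overline{\Sigmatt}$ from Theorem~\ref{isoggg}, specialized to the modular parametrization setting, and identify the cover and function explicitly via Fourier expansions at the cusp $\infty$.

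First, I would use the functoriality of the construction with respect to $\Phi$ (the analogue of Remark~\ref{kokk} for $\Sigmatt$) together with the Eichler--Shimura factorization $X_1(N) \to A_f \to A$ to reduce to the case in which $\Phi$ is the modular parametrization to $A_f$ itself and $\omega_A$ is normalized to match the differential attached to $f$. Under this reduction, Theorem~\ref{refined} already identifies $\overline{\Phi^{\dug}}$ explicitly; since Theorem~\ref{mainth} and Theorem~\ref{isoggg} must agree on the CL part of $C$, this forces $\overline{\Sigmatt}$ to restrict to $\overline{\Phi^{\dug}}$ on the CL-locus component.

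Second, I would identify the cover $\overline{X^{\ddug}}$. The degree-$p$ cover arising from the proof of Theorem~\ref{isoggg} records, via Serre--Tate deformation theory of the ordinary elliptic curve attached to a point of the isogeny class, the extraction of a $(p-1)$-th root (up to sign) of a scalar built from the Serre--Tate parameter. The Igusa cover $I_1(N) \to \overline{X_1(N)}$, with Galois group $\F_p^\times$, is precisely the cover that extracts such a root on the modular curve, and the quotient $J = I_1(N)/\langle -1\rangle$ absorbs the sign ambiguity that is invisible to $A(R)\pdiv$. Matching degrees, $1 + (p-1)/2 + (p-1)/2 = p$ gives the three-component decomposition of part~(1): the trivial component $\overline{X^0}$ handles the CL contribution, and the pullback of $I_1(N)$ splits as $\overline{X^+} \coprod \overline{X^-}$ according to the two sheets over $J$, corresponding to the two square roots of $\bar{\lambda}$.

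Third, for the formulas of part~(2), the correction $\overline{\Sigmatt} - \overline{\Phi^{\dug}}$ on each of $\overline{X^{\pm}}$ must be a scalar multiple of a function pulled back from $J$. The key Fourier-expansion computation uses Corollary~\ref{C:series are modular} and Lemma~\ref{L:series is modular function} to identify $\overline{f^{(0)}_{[a_p]}(q)}$ as the $q$-expansion of some $\eta \in k(J)$; then the Artin--Schreier-type identity~\eqref{fruct4} for the CL piece, transported to the Igusa cover, determines that the correction takes the form $\eta^p$ in the CL-$A$ case and $\bar{\lambda}\eta^{p^2} - \bar{a}_p \eta^p$ in the non-CL-$A$ case, multiplied by $\lambda_{\pm}$ on the respective sheet, with $\lambda_\pm^{(p-1)/2}$ equal to the two square roots of $\bar{\lambda}$.

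The main obstacle is pinning down the scalar $\bar{\lambda}$ and the sheet constants $\lambda_{\pm}$. This amounts to a normalization computation tracking $\omega_A$ relative to the differential attached to $f$ through the $\delta$-function machinery developed in Section~\ref{S:local}, combined with an Igusa-level uniqueness argument analogous to Theorem~\ref{uunnii} to eliminate residual ambiguity in $\overline{\Sigmatt}$. Once the Fourier expansion of $\overline{\Sigmatt}$ at $\infty$ is matched on each component, the Fourier expansion principle on the Igusa cover combined with the Hecke-eigenform properties of $\bar{f}^{(-1)}$ and $\bar{f}^{(0)}$ from~\cite[pp.~451--461]{gross} upgrades the $q$-series identities to rational function identities, completing the proof.
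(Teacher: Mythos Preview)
Your overall outline is in the right spirit, but it overcomplicates and partially misidentifies the argument; the paper's proof is almost a one-liner once the preliminaries are in place.

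For part~(1), you try to identify the cover $\overline{X^{\ddug}}$ through Serre--Tate deformation heuristics. But $\overline{X^{\ddug}}$ is, by construction in the proof of Theorem~\ref{isoggg}, simply $\Spec \cB$ with $\cB=(\cO^1(X^{\dug})\otimes_R k)/(\overline{f_{\lambda}^{\flat}})$. The decomposition of the cover therefore comes from a \emph{factorization of $\overline{f_{\lambda}^{\flat}}$ as a polynomial in $\overline{f^{\flat}}$} over $\cO(\bar{X}^{\dug})$. This is exactly Lemma~\ref{shadap}: one has $\overline{f_{\lambda}^{\flat}}=\overline{f^{\flat}}\cdot G_+\cdot G_-$ with $G_{\pm}$ of degree $(p-1)/2$ and constant term involving $\bar{E}_{p-1}/t^{(p-1)/2}$; this yields $\cB\otimes_{\cA}L\simeq L\times\calA^+\times\calA^-$, and the two nontrivial factors are visibly the function fields of the pullback of $J$ (since $J$ is precisely where one has a $(p-1)/2$-th root of the Hasse invariant). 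Your Serre--Tate narrative would have to reproduce exactly this factorization to succeed, and you do not indicate how.

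For part~(2), you propose to determine the restrictions by combining identities like~\eqref{fruct4} with an Igusa-level uniqueness argument in the style of Theorem~\ref{uunnii}. This is unnecessary. By construction, $\overline{\Sigmatt}$ is the image of $\overline{f^{\sharp}}$ in $\cB$, so on each of the $p$ Fourier branches it is obtained by substituting $q'\mapsto\lambda_i q^p$ (the maps~\eqref{iuegfyc} and~\eqref{acompo}) into $\overline{f^{\sharp}_{\infty}}$, which is already computed in~\eqref{floarenoua}. Reading off the result directly gives $\overline{\Phi^{\dug}}(q)+\lambda_i\,\overline{\Phi^{\dug\dug}}(q)$ with $\lambda_i$ running over the roots of $x^p-\bar{\lambda}x$; grouping $\lambda_i=0$ and the two cosets of $\{\pm1\}$ in $\bF_p^{\times}$ yields the three formulas. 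There is no residual ambiguity: $\bar{\lambda}$ is not a constant to be ``pinned down'' by uniqueness, it is simply the reduction of the $\lambda\in R^{\times}$ already chosen in Lemma~\ref{9438hfb} when defining $f_{\lambda}$ for the given isogeny class. Your functoriality reduction in the first paragraph and the appeal to a uniqueness argument in the last are both superfluous.
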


Theorem~\ref{refined2} will be proved in Section~\ref{S:local}.

\begin{corollary}
Let notation be as in Theorem~\ref{refined2}.
The characteristic polynomial of 
the endomorphism ``multiplication by $\overline{\Phi^{\ddug}}$'' 
in the locally free $\cO(\overline{X^{\dug}})$-algebra
 $\cO(\overline{X^{\ddug}})$ is
 \[
x^p-\bar{\lambda} h^2 x + (\bar{\lambda} h^2 \overline{\Phi^{\dug}}- (\overline{\Phi^{\dug}})^p),\]
where $h:=\left(\overline{\Phi^{\dug \dug}}\right)^{(p-1)/2} 
\in k(\overline{X_1(N)})$.
\end{corollary}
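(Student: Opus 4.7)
The strategy is to use the product decomposition provided by Theorem~\ref{refined2}(1). Since
\[
\OO(\overline{X^{\ddug}}) \;\cong\; \OO(\overline{X^0}) \times \OO(\overline{X^+}) \times \OO(\overline{X^-})
\]
as $\OO(\overline{X^{\dug}})$-algebras, the characteristic polynomial of multiplication by $\overline{\Phi^{\ddug}}$ factors as the product of the characteristic polynomials of its restrictions to the three components. On the trivial factor $\overline{X^0}\cong\overline{X^{\dug}}$, Theorem~\ref{refined2}(2) gives the restriction $\overline{\Phi^{\dug}}$, contributing a linear factor $x-\overline{\Phi^{\dug}}$. On each of $\overline{X^\pm}$, the restriction is $\overline{\Phi^{\dug}}+\lambda_\pm\overline{\Phi^{\dug\dug}}$, and since $\overline{\Phi^{\dug}}$ is pulled back from the base (and so acts as a scalar), the characteristic polynomial there is obtained from that of $\lambda_\pm\overline{\Phi^{\dug\dug}}$ by the substitution $x\mapsto x-\overline{\Phi^{\dug}}$.

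So I am reduced to computing the characteristic polynomial of multiplication by $\overline{\Phi^{\dug\dug}}$ on $\OO(\overline{X^\pm})$ over $\OO(\overline{X^{\dug}})$. The cover $\overline{X^\pm}\to\overline{X^{\dug}}$ is the base change of the Galois cover $J\to\overline{X_1(N)}$ of degree $(p-1)/2$ with Galois group $\F_p^\times/\{\pm1\}$. Tracking the weights through the construction of $\eta$ from the weight-$2$ newform $f$, one verifies that $\overline{\Phi^{\dug\dug}}$ transforms under this Galois group by the character $[\sigma]\mapsto\sigma^2$ in both the CL and non-CL cases, since $\eta^p$ and $\eta^{p^2}$ carry the same weight class modulo $p-1$ as $\eta$. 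This character descends from $\sigma\mapsto\sigma^2$ on $\F_p^\times$, whose kernel is exactly $\{\pm 1\}$, so it is faithful on the quotient, and hence $\overline{\Phi^{\dug\dug}}$ generates the field extension $k(\overline{X^\pm})/k(\overline{X^{\dug}})$. Since $h:=(\overline{\Phi^{\dug\dug}})^{(p-1)/2}$ lies in the base, the minimal---and therefore the characteristic---polynomial of $\overline{\Phi^{\dug\dug}}$ equals $x^{(p-1)/2}-h$. Scaling and using $\lambda_\pm^{(p-1)/2}=\pm\mu$ with $\mu^2=\bar\lambda$, the characteristic polynomial on each $\overline{X^\pm}$ becomes $x^{(p-1)/2}\mp\mu h$.

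Combining the three factors then gives
\[
(x-\overline{\Phi^{\dug}})\cdot\bigl[(x-\overline{\Phi^{\dug}})^{(p-1)/2}-\mu h\bigr]\cdot\bigl[(x-\overline{\Phi^{\dug}})^{(p-1)/2}+\mu h\bigr] = (x-\overline{\Phi^{\dug}})^p-\bar\lambda h^2(x-\overline{\Phi^{\dug}}),
\]
and in characteristic $p$ the right-hand side simplifies via $(x-\overline{\Phi^{\dug}})^p=x^p-(\overline{\Phi^{\dug}})^p$ to $x^p-\bar\lambda h^2 x+(\bar\lambda h^2\overline{\Phi^{\dug}}-(\overline{\Phi^{\dug}})^p)$, as required.

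The main obstacle is establishing the faithfulness of the character governing $\overline{\Phi^{\dug\dug}}$, or equivalently that $\overline{\Phi^{\dug\dug}}$ generates $k(\overline{X^\pm})/k(\overline{X^{\dug}})$. Without faithfulness the minimal polynomial would have strictly smaller degree $d\mid(p-1)/2$, yielding characteristic polynomial $(x^d-(\overline{\Phi^{\dug\dug}})^d)^{(p-1)/(2d)}$, which in general does not simplify to $x^{(p-1)/2}-h$ in characteristic $p$. The faithfulness ultimately rests on the weight-$2$ origin of $\eta$ together with the identity $\sigma^{2p^j}=\sigma^2$ in $\F_p^\times$ for all $j\ge 0$, and this is the step where one must unwind the explicit construction of $\eta$ from the proof of Theorem~\ref{refined2}.
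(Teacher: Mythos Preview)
Your argument is correct and follows the same overall plan as the paper: factor the characteristic polynomial over the three components of $\overline{X^{\ddug}}$, recombine, and then substitute $x\mapsto x-\overline{\Phi^{\dug}}$.  The paper's proof does exactly this, writing the characteristic polynomial of $\overline{\Phi^{\ddug}}-\overline{\Phi^{\dug}}$ as
\[
x\Bigl(x^{(p-1)/2}-\lambda_+^{(p-1)/2}h\Bigr)\Bigl(x^{(p-1)/2}-\lambda_-^{(p-1)/2}h\Bigr)=x^p-\bar\lambda h^2x
\]
and then shifting.

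Where you diverge is in the work you invest to justify the characteristic polynomial on each $\overline{X^{\pm}}$: you track the weight of $\eta$ and argue that $\overline{\Phi^{\dug\dug}}$ transforms under $\Gal(J/\overline{X_1(N)})\simeq\F_p^\times/\{\pm1\}$ by the faithful character $[\sigma]\mapsto\sigma^2$, hence generates the extension.  This is valid, but the paper leaves that step implicit, and the quicker route it has available is the Fourier picture already established in Lemma~\ref{coppies} and the proof of Theorem~\ref{refined2}: over $k((q))$ the algebra $\cO(\overline{X^{\ddug}})$ splits into $p$ copies of $k((q))$, and the image of $\overline{\Phi^{\ddug}}-\overline{\Phi^{\dug}}$ in the $i$-th copy is $\lambda_i\,\overline{\Phi^{\dug\dug}}(q)$ with $\lambda_1,\dots,\lambda_p$ the roots of $t^p-\bar\lambda t$.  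The characteristic polynomial is then literally $\prod_i(x-\lambda_i\,\overline{\Phi^{\dug\dug}}(q))=x^p-\bar\lambda\,\overline{\Phi^{\dug\dug}}(q)^{p-1}x=x^p-\bar\lambda h^2x$, with no need to decide whether any character is faithful.  So your last paragraph's worry, while correctly resolved, is unnecessary: the factorisation on each $\overline{X^{\pm}}$ can be read off from the product over all $p$ Fourier components rather than argued via Galois theory.
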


\begin{proof}
The characteristic polynomial of 
$\overline{\Phi^{\ddug}} - \overline{\Phi^{\dug}}$
equals
\[
	x \left( x^{(p-1)/2} - \lambda_+^{(p-1)/2} \overline{\Phi^{\dug \dug}}^{(p-1)/2} \right) \left( x^{(p-1)/2} - \lambda_-^{(p-1)/2} \overline{\Phi^{\dug \dug}}^{(p-1)/2} \right) = x^p - \bar{\lambda} h^2 x.
\]
In this, replace $x$ by $x-\overline{\Phi^{\dug}}$.
\end{proof}

\subsection{Strategy of proofs}
The proof of our local results
 will be an application of the theory of
 $\d$-characters~\cite{char,frob} and $\d$-modular
 forms ~\cite{difmod,shimura}. These two types
  of objects are special cases of {\it arithmetic differential equations}
  in the sense of ~\cite{book}.
Section~\ref{S:local} reviews the facts from this theory 
that are necessary for the proof.
As a sample of our strategy
let us explain, very roughly, the idea of our proof of Theorem~\ref{mainth}.
Assume for simplicity that we are dealing with 
a modular-elliptic correspondence $\thecorr$ arising from  a modular
parametrization attached to a newform $f$. Following~\cite{char}
 consider the
{\it Fermat quotient operator} $\d \colon R \to R$ defined by $\d
x:=(\phi(x)-x^p)/p$, where $\phi \colon R \to R$ is the lift of
Frobenius.
We view $\d$ as an analogue of a derivation operator
with respect to $p$. Recall from~\cite{char} that
 if $Y$ is any smooth scheme  over
 $R$ then a function $g \colon Y(R) \to R$ is called a
 $\d$-{\it function of order}
$r$ if it is Zariski locally 
of the form $P \mapsto G(x, \d x,\ldots,\d^r x)$, 
where $G$ is a restricted power series with $R$-coefficients 
and $x \in R^N$ is a tuple of affine coordinates
of $P$ in some $N$-dimensional affine space. 
If $A$ is our elliptic curve then by~\cite{char} there
exists a $\d$-function of order $2$,
$\psi\colon A(R) \to R$, which is also a group homomorphism;
$\psi$ is called in~\cite{char} a {\em $\d$-character} 
and may be viewed as an arithmetic
analogue of the ``Manin map''~\cite{man, man1}.
Consider the composition $f^{\sharp}=\psi \circ \Phi \colon X(R) \to R$.
On the other hand, the theory of
$\d$-modular forms~\cite{difmod} yields an open subset $X^{\dug}$ of $S$
and a $\d$-function of order~$1$, $f^{\flat} \colon X^{\dug}(R) \to R$, 
that vanishes at all CL-points. 
Then we prove that there exist $\d$-functions of order~$2$, 
denoted $h_0,h_1:X^{\dug}(R) \to R$, such that the $\d$-function
\[\Sigmat:=f^{\sharp}-h_0 \cdot f^{\flat}-h_1 \cdot \d \circ f^{\flat}\]
has order $0$, or equivalently is a formal function in the usual
sense of algebraic geometry. (Intuitively, in the system of
``arithmetic differential equations'' $f^{\sharp}=f^{\flat}=0$ one
can eliminate all the ``derivatives'' of the unknowns.) It follows
that $f^{\sharp}$ and $\Sigmat$ have the same value at each
CL-point $P_i$. So 
\[\sum m_i \Sigmat(P_i)=\sum m_i f^{\sharp} (P_i)=\psi(\sum m_i \Phi(P_i)).\]
By the arithmetic
analogue in~\cite{char,frob} of Manin's Theorem of the Kernel
\cite{man,man1}, $\psi(\sum m_i \Phi(P_i))$ vanishes if and only
if $\sum m_i \Phi(P_i)$ is torsion. (Actually for our application
to Corollaries \ref{coru} and \ref{cor2} we need only the ``if''
part, which does not require the analogue of the Theorem of
the Kernel.) On the other hand we will check that
$\overline{\Sigmat} \notin k$ by looking at Fourier
$q$-expansions, and this will end the proof of the first
equivalence in Theorem~\ref{mainth} in the special case we considered.

In particular, our proof of the (effective) finiteness of
$\Phi(\CL) \cap \Gamma$ in the case $\Gamma=A(R)_{\tors}$ 
can be intuitively described as follows. The points of $\CL$ are
solutions of the ``arithmetic differential equation''
$f^{\flat}=0$ whereas the points of $\Phi^{-1}(\Gamma)$ are
solutions of the ``arithmetic differential equation''
$f^{\sharp}=0$. Hence the points of $\CL \cap \Phi^{-1}(\Gamma)$
are solutions of the system of ``arithmetic differential
equations'' $f^{\flat}=f^{\sharp}=0$. By what was said above  one
can eliminate, in this system,  the ``derivatives'' of the
unknowns and hence one is left with a (non-differential) algebraic
equation mod $p$, whose ``degree'' can be estimated. There are
only finitely many solutions to this algebraic equation and their
number is effectively bounded by the ``degree".

\section{Proofs of global results, I}
\label{S:global}

In this section we prove some of our global results. The rest of
them will be proved in Section~\ref{S:global 2}, as a consequence
of the local theory to be developed in Section~\ref{S:local}.

\subsection{Proofs using equidistribution}
\label{S:equidistribution}

We begin with Theorem~\ref{T:global2}; for its proof we need some
measure-theoretic prerequisites.

\begin{lemma}
\label{L:tube} Let $S$ be a smooth projective curve over $\C$. Let
$X$ be a (possibly singular) closed $N$-dimensional
subvariety of $\PP^n_\C$. Let $\pi \colon X \to S$ be a morphism.
Let $s \in S(\C)$. Equip $S(\C)$ and $\PP^n(\C)$ with real
analytic Riemannian metrics. Let $B_r$ be the open disk
in $S$ with center $s$ and radius $r$, and let $B_r'=B_r - \{s\}$. 
Then there exists $\delta>0$ such that the $N$-dimensional volume of
$\pi^{-1}(B_r')$ with respect to the metric on $\PP^n(\C)$ is
$O(r^{\delta})$ as $r \to 0$.
\end{lemma}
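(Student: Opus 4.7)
The plan is to reduce the volume estimate to a local monomial computation via Hironaka's embedded resolution of singularities. First I would reduce to the case where $X$ is irreducible and $\pi$ is surjective, since components not dominating $S$ contribute nothing to $\pi^{-1}(B_r')$ for small $r$. Pick a local analytic uniformizer $t$ on $S$ at $s$ so that $|t|$ and the distance to $s$ are comparable near $s$; then $\pi^{-1}(B_r')$ is comparable to $\{x \in X : 0 < |t(\pi(x))| < cr\}$ and is contained in an arbitrarily small neighborhood of the fiber $F := \pi^{-1}(s)$ once $r$ is small, since $|t \circ \pi|$ is bounded below away from $F$ by compactness of $X$.

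Next I would apply embedded resolution to the principal ideal $(t \circ \pi)$ on $X$: there exists a proper birational morphism $\mu \colon \tilde X \to X$ with $\tilde X$ smooth and projective, such that around every point of $\tilde X$, in suitable local analytic coordinates $z_1,\ldots,z_N$,
\[
  t\bigl(\pi(\mu(z))\bigr) \;=\; u(z)\, z_1^{a_1}\cdots z_N^{a_N}
\]
with $u$ a unit and nonnegative integers $a_i$ not all zero. Because $\mu$ is birational, the pullback of the Fubini--Study volume form $\omega^N/N!$ from $\PP^n$ is a smooth semipositive top form on $\tilde X$ that agrees with the induced volume form on $X^{\mathrm{sm}}$ via $\mu$ over the open locus where $\mu$ is an isomorphism, and vanishes on the exceptional locus (where $d\mu$ drops rank); consequently the $N$-dimensional volume of $\pi^{-1}(B_r')$ in $\PP^n$ is bounded above by $\int_{\mu^{-1}(\pi^{-1}(B_r'))} \mu^*(\omega^N/N!)$.

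Cover $\tilde X$ by finitely many coordinate charts of the monomial form above; in each, $\mu^*(\omega^N/N!)$ is bounded pointwise by a constant multiple of the Euclidean Lebesgue form on a polydisk. The task then reduces to bounding
\[
  \mathrm{vol}\bigl\{ z \in \C^N : |z_1^{a_1}\cdots z_N^{a_N}| < cr,\ |z_i|\le 1\bigr\}
\]
by $O(r^\delta)$ for some $\delta>0$. A polar-coordinate computation, by induction on the number of nonzero exponents, gives a bound of $O(r^\alpha \log^{N-1}(1/r))$ with $\alpha = 2/\max_i a_i$, so any $\delta$ smaller than the minimum of these exponents over the finite cover of charts suffices. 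The main obstacle is the embedded-resolution step, which converts a potentially complicated fiber structure into a monomial one; everything after is elementary. (An alternative route avoiding Hironaka combines the Lojasiewicz inequality $|t(\pi(x))| \ge C\, d(x,F)^m$ with the standard estimate that the $\epsilon$-neighborhood of a closed subanalytic subset of real codimension $\ge 2$ has $N$-dimensional volume $O(\epsilon^2)$, yielding $\delta = 4/m$.)
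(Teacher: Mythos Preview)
Your proposal is correct and follows essentially the same route as the paper: Hironaka resolution to make $t\circ\pi$ locally monomial, a compactness-based finite cover by charts, and an elementary estimate of the Lebesgue measure of $\{|z_1^{a_1}\cdots z_N^{a_N}|<r\}$ in a polydisk. The only differences are cosmetic: the paper compares metrics directly rather than invoking Wirtinger and pulling back $\omega^N/N!$, and its final volume bound is the cruder inclusion $\{|h|<r\}\subset\bigcup_i\{|z_i|<(r/u_{\min})^{1/E}\}$ with $E>\sum a_i$, giving $\delta=2/E$ instead of your sharper $2/\max_i a_i$ with logarithmic factors.
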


\begin{proof}
Define $\Delta:=\{z \in \C : |z| < 1\}$
and $\blacktriangle:=\{z \in \C : |z| \le 1/2 \}$.
Let $g_{\PP}$ and $g_S$ be the given metrics on $\PP^n(\C)$ and $S(\C)$.
Let $\mu$ be Lebesgue measure on $\C^N$.
Fix a holomorphic chart $\iota\colon \Delta \to S(\C)$
mapping $0$ to $s$.

We may assume that $\dim \pi^{-1}(s) < N$.
By work of Hironaka, 
there exists a desingularization $p \colon Y \to X$ 
(see \cite[Corollary~3.22]{Kollar})
and we may assume that the fiber of the map
$f:=\pi \circ p \colon Y \to S$ above $s$ 
is a simple normal crossing divisor (see \cite[Theorem~3.21]{Kollar}).
Then for each $y \in f^{-1}(s)$, there exists a holomorphic chart
$\iota_Y \colon \Delta^N \to Y(\C)$ mapping $0$ to $y$ 
such that $f$ is given with respect to $\iota_Y$ and $\iota_S$ by
\begin{align*}
	h\colon \Delta^N &\to \Delta \\
	z = (z_1,\ldots,z_N) &\mapsto u(z) z_1^{e_1} \cdots z_N^{e_N}
\end{align*}
for some nonvanishing holomorphic function $u \colon \Delta^N \to \C$
and $e_i \in \Z_{\ge 0}$.
By compactness, there exist $\varepsilon>0$ and finitely many $\iota_Y$
such that the sets $\iota_Y(\blacktriangle^N)$ cover
$f^{-1}(B_\varepsilon)$.

Since $\blacktriangle$ is compact, 
$\left.\left(\iota_S^* g_S \right)\right|_{\blacktriangle}$ 
is bounded above and below by positive constants 
times the standard metric.
Similarly, the pullback of $g_{\PP}$ to $\blacktriangle^N$
is bounded above in terms of the standard metric.
Thus we reduce to showing that for each $\iota_Y$,
there exists $\delta>0$ such that
\[
	\mu\left(\{z \in \blacktriangle^N : |h(z)| < r \}\right) = O(r^{\delta})
\]
as $r \to 0$.

Let $u_{\min} := \inf\{|u(z)| :  z\in \blacktriangle^N \} > 0$.
We may assume that $r<u_{\min}$.
Fix $E > \sum e_i$.
Let $\rho := (r/u_{\min})^{1/E} < 1$.
If $|z_i| \ge \rho$ for all $i$,
then $|h(z)| \ge u_{\min} \rho^E = r$.
Thus
\[
	\{z \in \blacktriangle^N : |h(z)| < r \} \subseteq  \{z \in \blacktriangle^N : |z_i| < \rho \text{ for some $i$} \}.
\]
The volume of the latter is $O(\rho^2) = O(r^{2/E})$ as $r \to 0$.
\end{proof}

\begin{lemma}
\label{L:Hilbert continuity}
Let $Y,H$ be varieties over $\C$, with $Y$ proper.
Let $X$ be a closed subvariety of $Y \times H$.
Let $(h_i)$ be a sequence in $H(\C)$ converging to $h_\infty$.
For $i \le \infty$, let $X_i$ be the fiber of $X \to H$ above $h_i$.
View $X_i$ as a subvariety of $Y$.
Then any open neighborhood $N$ of $X_\infty(\C)$ in the
complex topology contains $X_i(\C)$ for all sufficiently large $i$.
\end{lemma}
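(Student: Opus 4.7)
The plan is to argue by contradiction using the compactness of $Y(\C)$ (which holds since $Y$ is proper over $\C$) together with the fact that $X(\C)$ is closed in $Y(\C) \times H(\C)$ in the complex analytic topology.

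Suppose that the conclusion fails for some open neighborhood $N$ of $X_\infty(\C)$. Then, after passing to a subsequence, I may assume that for every $i$ there exists a point $y_i \in X_i(\C) \setminus N$. By compactness of $Y(\C)$, pass to a further subsequence so that $y_i$ converges to some $y_\infty \in Y(\C)$. Since $(y_i, h_i) \in X(\C)$ and $(y_i, h_i) \to (y_\infty, h_\infty)$ in $Y(\C) \times H(\C)$, and since $X(\C)$ is closed in this product, the limit $(y_\infty, h_\infty)$ lies in $X(\C)$. Hence $y_\infty \in X_\infty(\C) \subseteq N$. But $N$ is open, so $y_i \in N$ for all sufficiently large $i$, contradicting the choice $y_i \notin N$.

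There is really no serious obstacle here: the only ingredients are that proper $\C$-varieties are compact in the analytic topology, that closed subvarieties give closed subsets analytically, and a standard subsequence extraction argument. The statement is essentially a topological upper semicontinuity of fibers for proper maps, specialized to the inclusion $X \hookrightarrow Y \times H$ followed by projection to $H$.
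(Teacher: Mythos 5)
Your proof is correct and rests on the same two facts the paper uses: compactness of $Y(\C)$ (since $Y$ is proper) and closedness of $X(\C)$ in $Y(\C) \times H(\C)$ in the analytic topology. The packaging differs slightly: you argue by contradiction with sequential compactness, whereas the paper applies the tube lemma (Munkres) to the open set $\bigl((Y\times H)-X\bigr)(\C) \cup \bigl(N \times H(\C)\bigr)$, which contains the slice $Y(\C) \times \{h_\infty\}$, to extract a tube $Y(\C) \times U$; then $h_i \in U$ for large $i$ forces $X_i(\C)\subseteq N$. The two routes are essentially interchangeable here (sequential compactness suffices since $Y(\C)$ is metrizable), though the tube-lemma version avoids passing to subsequences and yields the slightly stronger conclusion that $X_h(\C) \subseteq N$ for \emph{every} $h$ in a neighborhood of $h_\infty$, not only along the given sequence.
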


\begin{proof}
The open set $((Y \times H)-X)(\C) \union (N \times H(\C))$ contains
$Y(\C) \times \{h_\infty\}$,
so it contains also $Y(\C) \times U$ for some open neighborhood $U$
of $h_\infty$ in $H$,
by the ``tube lemma for compact spaces''
(Lemma~5.8 on p.~169 of~\cite{MunkresTopology}).
For large $i$, we have $h_i \in U$, and then $X_i(\C) \subseteq N$.
\end{proof}

{}From now on, we assume that $S=X_1(N)$ as in Theorem~\ref{T:global2}.
Choose a real analytic Riemannian metric on $S(\C)$.
Define $B_r'$ to be the punctured open disk in $S(\C)$
with center $\infty$ and radius $r$ with respect to the metric.
Let $\mu_{\mathcal H}$ be the probability measure on $S(\C)$
whose pullback to the upper half plane $\mathcal H$
equals a multiple of the hyperbolic measure $\frac{dx\,dy}{y^2}$.

We next show that $\mu_{\mathcal H}$
blows up relative to the Riemannian metric near the cusp $\infty$.
(The Riemannian volume of $B_r$ is only $O(r^2)$ as $r \to 0$.)

\begin{lemma}
\label{L:hyperbolic ball}
There exists $u>0$ such that for all sufficiently small $r>0$,
we have $\mu_{\mathcal H}(B_r) > u/\log(1/r)$.
\end{lemma}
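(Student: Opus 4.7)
The plan is to work locally near the cusp $\infty$ using the holomorphic uniformizer $q = e^{2\pi i \tau}$, where $\tau = x+iy$ is the standard coordinate on the upper half plane. First I would observe that since $\infty$ is a smooth point of $S = X_1(N)$ and $q$ is a local chart there, the given real-analytic Riemannian metric on $S(\C)$ at $\infty$ is comparable to the flat Euclidean metric $|dq|^2$: there exist constants $c_1, c_2 > 0$ such that for all sufficiently small $|q|$, the Riemannian distance from $\infty$ to the point with coordinate $q$ lies between $c_1|q|$ and $c_2|q|$. This immediately gives the inclusion $\{|q| < r/c_2\} \subseteq B_r$ for all small enough $r > 0$.

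The second step is to evaluate $\mu_{\mathcal H}$ on the sub-disk $\{|q| < \rho\}$ by pulling back to $\mathcal{H}$. The stabilizer of $\infty$ in $\Gamma_1(N)$ is generated by $\tau \mapsto \tau+1$, so a fundamental domain near $\infty$ for the projection $\mathcal{H} \to S(\C)$ has the form $\{0 \le x < 1,\ y > Y_0\}$ for sufficiently large $Y_0$, and the condition $|q| = e^{-2\pi y} < \rho$ translates to $y > \log(1/\rho)/(2\pi)$. Writing the pullback of $\mu_{\mathcal H}$ on this fundamental domain as $c_0\, \frac{dx\,dy}{y^2}$ for the positive normalization constant $c_0 > 0$, a routine integration in $y$ gives $\mu_{\mathcal H}(\{|q| < \rho\}) = 2\pi c_0/\log(1/\rho)$ for all sufficiently small $\rho$.

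Combining the two steps yields $\mu_{\mathcal H}(B_r) \ge 2\pi c_0/(\log(1/r) + \log c_2)$, which dominates $u/\log(1/r)$ for any fixed $u \in (0, 2\pi c_0)$ once $r$ is sufficiently small. The only step that is not entirely mechanical is the metric comparison in the first paragraph; I would verify it by writing the Riemannian metric tensor in the $q$-chart as a smooth positive-definite symmetric form with real-analytic coefficients in $q, \bar q$ and noting that its value at $q = 0$ is a nonzero positive-definite form, so a Taylor expansion gives the required two-sided bound on a neighborhood of $0$.
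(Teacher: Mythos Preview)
Your proposal is correct and follows essentially the same approach as the paper's proof: both use $q=e^{2\pi i\tau}$ as a local coordinate at $\infty$, compare the given Riemannian metric to the Euclidean metric in $q$ to get an inclusion $\{|q|<cr\}\subseteq B_r$, and then integrate the hyperbolic measure over the corresponding strip in the fundamental domain. Your write-up is somewhat more explicit (you spell out the metric comparison via a Taylor expansion of the metric tensor and compute the integral exactly), but the underlying idea is identical.
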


\begin{proof}
Let $\tau$ be the usual parameter on $\mathcal H$.
Then $q:=e^{2\pi i \tau}$ is a uniformizer at $\infty$ on $X_1(1)$.
So there exists $c>0$ such that for all sufficiently small $r$,
in the fundamental domain in $\mathcal H$,
the part corresponding to $B_r$ contains the part
where $|q| < c r$.
The inequality is equivalent to $\im(\tau)>\frac{1}{2\pi}\log(1/(cr))$,
and so the hyperbolic measure is at least a constant times $1/\log(1/(cr))$
for sufficiently small $r$.
If $u$ is small enough,
this exceeds $u/\log(1/r)$ for all sufficiently small $r$.
\end{proof}

We will need also an equidistribution result for CM points.
The first such equidistribution result was proved in~\cite{Duke1988},
and this has been generalized in several directions by several authors:
see Section~5.4 of the survey paper~\cite{Michel-Venkatesh2006},
for instance.
The version we use is a special case of a result in~\cite{Zhang2005}.

\begin{lemma}
\label{L:CM equidistribution} Let $k$ be a finite extension of
$\Q$. Fix an embedding $\kbar \injects \C$. Let $S$ be a modular
curve $X_1(N)$ or a Shimura curve $X^D(\cU)$ over $\kbar$. Let
$(x_i)$ be an infinite sequence of distinct $\CM$-points in
$S(\kbar)$. The uniform probability measure on the
$\Gal(\kbar/k)$-orbit of $x_i$ converges weakly as $i \to \infty$
to the measure $\mu_{\mathcal H}$ on $S(\C)$.
\end{lemma}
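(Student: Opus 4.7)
The plan is to reduce the statement to the main equidistribution theorem for CM points on (modular or) Shimura curves proved in~\cite{Zhang2005}, together with a subsequence argument that forces the discriminants of the CM endomorphism rings to go to infinity.

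First I would normalize the setup. For each $i$, let $\mathcal{O}_i \subset \calK_i$ be the imaginary quadratic order associated with the CM point $x_i$ (so $\calK_i$ is imaginary quadratic and $\mathcal{O}_i = \End$ of the corresponding (false) elliptic curve, up to the $\mathcal{O}_D$ action in the Shimura case), and let $D_i = \disc(\mathcal{O}_i)$. For any fixed $D$, there are only finitely many CM points in $S(\kbar)$ with $\disc = D$, because there are only finitely many $j$-invariants of CM type of that discriminant (and only finitely many level structures on each). Since the $x_i$ are distinct, after passing to a subsequence we may assume $|D_i|\to\infty$. The statement that the uniform probability measure on the $\Gal(\kbar/k)$-orbit of $x_i$ converges weakly to $\mu_{\mathcal{H}}$ is equivalent to the same statement for every subsequence, so the reduction is legitimate.

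Next I would appeal to Zhang's theorem~\cite{Zhang2005}, which asserts precisely that if $(y_i)$ is a sequence of CM points on $S$ whose discriminants $|D_i|\to\infty$, then the Galois orbits of the $y_i$ (under the absolute Galois group of the relevant reflex field) equidistribute with respect to $\mu_{\mathcal{H}}$. The only subtlety is that Zhang's orbits are taken with respect to a specific base field (typically $\Q$ or the reflex field, whose absolute Galois group acts through the full class group via the Shimura reciprocity law), whereas our orbits are taken with respect to $k$. Since $[k:\Q]<\infty$, the $\Gal(\kbar/k)$-orbit of $x_i$ decomposes the absolute Galois orbit into at most $[k:\Q]$ pieces of comparable size (controlled by the image of $\Gal(\kbar/k)$ in the class group), and each piece equidistributes individually by the argument of Clozel--Ulmo~\cite{Zhang2005} (isolating Galois suborbits via Weyl-type sums and Siegel's bound $h(D)\gg_\varepsilon |D|^{1/2-\varepsilon}$, which forces the suborbits to be large).

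The main obstacle is precisely this last point: verifying that equidistribution survives restriction from the full absolute Galois group to $\Gal(\kbar/k)$. The standard way to handle this is to combine the equidistribution of the large Galois orbit with the fact that, thanks to Siegel's lower bound on class numbers, the cosets of $\Gal(\kbar/k)$ inside the larger Galois group have negligible bias; once this is in place, the weak convergence to $\mu_{\mathcal{H}}$ follows by testing against a dense family of continuous functions on $S(\C)$ (for example, Maass forms and Eisenstein series in the modular case, matrix coefficients of automorphic representations in the Shimura case). All of this is contained in~\cite{Zhang2005}, so the lemma follows.
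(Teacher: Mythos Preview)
Your approach is essentially the same as the paper's: both invoke Zhang's equidistribution result for CM-suborbits in~\cite{Zhang2005}, with the required lower bound on the suborbit size coming from Brauer--Siegel. The paper is more direct---it applies Corollary~3.3 of~\cite{Zhang2005} straight to the $\Gal(\kbar/k)$-orbit as the ``CM-suborbit'' $O(x_i)$, with explicit parameter choices $\delta<1/2$ and $\epsilon>0$ satisfying $\delta/2+1/4+\epsilon<1/2$---so there is no need to first prove equidistribution of full orbits and then descend; also, your subsequence step is unnecessary (distinctness of the $x_i$ already forces $|D_i|\to\infty$ along the whole sequence), and the justification you give for it (``equivalent to the same statement for every subsequence'') is not the correct reduction principle, though the point is moot here.
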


\begin{proof}
This follows from Corollary~3.3 of~\cite{Zhang2005}.
Namely, we choose $\delta<1/2$ as on p.~3663 of~\cite{Zhang2005},
choose $\epsilon>0$ so that $\delta/2+1/4+\epsilon<1/2$,
and define the ``CM-suborbit'' $O(x_i)$ as
the $\Gal(\kbar/k)$-orbit of $x_i$.
The hypothesis of Corollary~3.3 of~\cite{Zhang2005}
is satisfied, by the Brauer-Siegel theorem (see\ the first remark
following Corollary~3.3 of~\cite{Zhang2005}).
\end{proof}

\begin{proof}[Proof of Theorem~\ref{T:global2}]
Let $A'$ be the image of $X \to A$.
By Corollary~9 of~\cite{Poonen1999}
(also proved partially independently as Theorem~1.2 of~\cite{Zhang2000})
applied to $A' \subseteq A$, we have that $A'$ is a coset.
We may translate to assume that $A'$ is an abelian subvariety,
and hence reduce to the case where $X \to A$ is surjective.
We may assume also that $X \to S$ is surjective, since $\dim S = 1$.
We want $X = S \times A$.
Suppose not.
Then $X \to A$ is generically finite, say of degree $d$.

The group $\Gamma$ is contained in
the division hull of a finitely generated group $\Gamma_0$.
Choose a number field $k \subset \Qbar$
such that $A,S,X$ are all defined over $k$
and $\Gamma_0 \le A(k)$.

Since $X(\Qbar) \intersect (\CM \times \Gamma_\epsilon)$
is Zariski dense in $X$
for every $\epsilon>0$,
and since $X$ has only countably many subvarieties,
we may choose a generic infinite sequence of points
$x_i=(s_i,a_i) \in X(\Qbar)$
with $s_i \in CM$ and $a_i \in \Gamma_{\epsilon_i}$
where $\epsilon_i \to 0$.
(``Generic'' means that each proper subvariety of $X$
contains at most finitely many $x_i$.)
In particular, each $s_i$ appears only finitely often.
Since class numbers of imaginary quadratic fields tend to infinity,
we have $[k(s_i):k] \to \infty$.
So $[k(x_i):k] \to \infty$.
For all but finitely many $i$, the $a_i$ lie in the open locus
above which the fibers of $X \to A$ have size $d$,
and then $[k(x_i):k] \le d [k(a_i):k]$.
Thus $[k(a_i):k] \to \infty$.

The $a_i$ form a sequence of almost division points relative to $k$
in the sense of~\cite{Zhang2000}.
By passing to a subsequence we may assume that they have a coherent
limit $(C,b+T)$ in the sense of~\cite{Zhang2000},
where $C$ is an abelian subvariety of $A$, and $b \in A(\C)/C(\C)$,
and $T$ is a finite set of torsion points of $A/C$.
Since $[k(a_i):k] \to \infty$, we have $\dim C > 0$
by definition of coherent limit.
By replacing $X$ by its image under
$S \times A \stackrel{(\id,\phi)}\longrightarrow S \times \tilde{A}$
for a suitable isogeny $\phi\colon A \to \tilde{A}$,
we may reduce to the case where $T=\{0\}$
and $A \isom B \times C$
for some abelian subvariety $B$ of $A$.
Identify $A/C$ with $B$.
Write $a_i=(b_i,c_i)$ with $b_i \in B(\Qbar)$ and $c_i \in C(\Qbar)$.
By definition of $T$, we have $b_i \in B(k)$.
By Theorem~1.1 of~\cite{Zhang2000},
the uniform probability measure on the orbit $\Gal(\kbar/k) a_i$
(supported on $\{b_i\} \times C(\C)$)
converges weakly as $i \to \infty$
to the $C(\C)$-invariant probability measure
on $\{b\} \times C(\C)$.
So the uniform probability measure on $\Gal(\kbar/k) c_i$
converges to Haar measure $\mu_C$ on $C(\C)$.

For each $i$, let $X_{b_i}$ be the fiber of the projection
$X \to B$ above $b_i$, viewed as a subvariety of $S \times C$.
Since the $b_i$ are generic in $B$,
we may discard finitely many
to assume that the $X_{b_i}$ have the same Hilbert polynomial
(with respect to some embedding $S \times C \injects \PP^N$)
and that the corresponding points of the Hilbert scheme $H$
converge in the complex topology; let $X_{b_\infty} \subseteq S \times C$
be the closed subscheme corresponding to the limit.
We have $\dim X_{b_i} < \dim (S \times C)$ for all finite $i$
(and hence also for $i=\infty$),
since otherwise by genericity of the $b_i$,
we would have $X = S \times B \times C = S \times A$.

Let $\pi_C\colon S \times B \times C \to \C$ be the projection.
Also, for $i \le \infty$, let $\pi_{S,i} \colon X_{b_i} \to S$
be the projection.

Choose a real analytic Riemannian metric on $C(\C)$
whose associated volume form equals $\mu_C$.
Let $g=\dim C$.
Let $B_r' = B_r - \{\infty\}$.
By Lemma~\ref{L:hyperbolic ball}, there exists $u>0$ such that
$\mu_{\mathcal H}(B_r') = \mu_{\mathcal H}(B_r) > u/\log(1/r)$
for all sufficiently small $r$.
On the other hand, Lemma~\ref{L:tube} implies that for some $\delta>0$,
the $g$-dimensional volume of $\pi_{S,\infty}^{-1}(B_r')$
is $O(r^\delta)$ as $r \to 0$.
Let $L_r:=\pi_C(\pi_{S,\infty}^{-1}(B_r'))$.
Projection onto $C$ can only decrease $g$-dimensional volume, so
$\mu_C(L_r) = O(r^\delta)$.
Thus we may fix $r>0$ such that $\mu_{\mathcal H}(B_r') > \mu_C(L_r)$.
Let $L=L_r$.
Fix a compact annulus $K \subseteq B_r'$ large enough so that
$\mu_{\mathcal H}(K) > \mu_C(L)$.

For a compact subset $M'$ of a metric space $M$,
let $N_\rho M'$ be the set of points in $M$ whose distance
to $M'$ is less than $\rho$.
Fix $\rho>0$ such that $N\rho K \subseteq B_r'$.
By Lemma~\ref{L:Hilbert continuity} with $Y=S \times C$,
with $H$ the Hilbert scheme, and $X$ the universal family in $Y \times H$,
we have $X_{b_i}(\C) \subseteq N_\rho X_{b_\infty}(\C)$
after discarding finitely many $i$.
In particular, every point of $\pi_{S,i}^{-1}(K)$
is within $\rho$ of a point of $X_{b_\infty}(\C)$.
The $S$-projections of the points of $X_{b_\infty}(\C)$ so used
are then within $\rho$ of $K$, so
\[
    \pi_{S,i}^{-1}(K) \subseteq \pi_{S,\infty}^{-1}(N_\rho K)
    \subseteq \pi_{S,\infty}^{-1}(B_r').
\]
Projecting to $C$, we obtain
\begin{equation}
\label{E:last}
    \pi_C(\pi_{S,i}^{-1}(K)) \subseteq L.
\end{equation}

Now as $i \to \infty$,
the fraction of points of $\Gal(\kbar/k)x_i$ whose $S$-projection
lies in $K$ tends to $\mu_{\mathcal H}(K)$
by Lemma~\ref{L:CM equidistribution},
and the fraction of points of $\Gal(\kbar/k)x_i$ whose $C$-projection
lies in $L$ tends to $\mu_C(L)$.
But \eqref{E:last} implies
that the first set of points is contained
in the second set of points,
so $\mu_{\mathcal H}(K) \le \mu_C(L)$,
contradicting the choice of $K$.
\end{proof}

For the proof of Theorem~\ref{T:global4}, we will need the following:

\begin{lemma}
\label{L:hyperbolic measure and Haar measure}
Let $\Phi\colon S \to A$ be a morphism from a Shimura curve
to an elliptic curve $A$ over $\C$.
Let $\mu_{\mathcal H}$ be the hyperbolic probability measure on $S(\C)$.
Let $\mu_A$ be the Haar probability measure on $A(\C)$.
Then $\Phi_* \mu_{\mathcal H} \ne \mu_A$.
\end{lemma}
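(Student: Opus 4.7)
The plan is to reduce to a local analysis at a ramification point of $\Phi$. If $\Phi$ is constant, then $\Phi_*\mu_{\mathcal H}$ is a Dirac mass at the point $\Phi(S)$, so it certainly differs from $\mu_A$; I henceforth assume $\Phi$ is non-constant, hence surjective of some degree $d \ge 1$.

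The key structural input is that $g(S) \ge 2$. Under the conventions of Section~\ref{conventions on m-e} (following \cite{buzzard,ZZ}), the level $\cU$ is sufficiently small that the uniformizing Fuchsian group $\Gamma \leq \operatorname{PSL}_2(\R)$ is torsion-free, so $S(\C) \isom \Gamma \backslash \mathcal H$ inherits a smooth metric of constant curvature $-1$; then $\chi(S) < 0$ by Gauss--Bonnet, forcing $g(S) \ge 2$. Applying Riemann--Hurwitz to $\Phi$, together with $g(A)=1$, yields
\[
    2g(S) - 2 = R,
\]
where $R \ge 0$ is the total ramification; hence $R \ge 2$ and $\Phi$ has some ramification point $s \in S(\C)$ of index $e \ge 2$. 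Set $a := \Phi(s)$.

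The endgame is to compare $\mu_{\mathcal H}(\Phi^{-1}(D_\epsilon))$ with $\mu_A(D_\epsilon)$ as $\epsilon \to 0^+$, where $D_\epsilon$ is the disk of radius $\epsilon$ about $a$ in a chosen holomorphic chart $w$ at $a$. Choose a holomorphic coordinate $z$ at $s$ so that $\Phi$ has the form $w = z^e$. Then near $s$ the density of $\mu_{\mathcal H}$ (with respect to Lebesgue $dx\,dy$) is a smooth strictly positive function, and near $a$ the density of $\mu_A$ is a positive constant. Hence $\mu_A(D_\epsilon) = \Theta(\epsilon^2)$, while the component of $\Phi^{-1}(D_\epsilon)$ around $s$ alone, namely $\{|z| < \epsilon^{1/e}\}$, already contributes $\Theta(\epsilon^{2/e})$ to $\mu_{\mathcal H}(\Phi^{-1}(D_\epsilon))$. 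Since $2/e < 2$, this single contribution overwhelms $\mu_A(D_\epsilon)$ for small $\epsilon$, so $\Phi_*\mu_{\mathcal H}(D_\epsilon) \neq \mu_A(D_\epsilon)$ and the lemma follows.

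The main obstacle is pinning down the genus lower bound $g(S) \ge 2$: this is where the hypothesis that $D$ is non-split indefinite quaternion (so that $\Gamma$ has compact quotient and is torsion-free for suitable $\cU$) enters decisively. Once ramification of $\Phi$ is secured, the remainder is a routine local computation, and in fact the argument shows the strictly stronger statement that $\Phi_*\mu_{\mathcal H}$ has an integrable (but genuine) singularity on $A$ at the image of each ramification point of $\Phi$, while $\mu_A$ is smooth.
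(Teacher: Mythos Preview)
Your proof is correct and follows essentially the same approach as the paper: establish that $\Phi$ is ramified, then observe that near a ramification point of index $e\ge 2$ the pushforward measure scales like $\epsilon^{2/e}$ on disks of radius $\epsilon$, whereas $\mu_A$ scales like $\epsilon^2$. The only cosmetic difference is in the ramification step: the paper passes to a finite cover so that $\mathcal H\to S(\C)$ is unramified and then argues that $\mathcal H\to A(\C)$ cannot be a covering map (since the universal cover of $A(\C)$ is $\C$, not $\mathcal H$), whereas you invoke $g(S)\ge 2$ via Gauss--Bonnet and apply Riemann--Hurwitz directly.
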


\begin{proof}
By replacing $S$ with a finite cover, we may assume
that ${\mathcal H} \to S(\C)$ is unramified.
The universal cover of $A(\C)$ is not biholomorphic to ${\mathcal H}$,
so the composition ${\mathcal H} \to S(\C) \to A(\C)$ cannot
be unramified.  Hence $\Phi$ is ramified.
Pick $s \in S(\C)$ at which the ramification index $e$ is $>1$.
Let $a=\Phi(s)$.
Choose a Riemannian metric on $A(\C)$
inducing the Haar probability measure $\mu_A$.
Let $B_r$ be the disk of radius $r$ centered at $a$.
With respect to suitable uniformizing parameters, $\Phi$ near $s$
is equivalent to $z \mapsto z^e$,
so there exists $c>0$ such that
$\mu_{\mathcal H}(\Phi^{-1}(B_r)) > c \mu_A(B_r)^{1/e}$
for all sufficiently small $r$.
In particular, for sufficiently small $r$, we have
$\left(\Phi_* \mu_{\mathcal H} \right)(B_r) =
\mu_{\mathcal H}(\Phi^{-1}(B_r)) > \mu_A(B_r)$.
\end{proof}

\begin{proof}[Proof of Theorem~\ref{T:global4}]
As in the first three sentences of the proof of Theorem~\ref{T:global2},
we may use Corollary~9 of~\cite{Poonen1999}
to reduce to the case that $\Phi$ is surjective.
If $A=0$, there is nothing to show,
so we may assume that $A$ is an elliptic curve.

Choose a number field $k \subset \Qbar$
such that $A,S,X$ are all defined over $k$
and $\Gamma$ is contained in the division hull of $A(k)$.

If the conclusion fails, then there is an infinite sequence $(s_i)$
in $CM$ with $\Phi(s_i) \in \Gamma_{\epsilon_i}$ for some $\epsilon_i \to 0$.
Let $a_i=\Phi(s_i)$.
By Lemma~\ref{L:CM equidistribution},
the uniform probability measure on $\Gal(\kbar/k) s_i$
converges weakly to $\mu_{\mathcal H}$ on $S(\C)$.
It follows that the uniform probability measure on $\Gal(\kbar/k) a_i$
converges weakly to $\Phi_* \mu_{\mathcal H}$ on $A(\C)$.

On the other hand, $(a_i)$ is a sequence of almost division points.
By the previous paragraph, $[k(s_i):k] \to \infty$,
so $[k(a_i):k] \to \infty$.
Passing to a subsequence, we may assume that $(a_i)$ has a coherent limit,
which can only be $(A,\{0\})$, since $[k(a_i):k] \to \infty$.
By Theorem~1.1 of~\cite{Zhang2000},
the uniform probability measure on $\Gal(\kbar/k) a_i$
converges weakly to the Haar measure $\mu_A$ on $A(\C)$.

The previous two paragraphs imply that
$\Phi_* \mu_{\mathcal H} = \mu_A$,
contradicting
Lemma~\ref{L:hyperbolic measure and Haar measure}.
\end{proof}

\subsection{Proofs using Hecke divisors}
\label{S:Hecke divisors}

\begin{lemma}
\label{non} 
Let $S=X_1(N)$.
Let  $f=\sum a_n q^n$ be a weight-$2$ newform on $\Gamma_1(N)$.
let $C \subset S(\Qbar)$ be an isogeny class.
Let $\Sigmat$ be a rational function on $S$ 
none of whose poles are in $C$. 
Assume that for infinitely many primes $l$ and for any $P \in C$ 
we have
\begin{equation}
\label{gura1} 
	\sum_i \Sigmat(P_i^{(l)})-a_l\Sigmat(P)=0.
\end{equation} 
Then $\Sigmat=0$.
\end{lemma}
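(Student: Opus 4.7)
The plan is to convert the hypothesis into a Hecke-eigenfunction identity on $\Sigmat$, deduce that $\Sigmat$ has no poles on $S = X_1(N)$, and then rule out nonzero constants. Let $\mathcal{L}$ denote the infinite set of primes for which \eqref{gura1} is assumed. Any isogeny class in $S$ is Zariski dense, so the hypothesis upgrades to the identity $T^{(l)}\Sigmat = a_l \Sigmat$ in $\Qbar(S)$ for each $l \in \mathcal{L}$; here $T^{(l)} := \sigma_{1*}\sigma_2^*$ acts on rational functions via pullback followed by trace, so that $(T^{(l)}\Sigmat)(P) = \sum_i \Sigmat(P_i^{(l)})$.

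Next, expand $\Sigmat$ as a Laurent series $\Sigmat = \sum_{n \ge -N_0} c_n q^n$ at the cusp $\infty$, and compute the action of $T^{(l)}$ on $q$-expansions from $(T^{(l)}\Sigmat)(\tau) = \Sigmat(l\tau) + \sum_{j=0}^{l-1}\Sigmat((\tau+j)/l)$ (for $l \nmid N$; the case $l\mid N$ is analogous): one obtains the recursion $l c_{lr} + c_{r/l} = a_l c_r$ for all $r \in \bZ$ and $l \in \mathcal{L}$, with the convention $c_{r/l} = 0$ if $l \nmid r$. Since the set $\{l : a_l = 0\}$ has density at most $1/2$, the subset $\mathcal{L}' \subset \mathcal{L}$ with $a_l \ne 0$ is still infinite. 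For fixed $r < 0$, choosing $l \in \mathcal{L}'$ with $l > |r| + N_0$ gives $c_{lr} = 0$ and $c_{r/l} = 0$, so $c_r = 0$; the case $r = 0$ gives $(l+1-a_l)c_0 = 0$, whence $c_0 = 0$ by the Hasse bound $|a_l| \le 2\sqrt{l}$ for $l$ large. Thus $\Sigmat$ vanishes at $\infty$, and the analogous argument at every other cusp (using that $T^{(l)}$ commutes with the diamond and Atkin-Lehner operators, which permute cusps) shows $\Sigmat$ vanishes at all cusps.

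To rule out non-cusp poles, let $D$ be the polar divisor of $\Sigmat$. Since $T^{(l)}\Sigmat = \sigma_{1*}(\sigma_2^*\Sigmat)$ is a trace, its polar divisor is dominated by $\sigma_{1*}\sigma_2^* D =: T^{(l)} D$; combined with $T^{(l)}\Sigmat = a_l \Sigmat$ and $a_l \ne 0$ this yields $D \le T^{(l)} D$ as divisors for each $l \in \mathcal{L}'$. Assume $D \ne 0$ and pick $P_0 \in \operatorname{supp}(D)$; the coefficient of $P_0$ in $T^{(l)}D$ is the sum of polar multiplicities at those $P \in \operatorname{supp}(D)$ for which $P_0$ is an $l$-isogeny cover, i.e., $E_{P_0}/H \simeq E_P$ for some order-$l$ subgroup $H \le E_{P_0}$. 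For non-CM $E_{P_0}$, any fixed $P$ is an $l$-isogeny target of $P_0$ for at most one prime $l$ (determined by the primitive isogeny degree), so for all sufficiently large $l \in \mathcal{L}'$ no element of the finite set $\operatorname{supp}(D)$ is an $l$-isogeny target of $P_0$, contradicting $D \le T^{(l)} D$. For CM $E_{P_0}$ the argument is finer: for $l$ split in $\operatorname{End}(E_{P_0}) \otimes \Q$, two of the $l+1$ subgroups of $E_{P_0}[l]$ give self-quotients, while the remaining $l-1$ give distinct new CM quotients varying with $l$; matching the local Laurent expansions on both sides of $T^{(l)}\Sigmat = a_l \Sigmat$ at $P_0$ then forces $\Sigmat$ to vanish on an infinite set of CM points, contradicting the finiteness of the zero locus of a nonzero rational function. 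Hence $D = 0$, so $\Sigmat$ is regular on the projective curve $S$ and therefore a constant $c$; the relation becomes $(l+1-a_l)c = 0$ for large $l \in \mathcal{L}$, whence $c = 0$.

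The main obstacle is the non-cusp pole analysis in the CM case: the naive disjointness of $l$-isogeny Hecke orbits from the finite polar support $\operatorname{supp}(D)$ can fail for primes split in the CM field, so one must combine the local structure of the Hecke correspondence at a CM point with the infinitude of distinct new quotients produced as $l$ varies in $\mathcal{L}$ to extract the required contradiction.
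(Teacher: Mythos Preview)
Your first two moves agree with the paper: you correctly upgrade the hypothesis to the rational-function identity $T^{(l)}\Sigmat=a_l\Sigmat$ for each $l\in\mathcal L$, and your recursion argument at $\infty$ is equivalent to the paper's ``compare leading terms'' step, yielding $v_q(\Sigmat)>0$. After that, however, your strategy and the paper's diverge sharply, and your route has real gaps.

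\textbf{The gaps.} Your step~4 (other cusps) is not justified: diamond operators together with Atkin--Lehner involutions do \emph{not} in general act transitively on the cusps of $X_1(N)$, so you cannot transport the $\infty$-argument to every cusp by symmetry alone. Your CM pole analysis is, as you yourself flag, incomplete. The claim that two of the $l+1$ quotients of a CM curve $E_{P_0}$ are ``self-quotients'' is wrong in general (for split $l=\mathfrak l\bar{\mathfrak l}$, the quotients $E_{P_0}/E_{P_0}[\mathfrak l]$ and $E_{P_0}/E_{P_0}[\bar{\mathfrak l}]$ have the same CM order but need not be isomorphic to $E_{P_0}$; their $j$-invariants range over the whole ideal class as $l$ varies). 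More importantly, the assertion that ``matching local Laurent expansions forces $\Sigmat$ to vanish on an infinite set of CM points'' is never substantiated: the pole contribution to $T^{(l)}\Sigmat$ at $P_0$ comes from the ideal-quotient targets, while the regular values at the remaining $l-1$ targets are invisible in the leading Laurent term, so no vanishing is forced.

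\textbf{What the paper does instead.} The paper never attempts to control all poles of $\Sigmat$. Having established $v_q(\varphi)>0$ for the $q$-expansion $\varphi$ of $\Sigmat$ at $\infty$, it observes that $\varphi\in\cO_{K,\mathcal S}((q))$ for some ring of $\mathcal S$-integers in a number field (write $\Sigmat$ as a ratio of modular forms with algebraic-integer coefficients). Now restrict to $l\in\mathcal L$ with $l\nmid N$, $l$ not under $\mathcal S$, and $l$ prime to the leading coefficient $c_m$ of $\varphi$. In the $q$-expansion identity
\[
\varphi(q^l)+\sum_{b=0}^{l-1}\varphi(\zeta^b q^{1/l})=a_l\varphi(q),
\]
the middle sum is $l$ times an integral series, so reducing mod $l$ gives $\varphi(q^l)\equiv a_l\varphi(q)\pmod{l\,\cO_{K,\mathcal S}[[q]]}$. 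Comparing leading coefficients forces $a_l$ to be prime to $l$; but then the lowest nonzero term on the right sits in degree $m=v_q(\varphi)>0$ with coefficient $a_lc_m\not\equiv 0\pmod l$, while on the left the lowest term sits in degree $lm>m$. Contradiction. This single $q$-expansion/mod-$l$ trick replaces your entire global regularity program, and in particular sidesteps both the ``other cusps'' issue and the CM pole analysis.
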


\begin{proof}
Assume that $\Sigmat \neq 0$.
The function
\[
	(T(l)\Sigmat)(x):=\sum \Sigmat(x_i^{(l)}), 
\]
defined for all but finitely many $x \in S(\bC)$, 
is a rational function on $S$ by~\cite[p.~55]{GACC}.
For the infinitely many given $l$, 
the rational functions $T(l)\Sigmat$ and $\Sigmat$ 
agree on the infinite set $C$ so they coincide.
Since $\Sigmat$ may be viewed as a ratio of modular forms over $\Qbar$,
each of which is a $\Qbar$-linear combination 
of newforms whose Fourier coefficients are algebraic integers,
the Fourier expansion $\varphi(q)$ of $\Sigmat$
is in $\OO_{K,\calS}((q))$ for some ring of $\calS$-integers in some number
field $K$, with $\calS$ finite.
We may restrict attention to primes $l \nmid N$
not lying under any prime in $\calS$.
We may assume also that the leading coefficient of $\varphi(q)$
is prime to $l$.
The $q$-values corresponding to the elliptic curves $l$-isogenous 
to the one corresponding to $q$ itself are $q^l$ and the $l$-th roots of $q$,
so taking Fourier expansions in $T(l)\Sigmat = \Sigmat$ yields
\begin{equation}
\label{E:Hecke}
	\varphi(q^l) + \sum_{b=0}^{l-1} \varphi(\zeta^b q^{1/l}) 
	= a_l \varphi(q),
\end{equation}
where $\zeta$ is a primitive $l$-th root of $1$.
Let $v_q$ be the valuation on $\Qbar((q))$.
Comparing leading terms in \eqref{E:Hecke} yields $v_q(\varphi) \ge 0$;
and if $v_q(\varphi)=0$, then $l+1=a_l$, 
which contradicts $|a_l| \le 2\sqrt{l} < l+1$.
Thus $v_q(\varphi)>0$.

The series $\sum_{b=0}^{l-1} \varphi(\zeta^b q^{1/l})$ is divisible by $l$,
so
\begin{equation}
\label{E:Hecke mod l}
	\varphi(q^l) \equiv a_l \varphi(q) \pmod{l \OO_{K,\calS}[[q]].}
\end{equation}
The leading coefficient of $\varphi(q^l)$ equals that of $\varphi(q)$,
so it is prime to $l$.
Then \eqref{E:Hecke mod l} shows that $a_l$ is prime to $l$.
Now \eqref{E:Hecke mod l} contradicts $v_q(\varphi)>0$.
\end{proof}

\begin{proof}[Proof of Theorem~\ref{nonu}]
Extend $\Phi$ linearly to
a homomorphism $\Phi_*\colon \Div^0(X_1(N)(\Qbar)) \to A(\Qbar)$.
Then $\Phi_* \circ T(l)_*=a_l \cdot \Phi_*$; see~\cite[p.~242]{DS}.
For any point $P \in C$  we have $T(l)_*(P-\infty)=\sum P_i^{(l)} -\sum
P_{i0}^{(l)}$ with $P_{i0}^{(l)}$ cusps.  We get
\begin{equation}
\begin{array}{rcl} \label{mar}
a_l \cdot \Phi(P) & = & a_l(\Phi_*(P-\infty))\\
\  & \  & \  \\
\  & = & \Phi_*(T(l)_*(P-\infty))\\
\  & \  & \  \\
\  & = & \Phi_*(\sum P_i^{(l)} -\sum P_{i0}^{(l)})\\
\  & \  & \  \\
\  & = & \sum \Phi(P_i^{(l)}) -\sum \Phi(P_{i0}^{(l)})
\end{array}
\end{equation}
By the Manin-Drinfeld theorem (see~\cite[p.~62]{lang}, for instance),
$\Phi(P_{i0}^{(l)}) \in A(\Qbar)_{\tors}$,
so \eqref{mar} yields
\begin{equation}
\label{heckerel} \sum \Phi(P_i^{(l)})-a_l \cdot \Phi(P)\in
A(\Qbar)_{\tors}.
\end{equation}
By \eqref{vivald}, 
we obtain $\sum_i \Sigmat(P_i^{(l)})-a_l \cdot \Sigmat(P)=0$.
Now Lemma~\ref{non} implies $\Sigmat = 0$.
\end{proof}

\begin{proof}[Proof of Theorem~\ref{triples}]
Without loss of generality, $p \nmid a_1$.
To prove that $\Phi^{\dug}$ is constant,
it will suffice to show that $\Phi^{\dug}$ is regular at every $P \in X(k)$.

Fix $P$.
Let $Y$ be the inverse image of $\{0\}$ under the morphism
\begin{align*}
	\beta \colon X \times (X^{\dug})^{n-1} &\to A \\
	(P_1,\ldots,P_n) & \mapsto \sum a_i \Phi(P_i).
\end{align*}
Let $\pi_i \colon Y \to X$ be the $i$-th projection.
The morphism $\pi_1 \colon Y \to X$ is surjective
since given $P_1$, if we choose $P_4,\ldots,P_n \in X^{\dug}(k)$ arbitrarily,
then there are only finitely many choices of $P_2 \in X^{\dug}$
such that the equation $\beta(P_1,\ldots,P_n)=0$ forces $P_3 \notin X^{\dug}$.
In particular, we can find a smooth irreducible curve $C$ 
and a morphism $\gamma \colon C \to Y$
such that $\pi_1(\gamma(C))$
is a dense subset of $X$ containing $P$.

By \eqref{ticc}, we have $\sum a_i \Phi^{\dug}(P_i) = 0$ for 
all $(P_1,\ldots,P_n) \in Y \intersect (X^{\dug})^n$.
In particular,
\[
	\sum_{i=1}^n a_i \Phi^{\dug}(\pi_i(\gamma(c))) = 0
\]
is an identity of rational functions of $c \in C$.
Since $\Phi^{\dug}$ is regular on $X^{\dug}$,
the last $n-1$ summands are regular on $C$.
Therefore the first summand is regular too.
So $a_1 \Phi^{\dug}$ is regular on $\pi_1(\gamma(C))$.
Since $a_1 \ne 0$ in $k$, and $P \in \pi_1(\gamma(C))$,
the function $\Phi^{\dug}$ is regular at $P$.
\end{proof}

\section{Proofs of local results}
\label{S:local}

Fix a prime $p \ge 5$.
Recall that $R=\hat{\bZ}_p^{\ur}$, $k=R/pR$, $K:=R[1/p]$,  
and $\phi \colon R \to R$ is the Frobenius automorphism.

\subsection{Review of CL and CM points}
\label{S:review of CL and CM}
This section reviews facts we need about CL abelian schemes 
and their relation with CM points; see~\cite{Katzcan,DO,messing}.
Expert readers should skip this discussion.

\begin{definition}
\label{deffCL}
An abelian scheme $E/R$ is $\CL$ (a {\it canonical lift})
if its reduction $\bar{E}:=E \otimes k$ is ordinary 
and there exists an $R$-homomorphism 
$E \to E^{\phi}:=E \otimes_{R,\phi} R$ 
whose reduction mod $p$ is the relative Frobenius $k$-homomorphism
$\bar{E} \to \bar{E}^{\Fr}:=\bar{E} \otimes_{k,\Fr}k$.
\end{definition}

\begin{theorem}
\label{T:revv1}
The following are equivalent for an elliptic curve $E$ over $R$:
\begin{enumerate}
\item $E$ is $\CL$.
\item $E$ has ordinary reduction and  Serre-Tate parameter $q(E)=1$
  (with respect to some, and hence any,  
   basis of the physical Tate module).
\item There exists a morphism of {\em $\bZ$-schemes} $E \to E$ whose
 reduction mod $p$ is the absolute Frobenius $\bF_p$-morphism
 $\bar{E} \to \bar{E}$. 
 (In~\cite{book} this situation was referred to by saying
  that $E$ {\it has a lift of Frobenius}.)
\end{enumerate}
\end{theorem}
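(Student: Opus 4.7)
The plan is to combine a formal translation between $\bZ$-scheme and $R$-scheme morphisms with the Serre-Tate classification of ordinary lifts.

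The equivalence (1)$\Leftrightarrow$(3) is essentially formal. A $\bZ$-scheme morphism $F\colon E\to E$ whose mod-$p$ reduction is absolute Frobenius must induce $\phi$ on $\Spec R$, because $\phi$ is the unique automorphism of $R$ lifting $\Fr$ on $k$. The data of such an $F$ is therefore equivalent to the data of an $R$-morphism $f\colon E\to E^\phi$, and the reduction of $f$ is exactly the relative Frobenius $\bar E\to\bar E^{\Fr}$. By rigidity of abelian schemes, composing with a translation on the target makes $f$ into a group homomorphism matching Definition~\ref{deffCL}, and conversely any such homomorphism gives a morphism of $\bZ$-schemes lifting absolute Frobenius. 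The ordinary condition required in (1) is automatic whenever (3) holds: a supersingular $\bar E$ has no lift of Frobenius to $R$, since any lift would be a separable characteristic-zero isogeny reducing to a purely inseparable isogeny whose kernel is not of multiplicative type.

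For (1)$\Leftrightarrow$(2), I would invoke Serre-Tate theory. For ordinary $\bar E$, the lifts of $\bar E$ to $R$ are classified by a Serre-Tate parameter $q(E)\in 1+pR$, with $E$ the canonical lift iff $q(E)=1$. The parameter is compatible with base change by $\phi$, giving $q(E^\phi)=\phi(q(E))$. Analyzing any lift $f\colon E\to E^\phi$ of relative Frobenius on the connected-\'etale sequence of $p$-divisible groups shows that $f$ acts as the identity on the \'etale quotient $\bQ_p/\bZ_p$ (since \'etale relative Frobenius over $R$ is trivial) and as $[p]$ on the connected part $\hat{\mathbf G}_m$ (Frobenius on the formal multiplicative group is the $p$-th power map). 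The compatibility of $f$ with the extension classes defining the two Serre-Tate parameters then forces
\[
q(E)^p \;=\; q(E^\phi) \;=\; \phi(q(E)).
\]
The implication (2)$\Rightarrow$(1) is immediate: when $q(E)=1$, both $E$ and $E^\phi$ are canonical lifts, so relative Frobenius lifts uniquely by Serre-Tate functoriality. For (1)$\Rightarrow$(2), I would derive the equation $\phi(q(E))=q(E)^p$ as above and solve it in $1+pR$: setting $x:=\log q(E)\in pR$ converts it to $\phi(x)=px$, whence $x=p^n\phi^{-n}(x)\in p^nR$ for every $n\geq 0$, forcing $x=0$ by $p$-adic completeness and therefore $q(E)=1$.

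The main obstacle is the $p$-divisible group bookkeeping that produces the identity $\phi(q(E))=q(E)^p$: one must identify the Serre-Tate parameter with the extension class of $E[p^\infty]$ in $\mathrm{Ext}^1(\bQ_p/\bZ_p,\hat{\mathbf G}_m)(R)$ and check that pushout by $[p]$ on the kernel together with the identity on the quotient transforms this class by $q\mapsto q^p$. Once this is set up, the remaining algebraic manipulations are routine.
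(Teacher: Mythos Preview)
Your proposal is correct, and for (1)$\Leftrightarrow$(3) you follow essentially the same route as the paper: convert between a $\bZ$-morphism lifting absolute Frobenius and an $R$-morphism $E\to E^\phi$ lifting relative Frobenius, then invoke rigidity (the paper uses the N\'eron mapping property) to replace the morphism by a homomorphism. Your argument that (3) forces ordinarity---that the kernel of any lift would be a finite flat order-$p$ group scheme over $R$ with special fiber $\alpha_p$, which is impossible by Oort--Tate---is a clean self-contained substitute for the paper's citation of results from~\cite{book}; the phrase ``not of multiplicative type'' is a slight misdirection, since the real point is simply that $\alpha_p$ does not lift to $R$ at all.

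The genuine difference is in (1)$\Leftrightarrow$(2). The paper dispatches this in one line by observing that it is essentially Katz's definition of canonical lift in~\cite{Katzcan}. You instead rederive it from the Serre--Tate functoriality of $q$: analyzing the lift of Frobenius on the connected--\'etale filtration yields $\phi(q(E))=q(E)^p$, and then $\log$ converts this to $\phi(x)=px$ in $pR$, whence $x=0$. This is a legitimate and instructive alternative. The trade-off is exactly the one you identify: the paper's citation is immediate, while your route requires checking that relative Frobenius acts as the identity on the \'etale quotient and as $[p]$ on the multiplicative part, and then translating this through the Serre--Tate pairing to get the exponent $p$ on the correct side. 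That bookkeeping is standard but does need to be written out carefully (via the criterion $q_A(x,\hat g^\vee y)=q_B(\hat g x,y)$ for lifting homomorphisms) if you want the argument to stand on its own.
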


\begin{proof}
The equivalence between 2 and 1 is essentially the definition of
the CL property in~\cite{Katzcan}.
The implication $1 \implies 3$ is trivial. For $3 \implies 1$, 
note first that $\bar{E}$ must be ordinary: 
this follows, for instance, 
from Proposition~7.15 and Corollaries 8.86 and~8.89
in~\cite{book}.
Finally, the $\bZ$-morphism $E \to E$ induces 
an $R$-morphism $E \to E^{\phi}$; 
the N\'{e}ron model property 
shows that the latter is a composition of a
homomorphism $u$ with translation by an $R$-point reducing to 
the identity mod $p$.
But then $u$ mod $p$ is the relative Frobenius.
\end{proof}

\begin{theorem}[Existence and uniqueness of CL abelian schemes]
\label{T:revv2}
\hfill
\begin{enumerate}
\item
Fix a prime $p$ and an ordinary abelian variety $\bar{E}$ over $k$. 
Then there exists a unique $\CL$ abelian scheme $E$ over $R$ 
with $E \otimes k \simeq \bar{E}$ (unique up to isomorphism).
\item
If $E$ and $E'$ are $\CL$ abelian schemes over $R$, 
then the natural map 
$\Hom_{R}(E,E')\to \Hom_{k}(\bar{E},\bar{E}')$ is an isomorphism. 
\item
If two elliptic curves over $R$ are related
by an isogeny of degree prime to $p$ and one of them is $\CL$, 
then so is the other.
\end{enumerate}
\end{theorem}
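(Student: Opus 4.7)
My plan is to invoke Serre-Tate deformation theory, together with the characterization of the $\CL$ property by vanishing of the Serre-Tate parameter from Theorem~\ref{T:revv1}, as developed in~\cite{Katzcan,messing}.

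For part (1), I would use the Serre-Tate theorem: the formal deformation functor of an ordinary abelian variety $\bar E$ of dimension $g$ to complete Noetherian local $R$-algebras with residue field $k$ is representable by a formal torus of dimension $g^2$. The deformation corresponding to the identity element of this torus algebraizes (after fixing a polarization on $\bar E$, via Grothendieck's existence theorem) to an abelian scheme $E$ over $R$ whose Serre-Tate parameter is $1$; by Theorem~\ref{T:revv1} this $E$ is $\CL$, and by the same criterion any two $\CL$ lifts of $\bar E$ correspond to the same point of the formal torus, hence are isomorphic.

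For part (2), injectivity is rigidity: an $R$-homomorphism of abelian schemes whose mod $p$ reduction vanishes has a zero locus that is closed in the source, contains the special fibre, and is $R$-flat, so equals the source. For surjectivity I would use the Serre-Tate criterion expressing when $\bar u\colon \bar E\to\bar E'$ lifts to $u\colon E \to E'$: the obstruction is an identity in terms of $q(E)$, $q(E')$, and the map induced by $\bar u$ on physical Tate modules, and it is automatic when $q(E)=q(E')=1$.

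For part (3), suppose first that $E_1$ is $\CL$ and $u\colon E_1 \to E_2$ is a prime-to-$p$ isogeny over $R$. Then $\bar E_2$ is ordinary (isogenous to $\bar E_1$), so by (1) it has a canonical lift $E_0$; by (2) I can lift $\bar u$ to $u_0\colon E_1 \to E_0$. Both $\ker u$ and $\ker u_0$ are finite \'{e}tale $R$-group schemes lifting $\ker \bar u$, and \'{e}tale schemes over the henselian ring $R$ are determined by their special fibres, so the two kernels coincide as closed subgroup schemes of $E_1$; whence $E_2 \simeq E_1/\ker u = E_1/\ker u_0 \simeq E_0$ is $\CL$. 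The converse (if $E_2$ is $\CL$) follows by applying this argument to the dual isogeny $\hat u\colon E_2 \to E_1$, using that an elliptic curve is canonically principally polarized and hence canonically isomorphic to its dual. The chief obstacle is the surjectivity in (2), which rests on the full Serre-Tate description of liftings of homomorphisms; the remaining steps are standard rigidity, \'{e}tale descent, and bookkeeping.
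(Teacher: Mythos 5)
The paper offers no proof of this theorem, merely citing Katz's \emph{Serre--Tate local moduli} and Dwork--Ogus for these classical facts. Your sketch correctly reconstructs the standard Serre--Tate argument underlying those references --- the formal-torus structure of the deformation functor for~(1), the Serre--Tate criterion for lifting homomorphisms (automatic when all Serre--Tate parameters are trivial) for~(2), and rigidity of prime-to-$p$ \'etale kernels over the strictly henselian ring $R$ together with the dual isogeny for~(3) --- so your route is the one the paper implicitly relies on.
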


\begin{proof}
This is due to Serre and Tate: see~\cite{Katzcan, DO}.
\end{proof}

The {\em conductor} of an order in a quadratic number field
is the index of the order in the maximal order.

\begin{theorem}[Relation between CL and CM]
\label{T:revv3}
\hfill
\begin{enumerate}
\item
\begin{enumerate}
\item If $E$ is a $\CL$ elliptic curve over $R$, then $E$ has $\CM$
(part of this claim is that
$E$ is definable over $M = K \intersect \Qbar$).
Thus we have the relation $\CL \subseteq \CM$ between subsets
of $Y_1(N)(\Qbar)$.
\item Conversely, if $Q = (E,\alpha) \in Y_1(N)(\Qbar)$ is in $\CM$,
and $p$ is split in $\End E \tensor \Q$
and does not divide the conductor of $\End E$,
then $Q \in \CL$.
\end{enumerate}
\item
\begin{enumerate}
\item If $(E,i)$ is a $\CL$ false elliptic curve over $R$, 
then $(E,i)$ is CM.
Thus we have the relation $\CL \subseteq \CM$ between subsets
of $X^D(\calU)(M)$.
\item Conversely, for any $\CM$-point $Q \in X^D(\calU)(\Qbar)$,
we know that the associated abelian surface $E$ 
is the square of an elliptic curve with $\CM$ by an order in some $\calK$;
if $p$ splits in $\calK$ and $p$ does not divide the conductor of
the order, then $Q \in \CL$.
\end{enumerate}
\end{enumerate}
\end{theorem}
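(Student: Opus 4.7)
The overall strategy is to derive each of the four implications from Theorems~\ref{T:revv1} and~\ref{T:revv2}, combined with Deuring's classical reduction theory of $\CM$ elliptic curves at $p$. The engine driving all four parts is Theorem~\ref{T:revv2}(2), which forces endomorphisms of a $\CL$ abelian scheme to coincide with endomorphisms of its reduction.

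For part (1)(a), let $E/R$ be $\CL$. Applying Theorem~\ref{T:revv2}(2) with $E'=E$ gives $\End_R(E)\isom \End_k(\bar E)$; since $\bar E$ is ordinary by Definition~\ref{deffCL}, Deuring's theorem identifies $\End_k(\bar E)$ with an order in an imaginary quadratic field strictly containing $\Z$, so $E$ has $\CM$. Because $E$ has $\CM$, $j(E)\in R$ is algebraic over $\Q$, whence $j(E)\in R\cap\overline{\Q}=M$; a standard twist argument (using that $R$ is strictly henselian with residue field $\Fbar_p$ and that $|\mathrm{Aut}(E)|$ is prime to $p$) then produces a model of $E$ over $M$. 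For part (1)(b), set $\calO=\End E$ and $\calK=\calO\otimes\Q$, and spread $E$ out over the ring of integers of a number field $F\subset\overline{\Q}$ containing $\calK$. The hypothesis that $p$ splits in $\calK$ and is prime to the conductor of $\calO$ implies, via Deuring's criterion, that $\bar E$ is ordinary and that reduction induces an isomorphism $\calO\isom \End_k(\bar E)$. Choose the factorization $p=\pi\bar\pi$ in $\calO$ so that, under this isomorphism, $\pi$ corresponds to the Frobenius of $\bar E$; then multiplication by $\pi$ is a $\Z$-morphism $E\to E$ whose reduction is Frobenius, and Theorem~\ref{T:revv1}(3) yields that $E$ is $\CL$.

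For part (2) the arguments parallel those of part (1). In (a), Theorem~\ref{T:revv2}(2) again gives $\End_R(E)\isom \End_k(\bar E)$; since $\bar E$ is an ordinary abelian surface carrying an $\calO_D$-action and the presence of a noncommutative subalgebra $D\subset \End(\bar E)\otimes\Q$ forces $\bar E$ to be isogenous over $k$ to the square of an ordinary elliptic curve $\bar E_0$ with imaginary quadratic endomorphism algebra $\calK$, we have $\End_k(\bar E)\otimes\Q\supseteq M_2(\calK)\supsetneq D$; the dichotomy recalled in Section~1 then forces $(E,i)$ to be $\CM$. For (b), the $\CM$ hypothesis produces an isogeny $E\sim E_0^2$ over $\overline{\Q}$ where $E_0$ has $\CM$ by an order in $\calK$; by part~(1)(b), $E_0$ admits a $\CL$ model over $R$, hence so does $E_0^2$. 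A standard extension of Theorem~\ref{T:revv2}(3) to abelian varieties (via Serre--Tate theory) shows that prime-to-$p$ isogeny preserves $\CL$; since the conductor of $\calO$ is prime to $p$, the isogeny $E_0^2\to E$ can be chosen of degree prime to $p$, so $(E,i)$ is $\CL$.

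The main subtle point will be in (1)(b): verifying that $\pi\in\calO$ can be chosen so that multiplication by $\pi$ on $E$ reduces to (relative) Frobenius rather than Verschiebung. This amounts to picking the correct prime of $\calO$ above $p$, using the explicit identification $\calO\isom \End_k(\bar E)$ in which exactly one of $\pi$, $\bar\pi$ is distinguished by mapping to the Frobenius endomorphism. A secondary technical point occurs in (2)(b), where the $\calO_D$-action on $E_0^2$ must be transported through the chosen prime-to-$p$ isogeny; this is arranged using the same lifting principle afforded by Theorem~\ref{T:revv2}(2) applied to the $\CL$ abelian surface.
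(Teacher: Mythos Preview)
Your overall strategy---driving everything through Theorem~\ref{T:revv2}(2)---matches the paper's. Parts (1)(a) and (2)(a) are essentially the paper's one-line proofs with extra detail, and for (2)(b) both you and the paper reduce to (1)(b) applied to the elliptic factor $E_0$ (the paper says only ``Apply Theorem~\ref{T:revv3}(1)(b) to the elliptic curve''), with your isogeny discussion filling in what the paper leaves implicit.

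The genuine gap is in your mechanism for (1)(b), where you invoke Theorem~\ref{T:revv1}(3) via multiplication by $\pi$. Two problems arise. First, writing $p=\pi\bar\pi$ in $\calO=\End(E)$ requires the primes of $\calO$ above $p$ to be principal; this fails whenever the ring class group of $\calO$ is nontrivial, so $\pi$ need not exist at all. Second, even when $\pi$ does exist, $[\pi]\colon E\to E$ is an $R$-morphism, so its reduction is a $k$-morphism $\bar E\to\bar E$; but absolute Frobenius is only $\Fr$-semilinear over $k$, not $k$-linear, so the reduction of $[\pi]$ cannot literally equal absolute Frobenius unless $\bar E$ happens to descend to $\F_p$. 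What you actually obtain is that the reduction of $[\pi]$ \emph{factors through} relative Frobenius $\bar E\to\bar E^{(p)}$, but then identifying the quotient $E/\ker[\pi]$ with $E^{\phi}$ (rather than some other lift of $\bar E^{(p)}$) is essentially the $\CL$ statement you are trying to prove. Your ``main subtle point'' paragraph flags only the $\pi$-versus-$\bar\pi$ ambiguity and misses both of these deeper issues. The paper sidesteps all of this by citing Serre directly; a correct self-contained argument is that $\calO\otimes\Z_p\simeq\Z_p\times\Z_p$ (since $p$ splits and $p$ is prime to the conductor), and its two idempotents force the connected--\'etale sequence of $E[p^\infty]$ over $R$ to split, giving Serre--Tate parameter~$1$.
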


\begin{proof}\hfill
\begin{enumerate}
\item
\begin{enumerate}
\item 
If $E/R$ is a CL elliptic curve, then 
$\End_{R}(E) \isom \End_{k}(\bar{E}) \neq \bZ$.
\item
This follows from the theorem in the middle of p.~293 
in~\cite{SerreComplexMultiplication}.
\end{enumerate}
\item
\begin{enumerate}
\item
Let $\calE:= \End_{R}(E) \tensor \Q \isom \End_{k}(\bar{E}) \tensor \Q$.
Since $\bar{E}$ is ordinary, the center of $\calE$
contains an imaginary quadratic field ${\mathcal K}$:
see \cite[p.~247]{book}, say. 
In particular, $\calE \not\isom D$, so $(E,i)$ is CM.
\item 
Apply Theorem~\ref{T:revv3}(1)(b) to the elliptic curve.
\end{enumerate}
\end{enumerate}
\end{proof}

\subsection{$\d$-functions} See~\cite{char,book}.
Let $\d\colon R \to R$ be the {\it Fermat quotient map} 
$\d x:=(\phi(x)-x^p)/p$. 
Then
\begin{equation}
\label{axioms}
\begin{array}{rcl} \d(x+y) & = &  \d x + \d y
+C_p(x,y)\\
\d(xy) & = & x^p \cdot \d y +y^p \cdot \d x +p \cdot \d x \cdot \d
y,
\end{array}\end{equation}
where $C_p(X,Y):=\frac{X^p+Y^p-(X+Y)^p}{p} \in \bZ[X,Y]$.
 Following~\cite{char} we think of $\d$ 
as a ``derivation with respect to $p$''. 
If $P \in \bA^N(R) = R^N$, then $\delta P$ is defined by applying $\delta$
to each coordinate.

Let $X$ be  a smooth $R$-scheme and let $f\colon X(R) \to R$ be a map of sets. 
Following~\cite[p.~41]{book}, we say that $f$ is a 
{\em $\d$-function of order $r$}
if for any point in $X(R)$ there is
a Zariski open neighborhood $U \subset X$, 
a closed immersion $u\colon U \injects \bA^N_R$,
and a restricted power series $F$ with $R$-coefficients in $(r+1)N$ variables 
such that
\[
	f(P)= F(u(P),\d(u(P)),\ldots ,\d^r(u(P))) 
	\quad \text{for all $P \in U(R)$.}
\]
({\em Restricted} means that the coefficients converge $p$-adically to $0$.)
Let $\cO^r(X)$ be the ring of $\d$-functions of order $r$ on $X$. 

We have natural maps $\d\colon \cO^r(X) \to \cO^{r+1}(X)$, $f
\mapsto \d f:=\d \circ f$, and natural ring homomorphisms
$\phi\colon \cO^r(X) \to \cO^{r+1}(X)$, $f \mapsto
\phi(f)=f^{\phi}:=\phi \circ f$. The maps $\d$ above still satisfy
the identities in \eqref{axioms}. Let $X$ be affine, and
let $x$ be a system of \'{e}tale coordinates on $X$, that is to
say there exists an \'{e}tale map $X \to \bA^d$ such that $x$ is
the $d$-tuple of elements in $\cO(X)$ obtained by pulling back the
coordinates on $\bA^d$. 
Let $x',x'',\ldots ,x^{(r)}$ be $d$-tuples of variables 
and let $\h$ denotes $p$-adic completion, as usual.
Then the natural map
\begin{equation} \label{zim} \cO(X)\h[x',x'',\ldots ,x^{(r)}]\h \ra
\cO^r(X)
\end{equation}
 sending $x' \mapsto \d x$, $x'' \mapsto \d^2
x$,\ldots ,$x^{(r)} \mapsto \d^r x$ is an isomorphism: see
Propositions 3.13 and~3.19 in~\cite{book}. 

\subsection{$\d$-characters} 
\label{delta-characters}
We recall facts from~\cite{char,book}.
If $G$ is a smooth group scheme over $R$, 
then by a {\em $\d$-character of order $r$}
we understand a $\d$-function  $\psi\colon G(R) \to R$ of order $r$
which is also a group homomorphism into the additive group of $R$.
Following~\cite{char}, we view $\d$-characters of abelian schemes
as arithmetic analogues of the Manin maps~\cite{man,man1}. 
Let $\bX^r(G)$ be the $R$-module of $\d$-characters of order $r$ on
$G$.  
By~\cite[pp.~325-326]{char}, the following hold for an elliptic curve $E/R$:
\begin{enumerate}
\item If $E$ is CL, then $\bX^1(E)$ is free of rank $1$.
\item If $E$ is not CL, then $\bX^2(E)$ is free of rank $1$.
\end{enumerate}
We will need to review (and  complement)  some results in
\cite{char,frob} that can be viewed as an arithmetic analogue of
Manin's Theorem of the Kernel~\cite{man1,chai}. 
For any abelian group $G$,
we set $p^{\infty}G:=\cap_{n =1}^{\infty} p^nG$ and 
we let $p^{\infty}G:p^{\infty}$ be the group of all $x \in G$ 
for which there exists an integer $n\geq 1$ with $p^nx \in p^{\infty}G$. 

\begin{lemma}
\label{thofker} Let $E$ be an elliptic curve over $\Z_p$.
Let $r$ be $1$ or $2$ according as $E$ is $\CL$ or not.
Let $\psi\colon E(R) \to R$ be a generator of $\bX^r(G)$.
Then
\begin{enumerate}
\item $\psi$ is surjective and defined over $\bZ_p$.
\item $\ker \psi=p^{\infty}E(R):p^{\infty}$.
\item $\ker \psi+pE(R)=E(R)_{\tors}+pE(R)=:E(R)\pdiv$.
\item $\psi^{-1}(pR)=E(R)_{\tors}+pE(R)=:E(R)\pdiv$.
\item $(\ker \psi) \cap E(\bZ_p^{\ur})=E(\bZ_p^{\ur})_{\tors}$.
\end{enumerate}
\end{lemma}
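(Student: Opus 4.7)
The plan is to establish parts (1) and (2) directly from the references \cite{char, frob}, and then to deduce (3), (4), (5) from (2) together with the structural identification $p^{\infty}E(R):p^{\infty} = E(R)_{\tors}$, which holds in our $p$-adic setting.

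For (1), I would use that $\bX^r(E)$ is a free $R$-module of rank one carrying a $\Gal(R/\bZ_p)$-semilinear action inherited from the $\bZ_p$-structure of $E$; a Hilbert 90 argument then produces a generator defined over $\bZ_p$. Surjectivity follows by computing the leading term of the restriction of $\psi$ to the formal group $\hat{E}(R)$: after normalizing the chosen generator, this term in the formal parameter is a unit of $R$, which forces $\psi|_{\hat{E}(R)}$ onto all of $R$. For (2), the inclusion $\supseteq$ is elementary --- if $x \in p^{\infty}E(R):p^{\infty}$, then $p^n x \in \bigcap_m p^m E(R)$ for some $n$, whence $p^n \psi(x) \in \bigcap_m p^m R = 0$ and $\psi(x) = 0$ since $R$ is torsion-free --- while the reverse inclusion is the arithmetic analogue of Manin's Theorem of the Kernel proved in \cite{char,frob}.

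For (3)--(5), the key observation is that $p^{\infty}E(R):p^{\infty} = E(R)_{\tors}$. Indeed, prime-to-$p$ torsion $t$ of order $N$ lies in $\bigcap_m p^m E(R)$, because from $1 = aN+bp$ we get $t = p(bt)$ and iterate, while $p$-power torsion is $p$-divisible by the structure of the $p$-divisible group $E[p^{\infty}]$ (both in the ordinary and supersingular cases). Conversely, any $x$ with $p^n x \in \bigcap_m p^m E(R)$ has image in $E(R)/E(R)_{\tors}$ lying in $\bigcap_m p^m (E(R)/E(R)_{\tors})$; but this quotient is $p$-adically Hausdorff, since (using $0 \to \hat{E}(R) \to E(R) \to \bar{E}(k) \to 0$ and the fact that every element of the torsion group $\bar{E}(k)$ lifts to a torsion element of $E(R)$) it is a quotient of $\hat{E}(R)/\hat{E}(R)_{\tors}$, on which $[p]$ strictly increases the valuation. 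Combining this with (2) gives $\ker \psi = E(R)_{\tors}$, from which (3) is immediate; (4) follows because $\psi(x) \in pR$ together with the surjectivity from (1) allows one to write $\psi(x) = p \psi(z)$ and conclude $x - pz \in \ker \psi = E(R)_{\tors}$; and (5) follows from $E(R)_{\tors} \cap E(\bZ_p^{\ur}) = E(\bZ_p^{\ur})_{\tors}$. The main obstacle is part (2), the arithmetic Theorem of the Kernel, whose proof in \cite{char,frob} depends on detailed structural results about $\d$-characters; by comparison, the identification $p^{\infty}E(R):p^{\infty} = E(R)_{\tors}$ is a routine $p$-adic computation, and the passage from (2) to the remaining parts is purely formal.
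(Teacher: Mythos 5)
Your plan for parts (1) and (2) coincides with the paper's (both cite \cite{char,frob} for the hard content), but your treatment of (3)--(5) takes a genuinely different and more unified route. The paper proves (3) by extracting only $P-pQ\in E(R)_{\tors}$ from the relation $p^nP=p^{n+1}Q$, proves (4) from (3) and (1), and proves (5) separately in the ordinary case (citing a further result of \cite{frob}, namely $p^{\infty}E(R)\cap E(\Z_p^{\ur})\subset E(\Z_p^{\ur})_{\tors}$) and in the supersingular case (a direct injectivity argument showing $\ker\psi$ embeds in $E(k)$). Your approach instead isolates the single structural fact $p^{\infty}E(R):p^{\infty}=E(R)_{\tors}$, equivalently $\ker\psi=E(R)_{\tors}$, from which (3), (4) and (5) all drop out formally. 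That identification is correct over $R=\hat{\Z}_p^{\ur}$ and your justification is essentially right: $E(R)_{\tors}$ is $p$-divisible in $E(R)$ (prime-to-$p$ torsion by B\'ezout, $p$-power torsion because $E[p^{\infty}](R)$ is $\cong\bQ_p/\Z_p$ in the ordinary case and $0$ in the supersingular case), so $E(R)_{\tors}\subset p^{\infty}E(R)$, and conversely $E(R)/E(R)_{\tors}$ is $p$-adically separated. Two small repairs: (i) you say $E(R)/E(R)_{\tors}$ is ``a quotient of'' $\hat{E}(R)/\hat{E}(R)_{\tors}$, but Hausdorff-ness does not pass to quotients, so this is not the right assertion; in fact the natural map $\hat{E}(R)/\hat{E}(R)_{\tors}\to E(R)/E(R)_{\tors}$ is an \emph{isomorphism}, because the étale $p^{\infty}$-torsion and all prime-to-$p$ torsion of $E(k)$ lift to $E(R)_{\tors}$ and $\hat{E}(R)\cap E(R)_{\tors}=\hat{E}(R)_{\tors}=0$ --- and moreover $\hat{E}(R)_{\tors}=0$ outright since $K=R[1/p]$ is unramified over $\Q_p$ while the formal torsion lives in ramified extensions. (ii) Your surjectivity sketch for (1) (``leading term in the formal parameter is a unit'') is imprecise: for $T=ps$ one finds $\psi(T)\equiv s^{p^2}-a_ps^p\pmod p$ (or $\equiv s^p$ in the CL case), so surjectivity rests on the surjectivity of the additive polynomial $u\mapsto u^p-\bar a_p u$ on $\bar{\F}_p$ rather than on a unit leading coefficient; this still works because $k$ is algebraically closed, but the paper simply cites \cite[Theorem~1.10]{frob}. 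With these adjustments your proposal is sound, and the uniform deduction of (3)--(5) from $\ker\psi=E(R)_{\tors}$ is cleaner than the paper's case-by-case argument.
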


\begin{proof}
\hfill
\begin{enumerate}
\item
Surjectivity follows from~\cite[Theorem~1.10]{frob}. 

That $\psi$ is defined over $\Z_p$ follows its construction in~\cite{char}.
\item
If $E$ has ordinary reduction, then \cite[Theorem~B', p.~312]{char}
shows that $(\ker \psi)/p^{\infty}E(R)$ is a
finite cyclic $p$-group; this implies the non-trivial inclusion ``$\subset$''.
If $E$ has supersingular reduction, 
we are done by~\cite[Corollary~1.12]{frob}.
\item
The non-trivial inclusion is ``$\subset$''. 
If $P \in \ker \psi$, by (2) there
exists $n$ such that $p^nP=p^{n+1}Q$ for some $Q \in E(R)$. So
$P-pQ \in E(R)_{\tors}$ and we are done.
\item 
This follows from (3) and (1).
\item
If $E$ has ordinary reduction,
then by Theorem~1.2 and Remark~1.3 on p.~209 of~\cite{frob},
we have 
$p^{\infty}E(R) \cap E(\bZ_p^{\ur}) \subset E(\bZ_p^{\ur})_{\tors}$;
combining this with (2) yields the nontrivial inclusion 
$(\ker \psi) \cap E(\bZ_p^{\ur}) \subset E(\bZ_p^{\ur})_{\tors}$ of (5).
Now assume that $E$ has supersingular reduction. 
If $a_p$ is the trace of Frobenius on $E_{\bF_p}$ 
then the map $\phi^2-a_p \phi+p\colon R \to R$ is injective.
By~\cite[Theorem~1.10, p.~212]{frob}, 
the restriction of $\psi$ to the kernel of the reduction map $E(R) \to E(k)$ 
is injective. Hence we have an injection $\ker \psi \to E(k)$. 
Since $E(k)$ is torsion, so is $\ker \psi$.
\end{enumerate}
\end{proof}

We now describe an explicit generator $\psi$ of $\bX^r(A_R)$,
where $A$ is an elliptic curve over $\Z_p$,
and $r$ is $1$ or $2$ according as $A_R$ is CL or not.
Fix a $1$-form $\omega$ generating the $\Z_p$-module $H^0(A,\Omega^1)$.
This uniquely specifies a Weierstrass model $y^2=x^3+ax+b$ for $A$ over $\Z_p$
such that $\omega=dx/y$.
Let $T:=-x/y$. 
So $T$ is an \'etale coordinate at the origin $0$ of $A$, vanishing at $0$. 
Let $L(T) \in \Q_p[[T]]$
be the logarithm of the formal group of $A$ associated to $T$,
so $dL(T) =\omega \in \Z_p[[T]]\,dT$ and $L(0)=0$.
If $A$ is $\CL$, let $up$ be the unique root in $p\Z_p$
of the polynomial $x^2-a_p x+p$. 
By \cite[Theorem~7.22]{book} and \cite[Theorem~1.10]{frob}, 
we may take 
\begin{equation}
\label{E:psi}
\psi := 
\begin{cases}
	\frac{1}{p} (\phi^2-a_p\phi+p) L(T) \in R[[T]][T',T'']\hat{\ }, 
		& \text{ if $A$ is not $\CL$;} \\
	\frac{1}{p} (\phi-up) L(T) \in R[[T]][T']\hat{\ },
		& \text{ if $A$ is $\CL$.}
\end{cases}
\end{equation}

\subsection{$\d$-Fourier expansions} 
\label{S:delta-Fourier}
See~\cite{difmod}.
We start by reviewing background on 
{\em classical} Fourier expansions as in~\cite[p.~112]{DI}. 
(The discussion there
involves the modular curve parameterizing elliptic curves with an
embedding of $\mu_N$ rather than $\bZ/N\bZ$ as here. 
But, the two modular curves are isomorphic over $\ZN$: see~\cite[p.~113]{DI}.) 
The cusp $\infty$ on $S:=X_1(N)$ arises from a $\ZN$-valued point;
so if $p \gg 0$ (specifically, $p \nmid N$),
then it gives rise to an $R$-point, 
which may be viewed as a closed immersion $s_\infty \colon \Spec R \to S_R$.
Let $[\infty] = s_\infty(\Spec R)$.
Let $\tilde{S}_{R}$ be the completion of $S_{R}$ along $[\infty]$. 
The Tate generalized elliptic curve $\operatorname{Tate}(q)/R[[q]]$ 
equipped with 
the standard immersion $\alpha_{can}$ of $\mu_{N,R} \simeq (\bZ/N\bZ)_{R}$ 
is a point in $S(R[[q]])$ that reduces mod $q$ to $s_\infty$.
For $p \gg 0$ there is an induced isomorphism 
$\Spf R[[q]] \simeq \tilde{S}_{R}$. 
Therefore, for any open subset $U \subset S_{R}$ containing $[\infty]$ 
we have an induced {\em Fourier $q$-expansion} homomorphism
\[ 
	\cO(U \setminus [\infty]) \to R((q)):=R[[q]][1/q].
\]

More generally, suppose that we are given a modular-elliptic
correspondence $\thecorr$.
Let $M$ be the ramification index of $\Pi$ at $x_{\infty}$. 
As before, we assume $p\gg 0$. 
Then
we have $\Spf R[[\qq]] \simeq \tilde{X}_{R}$,
where $\qq:=q^{1/M}$ and $\tilde{X}_{R}$ is the completion of
$X_{R}$ along the closure $[x_{\infty}]$ of $x_{\infty}$.
Moreover, for any open set $U \subset X_{R}$ containing
$[x_{\infty}]$ we have a {\em Fourier $q$-expansion} homomorphism
\[
	\cO(U \setminus [x_{\infty}]) \to R((\qq)).
\]

Next we move to the ``$\d$-theory''. 
Let $q',q'',\ldots ,q^{(r)},\ldots$ be new indeterminates.
Define
\[
	S_{\infty}^r:=R((q))\hat{\ }[q',q'',\ldots ,q^{(r)}]\hat{\ }.
\]
For each $r$, extend $\phi\colon R \to R$ to a ring homomorphism
$\phi\colon S_{\infty}^r \to S_{\infty}^{r+1}$
denoted $F \mapsto F^{\phi}$ 
by requiring 
\[
q^{\phi}:=q^p+pq', \quad (q')^{\phi}:=(q')^p+pq'', \quad \ldots,
\]
and define $\d\colon S_{\infty}^r \to S_{\infty}^{r+1}$ by
\begin{equation}
\label{E:d formula}
	\d F:= \frac{F^{\phi}-F^p}{p}. 
\end{equation}
By the universality property of
the sequence $\{\cO^r(U\setminus [\infty])\}_{r \geq 0}$ 
(see~\cite[Proposition~3.3]{book}), 
there exists a unique sequence of ring homomorphisms
\begin{equation}
\label{vaiy} \cO^r(U \setminus [\infty]) \to S_{\infty}^r,
\end{equation}
called {\em $\d$-Fourier expansion} maps 
and denoted $g \mapsto g_{\infty}$,
such that $(\d g)_{\infty}=\d(g_{\infty})$ for all $g$. 

More generally, given a modular-elliptic correspondence
$\thecorr$, define rings 
\[
	S^r_{x_{\infty}}:=R((\qq))\h[\qq',\ldots ,\qq^{(r)}]\h
\]
where $\qq',\ldots ,\qq^{(r)}$ are new variables. 
Again there are natural maps 
$\phi,\d\colon S^r_{x_{\infty}} \to S^{r+1}_{x_{\infty}}$ defined
exactly as above and there are {\em $\d$-Fourier expansion} maps
\[
	\cO^r(U \setminus [x_{\infty}]) \to S_{x_{\infty}}^r
\]
commuting with $\d$, and denoted $g \mapsto g_{x_\infty}$. 
There are natural maps $S^r_{\infty} \to S^r_{x_{\infty}}$. 
Since $\Spec R[\qq,\qq^{-1}] \to \Spec R[q,q^{-1}]$ 
is \'etale, \ref{zim} implies 
\[
	S^r_{x_{\infty}} \simeq R((\qq))\h[q',\ldots ,q^{(r)}]\h.
\]

\subsection{$\d$-Serre-Tate expansions} 
\label{S:delta Serre-Tate}
See~\cite{shimura,book}.
Assume that we are given a Shimura-elliptic correspondence $\thecorr$,
and that $p\gg 0$. 
By the proof of Lemma~2.6 in~\cite{shimura}, 
there exist infinitely many $k$-points $\bar{y}_0 \in S(k)$ 
whose associated triple
$(\bar{Y},\bar{i},\bar{\alpha})$ is such that 
\begin{enumerate}
\item $\bar{Y}$ is ordinary, and
\item if $\bar{\theta}$ is the unique principal polarization 
compatible with $\bar{i}$, then $(\bar{Y},\bar{\theta})$ 
is isomorphic to the polarized Jacobian of a genus-$2$ curve. 
\end{enumerate}
So we may choose a point
$\bar{y}_0 \in S(k)$ as above such that moreover,
there exists $\bar{x}_0 \in \bar{X}(k)$ with
$\Pi(\bar{x}_0)=\bar{y}_0$ such that both $\Pi$ and $\Phi$ are
\'etale at $\bar{x}_0$: here we use $p \gg 0$
to know that $\Pi \otimes k$ and $\Phi \otimes k$ are separable.

Let $Y$ be the canonical lift of $\bar{Y}$. 
Since $\End(Y) \isom \End(\bar{Y})$,
the embedding $\bar{i}\colon \cO_D \to \End(\bar{Y})$ 
induces an embedding $i\colon \cO_D \to \End(Y)$. 
Also the level $\cU$ structure
$\bar{\alpha}$ lifts to a level $\cU$ structure on $(Y,i)$. 
Let $y_0:=(Y,i,\alpha) \in S(R)$.
Since $\Pi$ is \'{e}tale at $\bar{x}_0$, there exists 
$x_0 \in X(R)$ such that $x_0 \bmod p = \bar{x}_0$ and $\Pi(x_0)=y_0$. 

Let $\bar{Y}^\vee$ be the dual of $\bar{Y}$.
By~\cite[Lemma~2.5]{shimura}, there exist
$\bZ_p$-bases of the Tate modules $T_p(\bar{Y})$ and $T_p(\bar{Y}^\vee)$,
corresponding to each other under $\bar{\theta}$,
such that any false elliptic curve over $R$ lifting $(\bar{Y},\bar{i})$ has a
diagonal Serre-Tate matrix $\operatorname{diag}(q,q^{disc(D)})$ 
with respect to these bases. 
Fix such bases. 
They define an isomorphism between
the completion of $S_{R}$ along the section $y_0$ and $\Spf R[[t]]$. 
The Serre-Tate parameter $q$ corresponds to the value
of $1+t$. Since $\Pi$ is \'{e}tale at $\bar{x}_0$ we have an
induced isomorphism between the completion of $X$ along the
section $x_0$ and $\Spf R[[t]]$. 
As in Section~\ref{S:delta-Fourier} define rings
\[S^r_{x_0} \simeq R[[t]][t',\ldots ,t^{(r)}]\h\]
and maps $\phi,\d\colon S^r_{x_0} \to S^{r+1}_{x_0}$; then for any
affine open set $U \subset X$  containing the image of the section
$x_0$ we have natural {\em $\d$-Serre-Tate expansion maps}
\begin{equation}
\label{vaiy2} \cO^r(U) \to S_{x_0}^r,
\end{equation}
denoted $g \mapsto g_{x_0}$, that commute with $\phi$ and $\d$.

\subsection{Pullbacks by $\Phi$ of $\d$-characters}
\label{S:pullbacks}
Assume that we are given 
a modular-elliptic or a Shimura-elliptic correspondence $\thecorr$.
Recall that $A$ is defined over a number field $F_0$.
We suppose that $p \gg 0$ and $p$ splits completely in $F_0$.
Then $A_R$ comes from an elliptic curve over $\bZ_p$.
Define $a_p$ and (if $A_R$ is CL) $u$ as in Section~\ref{delta-characters}.
Let $\psi$ be as in~\eqref{E:psi}.
The composition
\begin{equation}
\label{fsharp} f^{\sharp}\colon X(R) \stackrel{\Phi}{\ra} A(R)
\stackrel{\psi}{\ra} R.
\end{equation}
is in $\cO^r(X_{R})$. 
In what follows we compute the $\d$-Fourier expansion
$f^{\sharp}_{x_{\infty}} \in S^r_{x_{\infty}}$ (in the modular-elliptic case) 
or the $\d$-Serre-Tate expansion
$f^{\sharp}_{x_0} \in S^r_{x_0}$ (in the Shimura-elliptic case).

\subsubsection{Modular-elliptic case}
\label{S:modular-elliptic pullback}
Suppose that $S=X_1(N)$.
We have
$\Phi^*\colon R[[T]] \to R[[\qq]]$.
Define $b_n \in F_0 \cap R$ by
\[
	\left(\sum_{n \geq 1} b_n \qq^{n-1} \right)\, d\qq := d(\Phi^*(L(T))) = \Phi^*(dL(T))= \Phi^* \omega.
\]
so 
\begin{equation}
\label{E:b_n series}
\sum_{n \geq 1} \frac{b_n}{n} \qq^n = \Phi^*(L(T)).
\end{equation}
Applying $\Phi^*$ to \eqref{E:psi} and substituting \eqref{E:b_n series}
yields
\begin{equation}
\label{titi}
f^{\sharp}_{x_{\infty}} = \Phi^{*} \psi = 
\begin{cases}
\frac{1}{p} \sum_{n\geq 1} \left(
\frac{b_n^{\phi^2}}{n} \qq^{n\phi^2} -a_p \frac{b_n^{\phi}}{n}
\qq^{n\phi} +p \frac{b_n}{n} \qq^n \right),
		& \text{ if $A$ is not $\CL$;} \\
\frac{1}{p} \sum_{n\geq 1} \left(
 \frac{b_n^{\phi}}{n}
\qq^{n\phi} -up \frac{b_n}{n} \qq^n \right),
		& \text{ if $A$ is $\CL$.}
\end{cases}
\end{equation}
In both cases, $f^{\sharp}_{x_{\infty}} \in R[[\qq]][\qq',\qq'']\h$. 
Applying the substitution homomorphism
\begin{align*}
	R[[\qq]][\qq',\qq'']\h &\to R[[\qq]] \\
	G &\mapsto G_{\natural}:=G(\qq,0,0)=G|_{\qq'=\qq''=0},
\end{align*}
we obtain
\begin{equation}
\label{keti}
(f^{\sharp}_{x_{\infty}})_{\natural} =
\begin{cases}
\frac{1}{p} \sum_{n
\geq 1} \left( \frac{b^{\phi^2}_{n/p^2}}{n/p^2} -a_p
\frac{b^{\phi}_{n/p}}{n/p} +p \frac{b_n}{n}
\right) q^n_{M},
		& \text{ if $A$ is not $\CL$;} \\
\frac{1}{p} \sum_{n
\geq 1} \left( \frac{b^{\phi}_{n/p}}{n/p} -up \frac{b_n}{n}
\right) \qq^n,
		& \text{ if $A$ is $\CL$,}
\end{cases}
\end{equation}
where $b_{\gamma}:=0$ if $\gamma \in \bQ \setminus \bZ$. 
(In particular, the right hand side of \eqref{keti}
has coefficients in $R$, which is not a priori obvious.)

Let us consider the special case when
$\thecorr$ arises from a modular parametrization
associated to the newform $f=\sum a_n q^n$,
so $S=X=X_1(N)$, $\Pi=\Id$,
 $x_{\infty}=\infty$, $M=1$, and $\qq=q$. 
We may take $\omega$ so that $\Phi^* \omega=\sum a_n q^{n-1}dq$;
then $b_n=a_n$ for all $n$. 
Since $f$ is a newform,
the $a_n$ satisfy the usual relations~\cite[Theorem~3.43]{Shimura1971}
(we use $p \gg 0$ to know that $p \nmid N$):
\begin{align}
\label{tzu1}
a_{p^i m} &= a_{p^i}a_{m} \quad \text{ for $(p,m)=1$}, \\
\label{tzu2}
a_{p^{i-1}}a_p &= a_{p^i}+p a_{p^{i-2}} \quad \text{ for $i \geq 2$}.
\end{align}

\begin{lemma}
\label{compeverything}
Assume that $\thecorr$ arises from a modular parametrization attached to $f$.
\begin{enumerate}
\item 
With notation as in \eqref{fruct1} and \eqref{fruct2}, the
following holds in $\bZ_p[[q]]$:
\begin{equation}
\label{floare}
(f^{\sharp}_{\infty})_{\natural} = 
\begin{cases}
f^{(-1)}(q),
		& \text{ if $A$ is not $\CL$;} \\
-uf^{(-1)}_{[u]}(q),
		& \text{ if $A$ is $\CL$.}
\end{cases}
\end{equation}
\item
With notation as in \eqref{fructnou1} and \eqref{fructnou2},
the following holds in $k[[q]][q',q'']$:
\begin{equation}
\label{floarenoua}
\overline{f^{\sharp}_{\infty}} =
\begin{cases}
\overline{f^{(-1)}(q)}
 + \left( \frac{q'}{q^p} \right)^p
 \left(\overline{f^{(0)}_{[a_p]}(q)}\right)^{p^2}
 - \bar{a}_p \left( \frac{q'}{q^p} \right)
 \left(\overline{f^{(0)}_{[a_p]}(q)}\right)^{p},
		& \text{ if $A$ is not $\CL$;} \\ \\
-\bar{u} \overline{f^{(-1)}_{[u]}(q)}+
 \left( \frac{q'}{q^p} \right)
 \left(\overline{f^{(0)}_{[a_p]}(q)}\right)^{p},
		& \text{ if $A$ is $\CL$.}
\end{cases}
\end{equation}
\end{enumerate}
\end{lemma}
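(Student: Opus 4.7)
The plan is to work directly from the already-computed formula~\eqref{titi} for $f^{\sharp}_{x_\infty} = f^{\sharp}_\infty$ (recall that in this special case $M=1$, $\qq = q$, and $b_n = a_n \in \bZ$, so $a_n^\phi = a_n$), reducing first to the $q'=q''=0$ substitution for Part~(1) and then keeping track of the $q',q''$ contributions modulo $p^2$ for Part~(2). Throughout, set $g(x):=\sum_{n\geq 1}(a_n/n)x^n$ so that $h(x):=xg'(x)/x = f(x)/x$, and observe that $\Phi^\ast L(T) = g(q)$ and $f^\sharp_\infty$ is either $p^{-1}(g(q^{\phi^2})-a_p g(q^\phi)+p g(q))$ or $p^{-1}(g(q^\phi)-up\,g(q))$ depending on whether $A_R$ is $\CL$.

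For Part~(1), substituting $q'=q''=0$ sends $q^\phi \mapsto q^p$ and $q^{\phi^2}\mapsto q^{p^2}$, producing \eqref{keti}. Write every $m\geq 1$ as $m=p^i m'$ with $(m',p)=1$ and read off the coefficient of $q^m$:
\begin{itemize}
\item In the non-CL case, the newform multiplicativity $a_{p^im'}=a_{p^i}a_{m'}$ of~\eqref{tzu1} reduces the coefficient to $(a_{m'}/p^i m')\cdot(p a_{p^{i-2}}-a_p a_{p^{i-1}}+a_{p^i})$ for $i\geq 2$, which vanishes by~\eqref{tzu2}; the $i=1$ coefficient vanishes by~\eqref{tzu1} directly; and the $i=0$ case leaves $a_m/m$. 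Summing gives $f^{(-1)}(q)$.
\item In the CL case, the coefficient of $q^{p^i m'}$ is $(a_{m'}/p^i m')(a_{p^{i-1}}-u a_{p^i})$ for $i\geq 1$ and $-u a_{m'}/m'$ for $i=0$. I will prove by induction on $i$, using \eqref{tzu2} together with the defining relation $u a_p - u^2 p = 1$ (which is $(up)^2 - a_p(up)+p=0$), that $u a_{p^i} - a_{p^{i-1}} = u^{i+1} p^i$; this identity also follows transparently by writing $a_{p^i} = (\alpha^{i+1}-\beta^{i+1})/(\alpha-\beta)$ with $\alpha=1/u$ and $\beta=up$. Summing yields $-u f^{(-1)}_{[u]}(q)$.
\end{itemize}

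For Part~(2), expand mod $p^2$. Since $q^\phi = q^p+pq'$, Taylor's formula gives
\[g(q^\phi)\equiv g(q^p) + pq'\,h(q^p)\pmod{p^2},\]
because for $k\geq 2$ the term $(pq')^k g^{(k)}(q^p)/k!$ has $p$-adic valuation $\geq k-v_p(k!)\geq 2$ for $p\geq 3$. Similarly $q^{\phi^2} = (q^p+pq')^p+pq'^p+p^2q''\equiv q^{p^2}+pq'^p\pmod{p^2}$ (since $\binom{p}{k}p^k\equiv 0\pmod{p^2}$ for $1\leq k\leq p-1$), so
\[g(q^{\phi^2})\equiv g(q^{p^2})+pq'^p\,h(q^{p^2})\pmod{p^2}.\]
Plugging into~\eqref{titi}, dividing by $p$, reducing mod $p$, and using Part~(1) to identify the $(q',q'')$-constant term, one obtains
\[\overline{f^\sharp_\infty} = \overline{f^{(-1)}(q)} + (q'/q^p)^p\,\overline{f(q^{p^2})} - \bar a_p(q'/q^p)\,\overline{f(q^p)}\]
in the non-CL case and the analogous expression with only the last two modifications in the CL case. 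I will then use Frobenius in characteristic $p$ to replace $\overline{f(q^p)}$ by $\bar f(q)^p$ and $\overline{f(q^{p^2})}$ by $\bar f(q)^{p^2}$.

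The final step, which is a small but crucial identity, is to verify that $\bar f(q) = \overline{f^{(0)}_{[a_p]}(q)}$ in $k[[q]]$. Writing $n = p^i m$ with $(m,p)=1$, we have $a_n = a_{p^i}a_m$, and an easy induction using~\eqref{tzu2} gives $a_{p^i}\equiv a_p^i\pmod p$, so $\bar a_n = \bar a_p^i\bar a_m$, which exactly matches the coefficient of $q^n$ in $\overline{f^{(0)}_{[a_p]}(q)}$. Inserting this identity into the previous display produces \eqref{floarenoua} in both cases. The main obstacle is not conceptual but bookkeeping: keeping track of the Taylor expansions modulo $p^2$, the binomial congruences, and the Hecke recursions simultaneously, and verifying the telescoping identity $u a_{p^i}-a_{p^{i-1}}=u^{i+1}p^i$ needed for the CL half of Part~(1).
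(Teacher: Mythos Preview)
Your argument is correct, and it follows the same overall plan as the paper---start from~\eqref{titi} with $b_n=a_n\in\bZ$ and unwind using the Hecke relations~\eqref{tzu1}--\eqref{tzu2}---but the organization is genuinely different and, for Part~(2), cleaner.

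The paper attacks~\eqref{floarenoua} directly: it writes $q^{\phi}=q^p(1+p\,q'/q^p)$ and $q^{\phi^2}=q^{p^2}(1+p(q'/q^p)^p+p^2\square)$, uses the binomial-type congruence $(1+p\gamma+p^2\beta)^{mp^{\ell-2}}\equiv 1+mp^{\ell-1}\gamma\pmod{p^\ell}$, and then computes the coefficient $\gamma_n$ of $q^n$ by case-splitting on $v_p(n)\in\{0,1,\ge 2\}$; Part~(1) is declared ``similar and easier''. You instead prove Part~(1) first (same case-split, plus the telescoping identity $u\,a_{p^i}-a_{p^{i-1}}=u^{i+1}p^i$ for the $\CL$ half), and then for Part~(2) observe that a first-order Taylor expansion modulo $p^2$ suffices because $g^{(k)}$ has integral coefficients for $k\ge 1$; this isolates the $(q',q'')$-constant part as exactly $(f^{\sharp}_\infty)_\natural$ already computed, and the remaining derivative contribution is just $(q'/q^p)^p f(q^{p^2})-\bar a_p(q'/q^p)f(q^p)$ modulo $p$. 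The short identity $\bar f(q)=\overline{f^{(0)}_{[a_p]}(q)}$ (immediate from $a_{p^i}\equiv a_p^i\pmod p$) then finishes without any further case analysis. What you gain is a cleaner separation of concerns and no need to redo the $v_p(n)$ case-split at the level of~\eqref{floarenoua}; what the paper's approach gains is that it handles both parts in a single computation. One small point worth making explicit in your write-up: the division by $p$ and reduction modulo $p$ are legitimate even though $g(q)=\sum(a_n/n)q^n$ itself has non-integral coefficients, precisely because the Taylor remainder lies in $p^2\bZ_p[[q]][q']$ (each $g^{(k)}$ being integral) and the combination $g(q^{p^2})-a_pg(q^p)+pg(q)=p\,(f^{\sharp}_\infty)_\natural$ is already $p$ times an element of $\bZ_p[[q]]$.
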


\begin{proof}
We shall prove~\eqref{floarenoua} in the case where $A_{R}$ is not CL. 
The other three statements are proved similarly
(and are actually easier).

To simplify notation, let $\square$ stand for any element
of $\bZ_p[[q]][q^{-1},q',q'']\h$. 
For any $\gamma,\beta \in \bZ_p[[q]][q^{-1},q',q'']\h$, 
any $\ell \in \Z_{\ge 2}$, and any $m \in \Z_{\ge 1}$ we have
\begin{equation}
\label{oindd}
(1+p\gamma+p^2 \beta)^{mp^{\ell-2}}=1+mp^{\ell-1}\gamma+p^\ell\square.\end{equation}
(Writing $(1+p\gamma+p^2\beta)^m$ as $1+p\gamma'$ lets us reduce to the
case $\beta=0$ and $m=1$, which is proved by induction on $\ell$.)

By \eqref{titi} we get
\begin{align*}
f^{\sharp}_{\infty} &= \frac{1}{p} \left[ 
		\sum \frac{a_n}{n} \left(q^{p^2}+p(q')^p+p^2 \square \right)^n
		-a_p \sum \frac{a_n}{n} \left(q^p+pq' \right)^n
		+p\sum \frac{a_n}{n}q^n \right]\\
	&= \frac{1}{p} \left[ 
		\sum \frac{a_n}{n} \left(1+p(\frac{q'}{q^p})^p
		+p^2\square \right)^n q^{p^2n}
		-a_p\sum \frac{a_n}{n} \left(1+p\frac{q'}{q^p} \right)^n q^{pn}
		+p\sum \frac{a_n}{n}q^n \right]\\
	&= \sum\left[ \frac{a_{n/p^2}}{n/p}
		\left(1+p\left(\frac{q'}{q^p}\right)^p
			+p^2\square \right)^{n/p^2}
		-a_p \frac{a_{n/p}}{n}\left(1+p\frac{q'}{q^p}\right)^{n/p}
			+\frac{a_n}{n} \right]q^n\\
	&=: \sum \gamma_nq^n,
\end{align*}
where $a_r=0$ for $r \in \bQ \setminus \bZ$.

If $(n,p)=1$, then $\gamma_n=a_n/n$.

If $n=pm$ with $(m,p)=1$, then \eqref{tzu1} and \eqref{oindd} yield
\[
	\gamma_n
	=\frac{a_pa_m}{pm}
	 -a_p\frac{a_m}{pm} \left(1+pm\frac{q'}{q^p}+p^2\square\right) 
	\equiv -a_pa_m\frac{q'}{q^p} \pmod{p}.
\]

If $n=p^\ell m$ with $\ell \geq 2$ and $(m,p)=1$, 
then \eqref{tzu1}, \eqref{tzu2}, and \eqref{oindd} yield
\begin{align*}
\gamma_n &= \frac{a_{p^{\ell-2}}a_m}{p^{\ell-1}m} \left( 1+mp^{\ell-1}
\left( \frac{q'}{q^p} \right)^p+p^\ell \square \right)
-\frac{a_p a_{p^{\ell-1}}a_m}{p^\ell m}\left(1+mp^\ell \frac{q'}{q^p}+p^{\ell+1}\square \right)+
\frac{a_{p^\ell}a_m}{p^\ell m}\\
&\equiv a^{\ell-2}_p a_m \left( \frac{q'}{q^p} \right)^p
  - a_p^\ell a_m \frac{q'}{q^p} \pmod{p}.
\end{align*}
Therefore
\[
f^{\sharp}_{\infty} \equiv \sum_{(m,p)=1} \frac{a_m}{m} q^m
-a_p \frac{q'}{q^p} \sum_{(m,p)=1} a_m q^{mp}
+ \sum_{\ell \geq 2} \sum_{(m,p)=1} a_m\left( a_p^{\ell-2}
\left(\frac{q'}{q^p} \right)^p-a_p^\ell \frac{q'}{q^p} \right) q^{mp^\ell} 
\pmod{p},
\]
and the first case of \eqref{floarenoua} 
follows via a trivial algebraic manipulation.
\end{proof}

\begin{remark}
\label{R:f-sharp mod p is order 1}
The right hand side of~\eqref{floarenoua} belongs to 
the subring $k[[q]][q']$ of $k[[q]][q',q'']$.
In the case where
$\thecorr$ does not necessarily arise from a modular parametrization,
an argument similar to the one in the proof of Lemma~\ref{compeverything}
still yields
\begin{equation}
\label{erasasterb}
	\overline{f^{\sharp}_{x_{\infty}}} \in k[[\qq]][\qq'].
\end{equation}
\end{remark}

\subsubsection{Shimura-elliptic case}
\label{S:Shimura pullback}
Suppose that $S=X^D(\calU)$.
Recall that we fixed $x_0 \in X(R)$ 
and a corresponding $\d$-Serre-Tate expansion map
$\cO^2(X_{R}) \to S^2_{x_0}=R[[t]][t',t'']\h$,
denoted $G \mapsto G_{x_0}$.
Let $z_0=\Phi(x_0) \in A(R)$.
Let $\lambda \colon A_{R}\to A_{R}$ be the translation by $-z_0$. 
Recall the \'etale coordinate $T$ on $A_{R}$ at $0$; 
use $T_{z_0}:=\lambda^* T$ as \'etale coordinate at $z_0$.
Now we have 
$R[[T]] \stackrel{\lambda^*}\to R[[T_{z_0}]] \stackrel{\Phi^*}\to R[[t]]$.
Define $b_n \in F_0 \cap R$ by 
\[
\left(\sum_{n \ge 1} b_n t^{n-1} \right)\, dt := d(\Phi^* \lambda^*(L(T))) = \Phi^* \lambda^* d(L(T)) = \Phi^* \lambda^* \omega,
\]
so
\begin{equation}
\label{E:b_n series 2}
\sum_{n \geq 1} \frac{b_n}{n} t^n = \Phi^* \lambda^*(L(T)).
\end{equation}
Since $\Phi$ is \'etale at $x_0$, we have $b_1 \ne 0$;
scaling $\omega$, we may assume that $b_1=1$.
Since $\psi$ is a group homomorphism, 
we have $\psi-\psi(z_0) = \lambda^* \psi$.
Add the constant $\psi(z_0)$ to both sides, and apply $\Phi^*$
to obtain
\[
	f^\sharp_{x_0} = \Phi^* \psi = \psi(z_0) + \Phi^* \lambda^* \psi.
\]
Evaluate $\Phi^* \lambda^* \psi$ 
by applying $\Phi^* \lambda^*$ to \eqref{E:psi}
and substituting \eqref{E:b_n series 2} into the right hand side:
the final result is
\begin{equation}
\label{titish}
f^{\sharp}_{x_0} =
\begin{cases}
\psi(z_0) + 
\frac{1}{p} \sum_{n\geq 1} \left(
\frac{b_n^{\phi^2}}{n} t^{n\phi^2} -a_p \frac{b_n^{\phi}}{n}
t^{n\phi} +p \frac{b_n}{n} t^n \right),
		& \text{ if $A$ is not $\CL$;} \\
\psi(z_0) + 
\frac{1}{p} \sum_{n\geq 1} \left(
 \frac{b_n^{\phi}}{n}
t^{n\phi} -up \frac{b_n}{n} t^n \right),
		& \text{ if $A$ is $\CL$.}
\end{cases}
\end{equation}
An argument similar to the one in the proof of Lemma~\ref{compeverything}
shows that
\begin{equation}
\label{erasadau}
\overline{f^{\sharp}_{x_0}} \in k[[t]][t'].
\end{equation}

 \subsection{$\d$-modular forms: modular-elliptic
  case} 
\label{S:d-modular-elliptic}
We recall some concepts from~\cite{difmod,book,Barcau}.
The ring of {\em $\d$-modular functions}~\cite{difmod}
is 
\[
	M^r:=R[a_4^{(\leq r)},a_6^{(\leq r)}, \Delta^{-1}]\h,
\]
where $a_4^{(\leq r)}$ is a tuple of variables
$(a_4,a'_4,a''_4,\ldots ,a_4^{(r)})$ and $a_6^{(\leq r)}$ is similar,
and $\Delta:=-2^6a_4^3-2^43^3a_6^2$. 
If $g \in M^0 \setminus pM^0$, define
\[
	M^r_{\{g\}} := M^r[g^{-1}]\h
	= R[a_4^{(\leq r)},a_6^{(\leq r)}, \Delta^{-1},g^{-1}]\h.
\] 
An element of $M^r$ or $M^r_{\{g\}}$ is {\em defined over $\bZ_p$}
if it belongs to the analogously defined ring with $\Z_p$ in place of $R$.
Define $\d\colon M^r \to M^{r+1}$
and $\d\colon M^r_{\{g\}} \to M^{r+1}_{\{g\}}$
as $\d \colon S_\infty^r \to S_\infty^{r+1}$ was defined 
in Section~\ref{S:delta-Fourier}.
Let $j:-2^{12}3^3 a_4^3/\Delta$, let $i:=2^63^3-j$, and let $t:=a_6/a_4$.
(This $t$ is unrelated to the $t$ used in $\d$-Serre-Tate expansions.)  
By~\cite[Proposition~3.10]{difmod}, we have
\[
	M^r_{\{a_4a_6\}} 
	= R[j^{(\leq n)},j^{-1}, i^{-1}, t^{(\leq r)}, t^{-1}]\h.
\] 

If $w=\sum n_i \phi^i \in \Z[\phi]$, define $\deg w = \sum n_i$.
If moreover $\lambda \in R$,
define $\lambda^w:=\prod (\lambda^{\phi^i})^{n_i}$.
For $w \in \Z[\phi]$,
say that $f$ in $M^r$ or $M^r_{\{g\}}$ is of {\em weight} $w$
if
\begin{equation}
\label{nu shtiu} f(\lambda^4a_4,\lambda^6a_6,\d(\lambda^4a_4),\d(
\lambda^6a_6),\ldots )=\lambda^w f(a_4,a_6,a'_4,a'_6,\ldots ),
\end{equation}
for all $\lambda \in R$.
Let $M^r(w)$ be the set of $f \in M^r$ of weight $w$,
and define $M^r_{\{g\}}(w)$ similarly.
In~\cite{difmod}, elements of $M^r_{\{g\}}(w)$ were called
{\em $\d$-modular forms of weight $w$} (holomorphic outside $g=0$).

If $f \in M^r_{\{g\}}(w)$ and $E$ is an elliptic curve given by
$y^2=x^3+Ax+B$ with $A,B \in R$ and $g(A,B) \in R^{\times}$, 
then define $f(A,B) \in R$ by making the substitutions
$a_4 \mapsto A$, $a_6 \mapsto B$, $a'_4 \mapsto \d A$, $a_6' \mapsto \d B$, 
$a''_4 \mapsto \d^2 A$, and so on.
Recall from~\cite{difmod} that $f$ is called {\em isogeny covariant} if for
any isogeny $u$ of degree prime to $p$ from an elliptic
curve $y^2=x^3+A_1 x+B_1$ with $g(A,B) \in R^{\times}$ to an
elliptic curve $y^2=x^3+A_2 x+B_2$ with $g(A_2,B_2) \in R^{\times}$  
that pulls back $dx/y$ to $dx/y$ we have
\[
	f(A_1,B_1) = \deg(u)^{-\deg(w)/2} f(A_2,B_2).
\]

By~\cite[Corollary~3.11]{difmod},
$M^r_{\{a_4a_6\}}(0)=R[j^{(\leq r)}, j^{-1}, i^{-1}]\h$. 
More generally, if $m \in 2\Z$ and $g \in M^0(m)$, 
define $\tilde{g}:=gt^{-m/2}$; then
\begin{equation}
\label{urssu} 
	M^r_{\{a_4a_6g\}}(0)
	= R[j^{(\leq r)}, j^{-1}, i^{-1},\tilde{g}^{-1}]\h.
\end{equation} 
Also define the open subscheme 
$Y(1)^{\dug}:=\Spec R[j,j^{-1},i^{-1}, \tilde{g}]$
of the modular curve $Y(1)_R := \Spec R[j]$.
If we define
\[
	b:=a_6^2/a_4^3=-2^23^{-3}+2^8j^{-1}.
\]
then $R[j,j^{-1},i^{-1}]=R[b,b^{-1},(4+27b)^{-1}]$, 
so $b$ is an \'etale coordinate on $Y(1)^{\dug}$,
and $Y_1(N)_{R} \to Y(1)_{R}$ is \'etale over $Y(1)^{\dug}$. 
Suppose that in addition we are
given a modular-elliptic correspondence $\thecorr$.
Then we may (and will) choose $g$ so that the composition
$v\colon X_{R} \stackrel{\Pi}\to X_1(N)_{R} \to X(1)_{R}$
is \'etale above $Y(1)^{\dug}$.
Set 
\begin{equation}
\label{cocer} 
	X^{\dug}:=v^{-1}(Y(1)^{\dug}).
\end{equation}
The pull-back of $b$ to $X^{\dug}$,
which we will still call $b$, is an \'{e}tale coordinate on $X^{\dug}$. 
By~\eqref{zim}, we have natural isomorphisms
\begin{equation}
\label{mazel} \cO(X^{\dug})\h[b',\ldots ,b^{(r)}]\h \simeq
\cO^r(X^{\dug}),
\end{equation}
where $b',\ldots ,b^{(r)}$ are new indeterminates. 
We view \eqref{mazel} as an identification.
Similarly, since $j$ is an \'{e}tale coordinate on $Y(1)$, 
\eqref{zim} and~\eqref{urssu} yield
 \begin{equation}
 \label{foxx}
M^r_{\{a_4a_6g\}}(0) \simeq \cO^r(Y(1)^{\dug}) \subset
\cO^r(X^{\dug}).
 \end{equation}
Since $X^{\dug}$ is standard in the sense of Definition~\ref{D:standard},
we have the $\d$-Fourier expansion map
\begin{equation}
\label{catt} \cO^r(X^{\dug}) \to S^r_{x_{\infty}}.
\end{equation}
Composing \eqref{foxx} and~\eqref{catt} yields $\d$-Fourier expansion maps
\begin{equation}
\label{liion} M^r_{\{a_4a_6g\}}(0) \to S^r_{x_{\infty}}.
\end{equation}

Let $E_4(q)$ and $E_6(q)$ be the normalized Eisenstein series of
weights $4$ and $6$: ``normalized'' means with constant
coefficient equal to $1$.
We have natural ring homomorphisms, also
referred to as {\em $\d$-Fourier expansion maps}~\cite{difmod},
\begin{align}
\label{sqrl} 
	M^r &\to S^r_{\infty} \\
\notag
	g &\mapsto g_{\infty}=g(q,q',\ldots ,q^{(r)}),
\end{align}
characterized by the properties that they 
send $a_4$ and $a_6$ to
$-2^{-4}3^{-1}E_4(q)$ and $2^{-5}3^{-3}E_6(q)$, respectively, 
and commute with $\d$.
There exists a unique $E_{p-1} \in M^0(p-1)$ 
such that $E_{p-1}(q)$ is the normalized Eisenstein series of weight $p-1$.

By (4.1) and~(7.26) in~\cite{difmod}, 
there exists a unique $f^1 \in M^1(-1-\phi)$, defined over $\bZ_p$, 
such that
\begin{equation}
\label{maus} 
	f^1(q,q') 
	=\frac{1}{p} \log \frac{q^{\phi}}{q^p}
	:= \sum_{n \geq 1} 
		(-1)^{n-1}n^{-1} p^{n-1} \left( \frac{q'}{q^p} \right)^n
	\in R((q))\h[q']\h.
\end{equation}
As explained in~\cite[pp.~126--129]{difmod}, 
$f^1$ is isogeny covariant and may
be interpreted as a (characteristic zero) 
{\em arithmetic Kodaira-Spencer class}. 
If $E$ is an elliptic curve given by $y^2=x^3+Ax+B$ with $A,B \in R$ then, 
by~\cite[Proposition~7.15]{book},
\begin{equation}
\label{gugu}
	f^1(A,B)=0 \iff \text{$E$ is $\CL$.}
\end{equation}
Define
\begin{equation}
\label{fffut} 
	t^{\frac{\phi+1}{2}}
	:= t^{\frac{p+1}{2}} \left( \frac{t^{\phi}}{t^p} \right)^{1/2}
	= t^{\frac{p+1}{2}} \left( 1+p \frac{\d t}{t^p} \right)^{1/2} 
	= t^{\frac{p+1}{2}} \sum_{j \geq 0} 
			\binom{1/2}{j} p^j \left( \frac{\d t}{t^p} \right)^j;
\end{equation}
this function is an element of $M^1_{\{a_4a_6\}}(1+\phi)$.
Next define
\begin{equation}
\label{varfi} 
	\varphic :=f^1 \cdot t^{\frac{\phi+1}{2}} 
		\in M^1_{\{a_4a_6\}}(0) \subset M^1_{\{a_4a_6 g\}}(0) \subset \OO^1(X^{\dug}).
\end{equation}
The maps in \eqref{liion} and~\eqref{sqrl} are compatible, so
\begin{equation}
\label{elephant} 
	\varphic_{\infty} \in q'R((q))\h[q']\h \subset \qq' R((\qq))\h[\qq']\h.
\end{equation}
Finally, by the main theorem of~\cite{H},
\begin{equation}
\label{hurlburt} 
	f^1=c E_{p-1}\Delta^{-p}(2a_4^pa_6'-3a_6^pa'_4)+f_0+pf_1,
\end{equation}
for some $c \in R^{\times}$, $f_0 \in M^0(-1-p)$, and $f_1 \in M^1$.
A calculation using the analogue of~\eqref{E:d formula} yields
\begin{equation}
\label{bambilici} 
	\d b=a_4^{-4p}a_6^p(2a_4^pa_6'-3a_6^pa'_4)+ph
\end{equation}
for some $h \in M^1_{\{a_4a_6\}}$.
Set $a_0:=cE_{p-1}\Delta^{-p} a_4^{4p}a_6^{-p}$. 
Then combining \eqref{hurlburt} and~\eqref{bambilici} yields
\begin{equation}
\label{hurlprim} 
	\varphic = f^1 \cdot t^{\frac{\phi+1}{2}}
	= a_0 t^{\frac{p+1}{2}}\d b+f_0 t^{\frac{p+1}{2}}+ph_1,
\end{equation}
for some $h_1 \in M^1_{\{a_4a_6\}}$.
Let $\alpha=a_0 t^{\frac{p+1}{2}} \in M^0_{\{a_4a_6\}}(0)$.
Then by \eqref{hurlprim} and~\eqref{axioms}, respectively, we obtain, 
for $n=0$ and $n=1$,
\begin{equation}
\label{lolla} \d^n \varphic=\alpha^{p^n} \d^{n+1} b +\beta_n+p \gamma_n,
\end{equation} 
for some $\beta_n \in M^n_{\{a_4a_6\}}(0)$
and $\gamma_n \in M^{n+1}_{\{a_4a_6\}}(0)$.

\begin{lemma}
\label{lem1} 
Assume that the element $g \in M^0(m)$ is in $E_{p-1} M^0$.
Then $\overline{\varphic}$ and $\overline{\d \varphic}$ 
are algebraically independent over $\OO(\bar{X}^{\dug})$,
and the natural maps
\begin{align}
\label{E:O1}
	\OO(\bar{X}^{\dug})[\overline{\varphic}] 
		&\to \OO^1(X^{\dug}) \tensor_R k \\
\label{E:O2}
	\OO(\bar{X}^{\dug})[\overline{\varphic}, \overline{\d \varphic}]
		&\to \OO^2(X^{\dug}) \tensor_R k \\
\label{E:O3}
	\cO(X^{\dug})\h
		&\to \cO^2(X^{\dug})/(\varphic,\d \varphic)
\end{align}
are isomorphisms.
\end{lemma}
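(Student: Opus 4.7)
The plan is to exploit the triangular formula \eqref{lolla} for $\d^n \varphic$ together with the invertibility of $\alpha$ on $X^\dug$ that follows from the hypothesis $g \in E_{p-1} M^0$. All three asserted isomorphisms (together with the algebraic independence claim) reduce to a change-of-coordinates calculation under the identification $\cO^r(X^\dug) \simeq \cO(X^\dug)\h[b', \ldots, b^{(r)}]\h$ of \eqref{mazel}.

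First I would verify that $\alpha = a_0 t^{(p+1)/2} = cE_{p-1}\Delta^{-p} a_4^{(7p-1)/2} a_6^{(1-p)/2}$ is a unit in $\cO(X^\dug)\h$. Indeed $c \in R^\times$, while $\Delta$, $a_4$, $a_6$ are invertible in $M^r_{\{a_4 a_6 g\}}$ by construction, and the hypothesis $g \in E_{p-1} M^0$ forces $E_{p-1}$ to be invertible as soon as $g$ is. Hence $\bar\alpha$ is also a unit in $\cO(\bar X^\dug)$.

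Reducing \eqref{lolla} modulo $p$ under the identification $\cO^r(X^\dug) \otimes_R k \simeq \cO(\bar X^\dug)[\bar b', \ldots, \bar b^{(r)}]$ yields $\overline{\d^n \varphic} = \bar\alpha^{p^n} \bar b^{(n+1)} + \bar\beta_n$, with $\bar\beta_n$ involving only $\bar b^{(i)}$ for $i \leq n$. This is a triangular polynomial substitution with unit pivots $\bar\alpha, \bar\alpha^p$, hence an automorphism of $\cO(\bar X^\dug)[\bar b', \bar b'']$. This immediately gives \eqref{E:O1} and \eqref{E:O2}, and the algebraic independence of $\overline{\varphic}, \overline{\d \varphic}$ over $\cO(\bar X^\dug)$ is inherited from that of $\bar b', \bar b''$.

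For \eqref{E:O3}, I would work in $\cO^2(X^\dug) = \cO(X^\dug)\h[b', b'']\h$, where $\varphic = \alpha b' + \beta_0 + p\gamma_0$ and $\d\varphic = \alpha^p b'' + \beta_1 + p\gamma_1$. Because $\alpha$ is a unit, $\varphic = 0$ rewrites as a $p$-adic contraction fixed-point equation $b' = -\alpha^{-1}(\beta_0 + p \gamma_0(b, b'))$, which by standard iteration has a unique solution $b' = \tilde f \in \cO(X^\dug)\h$; applied to $\d\varphic = 0$ this yields likewise a unique $b'' = \tilde g \in \cO(X^\dug)\h$. Conversely, Taylor-expanding $\gamma_0$ about $\tilde f$ factors $\varphic = (\alpha + pH)(b' - \tilde f)$ with $\alpha + pH$ a unit, showing $b' \equiv \tilde f$ modulo $(\varphic)$; an analogous step yields $b'' \equiv \tilde g$ modulo $(\varphic, \d\varphic)$. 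Thus every element of $\cO^2(X^\dug)/(\varphic, \d\varphic)$ has a unique representative in $\cO(X^\dug)\h$, proving \eqref{E:O3} is an isomorphism. The main technical point is the Hensel-type convergence argument, whose success depends on $\gamma_0, \gamma_1$ being restricted power series.
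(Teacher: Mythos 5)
Your treatment of \eqref{E:O1}, \eqref{E:O2}, and the algebraic independence claim coincides with the paper's proof: reduce \eqref{lolla} modulo $p$, note that $\bar\alpha$ is a unit because $g \in E_{p-1}M^0$ (so $E_{p-1}$ is invertible on $X^\dug$), and observe that the change of variables $\bar{b}^{(n+1)} \mapsto \overline{\d^n\varphic}$ is triangular with unit pivots $\bar\alpha, \bar\alpha^p$. For \eqref{E:O3}, however, you take a genuinely different route. The paper first deduces surjectivity from the fact that \eqref{E:O2} makes \eqref{E:O3} an isomorphism after $\otimes_R k$, together with $p$-adic completeness and separatedness of both rings; it then proves injectivity by composing \eqref{E:O3} with the map $\cO^2(X^\dug)/(\varphic, \d\varphic) \to R((\qq))$ induced from the $\d$-Fourier expansion followed by $\qq'=\qq''=0$ (well defined by \eqref{elephant}), observing that the resulting composite is the ordinary Fourier expansion, which is injective on $\cO(X^\dug)\h$. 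You instead solve $\varphic=0$ and $\d\varphic=0$ explicitly for $b'$ and $b''$ by a $p$-adic contraction argument, producing $\tilde f, \tilde g \in \cO(X^\dug)\h$, and use the evaluation $b'\mapsto\tilde f$, $b''\mapsto\tilde g$ as a retraction. The Weierstrass-type factorizations $\varphic = (\alpha+pH)(b'-\tilde f)$ and its analogue for $\d\varphic|_{b'=\tilde f}$ then put $b'-\tilde f$ and $b''-\tilde g$ in the ideal, giving surjectivity, while the retraction gives injectivity. This is correct and somewhat more self-contained --- it avoids appealing to the $q$-expansion principle --- at the cost of carrying out the Weierstrass-style division carefully in the restricted power series ring (as you note, the convergence of $\gamma_0(\tilde f)$, of $H$, etc.\ relies on the coefficients of $\gamma_0,\gamma_1$ tending to zero $p$-adically and on $\tilde f,\tilde g$ being $p$-integral). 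Both routes establish the same statement; the paper's is shorter because the Fourier-expansion machinery was already set up.
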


\begin{proof}
By \eqref{maus}, \eqref{fffut}, and~\eqref{varfi}, we have
\begin{equation}
\label{E:varphic mod p}
	\overline{\varphic_\infty} = t_\infty^{\frac{p+1}{2}} q' / q^p,
\end{equation}
which involves $q'$, so the algebraic independence follows.
Reducing \eqref{mazel} mod $p$ gives isomorphisms like \eqref{E:O1}
and~\eqref{E:O2}
but with $\overline{b'}$ and $\overline{b''}$ on the left
in place of $\overline{\varphic}$ and $\overline{\d \varphic}$.
To change variables, observe that since $g \in E_{p-1} M^0$,
the element $\alpha$ is invertible in $\cO(X^{\dug})$;
thus \eqref{lolla} implies
$\OO(\bar{X}^{\dug})[\overline{\varphic}]
\isom \OO(\bar{X}^{\dug})[\overline{b'}]$
and
$\OO(\bar{X}^{\dug})[\overline{\varphic}, \overline{\d \varphic}] 
\isom \OO(\bar{X}^{\dug})[\overline{b'}, \overline{b''}]$.
This proves \eqref{E:O1} and~\eqref{E:O2}.

Now \eqref{E:O2} implies that \eqref{E:O3} induces an isomorphism mod $p$,
Since both sides of $\eqref{E:O3}$ are $p$-adically complete and
separated rings, \eqref{E:O3} is surjective.
The $\d$-Fourier expansion map $\OO^2(X^{\dug}) \to R((\qq))\h[\qq',\qq'']\h$
followed by the evaluation map mapping $\qq'$ and $\qq''$ to $0$
induces a map
$\cO^2(X^{\dug})/(\varphic,\d \varphic) \to R((\qq))$,
by~\eqref{elephant}.
The composition of~\eqref{E:O3} with this is simply the
Fourier expansion map, since elements of $\OO(X^{\dug})\h$
have Fourier expansions in $R((\qq))$.
So the Fourier expansion principle implies that~\eqref{E:O3} 
is injective.
\end{proof}

\begin{corollary} 
\label{C:series are modular}
The series
$\overline{f^{(0)}(q)}$ 
and 
$\overline{f^{(0)}_{[a_p]}(q)}$ 
are Fourier expansions 
of weight-$2$ quotients of modular forms.
\end{corollary}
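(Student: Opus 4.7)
I propose to establish the two claims of the corollary separately. For the first series, a direct Serre-operator computation suffices: since $\theta$ acts on $q$-expansions as $q\,d/dq$, the operator $\theta^{p-1}$ acts on $q^n$ by multiplication by $n^{p-1}$, which by Fermat's little theorem equals $1$ if $(n,p)=1$ and $0$ if $p\mid n$. Hence $(\theta^{p-1}\bar f)(q) = \sum_{(n,p)=1} \bar a_n q^n = \overline{f^{(0)}(q)}$. Since $\theta^{p-1}\bar f\in M_{p^2+1}$ while $\bar E_{p-1}^{p+1}\in M_{p^2-1}$ has Fourier expansion $1$, the quotient $\theta^{p-1}\bar f/\bar E_{p-1}^{p+1}$ is a weight-$2$ quotient of modular forms with Fourier expansion $\overline{f^{(0)}(q)}$.

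For the second series, I plan to extract its $p$-th power from $\overline{f^\sharp}$ via Lemma~\ref{lem1} and then recover the series itself via an Artin--Schreier-type relation. By inspection of \eqref{floarenoua} (cf.\ Remark~\ref{R:f-sharp mod p is order 1}), $\overline{f^\sharp_\infty}\in k[[q]][q']$, so $\overline{f^\sharp}$ has order at most $1$ modulo~$p$, and Lemma~\ref{lem1} gives $\overline{f^\sharp}=\sum_j c_j\,\overline{\varphic}^{\,j}$ with each $c_j\in\OO(\bar X^{\dug})$ a rational function on $\overline{X_1(N)}$, hence a weight-$0$ quotient. Fourier expanding via $\overline{\varphic_\infty} = t_\infty^{(p+1)/2}\,(q'/q^p)$ and matching the coefficient of $q'$ against \eqref{floarenoua} yields
\[
c_{1,\infty}\,t_\infty^{(p+1)/2}\;=\;\begin{cases}-\bar a_p\,\overline{f^{(0)}_{[a_p]}(q)}^{\,p}&\text{(non-CL case),}\\ \overline{f^{(0)}_{[a_p]}(q)}^{\,p}&\text{(CL case).}\end{cases}
\]
Thus, up to a nonzero scalar in $k$, $\overline{f^{(0)}_{[a_p]}(q)}^{\,p}$ is the Fourier expansion of the weight-$(p+1)$ quotient $c_1\cdot t^{(p+1)/2}$ on $\overline{X_1(N)}$.

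The concluding step uses the identity
\[
\overline{f^{(0)}_{[a_p]}(q)} \;=\; \overline{f^{(0)}(q)} \,+\, \bar a_p\,\overline{f^{(0)}_{[a_p]}(q)}^{\,p},
\]
which follows by splitting off the $i=0$ summand of the series defining $f^{(0)}_{[a_p]}$ and using that the Verschiebung $V$ acts on Fourier series with $\F_p$-coefficients as the $p$-th power map. Substituting the formula above for $\overline{f^{(0)}_{[a_p]}(q)}^{\,p}$ presents $\overline{f^{(0)}_{[a_p]}(q)}$ as a $k$-linear combination of $\overline{f^{(0)}(q)}$ and $c_{1,\infty}\,t_\infty^{(p+1)/2}$. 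The first summand is a weight-$2$ quotient by the first paragraph; the second is a weight-$(p+1)$ quotient which, after multiplying its denominator by $\bar E_{p-1}$, becomes a weight-$2$ quotient with the same Fourier expansion. Weight-$2$ quotients form a $k$-vector space, so the sum is a weight-$2$ quotient. The main potential obstacle---that extracting $\overline{f^{(0)}_{[a_p]}(q)}$ from its $p$-th power might force a passage to an inseparable cover of $\overline{X_1(N)}$---is thereby bypassed, since the identity realizes the series directly as a linear combination of weight-$2$ quotients on $\overline{X_1(N)}$ itself.
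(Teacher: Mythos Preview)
Your proposal is correct and follows essentially the same route as the paper: the direct $\theta^{p-1}\bar f/\bar E_{p-1}^{p+1}$ computation for $\overline{f^{(0)}(q)}$, then the use of Lemma~\ref{lem1} to expand $\overline{f^\sharp}$ in powers of $\overline{\varphic}$, extraction of the linear coefficient via \eqref{floarenoua} and \eqref{E:varphic mod p} to identify $\bar a_p\,\overline{f^{(0)}_{[a_p]}(q)}^{\,p}$ as a weight-$(p+1)$ (hence weight-$2$) quotient, and finally the relation $\overline{f^{(0)}_{[a_p]}(q)}-\bar a_p\,\overline{f^{(0)}_{[a_p]}(q)}^{\,p}=\overline{f^{(0)}(q)}$ to conclude. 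Your explicit remark about avoiding an inseparable cover is a nice clarification the paper leaves implicit.
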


\begin{proof}
We have
$\overline{f^{(0)}(q)} 
= \left( \theta^{p-1}\bar{f} \right) / \bar{E}_{p-1}^{p+1}$,
which is the Fourier expansion of a weight-$2$ quotient.
We handle the second series in an indirect way, using $f^\sharp$.
Although $f^\sharp \in \OO^2(X^{\dug})$,
we have $\overline{f^\sharp} \in \OO^1(X^{\dug}) \tensor_R k$
by \eqref{erasasterb}.
So~\eqref{E:O1}
identifies $\overline{f^\sharp}$ with a polynomial in
$\OO(\bar{X}^{\dug})[\overline{\varphic}] \subset L[\overline{\varphic}]$,
where $L:=k(\overline{X_1(N)})$.
We can find this polynomial explicitly from the $\d$-Fourier expansion,
since elements of $L$ have expansions in $k((q))$
while $\overline{\varphic_\infty}$ involves $q'$: see~\eqref{E:varphic mod p}.
By Lemma~\ref{floarenoua} and \eqref{E:varphic mod p},
\[
\overline{f^{\sharp}_{\infty}} =
\begin{cases}
\overline{f^{(-1)}(q)}
 + t_{\infty}^{-\frac{p^2+p}{2}}
 \left(\overline{f^{(0)}_{[a_p]}(q)}\right)^{p^2} \overline{\varphic_\infty}^p
 - \bar{a}_p t_{\infty}^{-\frac{p+1}{2}}
 \left(\overline{f^{(0)}_{[a_p]}(q)}\right)^{p} \overline{\varphic_\infty},
		& \text{ if $A$ is not $\CL$;} \\ \\
-\bar{u} \overline{f^{(-1)}_{[u]}(q)}+
   t_{\infty}^{-\frac{p+1}{2}}
 \left(\overline{f^{(0)}_{[a_p]}(q)}\right)^{p} \overline{\varphic_\infty},
		& \text{ if $A$ is $\CL$.}
\end{cases}
\]
In either case, taking the coefficient of $\overline{\varphic_\infty}$ 
shows that 
	$\bar{a}_p t_{\infty}^{-\frac{p+1}{2}} 
	\left(\overline{f^{(0)}_{[a_p]}(q)} \right)^{p}$
is the Fourier expansion of an element of $L$.
Since $t$ is a weight $2$ quotient,
$\bar{a}_p \left(\overline{f^{(0)}_{[a_p]}(q)} \right)^{p}$
is the Fourier expansion of a weight $p+1$ quotient,
and hence (by dividing by $\bar{E}_{p-1}$)
also of a weight-$2$ quotient.
By~\eqref{fructnou2},
\[
	- \bar{a}_p \left(
\overline{f^{(0)}_{[a_p]}(q)}\right)^p+\overline{f^{(0)}_{[a_p]}(q)}=
\overline{f^{(0)}(q)};
\]
now $\overline{f^{(0)}_{[a_p]}(q)}$ is the Fourier expansion of
a weight-$2$ quotient since the other terms are.
\end{proof}

\begin{remark}
The proof that $\overline{f^{(0)}_{[a_p]}(q)}$ is a Fourier expansion
of a quotient of modular forms used the theory of $\d$-modular forms;
we know no direct proof.
\end{remark}

Recall the Igusa curve $I_1(N)$ and its quotient $J$ defined in
Section~\ref{S:refinement for isogeny classes}.

\begin{lemma}
\label{L:series is modular function}
The Fourier series of any modular form $f$ on $X_1(N)$ over $k$
is also the Fourier series of a rational function $g \in k(I_1(N))$.
If the weight of $f$ is even, then we may take $g \in k(J)$.
\end{lemma}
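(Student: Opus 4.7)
My plan is to construct $g$ as the quotient $f/\bar{A}^w$, where $w$ is the weight of $f$ and $\bar{A}$ is the canonical section of $\omega$ on the Igusa curve whose $(p-1)$-th power is the pullback of the Hasse invariant $\bar{E}_{p-1}$. This modular form $\bar{A}$ and its transformation properties come from Section~5 of \cite{gross}: the Galois group $\F_p^{\times}$ of $I_1(N)/\overline{X_1(N)}$ acts on $\bar{A}$ by its defining character, i.e., $c \in \F_p^{\times}$ sends $\bar{A}$ to $c\bar{A}$, and $\bar{A}$ is a nonvanishing section of $\omega$ on the ordinary locus, with simple zeros at the (pre-images of the) supersingular points of $\overline{X_1(N)}$.

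First I pin down the Fourier expansion of $\bar{A}$. Because $I_1(N) \to \overline{X_1(N)}$ is unramified at the chosen point $\infty$, the Fourier expansion $\bar{A}(q)$ lies in $k[[q]]$, and the identity $\bar{A}(q)^{p-1} = \bar{E}_{p-1}(q) = 1$ forces $\bar{A}(q)$ to be a constant in $\F_p^{\times}$. The $\F_p^{\times}$-Galois action permutes the $p-1$ points of $I_1(N)$ above $\infty \in \overline{X_1(N)}$ simply transitively, scaling $\bar{A}(q)$ by the defining character; so I may and do choose the marked $\infty \in I_1(N)$ so that $\bar{A}(q) = 1$.

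Once this normalization is in place, setting $g := f/\bar{A}^w \in k(I_1(N))$ produces a well-defined rational function (since $\bar{A}$ is not identically zero), and its Fourier expansion equals $f(q)/1 = f(q)$, giving the first assertion. For the second assertion, when $w$ is even the element $-1 \in \F_p^{\times}$ acts on $\bar{A}^w$ by $(-1)^w = 1$, and it fixes $f$ trivially since $f$ is pulled back from $\overline{X_1(N)}$; hence $g$ is $\langle -1 \rangle$-invariant and descends to a rational function on $J = I_1(N)/\langle -1 \rangle$ with the same Fourier expansion at the chosen $\infty \in J$. The only nontrivial ingredient is the existence and transformation law for $\bar{A}$, but this is a standard consequence of the modular interpretation of $I_1(N)$ as parametrizing trivializations of the étale part of the $p$-torsion, so I anticipate no real obstacle.
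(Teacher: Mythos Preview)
Your proposal is correct and follows essentially the same argument as the paper: both divide $f$ by the $w$-th power of the canonical weight-$1$ section on $I_1(N)$ (your $\bar{A}$, the paper's $a$) whose Fourier expansion is $1$, and both use that $-1\in\F_p^{\times}$ acts on this section by $-1$ to descend to $J$ when $w$ is even. The only cosmetic difference is that you justify $\bar{A}(q)=1$ by normalizing the choice of $\infty$, whereas the paper simply cites \cite[Proposition~5.2]{gross} for this fact directly.
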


\begin{proof}
By~\cite[Proposition~2.2]{gross},
there is a line bundle $\omega$ on $\overline{X_1(N)}$
such that for each $i \in \Z$, 
the global sections of $\omega^i$ are the modular forms of weight $i$.
We denote by $\omega$ also the pullback of $\omega$ to $I_1(N)$ or $J$.
By~\cite[p.~461]{gross},
the sections of $\omega^i$ on $I_1(N)$ or $J$ have naturally defined
Fourier expansions, compatible with the Fourier expansions 
of modular forms on $X_1(N)$.
There is a section $a$ of $\omega$ on $I_1(N)$ whose Fourier expansion
is $1$: see~\cite[Proposition~5.2]{gross}.
Given a modular form $f$ of weight $i$ on $X_1(N)$,
let $g:=f/a^i \in k(I_1(N))$.

The action of $\F_p^\times$ on $I_1(N)$ lifts to an action 
of $\F_p^\times$ on $\omega$,
and $-1 \in \F_p^\times$ sends $a$ to $-a$
(see~\cite[Proposition~5.2(5)]{gross}),
so if $i$ is even, $f/a^i \in k(J)$.
\end{proof}

By Construction~3.2 and Theorem~5.1 of~\cite{Barcau},
there exist unique $\d$-modular forms 
$f^{\partial} \in M^1_{\{E_{p-1}\}}(\phi-1)$ and 
$f_{\partial} \in M^1_{\{E_{p-1}\}}(1-\phi)$, defined over $\bZ_p$, with
$\d$-Fourier expansions identically equal to $1$. 
Moreover, these forms are isogeny covariant 
and $f^{\partial} \cdot f_{\partial}=1$.
Furthermore, the reduction $\overline{f^{\partial}} \in M^1 \tensor k$
equals the image of $\bar{E}_{p-1}\in M_{p-1}$ in $M^1 \tensor k$.
For $\lambda \in R^{\times}$, define
\medskip
\begin{equation}
\label{flam} 
	f_{\lambda}  :=  (f^1)^{\phi}-\lambda f^1
(f^{\partial})^{-\phi-1} \in M^2_{\{E_{p-1}\}}(-\phi-\phi^2).
\end{equation}
\medskip
Since $f_1$ and $f^\partial$ are isogeny covariant,
so is $f_{\lambda}$.
Furthermore consider the series
\[t^{\frac{\phi^2+\phi}{2}}:=t^{\frac{p^2+p}{2}} \left(
\frac{t^{\phi}}{t^p} \right)^{1/2} \left( \frac{t^{\phi^2}}{t^{p^2}}
\right)^{1/2} \in M^2_{\{a_4a_6\}}(\phi+\phi^2),\] and
 define
\begin{equation}
\label{flats} f^{\flat}_{\lambda}  :=  f_{\lambda} \cdot
t^{\frac{\phi^2+\phi}{2}} \in M^2_{\{a_4a_6E_{p-1}\}}(0).
\end{equation}
\medskip
The main reason for considering these forms comes from the
following
\begin{lemma}
\label{9438hfb} 
Let $E_1$ be an elliptic curve $y^2=x^3+A_1x+B_1$ over $R$ 
with ordinary reduction.  Then
\begin{enumerate}
\item 
There exists $\lambda \in R^{\times}$ such that $f_{\lambda}(A_1,B_1)=0$.
\item
If $\lambda$ is as in (1) and there is an isogeny of
degree prime to $p$ between $E_1$ and an elliptic curve $E_2$
over $R$ given by $y^2=x^3+A_2x+B_2$, then $f_{\lambda}(A_2,B_2)=0$.
\item
If in addition, $A_2B_2 \not\equiv 0 \pmod{p}$, then 
$f_{\lambda}^{\flat}(A_2,B_2) = (\d f_{\lambda}^{\flat})(A_2,B_2) 
= \cdots = 0$.
\end{enumerate}
\end{lemma}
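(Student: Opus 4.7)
For part~(1), I evaluate
\[
	f_\lambda(A_1, B_1) = \phi\bigl(f^1(A_1, B_1)\bigr)
	- \lambda \, f^1(A_1, B_1) \, (f^\partial)^{-\phi-1}(A_1, B_1).
\]
Since $E_1$ has ordinary reduction, $E_{p-1}(A_1, B_1) \in R^\times$; together with the identity $f^\partial \cdot f_\partial = 1$ this shows $(f^\partial)^{-\phi-1}(A_1, B_1) \in R^\times$. If $E_1$ is $\CL$, then $f^1(A_1, B_1) = 0$ by~\eqref{gugu}, and any $\lambda \in R^\times$ suffices; otherwise $f^1(A_1, B_1) \neq 0$ in $R$, and since $\phi$ preserves $p$-adic valuation, the ratio
\[
	\lambda := \frac{\phi\bigl(f^1(A_1, B_1)\bigr)}{f^1(A_1, B_1) \, (f^\partial)^{-\phi-1}(A_1, B_1)}
\]
is a unit of $R$ satisfying $f_\lambda(A_1, B_1) = 0$.

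For part~(2), I combine~(1) with isogeny covariance. The forms $f^1$ and $f^\partial$ are isogeny covariant by the cited results of~\cite{difmod, Barcau}, and both summands of $f_\lambda$ carry the same weight $-\phi-\phi^2$; hence $f_\lambda$ is isogeny covariant of weight $-\phi-\phi^2$. Rescaling $(A_1, B_1)$ and $(A_2, B_2)$ by units of $R$ produces models $(A_i', B_i')$ for which $u$ pulls back $dx/y$ to $dx/y$ (possible because $u$ is separable); this rescaling multiplies the value of any weight-homogeneous $\d$-modular form by a unit, so vanishing is preserved. The definition of isogeny covariance then yields
\[
	f_\lambda(A_1', B_1') = \deg(u) \cdot f_\lambda(A_2', B_2'),
\]
since $\deg(-\phi-\phi^2) = -2$; as $\deg(u) \neq 0$ in $R$, the vanishing propagates from $(A_1, B_1)$ through $(A_1', B_1')$ and $(A_2', B_2')$ to $(A_2, B_2)$.

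For part~(3), the curve $E_2$ is also ordinary (a prime-to-$p$ isogeny preserves ordinariness), so $E_{p-1}(A_2, B_2) \in R^\times$; combined with $A_2 B_2 \not\equiv 0 \pmod{p}$, this guarantees that $f_\lambda^\flat$ and each of its $\d$-prolongations $\d^n f_\lambda^\flat \in M^{n+2}_{\{a_4 a_6 E_{p-1}\}}$ may be evaluated at $(A_2, B_2)$. By~(2),
\[
	f_\lambda^\flat(A_2, B_2)
	= f_\lambda(A_2, B_2) \cdot t^{\frac{\phi^2+\phi}{2}}(A_2, B_2) = 0.
\]
The evaluation map sends $a_4^{(n)} \mapsto \d^n A_2$ and $a_6^{(n)} \mapsto \d^n B_2$, so by the defining relations~\eqref{axioms} it intertwines the formal $\d$ on $M^\bullet$ with the Fermat-quotient $\d$ on $R$. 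Therefore
\[
	(\d^n f_\lambda^\flat)(A_2, B_2) = \d^n\bigl(f_\lambda^\flat(A_2, B_2)\bigr) = \d^n(0) = 0
\]
for every $n \ge 0$. The main subtlety is the weight bookkeeping in~(2)---one needs to verify that isogeny covariance behaves multiplicatively under the $\phi$-twist $(f^1)^\phi$ and under the inverse power $(f^\partial)^{-\phi-1}$, so that the normalization $(\deg u)^{-\deg(w)/2}$ comes out consistently for both summands---but once the weights are tracked, the remaining steps are immediate from the definitions.
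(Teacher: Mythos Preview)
Your proof is correct and follows essentially the same approach as the paper's: solve for $\lambda$ directly from the definition of $f_\lambda$ in part~(1), invoke isogeny covariance of $f_\lambda$ (already recorded just after~\eqref{flam}) together with a unit rescaling to normalize the differential in part~(2), and use that evaluation at $(A_2,B_2)$ commutes with $\d$ so that $\d^n 0 = 0$ in part~(3). Your write-up is somewhat more explicit about the weight bookkeeping and the rescaling, but there is no substantive difference in strategy.
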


\begin{proof}
\hfill
\begin{enumerate}
\item
If $f^1(A_1,B_1) = 0$, any $\lambda \in R^\times$ will do.
If $f^1(A_1,B_1) \ne 0$, set
\[
	\lambda:=\frac{f^1(A_1,B_1)^{\phi}}{f^1(A_1,B_1)}
		f^{\partial}(A_1,B_1)^{\phi+1};
\]
the numerator and denominator of the first factor have the same
$p$-adic valuation and 
$\overline{f^{\partial}(A_1,B_1)} \equiv \bar{E}_{p-1}(\bar{A},\bar{B}) \ne 0$,
so $\lambda \in R^\times$.
\item
Scaling $A_2$ and $B_2$ by suitable elements of $R^\times$,
we may assume that the isogeny pulls back $dx/y$ to $dx/y$.
Now use the isogeny covariance of $f_{\lambda}$.
\item
By \eqref{flats}, $f_{\lambda}^{\flat}(A_2,B_2) = 0$.
Now use $\delta 0 = 0$.
\end{enumerate}
\end{proof}

Set $\sigma:=q'/q^p$.
Then \eqref{maus}, \eqref{flam}, and~\eqref{flats} yield
\begin{equation}
\label{somefourexp}
	\overline{f^1_{\infty}} = \sigma,
	\qquad
	\overline{f_{\lambda,\infty}} = \sigma^p-\overline{\lambda} \sigma,
	\qquad \text{ and} \qquad
	\overline{f^{\flat}_{\lambda,x_{\infty}}} = 
	   t_{\infty}^{\frac{p^2+p}{2}}(\sigma^p- \overline{\lambda} \sigma).
\end{equation}
In what follows we assume that $X^{\dug}=U \setminus [x_{\infty}]$ 
where $U$ has an \'etale coordinate $\tau \in \cO(U)$ such that
$[x_{\infty}]$ is scheme-theoretically given by $\tau$: 
we can arrange this by shrinking $X^{\dug}$. 
Then $R[[\qq]]=R[[\tau]]$, so
\[
	R((\tau))\h[\tau',\ldots ,\tau^{(r)}]\h
	=R((\qq))\h[\qq',\ldots ,\qq^{(r)}]\h
	=R((\qq))\h[q',\ldots ,q^{(r)}]\h.
\] 
Also
$\cO^r(X^{\dug})=\cO(X^{\dug})\h[\tau',\ldots ,\tau^{(r)}]\h$.  
Since
\begin{equation}
\label{E:f-lambda-flat-bar}
\overline{f_{\lambda}^{\flat}} \in
\cO(\bar{X}^{\dug})[\tau',\tau''] \cap k((\tau))[\tau']  = 
\cO(\bar{X}^{\dug})[\tau']  =
\cO^1(X^{\dug})\tensor_R k,
\end{equation}
we may define a quotient ring
\begin{equation}
\label{thering}
	\cB:=(\cO^1(X^{\dug})\tensor_R k)/(\overline{f_{\lambda}^{\flat}})
\end{equation}
and a scheme $\bar{X}^{\ddug}:=\Spec \cB$.
View $\cB$ as an algebra over 
$\cA:=\cO(X^{\dug})\otimes k=\cO(\bar{X}^{\dug})$.

\begin{lemma}
\label{coppies}
The $k((\qq))$-algebra $\cB \otimes_{\cA} k((\qq))$ is a product
of $p$ copies of $k((\qq))$.
\end{lemma}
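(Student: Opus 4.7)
The plan is to base-change along the $\d$-Fourier expansion map $\cA = \cO(\bar X^{\dug}) \to k((\qq))$, which is defined because $X^{\dug} = U \setminus [x_{\infty}]$. Reduction mod $p$ of \eqref{mazel} combined with \eqref{thering} gives $\cB = \cO(\bar X^\dug)[\tau']/(\overline{f_\lambda^\flat})$, so base-change yields
\[
  \cB \otimes_\cA k((\qq)) \;\isom\; k((\qq))[\tau']\big/\bigl(g(\tau')\bigr),
\]
where $g(\tau') \in k((\qq))[\tau']$ is the $\d$-Fourier expansion of $\overline{f_\lambda^\flat}$. By \eqref{somefourexp},
\[
  g(\tau') \;=\; u_1 \bigl(\sigma^p - \bar\lambda\,\sigma\bigr), \qquad u_1 := t_\infty^{(p^2+p)/2} \in k((\qq))^\times,
\]
with $\sigma := q'/q^p$; here $u_1$ is a unit in $k((\qq))^\times$ since $t = a_6/a_4$ has nonzero value at the cusp.

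Next, I would show that $\sigma$, viewed in $k((\qq))[\tau']$, is an affine function of $\tau'$ with invertible leading coefficient. Using $q = \qq^M$ (with $p \nmid M$ for $p \gg 0$) together with $\d x = (x^\phi - x^p)/p$ gives $q' \equiv M\,\qq^{(M-1)p}\qq' \pmod{p}$, so $\sigma \equiv \bar M\, \qq'/\qq^p \pmod{p}$ in $k((\qq))[\qq']$. Converting between the two local coordinates $\tau$ and $\qq$ (an invertible change of variables, since both are uniformizers at $x_\infty$) yields $\qq' = \alpha_0^{-1}(\tau' - \beta_0)$ with $\alpha_0 \in k((\qq))^\times$, and hence
\[
  \sigma \;=\; \alpha\,\tau' + \beta, \qquad \alpha \in k((\qq))^\times,\ \beta \in k((\qq)).
\]
Via the invertible affine substitution $\tau' \mapsto \sigma$, we then obtain
\[
  k((\qq))[\tau']\big/\bigl(g(\tau')\bigr) \;\isom\; k((\qq))[\sigma]\big/\bigl(\sigma(\sigma^{p-1} - \bar\lambda)\bigr).
\]

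Finally, since $\lambda \in R^\times$ we have $\bar\lambda \in k^\times$, and as $k = \Fbar_p$ is algebraically closed, $\bar\lambda$ admits a $(p-1)$-st root $\nu \in k^\times$. Because $p - 1$ is coprime to $\mathrm{char}\,k = p$, the group $\mu_{p-1}(k) = \F_p^\times$ has $p-1$ distinct elements, so
\[
  \sigma^{p-1} - \bar\lambda \;=\; \prod_{\zeta \in \F_p^\times}(\sigma - \zeta\nu),
\]
and $\sigma(\sigma^{p-1} - \bar\lambda)$ splits into $p$ pairwise distinct monic linear factors over $k((\qq))$. The Chinese Remainder Theorem then delivers the decomposition $\cB \otimes_\cA k((\qq)) \isom k((\qq))^p$. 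The only non-formal point is the affine shape $\sigma = \alpha\tau'+\beta$ with $\alpha$ a unit, which is a routine computation via \eqref{axioms}; once this is secured, the separable factorization of $\sigma^{p-1}-\bar\lambda$ and CRT finish the argument.
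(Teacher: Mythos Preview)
Your proof is correct and follows essentially the same route as the paper's: both base-change $\cB$ along the Fourier expansion $\cA \to k((\qq))$, rewrite the result as $k((\qq))[\sigma]/(\sigma^p-\bar\lambda\sigma)$ via an invertible affine change from $\tau'$ (through $\qq'$ and $q'$) to $\sigma=q'/q^p$, and then split using the separable factorization of $\sigma^p-\bar\lambda\sigma$ over the algebraically closed field $k$. The paper compresses the coordinate changes into the single line $k((\tau))[\tau']/(\overline{f_\lambda^\flat}) = k((\qq))[q']/(\overline{f_{\lambda,x_\infty}^\flat})$, whereas you spell out the affine shape $\sigma=\alpha\tau'+\beta$ explicitly; otherwise the arguments coincide.
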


\begin{proof}
We have
\begin{align*}
\cB \otimes_{\cA} k((\qq)) & = 
\left(\cO(\bar{X}^{\dug})[\tau']/ (\overline{f_{\lambda}^{\flat}})\right) 
	\tensor_{\cA} k((\tau))\\
&= k((\tau))[\tau']/ (\overline{f_{\lambda}^{\flat}})\\
&= k((\qq))[q']/(\overline{f^{\flat}_{\lambda,x_{\infty}}})\\
&= k((\qq))[\sigma]/ (\sigma^{p}-\bar{\lambda} \sigma)\\
&\simeq \prod_{i=1}^{p} k((\qq)),
\end{align*}
since $\sigma^{p}-\bar{\lambda}\sigma = \prod_{i=1}^p (\sigma-\lambda_i)$
for some $\lambda_i \in k$.
Explicitly, the last isomorphism is given by
\begin{equation}
\label{iuegfyc}
	q' \mapsto (\lambda_1 q^p,\ldots ,\lambda_p q^p).
\end{equation}
\end{proof}

\begin{lemma}
\label{uewrybcxmzni} 
One can choose $X^{\dug}$ so that
$\bar{X}^{\ddug} \to \bar{X}^{\dug}$ is a finite \'etale cover
of degree $p$.
\end{lemma}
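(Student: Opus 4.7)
The plan is to compute $\overline{f_\lambda^\flat}$ explicitly as a polynomial in the étale coordinate derivative $\bar{b}'$, using the identification $\cO^1(X^{\dug}) \tensor_R k \simeq \cA[\bar{b}']$ obtained by reducing \eqref{mazel} modulo $p$, and then to read off finite étaleness directly from the shape of this polynomial.

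First I would compute $\overline{f^1}$: dividing \eqref{hurlprim} by $t^{(\phi+1)/2}$ and reducing modulo $p$ (noting that $\overline{t^{(\phi+1)/2}} = \bar{t}^{(p+1)/2}$, since $(t^\phi/t^p)^{1/2} \equiv 1 \pmod p$ by \eqref{fffut}) should yield $\overline{f^1} = \bar{a}_0 \bar{b}' + \overline{f_0}$, which is linear in $\bar{b}'$. Since $\phi \equiv \Fr \pmod p$, the Freshman's dream gives $\overline{(f^1)^{\phi}} = (\overline{f^1})^p = \bar{a}_0^p (\bar{b}')^p + \overline{f_0}^{\,p}$, which is degree $p$ in $\bar{b}'$ but with \emph{no} intermediate powers. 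Combining this via \eqref{flam} and \eqref{flats}, using $\overline{f^{\partial}} = \bar{E}_{p-1}$ (whence $\overline{(f^{\partial})^{-\phi-1}} = \bar{E}_{p-1}^{-p-1}$), will produce
\[
\overline{f_\lambda^\flat} = c_p (\bar{b}')^p + c_1 \bar{b}' + c_0,
\]
with $c_p = \bar{t}^{(p^2+p)/2} \bar{a}_0^p$, $c_1 = -\bar{\lambda}\, \bar{t}^{(p^2+p)/2}\, \bar{E}_{p-1}^{-p-1}\, \bar{a}_0$, and some $c_0 \in \cA$ whose precise form is immaterial.

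Next I would check that $c_p$ and $c_1$ are units in $\cA$. With $g \in E_{p-1} M^0$ as arranged in Section~\ref{S:d-modular-elliptic}, the ring $\cA$ inverts $\bar{a}_4, \bar{a}_6, \bar\Delta, \bar{E}_{p-1}$; consequently $\bar{t} = \bar{a}_6/\bar{a}_4$ and $\bar{a}_0 = \bar{c}\,\bar{E}_{p-1}\,\bar\Delta^{-p}\,\bar{a}_4^{4p}\,\bar{a}_6^{-p}$ are units, and $\bar{\lambda} \in k^{\times}$ since $\lambda \in R^{\times}$ in the construction of $f_\lambda$. So $c_p, c_1 \in \cA^{\times}$. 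Finite freeness of $\cB$ of rank $p$ over $\cA$ then follows immediately from $c_p \in \cA^{\times}$, with basis $1, \bar{b}', \ldots, (\bar{b}')^{p-1}$. For unramifiedness I would observe that the derivative of the defining polynomial with respect to $\bar{b}'$ is $p c_p (\bar{b}')^{p-1} + c_1 = c_1 \in \cA^{\times}$ in characteristic $p$, so the polynomial is separable over every residue field of $\cA$. Finite plus flat plus unramified gives étale of degree $p$.

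The only step demanding real effort is the explicit computation of $\overline{f_\lambda^\flat}$ above, particularly the Freshman's-dream cancellation that eliminates all intermediate powers of $\bar{b}'$. This cancellation is what makes the polynomial so clean that unramifiedness reduces to the single assertion that $c_1$ is a unit; everything else then follows by direct inspection.
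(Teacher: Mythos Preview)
Your argument is correct and takes a genuinely different route from the paper's. The paper proves the lemma in three lines: it invokes Lemma~\ref{coppies}, which computes $\cB \otimes_{\cA} k((\qq))$ via Fourier expansions as a product of $p$ copies of $k((\qq))$, to see that $\bar{X}^{\ddug} \to \bar{X}^{\dug}$ is \'etale of degree $p$ over the generic point, and then simply shrinks $X^{\dug}$ to spread this out. Your approach instead computes $\overline{f_\lambda^\flat}$ globally as an explicit Artin--Schreier--type polynomial $c_p(\bar{b}')^p + c_1 \bar{b}' + c_0$ in the \'etale coordinate $\bar{b}'$, with both $c_p$ and $c_1$ units in $\cA$; \'etaleness then follows immediately with no further shrinking. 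One phrasing to tighten: the elements $\bar{a}_4,\bar{a}_6,\bar{E}_{p-1}$ are not literally in $\cA$ since they have nonzero weight, so ``$\cA$ inverts them'' is an abuse; what you really use is that they are homogeneous units in the graded ring $M^0_{\{a_4a_6g\}} \otimes_R k$, so the weight-$0$ product $c_1$ has a weight-$0$ inverse there, hence is a unit in $\cA$. The paper's approach is quicker and recycles Lemma~\ref{coppies}, which is needed anyway; yours is more explicit, avoids Fourier expansions entirely for this step, and yields the stronger conclusion that no additional shrinking of $X^{\dug}$ is required beyond the hypothesis $g \in E_{p-1}M^0$ already imposed in Lemma~\ref{lem1}. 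Your explicit polynomial is also close in spirit to the factorization in Lemma~\ref{shadap}.
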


\begin{proof}
By definition, $\bar{X}^{\ddug} \to \bar{X}^{\dug}$ is of finite type.
Lemma \ref{coppies} shows that it is \'etale of degree $p$
above the generic point of $\bar{X}^{\dug}$.
Therefore $\bar{X}^{\ddug} \to \bar{X}^{\dug}$ is finite \'etale
of degree $p$ over some open neighborhood of the generic point.
\end{proof}

In case our correspondence arises from a modular parametrization
one has the following variant of Lemma \ref{coppies}.

\begin{lemma}
\label{shadap}
Assume $\thecorr$ arises from a modular parametrization and let
$L=k(\overline{X_1(N)})$.
Then
\[
	\cB \tensor_{\cA} L \isom L \times \calA^+ \times \calA^-
\]
where
\[
	\calA^{\pm} := L[y]/\left(y^{(p-1)/2} - \bar{E}_{p-1}/t^{(p-1)/2} \right).
\]
\end{lemma}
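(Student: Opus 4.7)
The plan is to use Lemma~\ref{lem1} to present $\cB \tensor_\cA L$ as an explicit univariate quotient over $L$, and then factor the defining polynomial.

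By isomorphism~\eqref{E:O1}, setting $y := \overline{\varphic}$ gives $\cO^1(X^{\dug}) \tensor_R k \isom \cA[y]$, so
\[
\cB \tensor_\cA L \isom L[y]/(\overline{f^{\flat}_{\lambda}}).
\]
The task reduces to writing $\overline{f^{\flat}_{\lambda}}$ as an explicit polynomial in $y$ over $L$, and factoring it. For the computation I will use $\d$-Fourier expansion. From~\eqref{somefourexp} and~\eqref{E:varphic mod p} we have
\[
\overline{f^{\flat}_{\lambda, x_{\infty}}} = t_\infty^{(p^2+p)/2}(\sigma^p - \bar\lambda \sigma), \qquad \overline{\varphic_\infty} = t_\infty^{(p+1)/2}\sigma,
\]
where $\sigma := q'/q^p$. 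Substituting the second into the first, $\overline{f^{\flat}_{\lambda, x_{\infty}}} = y_\infty^p - \bar\lambda\, t_\infty^{(p^2-1)/2}\, y_\infty$. Set $h := \bar E_{p-1}/\bar t^{(p-1)/2} \in L^\times$; using $\bar E_{p-1,\infty} = 1$, its Fourier expansion is $t_\infty^{-(p-1)/2}$, so $h^{-(p+1)}$ has expansion $t_\infty^{(p^2-1)/2}$. Injectivity of the Fourier expansion map on $L$ (Fourier expansion principle) then yields
\[
\overline{f^{\flat}_{\lambda}} = y^p - \bar\lambda\, h^{-(p+1)}\, y \in L[y].
\]

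Factoring $\overline{f^{\flat}_{\lambda}} = y \cdot (y^{p-1} - \bar\lambda h^{-(p+1)})$ gives
\[
\cB \tensor_\cA L \isom L \times L[y]/(y^{p-1} - \bar\lambda h^{-(p+1)}).
\]
Choose $\bar\mu \in k^\times$ with $\bar\mu^2 = \bar\lambda$ (exists because $k = \overline{\F}_p$ is algebraically closed). Since $(p+1)/2 \in \Z_{>0}$, factor in $L[y]$:
\[
y^{p-1} - \bar\lambda h^{-(p+1)} = \bigl(y^{(p-1)/2} - \bar\mu h^{-(p+1)/2}\bigr)\bigl(y^{(p-1)/2} + \bar\mu h^{-(p+1)/2}\bigr).
\]
These factors are coprime since their difference $2\bar\mu h^{-(p+1)/2}$ lies in $L^\times$ (using $p$ odd), so by CRT the second factor above splits as a product of two quotients $L[y]/(y^{(p-1)/2} - \nu h^{-(p+1)/2})$ with $\nu = \pm\bar\mu$. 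Finally, for each sign, pick $c \in k^\times$ with $c^{(p-1)/2} = \nu$ (again by algebraic closure of $k$); a direct computation shows that $Y \mapsto c/(yh)$ defines an $L$-algebra isomorphism
\[
\calA^{\pm} = L[Y]/(Y^{(p-1)/2} - h) \isom L[y]/(y^{(p-1)/2} - \nu h^{-(p+1)/2}),
\]
with explicit inverse $y \mapsto c/(Yh)$. Combining these identifications yields $\cB \tensor_\cA L \isom L \times \calA^+ \times \calA^-$.

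The main obstacle is the Fourier-expansion derivation of the explicit polynomial $\overline{f^{\flat}_{\lambda}} = y^p - \bar\lambda h^{-(p+1)} y$: it requires unpacking the definitions in~\eqref{varfi}, \eqref{flam}, \eqref{flats}, and~\eqref{fffut}, tracking the formal weight-$w$ expressions like $t^{(\phi+1)/2}$ and $(f^{\partial})^{-\phi-1}$, and then invoking injectivity of $L \to k((q))$ to promote the Fourier-level identity to a polynomial identity over $L$. Once that explicit formula is in place, the factorization and the match with $\calA^{\pm}$ are elementary.
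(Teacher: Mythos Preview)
Your proof is correct and follows essentially the same route as the paper: both present $\cB\otimes_{\cA}L$ as $L[\overline{f^{\flat}}]/(\overline{f^{\flat}_{\lambda}})$ via~\eqref{E:O1}, identify the defining polynomial as $(\overline{f^{\flat}})^p-\bar{\lambda}\,t^{(p^2-1)/2}\bar{E}_{p-1}^{-p-1}\,\overline{f^{\flat}}$, and factor it. The only real difference is in how that polynomial is obtained: the paper reads it off directly from the definitions \eqref{flam}--\eqref{flats} using $\overline{(f^1)^{\phi}}=(\overline{f^1})^p$ and $\overline{f^{\partial}}=\bar{E}_{p-1}$, which is a two-line computation; you instead pass through $\delta$-Fourier expansions and then invoke injectivity (which here really means injectivity of $\cA[y]\hookrightarrow k((q))[q']$, a mild extension of the Fourier expansion principle, justified since $y_\infty$ involves $q'$). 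Your explicit change of variable $Y\mapsto c/(yh)$ spells out the final identification with $\calA^{\pm}$ more concretely than the paper's ``so the result follows.''
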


\begin{proof}
By \eqref{E:O1}, we have $\cB \otimes_{\cA} L \simeq L
[\overline{f^{\flat}}]/(\overline{f_{\lambda}^{\flat}})$. 
On the other hand
\begin{align*}
\overline{f^{\flat}_{\lambda}} 
&= t^{\frac{p^2+p}{2}} \left[ (\overline{f^1})^p - \bar{\lambda} \overline{f^1}(\overline{f^{\partial}})^{-p-1} \right]\\
&= (\overline{f^{\flat}})^p-\lambda
t^{\frac{p^2-1}{2}}\bar{E}_{p-1}^{-p-1}\overline{f^{\flat}}\\
&= \overline{f^{\flat}} \left[(\overline{f^{\flat}})^{(p-1)/2}
+\sqrt{\lambda} t^{(p-1)/2}\bar{E}_{p-1}^{-1} (t^{(p-1)/2}/\bar{E}_{p-1})^{(p-1)/2}
\right] \\
& \phantom{= \overline{f^{\flat}}} \cdot \left[(\overline{f^{\flat}})^{(p-1)/2}
-\sqrt{\lambda} t^{(p-1)/2}\bar{E}_{p-1}^{-1} (t^{(p-1)/2}/\bar{E}_{p-1})^{(p-1)/2}
\right],
\end{align*}
so the result follows.
\end{proof}

\subsection{$\d$-modular forms: Shimura-elliptic case} 
We continue using the notation and assumptions of
Sections \ref{S:delta Serre-Tate} and~\ref{S:pullbacks}.
Assume that the $U$ in \eqref{vaiy2} is small enough 
that the {\em line bundle of false $1$-forms} on $U$ 
(see~\cite[p.~230]{book}) 
is trivial.
Let $q:=1+t \in R[[t]]$ and write $q'=\d(1+t)$, $q''=\d^2(1+t)$, and so on.
Define
\[
	\Psi=\Psi(t,t'):=\frac{1}{p} \log \frac{q^{\phi}}{q^p}
	=\frac{q'}{q^p}-\frac{p}{2} \left(\frac{q'}{q^p} \right)^2+\cdots  
		\in R[[t]][t']\h.
\]
By (8.116), (8.82), and Proposition~8.61 in~\cite{book},
one can find a series $u(t) \in R[[t]]^{\times}$ and a  function $f^{\flat}
  \in \cO^1(U)$ such that
\begin{equation}
\label{kellly}
	f^{\flat}_{x_0} = u(t)^{\phi+1} \cdot \Psi(t,t') \in q' R[[t]][t']\h,
\end{equation}
and 
\begin{equation}
\label{okellly} 
	f^{\flat}(P)=0 \quad \text{ for $P\in \Pi^{-1}(\CL) \intersect U(R)$}.
\end{equation}
(In the notation of~\cite{book}, one takes $f^{\flat}$ to be  the
value of the ``$\d$-modular form'' $f^1_{\crys}$ at the pull back
to $U$ of the universal false elliptic curve equipped with some
invertible false $1$-form; again $f^1_{\crys}$ should be viewed as
an arithmetic Kodaira-Spencer class.)

\begin{lemma}
\label{lemuli} 
There exists a neighborhood $X^{\dug} \subset U$ of
the section $x_0$ such that 
$\overline{\varphic}$ and $\overline{\d \varphic}$ 
are algebraically independent over $\OO(\bar{X}^{\dug})$
and the natural maps
\begin{align}
\label{E:SO1}
	\OO(\bar{X}^{\dug})[\overline{\varphic}] 
		&\to \OO^1(X^{\dug}) \tensor_R k \\
\label{E:SO2}
	\OO(\bar{X}^{\dug})[\overline{\varphic}, \overline{\d \varphic}]
		&\to \OO^2(X^{\dug}) \tensor_R k \\
\label{E:SO3}
	\cO(X^{\dug})\h
		&\to \cO^2(X^{\dug})/(\varphic,\d \varphic)
\end{align}
are isomorphisms.
\end{lemma}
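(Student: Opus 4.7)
The plan is to mimic the proof of Lemma \ref{lem1}, using $\d$-Serre-Tate expansions in place of $\d$-Fourier expansions. First I will shrink $U$ to an affine open neighborhood $X^{\dug}$ of (the image of) $x_0$ equipped with an \'{e}tale coordinate $\tau \in \cO(X^{\dug})$; by \eqref{zim} this gives isomorphisms $\cO^r(X^{\dug}) \simeq \cO(X^{\dug})\h[\tau',\ldots,\tau^{(r)}]\h$ whose reductions mod $p$ are polynomial rings $\cO(\bar{X}^{\dug})[\tau',\ldots,\tau^{(r)}]$. Since $\tau$ and the Serre-Tate parameter $t$ are both \'{e}tale coordinates at $x_0$, they differ on the formal completion $\Spf R[[t]]$ by an invertible formal change of variable, so computing leading mod-$p$ behavior in $t$ is equivalent to computing it in $\tau$.

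Next I will use \eqref{kellly} to show that $\overline{\varphic_{x_0}}$ is affine-linear in $t'$ with invertible leading coefficient. All terms of the series $\Psi = q'/q^p - (p/2)(q'/q^p)^2 + \cdots$ beyond the first carry positive $p$-adic valuation; since $q' = t' + C_p(1,t)$ is affine-linear in $t'$, this yields
\[
\overline{\varphic_{x_0}} \equiv \overline{u(t)}^{p+1} \cdot \frac{t' + C_p(1,t)}{(1+t)^p} \pmod{p},
\]
whose coefficient of $t'$ is a unit in $k[[t]]$. After shrinking $X^{\dug}$ so that the corresponding coefficient of $\tau'$ in $\overline{\varphic}$ is a unit in $\cO(\bar{X}^{\dug})$, solving for $\overline{\tau'}$ gives $\cO(\bar{X}^{\dug})[\overline{\varphic}] = \cO(\bar{X}^{\dug})[\overline{\tau'}]$, proving \eqref{E:SO1}. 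Writing $\varphic = \tilde A(\tau) + \tilde B(\tau)\tau' + pC(\tau,\tau')$ with $\tilde B$ a lift of that unit coefficient, a direct application of the axioms \eqref{axioms} together with the identity $\d(pC) \equiv C^p \pmod{p}$ shows that $\overline{\d\varphic}$ is affine-linear in $\overline{\tau''}$ with unit coefficient $\overline{\tilde B}^{p}$. This proves \eqref{E:SO2} and the stated algebraic independence of $\overline{\varphic}, \overline{\d\varphic}$ over $\cO(\bar{X}^{\dug})$.

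Finally, I will deduce \eqref{E:SO3} from \eqref{E:SO2} exactly as in Lemma \ref{lem1}. The map \eqref{E:SO3} reduces mod $p$ to an isomorphism by \eqref{E:SO2}, and $p$-adic completeness and separatedness of both sides then give surjectivity. For injectivity I will post-compose \eqref{E:SO3} with the $\d$-Serre-Tate expansion \eqref{vaiy2} followed by the substitution $q' = q'' = 0$; this descends to the quotient $\cO^2(X^{\dug})/(\varphic, \d\varphic)$ because $\varphic_{x_0} \in q'R[[t]][t']\h$ by \eqref{kellly}, and a short computation as in Lemma \ref{lem1} shows $(\d\varphic)_{x_0} \in (q', q'')R[[t]][t',t'']\h$. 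The resulting composition $\cO(X^{\dug})\h \to R[[t]]$ is the $p$-adically completed Serre-Tate expansion on regular functions, which is injective since $X^{\dug}$ is integral affine with closed point $x_0$.

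The principal obstacle is the bookkeeping required to relate the global \'{e}tale coordinate $\tau$, the Serre-Tate parameter $t$ (available only on the formal completion), and the logarithmic variable $q' = \d(1+t)$; all three pairs are related by invertible (formal) changes of variable, so the mod-$p$ leading-coefficient arguments remain unaffected and the proof reduces to the parallel computation already carried out in Lemma \ref{lem1}.
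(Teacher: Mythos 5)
Your proposal is correct and takes essentially the same route as the paper's proof: both establish from \eqref{kellly} and the change of variables $t = S(\tau)$ that $\overline{\varphic}$ and $\overline{\d\varphic}$ are affine-linear in $\tau'$ and $\tau''$ respectively with the same unit leading coefficient $\bar{v}$ (up to a Frobenius power), shrink $X^{\dug}$ to invert a lift of $\bar{v}$, and then deduce \eqref{E:SO3} verbatim as in Lemma~\ref{lem1} using the substitution that kills $q'$ and $q''$. The paper merely spells out the intermediate coordinate computation (the expansions of $t'$ and $t''$ in terms of $\tau'$, $\tau''$) that you compress into the remark that $t$ and $\tau$ differ by an invertible formal change of variable.
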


\begin{proof}
By \eqref{kellly},
\begin{equation}
\label{cocota1}
	\overline{f^{\flat}_{x_0}} 
	= \frac{\bar{u}(t)^{p+1}}{(1+t)^p} t'+ S_0 \in k[[t]][t'],
\end{equation}
for some $S_0 \in k[[t]]$.
Using \eqref{axioms} one obtains
\begin{equation}
\label{cocota2}
	\overline{\d f^{\flat}_{x_0}} =
	\frac{\bar{u}(t)^{p^2+p}}{(1+t)^{p^2}} t''+ S_1
	\in k[[t]][t',t'']
\end{equation}
for some $S_1 \in k[[t]][t']$.
We may assume that there is an \'etale coordinate $\tau$ on $U$
such that $x_0$ is given scheme-theoretically by $\tau=0$. 
Then $R[[t]]=R[[\tau]]$ (and $R[[t]][t',t'']\h=R[[\tau]][\tau',\tau'']\h$) 
so $t=S(\tau):=\sum_{n \geq 1} c_n \tau^n$ for some $c_n \in R$
with $c_1 \in R^{\times}$. 
One can easily see that
\[
	t'=\frac{1}{p}\left[ \sum c_n^{\phi}(\tau^p+p\tau')^n-
			\left( \sum c_n \tau^n \right)^p \right]
	=(\partial S/\partial \tau)^p \tau'+B_0+pB_1
\]
for some $B_0 \in R[[\tau]]$ and $B_1 \in R[[\tau]][\tau']\h$.
Using \eqref{axioms} we obtain
\[
	t''=(\partial S/\partial \tau)^{p^2} \tau'' + B_1^* + pB_2
\] 
for some $B_1^* \in R[[\tau]][\tau']\h$ 
and $B_2 \in R[[\tau]][\tau',\tau'']\h$. 
Combining with \eqref{cocota1} and~\eqref{cocota2}
and setting
\[
	\bar{v}(\tau):=\frac{\bar{u}(\bar{S}(\tau))^{p+1} 
		(\partial \bar{S}/\partial \tau)^p}
	{(\bar{S}(\tau)+1)^p} \in k[[\tau]],
\]
we obtain
\begin{align}
\label{yiyiyi}
	\overline{f^{\flat}_{x_0}} 
	&= \bar{v}(\tau) \tau'+C_0(\tau) \in k[[\tau]][\tau'],\\
\notag	\overline{\d f^{\flat}_{x_0}} 
	&= \bar{v}(\tau)^p \tau''+C_1(\tau,\tau') \in k[[\tau]][\tau',\tau''].
\end{align}
where $C_0(\tau) \in k[[\tau]]$ and $C_1(\tau,\tau') \in k[[\tau]][\tau']$. 
On the other hand, by \eqref{zim},
we have $\overline{f^{\flat}}\in \cO(\bar{U})[\tau']$ and
$\overline{\d f^{\flat}}\in \cO(\bar{U})[\tau',\tau'']$. 
Thus $\bar{v}(\tau)$, $C_0(\tau)$, and $C_1(\tau,\tau')$ 
are images of elements $\bar{v} \in \cO(\bar{U})$,
$C_0 \in \cO(\bar{U})$, and $C_1 \in \cO(\bar{U})[\tau']$, respectively,
such that
\begin{equation}
\label{yiyiyi2}
	\overline{f^{\flat}} = \bar{v} \tau'+C_0, \quad\text{ and }\quad
	\overline{\d f^{\flat}} = \bar{v}^p \tau''+C_1.
\end{equation}
Lift $\bar{v}$ to $v \in \cO(U)$.
Let $X^{\dug}$ be the complement in $U$ 
of the closed subscheme defined by $v$. 
Since $\bar{v}(\tau)$ has a nonzero constant term, 
$\bar{v}$ does not vanish at $\bar{x}_0$,
so $X^{\dug}$ contains the section $x_0$.
The proof now follows the proof of Lemma~\ref{lem1},
using \eqref{yiyiyi2} in place of \eqref{lolla}.
\end{proof}

\begin{remark}
\label{cecece} 
Using~\cite[pp.~268--269]{book}, 
for $\bar{U}$ contained in the ordinary locus
one can construct forms
$f^{\flat}_{\lambda} \in \cO^2(U)$ 
analogous to the ones in \eqref{flats}. 
(In the notation of~\cite{book}, one takes
$f^{\flat}_{\lambda}$ to be the Shimura analogues of the forms in
\eqref{flam} evaluated at a basis of the module of false
$1$-forms on $U$.) The analogues of Lemmas \ref{9438hfb},
\ref{coppies}, and~\ref{uewrybcxmzni} still hold with Fourier
expansions replaced by Serre-Tate expansions. The corresponding
statements and their proofs are analogous to the ones in
the modular-elliptic case.
\end{remark}

\subsection{Proofs of the local results}
\label{S:proofs of local results}
\begin{proof}[Proof of Theorem~\ref{mainth}]
Assume that we are given either a modular-elliptic  or a
Shimura-elliptic correspondence $\thecorr$.
Assume that $p$ is sufficiently large and $p$ splits completely in $F_0$. 
In the Shimura-elliptic case we also assume that the places $v|p$ 
are not anomalous for $A$. 
In the modular-elliptic case, 
choose $g$ as in Lemma~\ref{lem1}
and define $X^{\dug}$ as in~\eqref{cocer}. 
In the Shimura-elliptic case, choose $X^{\dug}$ as in Lemma~\ref{lemuli}.
By Lemma \ref{lem1} or~\ref{lemuli}, 
there exists $\Sigmat \in \cO(X^{\dug})\h$ such that
\begin{equation}
\label{aproape}  
	f^{\sharp}-\Sigmat = h_0 \varphic+h_1 \d \varphic,
\end{equation} 
for some $h_j \in \cO^2(X^{\dug})$. 
Suppose that $P_1,\ldots ,P_n \in \Pi^{-1}(\CL) \cap X^{\dug}(R)$ and 
$m_1,\ldots,m_n \in \Z$. 
By \eqref{gugu} or~\eqref{okellly}, we have $\varphic(P_i)=0$,
so $\d \varphic(P_i)=0$. 
Thus
\begin{equation}
\label{cliopr}
	f^{\sharp}(P_i)=\Sigmat(P_i).
 \end{equation}
Now \eqref{cliopr} implies
\begin{equation}
\label{easyrider} 
	\sum m_i\Sigmat(P_i) 
	= \sum m_i f^{\sharp}(P_i) 
	= \sum m_i\psi(\Phi(P_i))
	= \psi \left(\sum m_i\Phi(P_i) \right).
\end{equation}
Equation~\eqref{easyrider} and Lemma~\ref{thofker}(4) imply 
the second of the two equivalences in Theorem~\ref{mainth}. 

We now prove the first equivalence in Theorem~\ref{mainth}.
Let $Q:=\sum m_i\Phi(P_i)$.
If $Q \in A(R)_{\tors}$, then $\psi(Q)=0$
and \eqref{easyrider} implies $\sum m_i\Sigmat(P_i)=0$.
Conversely, suppose that $\sum m_i\Sigmat(P_i)=0$;
then $Q \in \ker \psi$.
Since $\CL \subseteq S(\Qbar)$, 
we have $P_i \in X(\Qbar) \intersect X(R)$,
so $Q \in A(\Qbar) \intersect A(R) \subset A(\Z_p^{\ur})$.
So Lemma~\ref{thofker}(5) implies $Q \in A(R)_{\tors}$.

To complete our proof, we need to check that $\overline{\Sigmat} \notin k$.

Assume first that we are in the modular-elliptic case. 
By \eqref{elephant},
\begin{equation}
\label{elephantt} \d \varphic_{\infty}  \in (\qq',\qq'')
R((\qq))\h[\qq',\qq'']\h.
\end{equation}
Taking $\d$-Fourier expansions in \eqref{aproape},
taking $\natural$ (i.e., setting $\qq'=\qq''=0$), 
and using \eqref{elephant} and~\eqref{elephantt}, 
we obtain
\begin{equation}
\label{tototo} 
	\Sigmat_{x_{\infty}} = (f^{\sharp}_{x_{\infty}})_{\natural}.
\end{equation} 
Let $e$ be the ramification index of $\Phi\colon X \to A$ at $x_{\infty}$.
Then the $b_e \in F_0$ of Section~\ref{S:modular-elliptic pullback}
is nonzero.
We may assume that $p$ is large enough that $e,b_e \not\equiv 0 \pmod{p}$.
By \eqref{tototo} and~\eqref{keti}, the coefficient of $\qq^e$
in $\Sigmat_{x_{\infty}}$ is $\frac{b_e}{e}$ or $-u \frac{b_e}{e}$,
where $u \not\equiv 0 \pmod{p}$;
in either case this coefficient is nonzero mod $p$.
Thus $\overline{\Sigmat_{x_{\infty}}} \notin k$.
Hence $\overline{\Sigmat} \notin k$.

Finally, assume that we are in the Shimura-elliptic case. 
By~\eqref{kellly},
 \begin{equation}
\label{popopo} 
	\d f^{\flat}_{x_0} \in (q',q'')R[[t]][t',t'']\h.
\end{equation}
By~\eqref{axioms},
\begin{equation}
\label{pesste}
\begin{array}{rcl}
q'& = & t'-G_1(t)\\
q'' & = & t''-G_2(t,t'),
\end{array}
\end{equation}
for some $G_1(t) \in \Z[t]$ and $G_2(t,t') \in \Z[t,t']$. 
Denote by $G \mapsto G_\natural$ the substitution homomorphism
\[
R[[t]][t',t'']\h \to R[[t]]
\]
sending $t'$ to $G_1(t)$ and $t''$ to $G_2(t,t')$.
Then $(q')_{\natural}=(q'')_{\natural}=0$, so 
\eqref{kellly} and~\eqref{popopo} imply
$(f_{x_0}^{\flat})_{\natural}=(\d f^{\flat}_{x_0})_{\natural}=0$.
Taking $\d$-Serre-Tate expansions in \eqref{aproape},
taking $\natural$, and substituting \eqref{titish},
we obtain
\begin{equation}
\label{titishhh}
\Sigmat_{x_0} = 
\begin{cases}
\psi(z_0)+ \frac{1}{p} \sum_{n\geq 1} \left(
\frac{b_n^{\phi^2}}{n} ((t^{\phi^2})_{\natural})^n -a_p
\frac{b_n^{\phi}}{n}
((t^{\phi})_{\natural})^n +p \frac{b_n}{n} t^n \right),
&\text{ if $A$ is not $\CL$;}\\
\psi(z_0)+\frac{1}{p} \sum_{n\geq 1} \left(
 \frac{b_n^{\phi}}{n}
((t^{\phi})_{\natural})^n -up \frac{b_n}{n} t^n \right),
&\text{ if $A$ is $\CL$.}
\end{cases}
\end{equation}
Substituting the two formulas
\begin{align*}
(t^{\phi})_{\natural} 
	&= (q^{\phi}-1)_{\natural} 
	= (q^p+pq'-1)_{\natural}
	= q^p-1 
	= (1+t)^p-1
	= pt + \cdots + t^p, \text{ and}\\
(t^{\phi^2})_{\natural}
	&= (q^{\phi^2}-1)_{\natural} 
	= ((q^p+pq')^p+p((q')^p+pq'')-1)_{\natural}
	= q^{p^2}-1 
	= (1+t)^{p^2}-1
	= p^2 t + \cdots + t^{p^2},
\end{align*}
and recalling from Section~\ref{S:Shimura pullback} that $b_1=1$,
we deduce that the coefficient of $t$ in $\Sigmat_{x_0}$
is $1-a_p+p$ if $A_R$ is not CL,
and $1-u$ if $A_R$ is CL.
This coefficient is nonzero mod $p$,
since our non-anomalous assumption implies $a_p \not\equiv 1 \pmod{p}$
and we have $\bar{u} \bar{a_p} = 1$ in the CL case.
So $\Sigmat_{x_0} \notin R+ pR[[t]]$.  
Hence $\overline{\Sigmat} \notin k$.
\end{proof}

\begin{remark}
Using the naturality of $\psi$ and the isogeny covariance of
$f^{\flat}$ and $f^{\partial}$, we easily verify
that $\Sigmat$ satisfies the functoriality properties in Remark~\ref{kokk}.
\end{remark}

\begin{proof}[Proof of Theorem~\ref{refined}] Assume, in the proof of Theorem
\ref{mainth}, that our modular-elliptic correspondence $\thecorr$
satisfies $S=X=X_1(N)$, $\Pi=\Id$, and $\Phi$ is a modular
parametrization attached to a newform $f$. 
We may choose $g:=E_{p-1}$ in Section~\ref{S:d-modular-elliptic}; then 
$\bar{X}^{\dug}=\overline{Y_1(N)}^{\ord}\setminus \{x \mid j(x)=0,1728\}$.
Now \eqref{floare} and~\eqref{tototo} give the formula for $\Sigmat_\infty$.
\end{proof}

\begin{lemma}
\label{nuzzi} 
Suppose that $\overline{\Theta} \in \cF$. 
Assume that for every $P \in {\mathcal P}$ and every prime $l\neq p$ we have
\begin{equation}
\label{gura2} \sum_i \overline{\Theta}(\bar{P}_i^{(l)})-
a_l\overline{\Theta}(\bar{P})=0.\end{equation} Then
$\overline{\Theta}=\bar{\lambda} \overline{f^{(-1)}}$ for
some $\bar{\lambda} \in k$.
\end{lemma}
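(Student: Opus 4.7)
The plan is to upgrade the pointwise identity \eqref{gura2} to an identity of rational functions, then transfer it to a Hecke-eigenform identity on Fourier expansions, and finally apply multiplicity one together with the $q$-expansion principle.

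First I would observe that for each prime $l\ne p$, the map
$T(l)\overline{\Theta}\colon x\mapsto \sum_i\overline{\Theta}(\bar{x}_i^{(l)})$
is, away from the cusps and the branch locus of $\sigma_1,\sigma_2\colon X_1(N,l)\to X_1(N)$, a rational function on $\overline{X_1(N)}$ (being $\sigma_{2*}\sigma_1^{*}$ applied to a rational function, as in \cite[p.~55]{GACC}). Since $\overline{\mathcal{P}}$ is infinite, the hypothesis forces the equality
\[
T(l)\overline{\Theta}=a_l\,\overline{\Theta}\quad\text{in }k(\overline{X_1(N)})
\]
for every prime $l\ne p$. In particular, taking Fourier $q$-expansions and using that $\langle d\rangle \overline{\Theta}=\overline{\Theta}$ and $U\overline{\Theta}(q)=0$, we find that $\overline{\Theta}(q)\in k((q))$ is a diamond-invariant element of $\ker U$ which is a simultaneous eigenvector for all Hecke operators $T(l)$, $l\ne p$, with eigenvalues $a_l$.

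Next, I would carry out the same analysis for $\bar f^{(-1)}$: by~\cite[p.~458]{gross} it lies in $\ker U$, is diamond invariant, and satisfies $T(l)\bar f^{(-1)}=a_l\bar f^{(-1)}$ for $l\ne p$. Thus $\overline{\Theta}(q)$ and $\bar f^{(-1)}(q)$ lie in the same simultaneous eigenspace in $k((q))$. Writing
\[
\overline{\Theta}(q)=\sum_{n\ge 1}c_nq^n,
\]
the relation $U\overline{\Theta}(q)=0$ gives $c_{np}=0$, so only $c_n$ with $(n,p)=1$ appear; for such $n$, the relation $T(n)\overline{\Theta}=a_n\overline{\Theta}$ (deduced from $T(l)$-eigenvector relations by the standard Hecke recursion, using $p\nmid n$ to avoid any issue with $T(p)$) together with the assumption $a_1=1$ for $f$ yields $c_n=c_1\,a_n/n$. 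Hence $\overline{\Theta}(q)=\bar{\lambda}\,\bar f^{(-1)}(q)$ with $\bar\lambda:=c_1\in k$.

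Finally, to lift this Fourier expansion identity back to an identity of functions, I would invoke the $q$-expansion principle: both $\overline{\Theta}$ and $\bar{\lambda}\bar f^{(-1)}$ lie in $\mathcal{O}(\overline{X_1(N)}^{\mathrm{ord}})$ and have equal $q$-expansions at the cusp $\infty$, so they agree, giving $\overline{\Theta}=\bar\lambda\bar f^{(-1)}$. The main subtlety is the first step, where one must confirm that $T(l)\overline{\Theta}$ is genuinely a rational function on $\overline{X_1(N)}$ (not merely on a finite \'etale cover) so that the identity at the infinite set $\overline{\mathcal{P}}$ propagates; this is what forces the definition of $\mathcal{P}$ in \eqref{cineiP}, excluding those isogeny classes whose $l$-neighbors could escape $X^{\dagger}$.
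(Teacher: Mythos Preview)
Your argument is correct and follows the same overall arc as the paper's proof: pass from the pointwise hypothesis on the infinite set $\overline{\mathcal P}$ to a Hecke-eigenfunction identity, then use multiplicity one on $q$-expansions to conclude $\overline{\Theta}(q)=\bar\lambda\,\bar f^{(-1)}(q)$, and finally invoke the $q$-expansion principle.

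The one execution difference worth noting: the paper does not work with $T(l)\overline{\Theta}$ as a rational function. Instead it clears denominators, setting $\bar G:=\bar E_{p-1}^{m}\overline{\Theta}$ for suitable $m$ so that $\bar G$ is a genuine modular form over $k$ on $\Gamma_1(N)$; using the isogeny invariance of the Hasse invariant $\bar E_{p-1}$, multiplying \eqref{gura2} by $\bar E_{p-1}(E,\alpha,\omega)^m$ turns it into the modular-form relation $lT(l)\bar G=a_l\bar G$, and then the paper quotes the mod-$p$ multiplicity-one statement from \cite[p.~453]{gross} directly. Your route via $\sigma_{2*}\sigma_1^*$ on rational functions (as in \cite{GACC}) and a hands-on Fourier computation $c_n=c_1\,a_n/n$ is a legitimate alternative; it avoids the detour through $\bar E_{p-1}$ at the cost of redoing a piece of multiplicity one by hand and tracking Hecke normalizations carefully (your rational-function $T(l)$ has eigenvalue $a_l$, which on $q$-expansions with the paper's normalization corresponds to eigenvalue $a_l/l$---this is why the factor $1/n$ appears in $c_n$). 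Either route lands at the same $q$-expansion identity, so the difference is purely cosmetic.
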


\begin{proof}
Since $\overline{\Theta}$ is regular on $\overline{X_1(N)}^\ord$,
there exists $m \in \Z_{\ge 1}$ 
such that $\bar{G}:=\bar{E}_{p-1}^m \overline{\Theta}$ is a modular form
over $k$ on $\Gamma_1(N)$.
View modular forms as functions on the set of triples $(E,\alpha,\omega)$
where $E$ is an elliptic curve over $k$, 
where $\alpha \colon \Z/n\Z \injects E(k)$ is an injective homomorphism,
and $\omega$ is a nonzero $1$-form on $E$.
Given $P \in \calP$ and a prime $l \ne p$, 
choose $(E,\alpha,\omega)$ such that $(E,\alpha)$ represents $P$,
and choose $(E_i,\alpha_i,\omega_i)$ such that $(E_i,\alpha_i)$
represents $P_i^{(l)}$
and such that $\omega_i$ pulls back to $\omega$ under the $l$-isogeny
$E \to E_i$.
Then $\bar{E}_{p-1}(E_i,\alpha_i,\omega_i) = \bar{E}_{p-1}(E,\alpha,\omega)$
by~\cite[p.~269]{book}, for instance.
Multiplying \eqref{gura2} by this yields
\[
	\sum_i \bar{G}(E_i,\alpha_i,\omega_i) = a_l \bar{G}(E,\alpha,\omega).
\]
By \cite[p.~452]{gross} or~\cite[p.~90]{Katz},
the left hand side equals $(l T(l) \bar{G})(E,\alpha,\omega)$.
Since $\calP$ is infinite, it follows that $l T(l) \bar{G} = a_l \bar{G}$
for all $l \ne p$.
On the other hand, $\bar{G}(q)=\overline{\Theta}(q)$,
and $U\overline{\Theta}=0$, so $U\bar{G}=0$.
Furthermore $\bar{G}$ is invariant under the diamond operators. 
Thus $\bar{G}$ is a Hecke eigenform with the same eigenvalues 
as $\theta^{p-2}\bar{f}$, so by~\cite[p.~453]{gross},
we have $\bar{G}(q) = \bar{\lambda} \cdot (\theta^{p-2}\bar{f})(q)$
for some $\bar{\lambda} \in k$.
Thus $\overline{\Theta}(q) = \bar{\lambda} \overline{f^{(-1)}}(q)$,
so $\overline{\Theta} = \bar{\lambda} \overline{f^{(-1)}}$.
\end{proof}

\begin{proof}[Proof of Theorem~\ref{isoggg}]
Assume that we have a modular-elliptic correspondence. 
Pick $Q \in C$  represented by
$(E_Q,\alpha_Q)$ where $E_Q$ is given by $y^2=x^3+Ax+B$. 
By Lemma~\ref{9438hfb}(1), there exists $\lambda \in R^{\times}$ such that
$f_{\lambda}(A,B)=0$. 
Let $X^{\dug} \subset X$ satisfy the conclusions of Lemmas~\ref{lem1} 
and~\ref{uewrybcxmzni}. 
View $f_{\lambda}^{\flat}$ and $f^{\sharp}$ as elements of $\cO^2(X^{\dug})$; 
then $\overline{f_{\lambda}^{\flat}}, \overline{f^\sharp} 
\in \cO^1(X^{\dug}) \tensor_R k$
by \eqref{E:f-lambda-flat-bar} and \eqref{erasasterb},
respectively.
Let $\overline{\Sigmatt}$ be the image
of $\overline{f^{\sharp}}$ in the ring $\cB=\cO(\bar{X}^{\ddug})$
of~\eqref{thering}.

{\em Claim.} $\overline{\Sigmatt}$ is non-constant on each 
irreducible component of $\bar{X}^{\ddug}$.

If not, there is a minimal prime $\cP$ of $\cB$ such that
the image of $\overline{\Sigmatt}$ in $\cB/\cP$, and hence in
$(\cB/\cP) \tensor_{\cA} k((\qq))$, is in $k$. 
By Lemma~\ref{coppies},
$(\cB/\cP) \tensor_{\cA} k((\qq))$ 
is a nonzero product of copies of $k((\qq))$. 
By~\eqref{iuegfyc}, the element
\[\overline{f^{\sharp}_{x_{\infty}}} \in
k[[\qq]][\qq']\subset k((\qq))[q']\] is sent into  an
element of $k$ by at least one of the $k((\qq))$-algebra
homomorphisms
\begin{equation}
\label{acompo} 
	k((\qq))[q'] \to k((\qq)),
\end{equation}
denoted $s \mapsto s_*$ and defined by $(q')_*:=\lambda_i q^p$. 
Since $q = \qq^M$, we have
\[
	q' = \d(\qq^M) = \frac{(\qq^p + p \qq')^M - \qq^{pM}}{p} \equiv M \qq^{p(M-1)} \qq' \pmod{p},
\]
so $\qq' \equiv M^{-1} \qq^{-p(M-1)} q' \pmod{p}$.
Thus 
\[
	(\qq')_* = M^{-1} \qq^{-p(M-1)} \lambda_i q^p = M^{-1} \lambda_i \qq^p \in \qq^p k[[\qq]].
\]
Hence
\[
	(\overline{f^{\sharp}_{x_{\infty}}})_*
	\in
	(\overline{f^{\sharp}_{x_{\infty}}})_{\natural}+\qq^pk[[\qq]],
\]
where we recall that $\natural$ means setting $\qq'=0$.
Let $e$ be the ramification index of $\Phi$ at $x_{\infty}$.
Exactly as in the proof of Theorem~\ref{mainth}, 
since $p \gg 0$, the coefficient of $\qq^e$ in
$(\overline{f^{\sharp}_{x_{\infty}}})_\natural$ is nonzero. So
$(\overline{f^{\sharp}_{x_{\infty}}})_*$ is not in $k$, 
a contradiction. This ends the proof of our Claim.

Now consider the set $\cC:=\Pi^{-1}(C) \cap X^{\dug}(R)$ and let
$P_1 \in \cC$,  $Q_1:=\Pi(P_1)$. Let $E_{Q_1}$ be given by
$y^2=x^3+A_1x+B_1$. 
By choice of $X^{\dug}$, we have $A_1B_1 \not\equiv 0 \pmod{p}$. 
By Lemma~\ref{9438hfb}, $f_{\lambda}^{\flat}(P_1)=0$.
Therefore the homomorphism $\cO^1(X^{\dug}) \to R$
sending a function to its value at $P_1$
induces a homomorphism $\cB \to k$,
which may be viewed as a point $\sigma(P_1) \in \bar{X}^{\ddug}(k)$
mapping to $P_1 \in \bar{X}^{\dug}(k)$.
This defines $\sigma\colon \calC \to \to \bar{X}^{\ddug}(k)$.
By definition of $\sigma(P_1)$ and $\Sigmatt$,
$\overline{f^{\sharp}(P_1)}=\overline{\Sigmatt}(\sigma(P_1))$.
Now, for $P_1,\ldots ,P_n \in \cC$,
\begin{align*}
\sum_{i=1}^n m_i \overline{\Sigmatt}(\sigma(P_i))
&= \sum_{i=1}^n m_i\overline{f^{\sharp}(P_i)}\\
&= \sum_{i=1}^n m_i\overline{\psi(\Phi(P_i))}\\
&= \overline{\psi \left(\sum_{i=1}^n m_i\Phi(P_i) \right)},
\end{align*}
so the desired equivalence follows from Lemma~\ref{thofker}(4).

The case of Shimura-elliptic correspondences is entirely similar,
given Remark~\ref{cecece}. 
We skip the details but point out one slight difference 
in the computations. 
The proof of the analogue of the Claim above, 
uses a $k[[t]]$-algebra homomorphism
\[
	k[[t]][t'] \to k[[t]],
\]
denoted $s \mapsto s_*$,
defined by requiring $(q')_*=\lambda_iq^q$,
where $q=1+t$ and $q'=\d(1+t)$. 
Then one must check that for $f_{x_0}^{\sharp}$ as in~\eqref{titish}, 
the coefficient of $t$ in $(f^{\sharp}_{x_0})_*$ 
is nonzero mod $p$. 
This coefficient can be computed explicitly, and, unlike in the
modular-elliptic case, its expression  has contributions from all
the terms with $n \geq 1$.
Nevertheless all the contributions from terms with $n \geq 2$ turn
out to be $0$ mod $p$, and the coefficient in question turns
out to be congruent mod $p$ to either $1-a_p$ or $1-u$,
and hence is nonzero mod $p$.
\end{proof}

The following will be used to prove Corollary~\ref{bizzet}:

\begin{lemma}
\label{berliozz} 
Under the assumptions of Corollary \ref{bizzet}
there is a constant $\gamma$ depending only on $N$ such that all
the fibers of the reduction mod $p$ map $C \to \overline{C}$ are
finite of cardinality at most $\gamma$.
\end{lemma}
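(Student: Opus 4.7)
The plan is to bound, uniformly in $\bar{P} \in \bar{C}$, the size of the fiber by analyzing lifts of ordinary elliptic (or false elliptic) curves via Serre-Tate theory. Fix $\bar{P} \in \bar{C}$ with underlying data $(\bar{E}', \bar{\alpha}')$. For $p \gg 0$ we have $p \nmid N$, so the $N$-torsion of any ordinary lift $E'/R$ of $\bar{E}'/k$ is \'etale; hence every level structure on $E'$ is the unique lift of its reduction. Consequently, up to a factor bounded by a function of $N$ (coming from $|(\mathbf{Z}/N\mathbf{Z})^\times| \cdot |\operatorname{Aut}(\bar{E}')|$, needed because different level structures can become isomorphic after application of automorphisms of the underlying curve), the fiber over $\bar{P}$ is in bijection with the set of isomorphism classes of lifts $E'/R$ of $\bar{E}'$ that are $\Sigma$-isogenous to $E_Q$ over $R$.

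If $E_Q$ is CL, then by Theorem~\ref{T:revv2}(3) every curve $\Sigma$-isogenous to $E_Q$ over $R$ is again CL (since $p \notin \Sigma$), and by Theorem~\ref{T:revv2}(1) it is uniquely determined by its reduction; so the fiber has size bounded by the level structure factor alone.

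If $E_Q$ is not CL, its Serre-Tate parameter $q_0 := q(E_Q)$ is a non-identity element of $1+pR$. Because $p \notin \Sigma$, every isogeny $\bar{\phi} \colon \bar{E}_Q \to \bar{E}'$ of degree in $\Sigma$ has \'etale kernel and lifts uniquely to $\phi \colon E_Q \to E'_{\bar{\phi}}$ over $R$, yielding a specific lift $E'_{\bar{\phi}}$ of $\bar{E}'$ whose Serre-Tate parameter, read off via the basis of the \'etale Tate module induced by $\bar{\phi}$, equals $q_0$. Two such isogenies $\bar{\phi}_1, \bar{\phi}_2$ produce isomorphic lifts precisely when the change-of-basis scalar $u \in \mathbf{Z}_p^\times$ relating them satisfies $q_0^u \in \operatorname{Aut}(\bar{E}') \cdot q_0$. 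The action of $\operatorname{Aut}(\bar{E}')$ on $1+pR$ factors through a finite character group whose order divides $|\operatorname{Aut}(\bar{E}')|$, which is universally bounded (by $24$ in the modular case, by $|\cO_D^\times|$ in the Shimura case). Since $q_0 \neq 1$ and $1+pR$ is $p$-adically torsion-free, the set of admissible $u$ lies in a finite union of cosets of cardinality bounded by $|\operatorname{Aut}(\bar{E}_Q)| \cdot |\operatorname{Aut}(\bar{E}')|$, independent of $p$ and $Q$. Combining this with the level structure factor yields the desired $\gamma = \gamma(N)$.

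The main obstacle is the precise comparison of Serre-Tate parameters under the joint action of $\operatorname{Aut}(\bar{E}_Q)$ and $\operatorname{Aut}(\bar{E}')$: one must verify that the non-triviality of $q_0$ (in the non-CL case) forces the change-of-basis scalars to be finitely quantized, and that the resulting count is independent of $p$ and of the specific point $Q$. Once this is established, the explicit bookkeeping of automorphisms and level structures produces a value of $\gamma$ depending only on $N$.
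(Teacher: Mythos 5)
Your argument has a serious gap: it never uses the hypothesis that $\Sigma$ consists of \emph{inert} primes of $\cK_Q$, and this hypothesis is essential. Without it the lemma is false. Indeed, suppose $E_Q$ is not CL, $\End(\bar{E}_Q)=\cO$ with class number $1$, and $l$ is a prime of $\Sigma$ that is \emph{split} in $\cK_Q$, say $l\cO=(\pi)(\bar\pi)$. Then the prime-to-$p$ subgroups $\bar{E}_Q[\pi^n]\subset\bar{E}_Q$ lift uniquely to subgroups $H_n\subset E_Q$, and each quotient $E_Q/H_n$ reduces to $\bar{E}_Q/\bar{E}_Q[\pi^n]\isom\bar{E}_Q$; but since $\pi$ acts on the physical Tate module by a scalar $a\in\Z_p^\times$ with $a/\bar a\neq 1$, the Serre--Tate parameters of the $E_Q/H_n$ form an infinite sequence $q_0^{(a/\bar a)^{n}}$ of distinct elements of $1+pR$. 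Thus the fiber of $C\to\bar C$ over $\bar Q$ would be infinite. Your Serre--Tate analysis only shows that the set of change-of-basis scalars $u$ identifying two isogenies that produce \emph{the same} lift is finite; it never bounds the number of distinct lifts, i.e., the number of equivalence classes, which is what the lemma requires. The phrase ``the set of admissible $u$ lies in a finite union of cosets'' conflates these two counts.

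The paper's proof is both shorter and uses the inert hypothesis exactly where it is needed. Given $Q_1,Q_2\in C$ with $\bar Q_1=\bar Q_2$, compose the $\Sigma$-isogenies $E_Q\to E_1$ and $E_Q\to E_2$ to get a $\Sigma$-isogeny $u\colon E_1\to E_2$ whose degree is a product of inert primes. Via the isomorphism $\bar E_1\isom\bar E_2$, the reduction $\bar u$ lies in $\End(\bar E_1)\subset\cO_{\cK_Q}$, and its norm equals $\deg u$. But an element of $\cO_{\cK_Q}$ whose norm is a product of inert primes necessarily lies in $\Z\cdot\cO_{\cK_Q}^\times$ (every inert prime generates a prime ideal of residue degree $2$, so the ideal $(\bar u)$ is generated by a rational integer). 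Hence $\ker\bar u=\bar E_1[n]$ for some $n\in\Z$, its unique lift is $E_1[n]$, and $u$ factors through $[n]$ followed by a degree-$1$ map, forcing $E_1\isom E_2$. The fiber is then bounded by the number of $\Gamma_1(N)$-structures (resp.\ level-$\cU$ structures) on a single curve. Your CL case is fine, and your opening reduction to counting lifts modulo level-structure ambiguity is also fine; the non-CL step is where you need to add the inert-norm argument.
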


\begin{proof}
Assume that we are in the modular-elliptic case; the Shimura-elliptic
case follows by the same argument. 
Suppose that $Q_1,Q_2 \in C$ are such that $\bar{Q}_1=\bar{Q}_2 \in S(k)$.
Let $Q_i$ be represented by $(E_i,\alpha_i)$,
so there is an isogeny $u \colon E_1 \to E_2$
of degree $\prod l_j^{e_j}$ where the $l_j$ are inert in $\calK_Q$.

We claim that $E_1 \isom E_2$.
Since $\bar{E}_1 \isom \bar{E}_2$, we may view $\bar{u}$ as an element
of $\End \bar{E}_1$, which may be identified with a subring of the
ring of integers $\OO$ of $\calK_Q$.
The norm of this element equals $\deg \bar{u} = \deg u$,
but the only elements of $\OO$ whose norm is a product of inert primes
are those in $\Z \cdot \OO^\times$.
Hence $u$ factors as $E_1 \stackrel{n}\to E_1 \stackrel{\epsilon}\to E_2$
for some $n \in \Z$ and $\epsilon$ of degree~$1$.
In particular, $E_1 \isom E_2$.

By the claim, Lemma~\ref{berliozz} holds with $\gamma$ equal to the number 
of possible $\Gamma_1(N)$-structures on an elliptic curve.
\end{proof}

\begin{proof}[Proof of Corollary \ref{bizzet}]
By Lemma~\ref{berliozz}, the map
\[
	\Phi(\Pi^{-1}(C)) \cap \Gamma 
	\to \overline{\Phi(\Pi^{-1}(C)) \cap \Gamma}
\]
has finite fibers 
of cardinality bounded by a constant independent of $\Gamma$. 
On the other hand, by Corollary~\ref{tridd}, 
the target of this map
has cardinality at most $c p^{r}$ for some $c$ independent of~$\Gamma$.
\end{proof}

\begin{proof}[Proof of Theorem~\ref{refined2}]
Assume, in the proof of Theorem~\ref{isoggg}, that we have
a modular-elliptic correspondence arising from a modular parametrization
attached to $f$. 
Part~(1) follows from Lemma~\ref{shadap}.
Part~(2) follows comparing Fourier expansions of the two sides:
apply the substitution maps as in~\eqref{acompo}
to $\overline{f^{\sharp}_{\infty}}$ given in~\eqref{titi} 
to obtain the $p$ different series 
\[
	\overline{\Sigmatt}_{\infty i} = (\overline{f^{\sharp}_{\infty}})_* 
	= \overline{\Phi^{\dug}}(q)+\overline{\lambda}_i \overline{\Phi^{\dug \dug}}(q) \in k((q))
\]
where $\lambda_1,\ldots,\lambda_p \in k$ 
are the zeros of $x^p-\bar{\lambda} x$ 
as in the proof of Lemma~\ref{coppies}.
\end{proof}

\section{Proofs of global results, II}
\label{S:global 2}

In this section we prove the rest of our global results; we will
derive them from the corresponding local results.

We begin a lemma to be used in the proof of Theorem~\ref{glver}.
If $E$ is an abelian variety over a field $L$
and $n \in \Z_{>0}$, let $\overline{L}$ be an algebraic closure of $L$
and let 
$E[n] := \ker \left( E(\overline{L}) \stackrel{n}\to E(\overline{L}) \right)$.

\begin{lemma}
\label{anom} 
Let $L$ be a  number field.
Let $A$ be an elliptic curve over $L$.
Let $E$ be an elliptic curve or abelian surface over $L$. 
Then there exist infinitely many rational primes $p$ for which
\begin{enumerate}
\item $p$ splits completely in $L$,
\item $A$ has good non-anomalous reduction at any prime above $p$, and
\item $E$ has good ordinary reduction at any prime above $p$.
\end{enumerate}
\end{lemma}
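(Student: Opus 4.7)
The plan is to combine Chebotarev's density theorem with standard distribution results for Frobenius traces and Newton polygons. Replacing $L$ by its Galois closure over $\Q$ (and base-changing $A$ and $E$), one may assume $L/\Q$ is Galois, since a rational prime splits completely in $L$ iff it does so in its Galois closure. Chebotarev then gives that the set $T_1$ of rational primes splitting completely in $L$ has density $1/[L:\Q]>0$. After discarding the finitely many primes of bad reduction for $A$ or $E$ and those $p<7$, the Hasse bound $|a_v(A)|\le 2\sqrt{p}<p-1$ turns the anomalous condition $a_v(A)\equiv 1\pmod p$ into the integer equality $a_v(A)=1$, for every $p\in T_1$ and every $v\mid p$.

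The first main input is that for any elliptic curve $A$ over a number field, the set of primes $v$ with $a_v(A)=1$ has density $0$. When $A$ has no CM, this follows from Serre's theorem on openness of $\ell$-adic images: for $\ell\gg 0$ the image of $\rho_{A,\ell}$ fills up most of $\GL_2(\F_\ell)$, while the subset of trace $\equiv 1\pmod\ell$ has fraction $O(\ell^{-1})$, so Chebotarev applied to $L(A[\ell])$ bounds the density of $\{v:a_v\equiv 1\pmod\ell\}$ by $O(\ell^{-1})$; letting $\ell\to\infty$ gives density $0$ for $\{v:a_v=1\}$. When $A$ has CM by an imaginary quadratic field $\calK$, the density is controlled directly by the attached Hecke character: primes of $L$ inert in $\calK$ give $a_v=0$, and at split primes $a_v=\pi+\overline{\pi}$ with $|\pi|=\sqrt{p}$, so $a_v=1$ forces an isolated value of the Hecke character and again has density $0$. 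Hence the set of $p\in T_1$ failing condition $(2)$ has density $0$.

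The second main input is that the set of primes of good ordinary reduction for $E$ has positive density. If $E$ is an elliptic curve this is classical: density $1$ in the non-CM case (Serre's theorem that supersingular primes have density $0$) and density $1/2$ in the CM case (ordinary iff $p$ splits in the CM field). If $E$ is an abelian surface one invokes Ogus's theorem on ordinary reduction for abelian varieties of dimension $\le 3$; alternatively, in the applications of this paper $E$ is a false elliptic curve or a product of elliptic curves, and the positive density follows from the structure of the endomorphism algebra by reduction to the elliptic case. In every case the density of good ordinary primes is a positive constant $\delta_E>0$.

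Combining these ingredients, within $T_1$ the primes failing $(2)$ form a density-$0$ subset and the primes failing $(3)$ form a subset of relative density at most $1-\delta_E$. So the primes satisfying $(1)$, $(2)$, and $(3)$ have density at least $\delta_E/[L:\Q]>0$, and in particular form an infinite set. The main obstacle is the positive density of ordinary primes for $E$ in the abelian surface case; the rest of the argument is a routine application of Chebotarev together with Serre's theorems on $\ell$-adic Galois representations.
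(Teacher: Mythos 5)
Your approach is genuinely different from the paper's and, as written, contains a real gap. The paper's proof is quite clever and self-contained: it enlarges $L$ so that $A[13] \subset A(L)$ and $E[13] \subset E(L)$ and then applies Chebotarev to find $p$ splitting completely in $L$. For such $p$, since Frobenius at each $v \mid p$ acts trivially on the $13$-torsion, the characteristic polynomial $P(x)$ of $\Frob_v$ on $T_{13}(E)$ satisfies $P(x) \equiv (x-1)^{2g} \pmod{13}$, so its middle coefficient $b$ satisfies $b \equiv \binom{2g}{g} \pmod{13}$ (which is $6$ for $g=2$). Meanwhile, $p \equiv 1 \pmod{13}$ (Weil pairing) together with $|b| < \binom{2g}{g} p$ forces $b \not\equiv 0 \pmod p$, since otherwise $b = cp$ with $|c| < 6$ would give $b \equiv c \pmod{13}$ with $|c| \le 5$, contradicting $b \equiv 6$. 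The same congruence trick forces $\tr \Frob_v \equiv 2 \pmod{13}$ on $T_{13}(A)$, ruling out anomalous reduction. Thus ordinariness and non-anomaly are extracted directly from complete splitting in the $13$-torsion field, with no appeal to density results at all.

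Your proposal instead wants to intersect density-theoretic sets, and this is where the gap lies: you cite Ogus for positive density of ordinary primes for an abelian surface $E$, but Ogus's Corollary~2.9 proves only the \emph{infinitude} of ordinary primes in dimensions $\le 3$, not positive density. Since you then discard a density-$0$ set of anomalous primes, infinitude alone does not suffice; you really need a positive lower density (and in fact something close to density~$1$, or a more careful analysis, since you require \emph{every} prime $v \mid p$ to be ordinary, and for $E$ defined over $L$ rather than $\Q$, the curves $\overline{E}_v$ for distinct $v\mid p$ need not be isomorphic). Positive density of ordinary primes for general abelian surfaces is known, but it is a substantially deeper theorem (involving $\ell$-adic monodromy groups) than what Ogus provides, and your fallback remark about false elliptic curves being ``products of elliptic curves'' is incorrect for the generic non-CM false elliptic curve, which is absolutely simple. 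So to repair your route you would need a much stronger citation than Ogus and additional care with the bookkeeping of primes above $p$, whereas the paper's torsion-congruence trick gets everything in one stroke.
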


\begin{proof}
Recall first the following general criterion for a $g$-dimensional abelian
variety $E$ over $\F_p$ to be ordinary. 
Let $P(x)$ be the characteristic polynomial
of $\Frob_p$ acting on a Tate module. So $P(x)$ is a monic degree
$2g$ polynomial in $\Z[x]$. Then $E$ is ordinary if and only if
exactly $g$ of the $2g$ zeros of $P(x)$ in $\overline{\bQ}_p$  are
$p$-adic units, or equivalently (via the theory of Newton
polygons) if and only if the middle coefficient (coefficient of
$x^g$) in $P(x)$ is not divisible by $p$.

We may enlarge $L$ to assume that $A[13] \subset A(L)$,
that $E[13] \subset E(L)$, and that $L$ is Galois over $\Q$.
The Chebotarev density theorem gives infinitely many $p$
splitting completely in $L$
such that $A$ and $E$ have good reduction at the primes above $p$.

We claim that any such $p$ satisfies all three properties.
First, by the Weil pairing, 
$L$ contains a primitive $13^{\operatorname{th}}$ root of~$1$, 
so $p \equiv 1 \pmod{13}$.

Suppose that $\dim E = 2$.
Fix a place $v$ above $p$.
Let $P(x)$ the characteristic polynomial of $\Frob_v$ acting on
the $13$-adic Tate module $T_{13}(E)$.
By the Weil conjectures,
the $4$ zeros of $P(x)$ have complex absolute value $\sqrt{p}$, 
so the absolute value of the middle coefficient $b$ of $P(x)$ 
is at most $6p$. Equality would mean
that all the zeros were $\sqrt{p}$ or all the zeros were $-\sqrt{p}$, 
but this is impossible, since $P(x) \in \Z[x]$. 
If $E$ were not ordinary at $v$, then we would have 
$b = cp$ for some $c \in \Z$, and $|b| < 6p$ would imply $|c|<6$;
then $b$ is congruent to one of $-5,-4,\ldots ,4,5$ mod $13$. 
On the other hand, since $p$ splits completely in $L = L(E[13])$, 
we have $P(x) \equiv (x-1)^4 \pmod{13}$, 
so $b \equiv 6 \pmod{13}$. 
This contradiction shows that $v$ is an ordinary prime for $E$. 

The case where $E$ is an elliptic curve can be treated similarly,
and is even easier.
Moreover, by a similar argument, the trace of $\Frob_v$ on $T_{13}(A)$
is $2 \bmod 13$, and in particular is not $1$, so $v$ is not anomalous
for $A$.
\end{proof}

\begin{remark}
\hfill
\begin{enumerate}
\item
The first to prove that an abelian surface over a number field
has infinitely many ordinary primes was A.~Ogus~\cite[Corollary~2.9]{ogus}.
\item
The method used to prove~\cite[Theorem~3.6.4]{SerreTopics} 
for non-CM elliptic curves over $\Q$ can be generalized 
to show that the set of anomalous primes of an elliptic curve
over a number field $L$ has density~$0$.
\end{enumerate}
\end{remark}

\begin{proof}[Proof of Theorem~\ref{glver}]
The point $Q$ corresponds to a (possibly false) elliptic curve
$E$ over a number field $L$.
We may assume that $L \supset F_0$.
If $E$ is $\CM$, we assume also that $L \supset \calK_Q$,
where $\calK_Q$ is the imaginary quadratic field $(\End E) \tensor \Q$
(or $\End(E,i) \tensor \Q$ in the Shimura case).
Let $\Delta$ be the infinite set of rational primes 
given by Lemma~\ref{anom}.
Removing finitely many elements from $\Delta$, we may assume 
for any $p \in \Delta$ that $Q \in S(\Z_p^{\ur})$,
that $\Pi$ reduces modulo primes above $v$ to a separable morphism,
and that Corollary~\ref{bizzet} holds for $p$;
if in addition $E$ is CM, we may assume that $Q \in \CL$,
and Corollary~\ref{coru} holds for $p$.) 

Fix $p \in \Delta$.
Define $R$ and so on as in Section~\ref{S:local}.
Then $Q$ is an ordinary point of $S(R)$. 
If $E$ is not CM, let $\Sigma$ be the set of all rational primes 
that are inert in $\cK_Q$; 
if $E$ is CM, take $\Sigma:=\{l \mid l \neq p\}$.
Let $C$ be the $\Sigma$-isogeny class of $Q$ in $S(\Qbar)$;
if $Q \in \CM$, then $C \subset \CL$ by Theorem~\ref{T:revv2}(3).
Let $\bar{S}_\ram \subset \bar{S} := S \tensor k$ 
be the branch locus of $\bar{\Pi} \colon \bar{X} \to \bar{S}$.
Write $C$ as the disjoint union of $C_\et$ and $C_\ram$,
where $C_\ram$ is the set of points of $C$ whose reduction
lies in $\bar{S}_\ram$.

Since $\bar{S}_\ram$ is finite, Lemma~\ref{berliozz}
implies that $C_\ram$ is finite of cardinality
bounded independently of $\Gamma$,
so the same is true of $\Phi(\Pi^{-1}(C_\ram))$.

On the other hand, $\Pi^{-1}(C_\et) = \Pi_R^{-1}(C_\et) \subset X(R)$
where $\Pi_R$ is the set map $X(R) \to S(R)$,
so
\[
	\Phi(\Pi^{-1}(C_\et)) \intersect \Gamma
	\subset
	\Phi(\Pi_R^{-1}(C)) \intersect \Gamma.
\]
By Corollary~\ref{bizzet} (or Corollary~\ref{coru} if $E$ is CM)
the set
$\Phi(\Pi_R^{-1}(C)) \intersect \Gamma$ is finite of
cardinality at most $c p^r$,
where $r=\rank_p^{A(R)}(\Gamma) =\rank_p^{A(M)}(\Gamma)$. 
Thus $\# \Phi(\Pi^{-1}(C_\et)) \intersect \Gamma \le c p^r$ too.

Combining the previous two paragraphs gives the desired bound
on $\Phi(\Pi^{1}(C)) \intersect \Gamma$.
\end{proof}

We use the following to prove Theorem~\ref{incauna}:

\begin{lemma}
\label{focati} 
Let $E$ be a non-$\CM$ elliptic curve over a finite Galois extension
$L$ of $\Q$.
Then there exists a constant $l_0$ such that for any finite set
$\Sigma$ of rational primes greater than $l_0$ 
there is an infinite set $\Delta_L$ of primes $v$ of $L$
such that
\begin{enumerate}
\item $E$ has good ordinary reduction $\overline{E}_v$ at $v$;
\item each $l \in \Sigma$ is inert in the imaginary quadratic field
$\End(\overline{E}_v) \otimes \bQ$; and
\item $v$ has degree $1$ (so the rational prime beneath it 
splits completely in $L$).
\end{enumerate}
\end{lemma}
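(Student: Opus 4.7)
Plan. The plan is to derive the lemma from Serre's joint open image theorem for non-CM elliptic curves together with the Chebotarev density theorem. Serre's theorem applied to $E/L$ says the image of the joint representation
\[
\rho \colon G_L \longrightarrow \prod_l \GL_2(\Z_l)
\]
is open. Because any open subgroup of a product of profinite groups contains all but finitely many factors outright, there exists a constant $l_0$ such that for every finite set $\Sigma$ of primes $> l_0$ the mod-$l$ joint representation
\[
\rho_\Sigma \colon G_L \twoheadrightarrow \prod_{l \in \Sigma} \GL_2(\F_l)
\]
is surjective. I would take this $l_0$ (enlarged to be at least $5$) as the constant claimed in the lemma.

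Next, fix such a $\Sigma$ and set $L_\Sigma := L(E[\prod_{l \in \Sigma} l])$. Each $\GL_2(\F_l)$ with $l$ odd contains non-split semisimple elements (for instance, the companion matrix of $x^2 - c$ for any non-residue $c \in \F_l^\times$), so the subset $T \subset \Gal(L_\Sigma/L)$ of elements whose image in every factor $\GL_2(\F_l)$ is non-split semisimple is nonempty and stable under conjugation. Chebotarev applied to $L_\Sigma/L$ then produces a positive-density set of primes $v$ of $L$, unramified in $L_\Sigma$, with $\Frob_v \in T$. From this set I would discard three density-zero exceptional families: primes of $L$ of residue degree $\geq 2$ over $\Q$; primes of bad reduction for $E$ or of residue characteristic in $\Sigma$; and primes of supersingular reduction. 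The last family has density zero because for a non-CM $E/L$ and an auxiliary large prime $l'$, the supersingular condition forces $a_v = 0$, hence $a_v \equiv 0 \pmod{l'}$, which has Chebotarev density equal to the proportion of trace-zero elements in the image of $\rho_{l'}$ --- a quantity tending to $0$ as $l' \to \infty$. What remains is the desired infinite set $\Delta_L$ satisfying (1) and (3).

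To verify (2), let $v \in \Delta_L$ have residue characteristic $p$, and pick $l \in \Sigma$. The characteristic polynomial of $\Frob_v$ acting on $T_l E$ reduces mod $l$ to $x^2 - a_v x + p$, whose discriminant $a_v^2 - 4p$ is by the non-split semisimple condition a nonzero non-square in $\F_l$. Since $v$ is ordinary, $\End(\overline{E}_v) \otimes \Q = \Q(\sqrt{a_v^2 - 4p})$, and ``$a_v^2 - 4p$ is a non-square mod $l$'' is precisely the statement that $l$ is inert in $\End(\overline{E}_v) \otimes \Q$.

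The main obstacle is the first step: invoking Serre's joint open image theorem for non-CM elliptic curves over a number field, rather than the already nontrivial single-$l$ statement. This is a deep but classical input which I would cite rather than reprove. Once it is available, the remainder is a routine Chebotarev density computation, the only mild technical point being the density-zero claim for supersingular primes, which itself follows from the same big-image input applied to one auxiliary large prime.
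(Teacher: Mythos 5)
Your proposal follows essentially the same route as the paper's proof: cite Serre's surjectivity of the joint mod-$\Sigma$ representation for all finite sets $\Sigma$ of primes $> l_0$ (the paper's reference is~\cite{dordre}), then apply Chebotarev to a conjugation-stable set of non-split semisimple tuples, and verify inertness via the irreducibility of the mod-$l$ characteristic polynomial of Frobenius. The one point of divergence is how ordinariness (condition~(1)) is arranged. The paper selects for each $l\in\Sigma$ a \emph{single} class $B_l$ with irreducible characteristic polynomial \emph{and} $\tr(B_l)\ne 0$, so that any $v$ in the resulting Chebotarev set has $a_v\not\equiv 0\pmod l$, hence $a_v\ne 0$, hence $p\nmid a_v$ for $p\ge 5$ since $|a_v|\le 2\sqrt p<p$; no separate density argument is needed. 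You instead take the full set of non-split semisimple tuples (which includes trace-zero elements such as your companion matrix of $x^2-c$), and then discard supersingular primes as a density-zero family using an auxiliary large prime $l'$. Both variants are correct; the paper's choice of $B_l$ with nonzero trace is marginally cleaner because it subsumes the ordinariness condition into the single Chebotarev application and avoids the extra auxiliary-prime estimate.
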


\begin{proof}
Let $G_L$ be the absolute Galois group of $L$.
For a set of primes $\Sigma$, 
let
$\rho_\Sigma \colon G_L \to \prod_{l \in \Sigma} \GL_2(\F_l)$
be assembled from the homomorphisms 
$\rho_l \colon G_L \to \GL_2(\F_l)$ giving the Galois action on $E[l]$.
By~\cite{dordre}, one can find $l_0$ such that $\rho_{\Sigma}$ is surjective
for any finite set $\Sigma$ of rational primes greater than $l_0$.

Fix a finite set $\Sigma$ of primes greater than $l_0$.
For each $l$, choose $B_l \in \GL_2(\F_l)$ with irreducible characteristic
polynomial.
Add the identity matrix to $B_l$ if necessary to make $\tr(B_l)\ne 0$.
By the Chebotarev density theorem, 
the set $\Delta_1$ of primes $v$ of $L$
such that $\rho_l(\Frob_v)$ is conjugate to $B_l$ for all $l$
has positive Dirichlet density.
The set of primes of $L$ of degree greater than $1$ has density $0$,
so the subset $\Delta_L \subset \Delta_1$ obtained by excluding 
these and the primes of bad reduction for $E$ is still infinite.

Fix $v \in \Delta_L$ and $l \in \Sigma$.
We now check that $v$ satisfies the three desired conditions; 
(3)~holds by definition.
Since $\tr(\rho_l(\Frob_v)) = \tr(B_l) \ne 0$,
we have~(1).
Let $P(x) \in \Z[x]$ be the characteristic polynomial of Frobenius
acting on a Tate module of $\bar{E}_v$.
By choice of $B_l$, 
the quadratic polynomial $(P(x) \bmod l) \in \F_l[x]$ 
is irreducible.
Thus $(\End \bar{E}_v) \tensor \Q$ is the quadratic
field defined by $P(x)$, and $l$ is inert in this field;
this proves~(2).
\end{proof}

\begin{proof}[Proof of Theorem~\ref{incauna}]
Choose a number field $L$
such that $Q \in S(L)$.
We may assume that $L \supset F_0$ and that $L$ is Galois over $\Q$.
Let $E$ be the corresponding elliptic curve over $L$.

Suppose first that $E$ is not CM. 
Let $l_0$ be as in Lemma~\ref{focati},
and let $\Sigma$ be a finite set of rational primes greater than $l_0$. 
Let $\Delta_L$ be as in Lemma~\ref{focati}.
Let $\Delta$ be the set of rational primes divisible by primes 
in $\Delta_L$.
Each $p \in \Delta$ splits completely in $L$ and hence in $F_0$.
For each $p \in \Delta$ we choose $M \subset \overline{\bQ}$ 
maximally unramified at some prime $\wp$ of $M$
such that $\wp$ lies above  some $v \in \Delta_L$ over $p$.
We have that $Q \in S(\cO_{M,\wp})$ is ordinary, 
and each $l\in \Sigma$ is inert in $\cK_Q$.
Remove finitely many primes from $\Delta$ 
so that Corollary~\ref{bizzet} holds for $p$. From
now on, proceed as in the proof of Theorem~\ref{glver}.

Finally suppose instead that 
$E$ has CM by an imaginary quadratic field $\calK$.
Let $\Sigma$ be any finite set of rational primes. 
There exist infinitely many rational primes 
that split completely in $\cK F_0$. 
So there is an infinite set $\Delta$ of rational primes, disjoint from
$\Sigma$, such that for any $p \in \Delta$, 
we have that $p$ splits in $\cK F_0$,
that $Q \in S(\cO_{M,\wp})$ 
(with $M$ maximally unramified at a $\wp$ over $p$),
that $Q \in \CL$ (by Theorem~\ref{T:revv3}(1)(b)), 
and that Corollary~\ref{coru} holds for $p$.
Again we may proceed as in the proof of Theorem~\ref{glver}.
\end{proof}

\begin{proof}[Proof of Theorem~\ref{simairefined}]
 Let $P_i$ be represented by data
$(E_i,\alpha_i)$, and let $\omega_i$ be any $1$-form on $E_i$
defined over $\Qbar$. 
Since $p$ is sufficiently large, 
the triple $(E_i,\alpha_i,\omega_i)$ is definable over $\cO_{M,\wp}$ 
and hence may be considered over $R$.
Since $p$ splits completely in $\cK_i$ and is sufficiently large, 
$E_i \in \CL$ by Theorem~\ref{T:revv3}(1)(b).
Also since $p$ is sufficiently large, we may assume
$\bZ[\zeta_N,1/N] \subset \cO_{M,\wp}$:
we want this because of Remark~\ref{R:zeta_N}.
In particular,
$f_{p^2-p}(E_i,\alpha_i,\omega_i) \in \cO_{M,\wp}$
and
$E_{p-1}(E_i,\alpha_i,\omega_i) \in (\cO_{M,\wp})^{\times}$. 
With $\bar{f}^{(-1)}$ as in Remark~\ref{irigutza}, we have
\[
\bar{f}^{(-1)}(\bar{P}_i)   =  \frac{(\theta^{p-2}
\bar{f})(\bar{E}_i,\bar{\alpha}_i,\bar{\omega}_i)}{\bar{E}_{p-1}^p
(\bar{E}_i,\bar{\alpha}_i,\bar{\omega}_i)}=\overline{ \left(
\frac{f_{p^2-p}(E_i,\alpha_i,\omega_i)}{E_{p-1}^p(E_i,
\alpha_i,\omega_i)}\right)}= \overline{\Sigmat_p(P_i)}:
\]
the hypothesis $\calK_i \ne \Q(\sqrt{-1}),\Q(\sqrt{-3})$
guarantees that $\bar{P}_i \in \bar{X}^{\dug}$.
Now apply Theorem~\ref{mainth} with Theorem~\ref{refined}(2).
\end{proof}

\bibliographystyle{amsplain}

\end{document}